\newtheorem{thm}{Theorem}[section]
\newtheorem{cor}[thm]{Corollary}
\newtheorem{lem}[thm]{Lemma}
\newtheorem{prop}[thm]{Proposition}
\newtheorem{question}[thm]{Question}
\newtheorem{problem}[thm]{Problem}
\newtheorem{claim}{Claim}[thm]
\theoremstyle{definition}
\newtheorem{defn}[thm]{Definition}
\newtheorem{notation}[thm]{Notation}
\newtheorem{rem}[thm]{Remark}
\newtheorem{example}[thm]{Example}
\newtheorem*{discussion}{Discussion}
\newcommand\push[2]{#1_\star #2}
\newcommand\tensor{\otimes}
\newcommand\normal{\trianglelefteq}
\newcommand\acl{\mathrm{acl}}
\newcommand\dcl{\mathrm{dcl}}
\newcommand{\restr}[2]{{\left.#1\right|{#2}}}
\newcommand\tp{\mathrm{tp}}
\newcommand\ACVF{\mathrm{ACVF}}
\newcommand\alg[1]{#1^{\mathrm{alg}}}
\newcommand\fg{\mathfrak{g}}
\newcommand\fh{\mathfrak{h}}
\newcommand\AFD{(FD)\xspace}
\newcommand\Aom{(FD$_\omega$)\xspace}
\newcommand\St{\mathrm{St}}
\renewcommand{\projlim}{\varprojlim}
\newcommand{\ind}{\downfree}
\newcommand{\germ}[2]{[#2]_{#1}}
\newcommand{\Zz}{\mathbb{Z}}
\newcommand{\dimst}{\dim_{\mathrm{st}}}
\newcommand{\dimo}{\dim_{\mathrm{o}}}
\newcommand{\RM}{\mathrm{MR}}
\newcommand{\transl}[2]{{^{#1}#2}}
\newcommand{\rtransl}[2]{#1^{#2}}
\newcommand{\defsc}[2]{(\mathrm{d}_{#1}#2)}
\newcommand{\fequiv}{\leftrightarrow}
\renewcommand{\mid}{:}
\newcommand{\Stab}{\mathrm{Stab}}
\newcommand{\cL}{\mathcal{L}}
\newcommand{\Sym}{\mathrm{Sym}}
\newcommand{\fF}{\mathfrak{F}}
\newcommand{\fG}{\mathfrak{G}}
\newcommand{\sminus}{\smallsetminus}
\newcommand{\Uu}{\mathbb{U}}
\newcommand{\union}{\cup}
\newcommand{\meet}{\cap}
\newcommand{\Oo}{\mathcal{O}}
\newcommand{\ACF}{\mathrm{ACF}}
\newcommand{\val}{\mathrm{val}}
\newcommand{\spec}{\mathrm{Spec}}
\newcommand{\Aa}{\mathbb{A}}
\newcommand{\trdeg}{\mathrm{trdeg}}
\newcommand{\Hh}{\mathbb{H}}
\newcommand{\Ff}{\mathcal{F}}
\newcommand{\End}{\mathrm{End}}
\newcommand{\Pp}{\mathbb{P}}
\newcommand{\tA}{\widetilde{A}}
\newcommand{\Gg}{\mathbb{G}}
\newcommand{\Ga}{\Gg_{\mathrm{a}}}
\newcommand{\Gm}{\Gg_{\mathrm{m}}}
\newcommand{\eq}[1]{#1^{\mathrm{eq}}}
\newcommand{\res}{\mathrm{res}}
\newcommand{\Cc}{\mathbb{C}}
\newcommand{\Rr}{\mathbb{R}}
\newcommand{\fM}{\mathfrak{M}}
\newcommand{\Qq}{\mathbb{Q}}
\newcommand{\GL}{\mathrm{GL}}
\newcommand{\cS}{\mathcal{S}}
\newcommand{\cG}{\mathcal{G}}
\newcommand{\SL}{\mathrm{SL}}
\newcommand{\dimK}{\dim_{\mathrm{K}}}
\newcommand\cB{\mathcal{B}}
\title{Valued fields, Metastable groups}
\author{Ehud Hrushovski}
\address{Institute of Mathematics, Hebrew
  University of Jerusalem, Jerusalem, 91904, Israel, and Mathematical Institute, University of Oxford, Andrew Wiles Building, Oxford, OX2 6GG, United Kingdom.} 
\email{ehud@math.huji.ac.il}
\author{Silvain Rideau}
\address{CNRS, Université Paris Diderot, Sorbonne Université, Institut de Mathématiques de Jussieu-Paris Rive Gauche, IMJ-PRG (UMR 7586), F-75013, Paris, France.}
\email{silvain.rideau@berkeley.edu}
\begin{document}

\begin{abstract}
  We introduce a class of theories called {\em metastable}, including
  the theory of algebraically closed valued fields ($\ACVF$) as a
  motivating example. The key local notion is that of definable types
  dominated by their stable part. A theory is metastable (over a sort
  $\Gamma$) if every type over a sufficiently rich base structure can
  be viewed as part of a $\Gamma$-parametrized family of stably
  dominated types. We initiate a study of definable groups in
  metastable theories of finite rank. Groups with a stably dominated
  generic type are shown to have a canonical stable quotient. Abelian
  groups are shown to be decomposable into a part coming from
  $\Gamma$, and a definable direct limit system of groups with stably
  dominated generic.  In the case of \(\ACVF\), among definable
  subgroups of affine algebraic groups, we characterize the groups
  with stably dominated generics in terms of group schemes over the
  valuation ring. Finally, we classify all fields definable in
  \(\ACVF\).
\end{abstract}

\maketitle

\section{Introduction}

Let $V$ be a variety over a valued field $(F,\val)$.  By a
\emph{$\val$-constructible set} we mean a finite Boolean combination
of sets of the form \(\{x \in U: \val(f(x)) \leq \val(g(x)) \}\) where
$U$ is an open affine, and $f,g$ are regular functions on $U$. Two
such sets can be identified if they have the same points in any valued
field extension of $F$, or equivalently, by a theorem of Robinson, in
any fixed algebraically closed valued field extension $K$ of $F$.

In many ways these are analogous to constructible sets in the sense of
the Zariski topology, and (more closely) to semi-algebraic sets over
real fields.  However, while the latter two categories are closed under
quotients by equivalence relations, the valuative constructible sets
are not. For instance, the valuation ring $\Oo = \{x: \val(x) \geq 0
\}$, is constructible, and has constructible ideals $\alpha \Oo = \{x:
\val(x) \geq \alpha \}$ and $\alpha \fM = \{x: \val(x) > \alpha \}$.
For any group scheme $G$ over $\Oo$, one obtains corresponding
congruence subgroups; but the quotients $\Oo / \alpha \Oo$ and $G(\Oo
/ \alpha \Oo)$ are not constructible. We thus enlarge the category by
formally adding quotients, referred to as the {\em imaginary sorts};
the objects of the larger category will be called the {\em definable
  sets}. It is explained in \cite{HasHruMac-ACVF} that instead of this
abstract procedure, it suffices to add sorts for the homogeneous
spaces $\GL_n(K)/\GL_n(\Oo)$, for all $n$, and also $\GL_n(K)/I$ for a
certain subgroup $I$; we will not require detailed knowledge of this
here.  One such sort that will be explicitly referred to is the value
group $\Gamma:= \GL_1(K)/\GL_1(\Oo)$; this is a divisible ordered
Abelian group, with no additional induced structure. We will refer to
{\em stable sorts} also. These include, first of all, the residue field
$k = \Oo/\fM$, but also vector spaces over $k$ of the form $L/\fM L$,
where $L\subseteq K^n$ is an $\Oo$-lattice.

The paper \cite{HasHruMac-Book}, continuing earlier work, studied the
category of quantifier-free definable sets over valued fields,
especially with respect to imaginaries. As usual, the direct study of
a concrete structure of any depth is all but impossible, if it is not
aided by a general theory.  We first tried to find a generalization of
stability (or simplicity) in a similar format, capable of dealing with
valued fields as stability does with differential fields, or
simplicity with difference fields. To this we encountered resistance;
what we found instead was not a new analogue of stability, but a new
method of utilizing classical stability in certain unstable
structures.
 
Even a very small stable part can have a decisive effect on the
behavior of a quite ``large,'' unstable type.  This is sometimes
analogous to the way that the (infinitesimal) linear approximation to
a variety can explain much about the variety; and indeed, in some cases,
casts tangent spaces and Lie algebras in an unexpected model theoretic
role. Two main principles encapsulate the understanding gained:  
\begin{enumerate}
\item Certain types are dominated by their stable parts.  They behave
  ``generically'' as stable types do.
\item Uniformly definable families of types make an appearance; they
  are indexed by the linear ordering $\Gamma$ of the value group, or
  by other, piecewise-linear structures definable in $\Gamma$. An
  arbitrary type can be viewed as a definable limit of stably
  dominated types (from principle (1)).
\end{enumerate}

A general study of stably dominated types was initiated in
\cite{HasHruMac-Book}; it is summarized in
\S\ref{S:prelim}. Principle (2) was only implicit in the proofs
there. We state a precise version of the principle, and call a theory
satisfying (2) \emph{metastable}. We concentrate here on finite rank
metastability.

Principle (1) is given a general group-theoretic rendering in
Proposition\ \ref{P:groupdom}. Stably dominated groups are defined,
and it is shown that a group homomorphism into a stable group controls
them generically. Theorem\ \ref{T:lmstd} clarifies the second
principle in the context of Abelian groups. A metastable Abelian group
of finite rank is shown to contain nontrivial stably dominated groups
$T_\alpha$, unless it is internal to $\Gamma$.  Moreover, the groups
$T_\alpha$ are shown to form a definable direct limit system, so that
the group is described by three ingredients: $\Gamma$-groups, stably
dominated groups, and definable direct limits of groups.

We then apply the theory to groups definable in algebraically closed
valued fields. Already the case of Abelian varieties is of
considerable interest; all three ingredients above occur, and the
description beginning with a definable map into a piecewise linear
definable group $L$ takes a different aspect than the classical one.
The points of $L$ over a non-Archimedean local field will form a
finite group, related to the group of connected components in the
Néron model. On the other hand over $\Rr(t)$ the same formulas will
give tori over $\Rr$.

Non-commutative groups definable in algebraically closed valued fields
include examples such as $\GL_n(\Oo)$, $\GL_n(\Oo/a\Oo)$ and
``congruence subgroups'' such as the kernel of $\GL_n(\Oo) \to
\GL_n(\Oo/a\Oo)$, which are all stably dominated. In
Corollary\ \ref{C:embed2} and Theorem\ \ref{T:genchar3} we relate
stably dominated groups to group schemes over \(\Oo\). Previous work
in this direction used other theories, inspired by topology; see
\cite{Pil-FieldsQp}, \cite{HruPil-GpPFF} regarding the
$p$-adics.\medskip

We now describe the results in more detail. Our main notion will be
that of a stably dominated group.  In a metastable theory, we will try
to analyze arbitrary groups, and to some extent types, using them.  In
the case of valued fields, this notion is related to but distinct from
compactness (over those fields where topological notions make sense,
i.e. local fields). For groups defined over local fields, stable
domination implies compactness of the group of points over every
finite extension. Abelian varieties with bad reduction show that the
converse fails; this failure is explained by another aspect of the
theory, definable homomorphic quotients defined over $\Gamma$.

Let $T$ be a first order theory.  It is convenient to view a
projective system of definable sets $D_i$ as a single object, a
pro-definable set; similarly for a compatible system of definable maps
$\alpha_i: P \to D_i$. Precise definitions of the terminology and
notations used in the following paragraphs can be found in
\S\ref{S:stdom} and \S\ref{S:ranks}.

\begin{defn}[Stable domination]
  A type $p$ over $C$ is {\em stably dominated} if there exists, over
  $C$, a pro-definable map $\alpha: p \to D$, $D$ stable and stably
  embedded, such that for any \(a\models p\) and tuple $b$,
  $\alpha(a)\ind_C D \meet \dcl(b)$ implies:
  \[\tp(b/C\alpha(a)) \models \tp(b/Ca).\]
\end{defn}

Let $\Gamma$ be a stably embedded definable set which is orthogonal to
the stable part: there is no definable function with infinite image from a power of $\Gamma$ to a stable definable set. In this paper we will mostly be focusing on the case
where $\Gamma$ is $o$-minimal, as is the case in algebraically closed
valued fields. Many of the results presented here should remain true
in Henselian valued fields with algebraically closed residue field and
different value groups, such as $\Zz$; where every definable subset of
$\Gamma$ is still a Boolean combination of $\emptyset$-definable sets
and intervals.

\begin{defn}[Metastability]
  A theory $T$ is {\em metastable} (over $\Gamma$) if:
\begin{itemize}
\item[(B)] Any set of parameters
  $C_0$ is included in a set $C$, called a metastability basis, such
  that for any $a$, there exists a pro-definable map \(\gamma\) (over C), from $p$ to some power of $\Gamma$ with $\tp(a/\gamma(a))$ stably
  dominated.
 \item[(E)] Every type over an algebraically closed subset (including imaginaries) of a
      model of $T$ has an automorphism-invariant extension to the
      model.
  \end{itemize}
\end{defn}

The main reason we require property (E) is because of descent
(Proposition\ \ref{P:base change}.(\ref{descent})) which is not known
to hold without the the additional hypothesis that \(\tp(B/C)\) has a
global \(C\)-invariant extension.

\begin{question}
  \begin{enumerate}
  \item Can descent be proved without the additional hypothesis that
    \(\tp(B/C)\) has a global \(C\)-invariant extension?

  \item If we assume \(\Gamma\) to be \(o\)-minimal, does property (E) follow from the existence of
    metastability bases?
  \end{enumerate}
\end{question}

\begin{rem}
  \begin{enumerate}
  \item Instead of considering all \(C\)-definable stable sets, it is
    possible, in the definition of stable domination, to take a proper
    subfamily $\cS_C$ with reasonable closure properties. The notion
    of $\cS$-domination is meaningful even for stable theories.
  \item In the definition of metastability, one can also replace the
    single sort $\Gamma$ with a family of sorts $\Gamma_i$, or with a
    family of parametrized families of definable sets $\cG_A$, with no
    loss for the results of the present paper.
  \end{enumerate}
 \end{rem}  

As in stability theory, a range of finiteness assumptions is
possible. Let \(T\) be metastable over \(\Gamma\).

\begin{defn}[{cf. Definition\ \ref{D:AFD}}]
The theory \(T\) has \AFD if:
\begin{enumerate}
\item $\Gamma$ is $o$-minimal.
\item Morley dimension  is uniformly finite and  definable in families.
\item Let $D$ be a definable set. The Morley dimension of $f(D)$,
  where $f$ ranges over all definable functions with parameters such
  that $f(D)$ is stable, takes a maximum value.
\item Similarly, the $o$-minimal dimension of $g(D)$, where $g$ ranges
  over all definable functions with parameters such that $g(D)$ is
  $\Gamma$-internal, takes a maximum value.
\end{enumerate}
\end{defn}

Let $A \leq M \models T$. We define $\St_A$ to be the family of all
stable, stably embedded $A$-definable sets. Some statements will be
simpler if we also assume:

\begin{defn}[{cf Definition\ \ref{D:Aom}}]
The theory \(T\) has \Aom if, in addition to \AFD, any countable set is
contained in a metastability basis $M$ which is an \(\aleph_1\)-saturated model.
Moreover, for any \(\acl\)-finitely generated $G \subseteq \Gamma$ and $S
\subseteq \St_{M\cup G}$ over $M$, isolated types over $M \cup S$ are dense.
\end{defn}

\begin{rem}
\begin{enumerate}
\item \AFD and \Aom both hold in \(\ACVF\), with all imaginary sorts
  included (cf Proposition\ \ref{P:ACVF hyp}). \AFD, at least, is
  valid for all $C$-minimal expansions of \(\ACVF\), in particular the
  rigid analytic expansions
  (cf. \cite{HasMac-Cmin,Lip-ACVFAn,LipRob-ACVFAn}).
\item Existentially closed valued fields with a contractive derivative
  (cf. \cite{Rid-VDF}) and separably closed valued fields
  (cf. \cite{HilKamRid}) are also metastable, but of infinite rank. The
  results presented here will only hold in finite rank definable
  groups.
\item In practice, the main structural results will use finite weight
  hypotheses, see Definition\ \ref{D:weight}; this is a weaker consequence of \AFD.
\end{enumerate}
\end{rem}

A group is {\em stably dominated} if it has a generic type which is
stably dominated (See Definitions \ref{D:gentype} and
\ref{D:stdomgp}). In this case, the stable domination is witnessed by
a group homomorphism (cf. Proposition\ \ref{P:groupdom}). One cannot
expect every group to be stably dominated. But one can hope to shed
light on any definable group by studying the stably dominated groups
inside it. We formulate the notion of a {\em limit stably dominated}
group: it is a direct limit of connected metastable groups by a
pro-definable direct limit system (cf. Definition\ \ref{D:lmstd}).
 
\begin{thm}[{cf. Theorem\ \ref{T:lmstd}}]
\label{T:lmstd1}
Let $T$ be a metastable theory with \Aom. Let $A$ be an interpretable Abelian
group. Then there exists a definable group $\Lambda \subset\Gamma^n$, and a
definable homomorphism $\lambda: A \to \Lambda$, with $H := \ker(\lambda)$ limit
stably dominated.
\end{thm}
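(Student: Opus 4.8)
The plan is to separate $A$ into a piece coming from $\Gamma$ and a piece controlled by the stable part, guided by the two prototypes in $\ACVF$: for $\Gm$ the valuation $\val\colon K^\times\to\Gamma$ is a homomorphism whose kernel $\Oo^\times$ is stably dominated, while for $\Ga$ there is \emph{no} nontrivial homomorphism to $\Gamma$ and the group is the increasing union $\bigcup_\alpha \alpha\Oo$ of stably dominated balls, a limit stably dominated group. The general statement must produce $\lambda$ absorbing the ``homomorphic $\Gamma$-part'' and then recognise the residual $\Gamma$-invariant on the kernel as ``valuation-like'', giving the direct limit filtration. Using \Aom I first replace the base by a metastability basis $M$ that is a model, and pass to the connected component so as to work with a single generic type $p$ of $A$ over $M$.

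To construct $\lambda$, I apply metastability to $p$, obtaining a pro-definable $\gamma\colon p\to\Gamma^m$ with $\tp(a/M\gamma(a))$ stably dominated. The crucial point is orthogonality of $\Gamma$ to the stable part: since $\tp(a/M\gamma(a))$ is stably dominated, the stable component contributes nothing new to $\Gamma$, so every $\Gamma$-valued invariant $h(a)$ of a generic satisfies $h(a)\in\dcl(M\gamma(a))$. In particular any definable homomorphism from $A$ into a $\Gamma$-internal group factors, generically, through $\gamma$. I then isolate the part of $\gamma$ that is compatible with the group law: for generic independent $a,b$ the value of a homomorphic invariant on $ab$ must be a fixed definable function of its values on $a$ and $b$, and by a generic group-chunk (germ) argument these assemble into an honest definable homomorphism $\lambda\colon A\to\Lambda$. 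As $\Lambda$ is a definable group internal to the $o$-minimal $\Gamma$, it embeds in $\Gamma^n$. I take $\lambda$ of maximal $o$-minimal dimension among such quotients, which exists by clause (4) of \AFD; maximality guarantees that $K:=\ker(\lambda)$ carries no further homomorphic $\Gamma$-invariant.

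For the kernel I must show $K$ is limit stably dominated in the sense of Definition\ \ref{D:lmstd}. By maximality of $\lambda$, any residual $\Gamma$-invariant on the generic of $K$ is non-homomorphic; because $\Gamma$ is $o$-minimal (so definable maps are piecewise monotone) such an invariant is forced to be valuation-like, i.e. subadditive with convex sublevel sets. This yields a $\Gamma$-definable directed family of stably dominated types $p_\alpha$, the generics ``at level $\alpha$''. I set $T_\alpha:=\Stab(p_\alpha)$; by Proposition\ \ref{P:groupdom} and the canonical stable quotient for groups with stably dominated generic, each $T_\alpha$ is a connected metastable group with stably dominated generic, and these fit into a pro-definable directed system whose union (direct limit) is exactly $K$. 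The finite-weight consequence of \AFD is what bounds the system and ensures the relevant stabilizers are nontrivial and exhaust $K$. This exhibits $K$ as $\varinjlim_\alpha T_\alpha$, as required.

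The main obstacle is the last paragraph: extracting, from a single non-homomorphic residual $\Gamma$-invariant, a genuine \emph{directed system of stably dominated subgroups} with limit precisely $K$, together with its uniform definability. Two things must be controlled simultaneously. First, that the level types $p_\alpha$ are stably dominated and that their stabilizers $T_\alpha$ are connected and stably dominated rather than merely $\Stab$-definable subsets; this is where orthogonality of $\Gamma$ to the stable part and the canonical stable quotient are used to keep the stable-domination data coherent as $\alpha$ varies. Second, that the $T_\alpha$ are linearly directed and that $K=\bigcup_\alpha T_\alpha$ with no residual $\Gamma$-direction escaping the filtration, which rests on the $o$-minimal monotonicity forcing the valuation-like shape and on the finite-rank hypotheses preventing an unbounded tower of distinct $\Gamma$-invariants. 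I expect the technical heart to be verifying that the germ machinery behind $\lambda$ and the stabilizer machinery behind the $T_\alpha$ are compatible, so that the homomorphic and the limit parts of $A$ are cleanly complementary.
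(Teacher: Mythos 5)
Your proposal inverts the paper's order of construction---quotient first, kernel second---and both halves contain genuine gaps. First, the construction of $\lambda$: the metastability map $\gamma$ has no a priori compatibility with the group law, and you give no mechanism for extracting its ``homomorphic part''; the germ/group-chunk machinery (Propositions \ref{P:gpchunk} and \ref{P:gpchunk fct}) builds a group out of a multiplicative structure that is already present, it does not manufacture additivity from an arbitrary $\Gamma$-valued invariant. Worse, even granting a supply of $\Gamma$-internal quotients, selecting one of maximal $o$-minimal dimension only pins $K=\ker(\lambda)$ down up to commensurability: comparing $\lambda$ with any other $\Gamma$-internal quotient $\mu$ via $(\lambda,\mu)$ shows only that $\mu(K)$ is finite. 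Since every connected stably dominated subgroup of $A$ lies in the limit stably dominated subgroup $H$ (a connected stably dominated group that is $\Gamma$-internal is trivial, by orthogonality), a kernel $K$ with $H \subsetneq K$ and $K/H$ finite would \emph{not} be limit stably dominated. So you need the kernel to be exactly $H$, i.e.\ the universal quotient---which the paper obtains as a \emph{corollary} of the theorem (Corollary \ref{C:max-G-quotient}), not as an input; dimension-maximality alone does not deliver it.

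Second, and more seriously, your kernel analysis rests on the claim that a non-homomorphic residual $\Gamma$-invariant must be ``valuation-like'' with convex sublevel sets. Nothing in $o$-minimality forces this; the assertion is essentially the theorem itself, restated. The paper's actual engine is entirely different: (i) Lemma \ref{L:abelian1}---for a stably dominated (hence symmetric) type $p$ on an Abelian group of bounded $p$-weight, $p^{\pm 2n}$ is the generic of the connected subgroup $\Stab(p^{\pm 2n})$ and $p$ concentrates on a coset of it; this weight-pigeonhole plus $\Sym(n)$-invariance argument is what turns types into stably dominated subgroups, and it is missing from your outline ($T_\alpha := \Stab(p_\alpha)$ is not known to be connected, stably dominated, or to have $p_\alpha$ as a generic; Proposition \ref{P:groupdom} presupposes a stably dominated group rather than producing one). (ii) Proposition \ref{P:sums}---any family of connected stably dominated subgroups lies in a common one, proved by realizing the partial type asserting joint invariance of all $\Gamma$-valued functions under the generics $p_i$; this is what gives the directedness you assert but never derive. (iii) The fiber argument over a metastability basis, showing each fiber of the map enumerating all $\Gamma$-valued definable functions meets boundedly many cosets of $H$, whence $A/H$ is almost $\Gamma$-internal; this is what guarantees no $\Gamma$-direction escapes, the exhaustion step you flag but leave open. (iv) Under \Aom, Corollary \ref{C:cert-2}, Lemma \ref{L:cofinal-def} and Lemma \ref{L:almost-int-2} upgrade this to definability of $H$ and $\Gamma$-internality of $A/H$, after which $\Lambda := A/H$ embeds in $\Gamma^n$. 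Without analogues of (i)--(iii), the proposal does not close.
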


In fact, under these assumptions, $H$ is the union of a definable
directed family of definable groups, each of which is stably
dominated.  Assuming only bounded weight (in place of \Aom), we obtain
a similar result but with $H$ $\infty$-definable.

In the non-Abelian case the question remains open.  
The optimal conjecture would be a positive answer to:  

\begin{problem}\label{Pb:double}
\Aom Does any definable group $G$ have a limit stably dominated
definable subgroup $H$ with $H \backslash G / H$ internal to $\Gamma$?
\end{problem}

Another goal of this paper is to relate definable groups in \(\ACVF\)
to group schemes over $\Oo$.  We recall the analogous results for the
algebraic and real semi-algebraic cases.  Consider a field $K$ of
characteristic zero. Then the natural functor from the category of
algebraic groups to the category of constructible groups is an
equivalence of categories.  This follows locally from Weil's group
chunk theorem; nevertheless some additional technique is needed to
complete the theorem. It was conjectured by Poizat and proved in
\cite{vdD-GpCh} using definable topological manifolds, and in
\cite{Hru-UniDim} using stability theoretic notions like definable
types and germs. This methods will be explained in
\S\ref{S:DefGen}. Let us only remark here that an irreducible
algebraic variety has a unique generic behavior, in that any definable
subset has lower dimension or a complement of lower dimension; this is
typical of stable theories.

In \(\ACVF\), we certainly cannot hope every definable group to be a
subgroup of an algebraic group or even of the definable homomorphic
image of an algebraic group. We can, however, hope for all definable
groups to be towers of such groups:

\begin{problem}
  Let $G$ be a definable group in \(K\models\ACVF\). Do there exist
  definable normal subgroups $(1) = G_0 \leq \cdots \leq G_n = G$ of
  $G$ and definable homomorphisms $f_i$, with kernel $G_i$, from
  $G_{i+1}$ into the definable homomorphic image of an algebraic group
  over $K$?
\end{problem}

Call a definable set $D$ {\em boundedly imaginary} if there exists no
definable map with parameters from $D$ onto an unbounded subset of
$\Gamma$. If $D$ is defined over a local field $L$, then $D$ is
boundedly imaginary if and only if $D(L')$ is finite for every finite
extension $L'$ of $L$. We prove:

\begin{prop}[{cf. Corollary\ \ref{C:embed2}}]\label{P:embed1}
Let $H$ be a stably dominated connected group definable in
$K\models \ACVF$. Then there exists an algebraic group $G$ over $K$ and a
definable homomorphism $f: H \to G(K)$ with boundedly imaginary
kernel.
\end{prop}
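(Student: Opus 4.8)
The plan is to reduce to the generic type of \(H\), use stable domination to pin down the ``residual'' picture over the residue field, and then reconstruct an algebraic group over \(K\) by a definable group-chunk argument in the field sort, with the boundedly imaginary kernel accounting for exactly the data invisible to \(K\)-valued coordinates. Concretely, I would fix a metastability basis \(M\) (a model) containing the parameters defining \(H\) and work with the generic type \(p\) of \(H\), which is stably dominated and definable, \(H\) being connected. By Proposition \ref{P:groupdom} the stable domination of \(p\) is witnessed by a definable homomorphism \(\mu\colon H\to S\) onto a stable, stably embedded group \(S\). In \(\ACVF\) every stable stably embedded set is internal to the residue field \(k\), so \(S\) is definably isomorphic to \(\bar G(k)\) for an algebraic group \(\bar G\) over \(k\); this is the residual shadow of \(H\), and the real content is to promote it to the field \(K\).

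Next I would realise \(H\), modulo a kernel to be controlled, as a definable subgroup of \(K\)-points by the definable group-chunk method of \S\ref{S:DefGen}. Consider the \(M\)-definable functions \(H\to K\) under the left-translation action of \(H\); using finite rank (\AFD) and elimination of imaginaries in the geometric sorts, I would select finitely many such functions assembling into a map \(h\colon H\to K^N\) rich enough that left translation by a generic \(a\models p\) acts on the germ of \(h\) at \(p\) by \(K\)-rational maps. Because \(p\) is stably dominated it is in particular definable and generically stable, so these germs and their composition law satisfy the hypotheses of Weil's group-chunk theorem over the algebraically closed field \(K\); this yields an algebraic group \(G\) over \(K\), a generically defined rational action, and a generic homomorphism out of \(p\). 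Connectedness of \(H\) together with the definability of \(p\) then lets me extend the generic chunk to a total definable homomorphism \(f\colon H\to G(K)\), by writing an arbitrary \(g\in H\) as a product of two independent generics and setting \(f(g)=f(a)f(b)\).

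Finally I would verify that \(N:=\ker f\) is boundedly imaginary: since \(f\) separates all the \(K\)-valued coordinates used in \(h\), and every definable map from a geometric-sort set to \(\Gamma\) factors, up to bounded and residual data, through valuations of \(K\)-valued definable functions, a definable surjection \(N\to\Gamma\) with unbounded image would produce an unbounded \(\Gamma\)-valued function on \(p\) invisible to the stable part, contradicting the orthogonality to \(\Gamma\) inherent in stable domination. I expect the main obstacle to be exactly the passage from the residue field to \(K\): the generic \(p\) is stably dominated rather than field-generic, so a naive chunk sees only \(\bar G\) over \(k\). Overcoming this requires using the valuation to spread the residual group out, realising \(H\) generically as the \(\Oo\)-points of a smooth group scheme \(\mathcal G\) whose special fibre reduces to \(\bar G\), and then taking \(G=\mathcal G_K\); keeping \(f\) total on all of \(H\) and certifying that only boundedly imaginary information is discarded is where the connectedness of \(H\) and the finiteness built into \AFD are essential.
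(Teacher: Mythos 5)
Your overall strategy---pass to the generic type, build an algebraic group over $K$ out of the $K$-valued coordinates of generic realizations, then control the kernel---has the right shape, but the central step as you state it fails. You propose to apply Weil's group chunk theorem to germs of a map $h\colon H \to K^N$, asserting that left translation by a generic ``acts on the germ of $h$ at $p$ by $K$-rational maps''. This is precisely what is not available. For $a,b$ independent realizations of $p$, the tuple $h(a\cdot b)$ is definable in $\ACVF$ from $(a,b)$, but over the field it is only \emph{algebraic} over $F(h(a),h(b))$ (this is the Claim in the paper's proof: field elements of $\acl(E_1 \union E_2)$ lie in $\alg{(L_1L_2)}$); nothing makes it a rational, i.e.\ single-valued, function of $(h(a),h(b))$. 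So the multiplication induced on the $K$-coordinates is a finite-to-finite algebraic correspondence, not a rational map, and Weil's hypotheses are not met. This is exactly why the paper instead forms the six-point configuration $\{a_1,a_2,a_3,a_1a_2,a_2a_3,a_1a_2a_3\}$, transports it to the field via $c\mapsto\alpha(c)$ (a generator of $\acl(Cc)\meet K$ over $F$), and invokes the \emph{group configuration theorem} for the stable theory $\ACF$---a tool designed precisely to recover a group from an interalgebraic (rather than interdefinable) configuration. Even then one does not get a homomorphism on germs directly: the paper passes to $H\times G$, takes the stabilizer $S$ of a definable type, shows one projection has purely imaginary kernel $J$ and the other a finite normal kernel $J'$, and reads off $f\colon H\to G/J'$ from the graph $S$. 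Your proposed escape from this obstacle---realising $H$ generically as the $\Oo$-points of a group scheme and taking its generic fibre---is circular: in the paper that realization is Theorem~\ref{T:genchar3}, which applies to stably dominated subgroups of affine algebraic groups over $K$, i.e.\ it \emph{presupposes} the embedding you are trying to construct.

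Your kernel argument is also too loose. Bounded imaginarity of $\ker f$ does not follow formally from orthogonality of $p$ to $\Gamma$: elements of the kernel are not realizations of $p$, and the inference from an unbounded definable map on the kernel to an unbounded $\Gamma$-valued germ on $p$ is not given. The paper's argument (Corollary~\ref{C:embed2}) is concrete: for $h\in J$ and $(a,b)\models q_2|Dh$ one has $(h\cdot a,b)\models q_2$; by Lemma~\ref{L:se1}, $a\in\dcl(D,\beta_0,\beta_1,\beta_0\Oo/\beta_1\fM)$ with $\beta_0,\beta_1\in\Gamma_D(a,b)$, and stable domination of $q_2$ forces $\beta_0,\beta_1\in\Gamma(D)$; hence $h\in\dcl(a,h\cdot a,D)\subseteq\dcl(D,\beta_0\Oo/\beta_1\fM)$, and compactness together with Corollary~\ref{C:bdd-im} gives bounded imaginarity. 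Note that stable domination enters the proof exactly at this point (upgrading the purely imaginary kernel of Proposition~\ref{P:embed} to a boundedly imaginary one); your use of Proposition~\ref{P:groupdom} and of the residual group over $k$ plays no role in the paper's argument and cannot substitute for these steps.
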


\begin{cor}
Let $H$ be a stably dominated group definable in $\ACVF$ with
parameters from a local field \(L\). Then there exists a definable
homomorphism $f: H(L) \to G(L)$ with $G$ an algebraic group over $L$,
with finite kernel.
\end{cor}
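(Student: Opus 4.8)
The plan is to deduce the statement from Proposition~\ref{P:embed1} by (i)~reducing to the connected case, (ii)~applying the Proposition to obtain an algebraic group and a homomorphism defined over a finite extension of $L$, (iii)~descending this data to $L$ via a Weil restriction, and (iv)~passing to $L$-points, where the boundedly imaginary kernel becomes finite by the characterization recalled just before Proposition~\ref{P:embed1}. The genuinely delicate step is the descent (iii), since a local field is far from algebraically closed; the rest is bookkeeping layered on top of the Proposition.

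First I would dispose of connectedness. In finite rank the connected component $H^0$ is a definable subgroup of finite index $m$, and the stably dominated generic of $H$ is concentrated on $H^0$, so $H^0$ is itself connected and stably dominated. Applying Proposition~\ref{P:embed1} to $H^0$ produces an algebraic group $G_0$ and a definable homomorphism $f_0\colon H^0 \to G_0$ with boundedly imaginary kernel; I will argue that, since $H^0$ is defined over $\acl(L)=\alg{L}$, the construction can be carried out over $\alg{L}$ as well, this being the point one must extract from the proof of the Proposition. To promote $f_0$ to all of $H$ I would use the induced representation: choosing coset representatives $g_1,\dots,g_m$ for $H/H^0$ and letting $H$ act by permuting them, one obtains a homomorphism $H \to G_0 \wr (H/H^0) = G_0^{\,m} \rtimes (H/H^0)$, whose target is again an algebraic group (a finite extension of $G_0^{\,m}$) over $\alg{L}$, and whose kernel is the normal core of $\ker(f_0)$, hence a subgroup of a boundedly imaginary set and therefore boundedly imaginary. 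Renaming, I may thus assume a definable homomorphism $f\colon H \to G$ with $G$ algebraic and $\ker(f)$ boundedly imaginary, all defined over a finite extension $L'/L$.

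Now for the descent. Writing $\mathrm{Gal}(L'/L)$ for the finite Galois group (passing to the Galois closure if needed) and $f^{\sigma}, G^{\sigma}$ for the Galois conjugates of $f, G$, I would form the product map $\prod_{\sigma} f^{\sigma}\colon H \to \prod_{\sigma} G^{\sigma} = (\mathrm{Res}_{L'/L} G)(\alg{L})$. Because the Galois action simply permutes the factors in a manner matching the permutation of the $f^{\sigma}$, this product is Galois-invariant and hence $L$-definable, landing in the Weil restriction $\mathrm{Res}_{L'/L} G$, an algebraic group over $L$. Its kernel is $\bigcap_{\sigma} \ker(f^{\sigma})$, a subgroup of the boundedly imaginary set $\ker(f)$, and therefore again boundedly imaginary. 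This yields a definable $L$-homomorphism $F\colon H \to (\mathrm{Res}_{L'/L} G)$ with boundedly imaginary kernel.

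Finally I would restrict to $L$-points. Since $F$ is $L$-definable and $H(L)\subseteq \dcl(L)$, every value $F(a)$ for $a\in H(L)$ lies in $\dcl(L)$, so $F$ maps $H(L)$ into $(\mathrm{Res}_{L'/L} G)(L)=G(L')$; taking $G' := \mathrm{Res}_{L'/L} G$ gives the required algebraic group over $L$. The kernel of $F$ restricted to $L$-points is exactly $(\ker F)(L)$, and as $\ker F$ is boundedly imaginary and defined over the local field $L$, the characterization preceding Proposition~\ref{P:embed1} shows $(\ker F)(L)$ is finite. Thus $F\colon H(L)\to G'(L)$ is the desired definable homomorphism with finite kernel. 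The one point requiring care beyond these formal manipulations is the claim in step~(ii) that $G$ and $f$ descend from the ambient model to $\alg{L}$; I expect this to be the main obstacle, to be settled by an automorphism-invariance argument showing the output of Proposition~\ref{P:embed1} may be chosen over the algebraic closure of the field of definition of $H$.
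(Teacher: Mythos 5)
Your proof is correct and is essentially the paper's (implicit) derivation: the corollary is stated without proof as a direct consequence of Proposition~\ref{P:embed1} --- whose precise form, Corollary~\ref{C:embed2}, already supplies the field-of-definition control $\acl(L)\cap K=\alg{L}$ that you flagged as the main obstacle --- together with the remark immediately preceding it that a boundedly imaginary set defined over a local field has finitely many points over every finite extension. Your supplementary steps (definability and finite index of $H^0$, which follows from Proposition~\ref{P:groupdom} and Corollary~\ref{C:def G0} since \Aom holds in $\ACVF$; the induced wreath-product representation to handle disconnectedness; Weil restriction for the Galois descent to $L$) are exactly the routine details the paper leaves to the reader, and you carry them out correctly.
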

 
Proposition\ \ref{P:embed1} reduces, up to a boundedly imaginary kernel, the study of a stably dominated groups definable in \(\ACVF\) to that of stably
dominated subgroups of algebraic groups $G$. We proceed then to
describe these.

There exists an exact sequence $1 \to A \to G \to_f L \to 1$, with $A$
an Abelian variety and $L$ an affine algebraic group. We show
(cf. Lemma\ \ref{L:stdomext} and Corollary\ \ref{C:stdomsubgpvf}) that
a definable subgroup $H$ of $G$ is stably dominated if and only
if $H \meet A$ and $f(H)$ are. For linear groups, we have:

\begin{thm}[{cf. Theorem\ \ref{T:genchar3}}]\label{T;structure}
Let $G$ be an affine algebraic group and let $H$ be a stably dominated
definable subgroup of $G$. Then $H$ is isomorphic to $\Hh(\Oo)$, $\Hh$
a group scheme of finite type over $\Oo$.

If $H$ is Zariski dense in $G$, $\Hh$ can be taken to be $K$-isomorphic to $G$.
\end{thm}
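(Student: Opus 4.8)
The plan is to reconstruct $\Hh$ directly from the stably dominated generic type of $H$, using an integral (over $\Oo$) form of the Weil group-chunk method developed in \S\ref{S:DefGen}, after first placing $H$ in integral position. I begin with two reductions. Replacing $G$ by the Zariski closure of $H$ (an affine algebraic subgroup of $G$) does not alter $H$ and makes $H$ Zariski dense; a group scheme $\Hh$ whose generic fibre is $K$-isomorphic to this closure then yields both assertions simultaneously, and I may also pass to the identity component and so assume $H$ connected. Fixing a faithful representation $G\hookrightarrow\GL_n$, I use that stable domination forces the generic type of $H$ to be orthogonal to $\Gamma$, so that $H$ is boundedly imaginary, hence bounded in $\GL_n(K)$; a bounded subgroup fixes a point of the affine building and thus stabilises a lattice, so after a $K$-isomorphic conjugation I may assume $H\subseteq\GL_n(\Oo)$, Zariski dense in $G\subseteq\GL_{n,K}$.

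Let $p$ be the generic type of $H$ and let $\mu\colon H\to\bar H$ be the canonical stable quotient furnished by Proposition\ \ref{P:groupdom}; in \(\ACVF\) the target $\bar H$ is interpretable in the residue field and hence is an algebraic group over $k$, and $\mu$ is generically dominant with stably dominated fibres. The type $p$ thus carries two coherent pieces of generic data: its Zariski generic over $K$, which is the generic of $G$, and its reduction, which is the generic of $\bar H$ over $k$, the two being linked by the reduction map. Feeding the generic group law of $H$, together with this reduction datum, into the integral group-chunk construction produces a flat affine $\Oo$-group scheme $\Hh$ of finite type, with generic fibre $K$-isomorphic to $G$ and with $p$ a generic type of $\Hh(\Oo)$; by construction $H\subseteq\Hh(\Oo)$.

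It remains to upgrade this inclusion to $H=\Hh(\Oo)$, which follows cleanly once the construction is in place: both $H$ and $\Hh(\Oo)$ are connected groups having the same generic type $p$, and a connected definable group is recovered from the germ of its generic (every element is a product of two independent generics), so the two groups coincide. The main obstacle is the integral group-chunk step itself. One must reconstruct a single $\Oo$-group scheme from generic data given simultaneously over $K$ (yielding $G$) and over $k$ (yielding $\bar H$), glued along reduction, in a setting where $\Gamma$ is divisible and there is no uniformiser; the level at which $H$ sits must therefore be encoded in $p$ and realised through a fixed auxiliary $a\in\fM$ rather than through a discrete valuation, and one must verify that the resulting scheme is genuinely of finite type over $\Oo$. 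This finiteness, and the boundedness of the germ data entering the group chunk, is exactly where finite rank (\AFD, via the finite weight of $p$) is used.
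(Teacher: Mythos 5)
There is a genuine gap, and it sits exactly where you place your ``main obstacle'': the entire construction of $\Hh$ is delegated to an ``integral group-chunk construction'' that you never carry out and that does not exist as a citable tool. The group-chunk machinery of \S\ref{S:DefGen} (Proposition~\ref{P:gpchunk}) produces a \emph{pro-definable} group from a generic multiplication datum; it has no mechanism for producing a flat $\Oo$-scheme, and in the paper it is used only to build the stable quotient (Proposition~\ref{P:groupdom}), never an integral model. The statement you would need --- that the germ of multiplication on $p$, together with the stable quotient $\mu: H \to \bar{H}$ ``glued along reduction'', yields an affine group scheme of finite type over $\Oo$ whose $\Oo$-points have $p$ as generic --- is essentially equivalent to the theorem itself, so naming it does not prove it. The paper's actual proof of Theorem~\ref{T:genchar3} goes by a commutative-algebra route your proposal never touches: with $K_0$ an algebraically closed field of definition and $p$ the generic of $H$, set $R = \{r \in K_0[G] : \defsc{p}{x}\,\val(r(x)) \geq 0\}$; orthogonality of $p$ to $\Gamma$ gives the renormalization $R \tensor_{\Oo_0} K_0 = K_0[G]$; the maximum modulus principle shows $R$ is closed under co-multiplication and co-inversion, i.e.\ is a Hopf $\Oo_0$-algebra, so $\Hh := \spec(R)$ is a flat group scheme with $\Hh_K \cong G$; Proposition~\ref{P:genchar2} identifies $p$ as the unique generic of $\Hh(\Oo)$, whence $H = \Hh(\Oo)$ (this last step does agree with your closing argument); and finite-type-ness comes from a weight estimate (the invertible-matrix claim) showing each $r \in R$ lies in a finitely generated Hopf subalgebra. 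Your remark that finite rank enters precisely at the finiteness stage is correct, but it enters through that weight argument, not through any chunk construction.

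Secondary problems: your normalization $H \subseteq \GL_n(\Oo)$ via a Bruhat--Tits fixed point is both unjustified and unhelpful. Over a model of $\ACVF$ the value group is dense and the field is not complete, so the fixed-point theorem for bounded groups cannot be quoted off the shelf; and even granted, the inclusion does nothing toward producing a scheme $\Hh$ with $\Hh(\Oo)$ \emph{equal} to $H$ --- congruence subgroups such as $\ker(\GL_n(\Oo) \to \GL_n(\Oo/a\Oo))$ already sit inside $\GL_n(\Oo)$ while having an entirely different integral model. Likewise, the stable quotient $\mu: H \to \bar{H}$ is an output of the theory (via the reduction map of the scheme one constructs), not an input to the paper's construction, so the gluing picture is not the right one. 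Finally, note that passing to the identity component does not reduce the stated theorem to the connected case: if $H \neq H^0$, exhibiting $H^0$ as the $\Oo$-points of a group scheme says nothing about $H$ itself (though the paper's own Theorem~\ref{T:genchar3} shares this restriction to connected $H$).
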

 
Finally, we show that enough of the structure of definable groups in
\(\ACVF\) is known to be able to classify all the fields: as expected,
they are all isomorphic either to the valued field itself or to its
residue field (cf. Theorem\ \ref{T:acvf-fields}).\medskip

Let us conclude this introduction with a series of examples that
illustrate some of the structure results proved in this paper. Since this structure is reflected in the generics of the groups, we also discuss these generics (cf. Section 3 for definitions).

\begin{example}[Structure and generics of certain algebraic groups]
Let $K$ be an algebraically closed valued field.
\begin{enumerate} 
\item $\SL_n(\Oo)$ is stably dominated. Its unique generic is stably dominated via the residue map.
\item $\Gm(K)$ has a largest stably dominated subgroup $\Gm(\Oo)$ and \(\Gm(K)/\Gm(\Oo)\simeq\Gamma\). It has two generics: elements of small valuation
  and elements of large valuation (i.e. infinitesimals), both generics are attributable to $\Gamma$.
\item $\Ga(K)$ is a limit stably dominated group, it is the union of the stably dominated subgroups $\alpha \Oo$. It has a unique generic: elements of small
  valuation. Here the generic types correspond to cofinal definable types in the
  (partially) ordered set $(\Gamma, >)$ indexing the stably dominated
  subgroups $\alpha \Oo$. The fact that the poset has a unique cofinal
  definable type is however a phenomenon of dimension one.
  
  The limit stably dominated group $\Ga^2(K)$, however, has a large family of generics. For any
  definable curve $E$ in $\Gamma^2$, cofinal in the sense that for any
  $(a,b)$ there exists $(d,e) \in E$ with $d<a$ and $e<b$, there exists a
  generic type of $\Ga^2(K)$ whose projection to $\Gamma^2$
  concentrates on $E$.
\item Let $B_n \leq \SL_n(K)$ be the solvable group of upper diagonal matrices. The valuation of the diagonal coefficients is a group homomorphism into \(\Gamma^{n-1}\) with
  limit stably dominated kernel $U_n$.
  
  The group $B_n$ has left, right as well as two sided generics. One of
  the latter is given as follows. Let $(x_{i,j})$ be the matrix
  coefficients of an element of $x \in \GL_n(K)$. A two-sided generic
  of \(G\) is determined by: $\val(x_{i,j}) << \val(x_{i',j'})$ when
  $(i,j)< (i',j')$ lexicographically (and $i<j$).
\item For $n>1$, $\SL_n(K)$ is simple but neither limit stably dominated nor $\Gamma$-internal. However, the double coset space of $U_n\backslash\SL_n(K)/U_n$ is a disjoint union of \(n!\) copies of \(\Gamma^{n-1}\). The group $\SL_n(K)$ has no generic types.
\end{enumerate}
\end{example}

Parts of this text served as notes for a graduate seminar, given by the first
author, in the Hebrew University in Fall 2003; the participants have the
authors' warm thanks. The authors are also very grateful to the referee for his
two friendly and highly conscientious reports, written more than a decade apart.
The second author would like to express his profound gratitude to Reid Dale and
all the participants in the two Paris reading groups with whom he spent many
hours reading earlier drafts of this paper.

Lastly, both authors are deeply indebted to Paul Wang for spotting a gap in the
published version of this paper. The present version contains the modifications
of the corrigendum \cite{HRW-Meta} written with his collaboration.

The first author was supported by the European Research Council under
the European Union's Seventh Framework Programme (FP7/2007-2013) / ERC
Grant agreement no.\ 291111/ MODAG. The second author was partially
supported by ValCoMo (ANR-13-BS01-0006)

\section{Preliminaries}\label{S:prelim}

We recall some material from stability, stable domination,
\(o\)-minimality and the model theory of algebraically closed valued
fields, in a form suitable for our purposes.

We fix a theory \(T\) that eliminates imaginaries and a universal
model \(\Uu\) (a sufficiently saturated and homogeneous model). Any
model of \(T\) that we consider will be an elementary submodel of
\(\Uu\). We write $\models \phi$ as a shorthand for $\Uu \models
\phi$. By a \emph{definable} set or function we mean one defined in
$T_A$ for some $A \subseteq \Uu$.  If we wish to specify the base of
definition, we say $A$-definable.

A pro-definable set \(X\) is a (small) projective filtered system
\((X_i)_{i\in I}\) of definable sets and definable maps. We think of
\(X\) as \(\projlim_i X_i\). Pro-definable sets can equivalently be
presented as a collection of formulas in potentially infinitely many
variables. If all the maps in the system describing \(X\) are
injective, we say that \(X\) is \(\infty\)-definable and we can
identify \(X\) with a subset of any of the \(X_i\). If all the maps in
the system describing \(X\) are surjective, we say that \(X\) is
strict pro-definable; equivalently, the projection of \(X\) to any
\(X_i\) is definable. A relatively definable subset of \(X\) is a
definable subset of one of the \(X_i\); we identify it as a subset of
\(X\) by pulling it back to \(X\). Equivalently, if \(x = (x_i)\) is a
tuple of variables where each \(x_i\) ranges over \(X_i\), a
relatively definable subset of \(X\) is the set of realizations in
\(X\) of some formula \(\phi(x)\).

A pro-definable map \(f : \projlim_i X_i \to Y\) where \(Y\) is
definable is a definable map from some \(X_i\) to \(Y\). A
pro-definable map \(f:\projlim_i X_i \to \projlim_j Y_j\) is a
compatible collection of maps \(f_j : \projlim_i X_i \to Y_j\). A
pro-definable subset of a pro-definable set \(X = \projlim_i X_i\) is
a pro-definable set \(Y\) with an injective pro-definable map \(f:
Y\to X\); it can be identified with a compatible collection of
\(\infty\)-definable subsets of the \(X_i\), or with an intersection
of relatively definable subsets of \(X\).

More generally, by a {\em piecewise pro-definable} set, we mean a
family $(X_i)_{i\in I}$ of pro-definable sets over a directed order $I$,
with a compatible system of injective pro-definable maps $X_i \to
X_{j}$, for all $i\leq j$, viewed as inclusion maps. The direct limit
is thus identified with the union, and denoted $X = \bigcup_i X_i$. A
pro-definable subset of $X$ is a pro-definable subset of one of the
$X_i$.

\subsection{Definable types and filters}\label{S:def filter}

A type will mean a complete type in possibly infinitely many variables
(equivalently an ultrafilter on the Boolean algebra of relatively
definable sets). We will refer to possibly incomplete types as
filters.

A filter \(\pi(x)\) over \(\Uu\) in the (possibly infinite) tuple of
variables \(x\) is said to be \(C\)\emph{-definable} if for all
formulas \(\phi(x,y)\), there exists a formula \(\theta(y) =:
\defsc{\pi}{x}\phi(x,y)\) over \(C\), such that \(\pi = \{\phi(x,m)
\mid \Uu\models\defsc{\pi}{x}\phi(x,m)\}\). Note that \(\pi\) is
completely determined by the map \(\phi(x,y) \mapsto
\defsc{\pi}{x}\phi(x,y)\), its \emph{definition scheme}. If \(\pi\) is
a type over \(\Uu\), then this map is a Boolean homomorphism.

If \(C\) is a set of parameters, we define \(\pi|C := \{\phi(x,m) \mid
\Uu\models\defsc{\pi}{x}\phi(x,m)\text{ and }m\in C\}\). Note that if
\(C\) is a model, \(\pi|C\) completely determines \(\pi\).

We say that \(\pi\) \emph{concentrates} on a pro-definable set \(D\)
if \(\pi(x)\vdash x\in D\). By a \emph{pro-definable function on
  \(\pi\)}, we mean a pro-definable map \(f : D \to X\) such that
\(\pi\) concentrates on \(D\).

\begin{defn}\label{D:pushfwd}
If $f$ is a pro-definable function on a $C$-definable filter $\pi$, we
define the push-forward $f_\star \pi$ by:
\[\defsc{f_\star\pi}{u} \phi(u,v) = \defsc{\pi}{x} \phi(f(x),v).\]
\end{defn}

So that if $a \models \pi | C$ then $ f(a) \models f_\star\pi |C$.

\begin{defn}\label{D:freeprod}
Let $\pi(x)$, $\mu(y)$ be \(C\)-definable filters. Define $\rho(x,y) =: \pi(x)
\tensor \mu(y)$ by:
\[\defsc{\rho}{x y} \phi(x,y,z) = \defsc{\pi}{x}\defsc{\mu}{y} \phi(x,y,z).\]
\end{defn}
Then, if \(a\models \pi|C\) and \(b\models \mu|Ca\),
\(ab\models(\pi\tensor\mu)|C\). If \(\pi\) and \(\mu\) are types, then
\(f_\star\pi\) and \(\pi\tensor\mu\) are types.

We will occasionally use a more general construction. Assume $p(x)$
is a $C$-definable type. Let $a \models p$ and let $q_a(y)$ be
a \(Ca\)-definable type of the theory $T_{Ca}$.

\begin{lem}\label{L:concat}
  There exists a unique definable type $r(x y)$ such that for any
  $B\supseteq C$, if $ab \models r |B$ then $a \models p |B$ and $b
  \models q_a | Ba$.
\end{lem}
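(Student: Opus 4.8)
\emph{Proof proposal.} The plan is to write down the definition scheme of $r$ explicitly, as the composition of the scheme of $p$ with a $C$-uniform version of the scheme of $q_a$, and then to check that this composition is well defined, yields a complete type, and has the stated characterizing property. Uniqueness comes for free and does not even need the construction: a global definable type is determined by its restriction to any model containing its base, so it is enough to observe that the displayed property forces $r|B$ for every model $B \supseteq C$. Indeed, if $ab \models r|B$ then $a \models p|B$ pins down $\tp(a/B)$, and then $b \models q_a|Ba$ pins down $\tp(b/Ba)$; hence $\tp(ab/B) = r|B$ is determined, so at most one definable $r$ can work.

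For existence I would define, for each formula $\phi(x,y;z)$, the formula $\defsc{r}{xy}\phi(x,y;z) := \defsc{p}{x}\,\theta_\phi(x;z)$, where $\theta_\phi(x;z)$ is a $C$-formula chosen so that $\theta_\phi(a;z)$ is equivalent to $\defsc{q_a}{y}\phi(a,y;z)$. Such a $\theta_\phi$ exists precisely because $q_a$ is $Ca$-definable: its scheme produces, for the instance $\phi(a,y;z)$, a formula over $Ca$, and any formula over $Ca$ has the form $\theta_\phi(a;z)$ for a $C$-formula $\theta_\phi(x;z)$ obtained by displaying the parameter $a$. Informally this is the composition ``$\defsc{r}{xy} = \defsc{p}{x}\defsc{q_x}{y}$'', with $\defsc{q_x}{y}$ denoting the scheme of $q_a$ with $a$ replaced by the variable $x$; by invariance the same $C$-formula $\theta_\phi$ describes the scheme of $q_{a'}$ at every realization $a'$ of $p$, which is what makes the reference to $q_a$ at the first coordinate coherent.

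The main point, and the only genuine obstacle, is that $\theta_\phi$ is not canonical, so I must verify that $\defsc{r}{xy}\phi$ does not depend on the choice. Suppose $\theta_\phi$ and $\theta_\phi'$ both represent $\defsc{q_a}{y}\phi(a,y;z)$; then $a$ satisfies the $C$-formula $\forall z\,(\theta_\phi(x;z) \leftrightarrow \theta_\phi'(x;z))$. Since $a \models p$ we have $\tp(a/C) = p|C$, so this formula lies in $p|C$; consequently, for any parameter $m$ and any $a^* \models p|Cm$ we get $\theta_\phi(a^*;m) \leftrightarrow \theta_\phi'(a^*;m)$, and therefore $\defsc{p}{x}\theta_\phi(x;m)$ and $\defsc{p}{x}\theta_\phi'(x;m)$ define the same set. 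Thus $r$ is well defined, with a scheme manifestly over $C$. It is exactly here that the definability of $p$ is essential: it is what forces the non-canonical choices of $\theta_\phi$ to wash out after integrating against $p$.

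Finally I would establish completeness and the characterizing property simultaneously by realizing $r$ over each model. Fix a model $M \supseteq C$, take $a \models p|M$, and then $b \models q_a|Ma$ in an elementary extension. Unwinding the definitions, $\phi(x,y;m) \in \tp(ab/M)$ holds iff $\models \defsc{q_a}{y}\phi(a,y;m)$ (because $b \models q_a|Ma$), iff $\theta_\phi(a;m)$ holds, iff $\theta_\phi(x;m) \in p|M$ (because $a \models p|M$), iff $\phi(x,y;m) \in r|M$. Hence $r|M = \tp(ab/M)$ is a complete type for every model $M \supseteq C$; since the scheme is over $C$, this shows $r$ is a complete $C$-definable type, and reading the same chain of equivalences backwards shows that any $ab \models r|B$ satisfies $a \models p|B$ and $b \models q_a|Ba$, as required.
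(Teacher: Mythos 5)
Your proposal is correct and takes essentially the same route as the paper: you define the scheme of $r$ by composing the scheme of $p$ with a formula $\theta_\phi$ (the paper's $\psi$) representing the scheme of $q_a$, and note that the non-canonical choice of $\theta_\phi$ washes out under $\defsc{p}{x}$ because any two choices agree on realizations of $p|C$. Your verification of completeness and of the characterizing property by realizing $r$ over models is exactly the routine check that the paper leaves to the reader with ``it is easy to check that this definition scheme works.''
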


\begin{proof} Given a formula $\phi(x y,z)$, let $\psi(x,z)$ be a formula
such that $\psi(a,z) = \defsc{q_a}{y}\phi(a,y,z)$. The formula $\psi$
is not uniquely defined, but if $\psi$ and $\psi'$ are two
possibilities then $\defsc{p}{x}(\psi \leftrightarrow
\psi')$. Therefore we can define:
\[\defsc{r}{x y}\phi(x y,z) = \defsc{p}{x}\psi(x,y,z).\]

It is easy to check that this definition scheme works.
\end{proof}

In fact, over \(C = \acl(C)\), it suffices that $q_a$ be definable
over $\acl(Ca)$. This follows from:

\begin{lem}\label{L:def2}
  Let $M$ be a model and let \(C=\acl(C)\subseteq M\). Let $\tp(a/M)$
  be $C$-definable. Let $c \in \acl(Ca)$.  Then $\tp(ac/M)$ is
  $C$-definable. Indeed, $\tp(a/M) \cup \tp(ac/C) \vdash \tp(ac/M)$.
\end{lem}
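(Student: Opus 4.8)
The plan is to prove the stronger ``Indeed'' clause, that $\tp(a/M)\cup\tp(ac/C)$ is already complete over $M$; the $C$-definability of $\tp(ac/M)$ will then drop out of the explicit definition scheme produced along the way. First I would record the algebraicity data: since $c\in\acl(Ca)$, choose a formula $\chi(x,w)$ over $C$ with $\models\chi(a,c)$, with $\models\forall x\,\exists^{\le n}w\,\chi(x,w)$ for some $n$, and with $\chi(a,w)$ isolating $\tp(c/Ca)$, so that $\chi(a,\Uu)$ is exactly the finite set of $Ca$-conjugates of $c$. Let $\bar p$ denote the global $C$-definable extension of $\tp(a/M)$ given by the definition scheme; it is $\mathrm{Aut}(\Uu/C)$-invariant.

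The key reduction is to a \emph{homogeneity} statement: for every $\phi(x,w,z)$ and every $m\in M$, the truth value of $\phi(a,w,m)$ is constant as $w$ ranges over $\chi(a,\Uu)$; equivalently, any two $Ca$-conjugates of $c$ realize the same type over $Ma$. This is a property of the type $p=\tp(a/M)$, so it holds for every realization of $p$. Granting it, for $m\in M$ we get $\phi(a,c,m)\fequiv\models\forall w(\chi(a,w)\to\phi(a,w,m))$, and the right-hand condition is computed by $\theta(z):=\defsc{\bar p}{x}\forall w(\chi(x,w)\to\phi(x,w,z))$, a formula over $C$; this is exactly a $C$-definition scheme for $\tp(ac/M)$. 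Moreover, if $a_1c_1\models\tp(a/M)\cup\tp(ac/C)$ then $\models\chi(a_1,c_1)$ (as $\chi$ is over $C$), so homogeneity for $a_1$ gives $\phi(a_1,c_1,m)\fequiv\models\theta(m)\fequiv\phi(a,c,m)$, which is the entailment. Conversely, any genuine splitting $\phi(a,c_1,m)\wedge\neg\phi(a,c_2,m)$ with $c_1,c_2$ $Ca$-conjugate would give $\tp(ac_1/M)\ne\tp(ac_2/M)$ while $\tp(ac_1/C)=\tp(ac_2/C)$, so $\tp(a/M)\cup\tp(ac_i/C)$ could not be complete; thus the two assertions are genuinely equivalent to homogeneity.

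To prove homogeneity, suppose it fails: some $m\in M$ and $\phi$ cut out $D=\{w\in\chi(a,\Uu):\phi(a,w,m)\}$, a proper nonempty subset of the orbit $\chi(a,\Uu)$. As a finite subset of $\acl(Ca)$ it has a canonical parameter $e=\ulcorner D\urcorner\in\acl(Ca)$, and since $D$ is defined over $Ma$ we have $e\in\dcl(Ma)$; but a proper nonempty subset of a single $\mathrm{Aut}(\Uu/Ca)$-orbit is never $Ca$-definable, so $e\notin\dcl(Ca)$. Hence homogeneity reduces to the single equality $\acl(Ca)\meet\dcl(Ma)=\dcl(Ca)$: the $C$-definable, hence generic, type $\tp(a/M)$ admits no algebraic element over $Ca$ that becomes definable over the larger base $Ma$ without already lying in $\dcl(Ca)$.

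The hard part is precisely this equality, and it is where $C=\acl(C)$ and the definability (not merely invariance) of $\tp(a/M)$ must be used. Its content is the model-theoretic analogue, valid here by genericity, of the field-theoretic fact that if $C$ is relatively algebraically closed in $M$ then $C(a)$ is relatively algebraically closed in $M(a)$ for $a$ transcendental over $M$. I would attack it through $\bar p$: writing $e=f(a)$ with $f$ over $M$ and taking a $Ca$-conjugate $e'=\tau e$ with $\tau\in\mathrm{Aut}(\Uu/Ca)$, I would use $\tau\bar p=\bar p$ to realize $\bar p$ over a common model extending both $M$ and $\tau M$ and then compare the values of $f$ and $f^{\tau}$ on that realization, aiming to force $e=e'$ from the fact that a single complete type over a model assigns a definite status to each point of the $C$-definable finite set $\chi(a,\Uu)$. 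Making this comparison airtight — ruling out any definable splitting of a finite algebraic orbit over the generic base $M$ — is the main obstacle; once $\acl(Ca)\meet\dcl(Ma)=\dcl(Ca)$ is established, homogeneity and hence both clauses of the lemma follow as above.
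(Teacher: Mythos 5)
Your overall architecture is sound: the equivalence of the lemma with the homogeneity statement (any two $Ca$-conjugates of $c$ realize the same type over $Ma$), the further reduction of homogeneity to the inclusion $\acl(Ca)\meet\dcl(Ma)\subseteq\dcl(Ca)$ via codes of the pieces of the $Ma$-type partition of the conjugate set, and the observation that homogeneity yields the explicit scheme $\defsc{p}{x}\,\forall w(\chi(x,w)\to\phi(x,w,z))$ are all correct. But there is a genuine gap, and you name it yourself: the inclusion $\acl(Ca)\meet\dcl(Ma)\subseteq\dcl(Ca)$ is never proved, and it is not a routine step --- it is essentially the entailment clause of the lemma repackaged, and it is exactly where the two hypotheses that carry all the weight ($C=\acl(C)$ and definability, not mere invariance, of $\tp(a/M)$) must be used. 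Everything before it is bookkeeping.

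The attack you sketch for the core does not close. Extending $\tau\in\mathrm{Aut}(\Uu/Ca)$ so that it fixes $a$ (possible since $\tau\bar p=\bar p$) gives that $a$ realizes $\bar p$ restricted to $M$ and, separately, restricted to $\tau(M)$; but nothing guarantees that $a$ realizes $\bar p$ over any common model containing $M\cup\tau(M)$, and indeed $\tp(a/M\cup\tau(M))$ is precisely what you cannot control: since $f$ and $f^{\tau}$ have the same $\bar p$-germ (the germ is coded over $C$ and $\tau$ fixes $C$), if $a$ did realize $\bar p$ over $M\cup\tau(M)$ then $f(a)=f^{\tau}(a)$ would be immediate, and conversely any failure of that equality forces $a\not\models\bar p|M\tau(M)$. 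So "comparing the values over a common model" is circular. The paper circumvents this amalgamation problem by never leaving the base model: for each formula $\psi(x,y,z)$ it uses the definition scheme of $p$ to introduce the $C$-definable equivalence relation $z_1 E z_2$ given by $\defsc{p}{x}(\forall y)(\phi(x,y)\to(\psi(x,y,z_1)\fequiv\psi(x,y,z_2)))$, which has at most $2^k$ classes; since $T$ eliminates imaginaries and $C=\acl(C)$, each class $Z_i$ is $C$-definable (this is the precise point where $C=\acl(C)$ enters); and for $m,m'\in M$ lying in the same class, both parameters are in $M$, so $a\models p|M$ applies unconditionally and gives $\psi(a,c,m)\fequiv\psi(a,c,m')$. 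Hence the $\psi$-part of $\tp(ac/M)$ is a union of the $C$-definable classes $Z_i$, which yields the $C$-definability, and the selection of classes is then expressed by conditions belonging to $\tp(ac/C)$, which yields the entailment. The moral difference is that the paper's argument only ever evaluates formulas at parameters inside $M$, where definability of $p$ can be quoted, whereas your route requires relating $a$'s behaviour over two different models simultaneously --- exactly what a definition scheme does not provide.
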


\begin{proof}
Let $\phi(x,y)$ be a formula over $C$ such that $\phi(a,c)$ holds, and
such that $\phi(a,y)$ has $k$ solutions. Let \(\psi(x,y,z)\) be any
formula. Then the equivalence relation \(z_1 E z_2\) defined by
\(\defsc{p}{x}(\forall y)\phi(x,y)\implies(\psi(x,y,z_1)
\Leftrightarrow \psi(x,y,z_2))\) where \(p := \tp(a/M)\) is
\(C\)-definable and has at most \(2^k\) classes. It follows that each
of these classes, denote \(Z_i\), are \(C\)-definable. Finally, for
all \(m\), \(\models\psi(a,c,m)\) if and only if \(m\in Z_i\) for some
\(i\) such that \((\forall z \in Z_i)\psi(x,y,z) \in \tp(ac/C)\).
\end{proof}

See Proposition\ \ref{P:stdom}.(\ref{aclstdom}) for a stronger statement
in the stably dominated case.

\begin{defn}[Germs of definable functions]
  Let $p$ be a definable type. Two definable functions $f(x,b)$ and
  $g(x,b')$ are said to have the same \emph{$p$-germ}
  if \[\models\defsc{p}{x}f(x,b)=g(x,b').\]
\end{defn}
 
We say that {\em the $p$-germ of $f(x,b)$ is defined over $C$} if
whenever $\tp(b/C)=\tp(b'/C)$, $f(x,b)$ and $f(x,b')$ have the same
$p$-germ.  Note that the equivalence relation $b \sim b'$ defined by ``$f(x,b),f(x,b')$ have the same $p$-germ'' is definable; the $p$-germ of $f(x,b)$ is defined over $C$ if and only if $b/{\sim} \in \dcl(C)$.

\subsection{Stable domination}\label{S:stdom}

\begin{defn}
Let \(D\) be a \(C\)-definable set.
\begin{enumerate}
\item \(D\) is said to be \emph{stably embedded} if any definable
  \(X\subseteq D^n\), for some \(n\), is \(C\cup D\)-definable.
\item \(D\) is said to be \emph{stable} if every formula
  $\phi(x;y)$ with parameters in \(C\), implying \(x\in D^n\) for some
  \(n\), is a stable formula.
\end{enumerate}
\end{defn}

Since being stable implies being stably embedded, the latter is often
referred to as being \emph{stable, stably embedded} in the
literature. See e.g. \cite{Pil-Stab} for a treatment of basic
stability and \cite[Appendix]{ChaHru-ACFA} for the properties of
stably embedded and stable sets.

For all \(C\), let \((D_i)_i\) enumerate all \(C\)-definable stable
sets and \(\St_{C} := \prod_i D_i\). It is a strict pro-definable
set. We often consider \(\St_{C}\) as a (stable) structure, whose
sorts are the \(D_i\), with the full induced structure. By a
pro-definable map into \(\St_{C}\), we mean (somewhat abusively) a
collection of maps \(f = (f_j)_j\) where the range of each \(f_j\) is
stable. For all \(b\), let \(\St_{C}(b)\) denote
\(\dcl(Cb)\cap\St_{C}\).

\begin{notation}\label{N:ind}
For all \(a\), \(b\), \(C\), we write \(a\ind_C b\) if there exists an
\(\acl(C)\)-definable type \(p\) such that \(a \models p |\acl(Cb)\).
\end{notation}

Note that if \(a\), \(b\), \(C\in \St_D\) for some \(D\subseteq C\)
then \(a\ind_C b\) if and only if they are (forking) independent in
\(\St_D\) over \(C\). Let us now recall the definition of stable domination:

\begin{defn}
Let \(p = \tp(a/C)\) and \(\alpha : p \to \St_{C}\) be a
pro-\(C\)-definable map. The type \(p\) is said to be \emph{stably
  dominated} via \(\alpha = (\alpha_i)_i\) if for any tuple \(b\), if
\(\St_{C}(b)\ind_{C} \alpha(a)\), then \(\tp(b/C\alpha(a))\vdash
\tp(b/Ca)\).
\end{defn}

A type \(p\) over \(C\) is said to be stably dominated if it stably
dominated via some \(C\)-definable map \(\alpha : p\to \St_C\). For
\(a\models p\), let \(\theta_C(a)\) enumerate \(\St_C(a)\), then \(p\)
is stably dominated if and only if it is stably dominated via
\(\theta_C\).

Let us now recall some of the result from \cite{HasHruMac-Book}
regarding stable domination.

\begin{prop}[{\cite[Corollary\ 3.31.(iii) and Proposition\ 3.13]{HasHruMac-Book}}]\label{P:stdomdef}
For all \(a\) and \(C\),
\begin{enumerate}
\item \(\tp(a/C)\) is stably dominated if and only if
  \(\tp(a/\acl(C))\) is.
\item If \(C = \acl(C)\) and \(\tp(a/C)\) is stably dominated via
  \(f\), then \(\tp(a/C)\) has a unique \(C\)-definable extension
  \(p\). Moreover, for all \(B\supseteq C\), \(a\models p|B\) if and
  only if \(\St_C(B)\ind_C f(a)\).
\end{enumerate}
\end{prop}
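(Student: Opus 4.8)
The plan is to prove the substantive statement (2) first, and then deduce (1) by comparing the domination data over $C$ and over $\acl(C)$. Throughout I use the fact, stated just after the definition, that $\tp(a/C)$ is stably dominated if and only if it is stably dominated via the map $\theta_C$ enumerating $\St_C(a)$.

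For (2), set $q := \tp(f(a)/C)$. Since $f(a)\in\St_C$ and $\St_C$ is stable and stably embedded, $q$ is a type in a stable structure over the algebraically closed base $C$, hence stationary: it has a unique global nonforking extension $\overline{q}$, and $\overline{q}$ is $C$-definable. I would build the global extension $p$ by inverting $f$ through the domination. Concretely, for a tuple $b$ I declare $\tp(a'b/C)$ (for $a'\models p$) to be the unique type extending $\tp(a/C)$ and $\tp(b/C)$ for which $f(a')\models\overline{q}\mid Cb$, so that $\St_C(b)\ind_C f(a')$, and such that $\tp(b/Cf(a'))$ dominates $\tp(b/Ca')$. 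The independence $\St_C(b)\ind_C f(a')$ makes the domination clause applicable, and it pins down $\tp(b/Ca')$, hence $\tp(a'/Cb)$; letting $b$ range over $\Uu$ yields a global type $p$ extending $\tp(a/C)$.

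The crux is the definability of $p$, which rests on a compactness argument showing that the domination is itself definable: for each formula $\phi(x,y)$ there should be a formula $\theta_\phi(u,y)$ over $C$, with $u$ in the stable sort, such that $\models\phi(a',b)\fequiv\theta_\phi(f(a'),b)$ whenever $f(a')\models q$ and $\St_C(b)\ind_C f(a')$. Granting this, one sets $\defsc{p}{x}\phi(x,y):=\defsc{\overline{q}}{u}\theta_\phi(u,y)$, which is over $C$ because $\overline{q}$ is. I expect this compactness step — extracting a single formula $\theta_\phi$ uniformly from the family of implications $\tp(b/Cf(a))\vdash\tp(b/Ca)$ — to be the main technical obstacle. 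Uniqueness is then easy: any $C$-definable extension $p'$ of $\tp(a/C)$ pushes forward to a $C$-definable, hence (in the stable part, over $C=\acl(C)$) nonforking, extension of $q$, so $f_\star p'=\overline{q}$, and domination forces $p'=p$. The ``moreover'' clause is read off the construction: $a\models p\mid B$ forces $f(a)\models\overline{q}\mid B$, giving $\St_C(B)\ind_C f(a)$ by symmetry; conversely $\St_C(B)\ind_C f(a)$ gives $f(a)\models\overline{q}\mid B$, and then domination places $\tp(a/B)$ at $p\mid B$.

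For (1), first note that the definition of $\ind$ already passes through $\acl$, so $\ind_C$ and $\ind_{\acl(C)}$ coincide, and that $\St_C\subseteq\St_{\acl(C)}$. For the forward direction I would reuse the witness $f$ over $\acl(C)$: given $b$ with $\St_{\acl(C)}(b)\ind_C f(a)$, apply $C$-domination to the enlarged tuple $be$, where $e$ enumerates $\acl(C)$; since $\St_C(be)\subseteq\St_{\acl(C)}(b)$ the hypothesis is preserved, and an automorphism argument fixing $e$ upgrades $\tp(b/Cf(a))\vdash\tp(b/Ca)$ to $\tp(b/\acl(C)f(a))\vdash\tp(b/\acl(C)a)$. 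For the backward direction I would work with the $\theta$-characterization: the point is that $\St_C(a)$ and $\St_{\acl(C)}(a)$ are interalgebraic over $\acl(C)$ — a consequence of stable embeddedness of the stable part — so that the independence hypotheses and the dominating conclusions match up over $C$ and over $\acl(C)$, modulo the same orbit-counting manipulation of the $\acl(C)$-parameters used in the forward direction. I expect this descent from $\acl(C)$ to $C$, and in particular the interalgebraicity of the two stable parts, to be the delicate point of (1).
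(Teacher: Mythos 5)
A preliminary remark: the paper does not prove this proposition at all — it is quoted from the literature (Corollary~3.31(iii) and Proposition~3.13 of the Haskell--Hrushovski--Macpherson book), so there is no in-paper argument to compare yours with; what you are attempting is a reconstruction of the cited proofs. Your skeleton does follow that standard route, and parts of it are sound: the stationarity of \(q=\tp(f(a)/C)\) over \(C=\acl(C)\) and the \(C\)-definability of its global nonforking extension \(\overline{q}\); the uniqueness argument via \(f_\star p'=\overline{q}\); and the left-to-right half of (1), where your enlarged-tuple trick (applying \(C\)-domination to \(be\), with \(e\) an enumeration of \(\acl(C)\)) is complete and correct, precisely because the automorphism it produces fixes \(e\) pointwise.

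However, the two steps you yourself defer are genuine gaps, and they are exactly where the content of the cited results lies. (a) In (2), definability of \(p\): extracting \(\theta_\phi\) ``by compactness'' does not work as stated, because the hypothesis \(\St_C(b)\ind_C f(a')\) is not a definable condition on \((a',b)\); it is only type-definable (for instance, by stationarity it is equivalent to the small conjunction, over all \(C\)-definable \(g\) into \(\St_C\) and all formulas \(\delta(u,v)\) over \(C\), of \(\delta(f(a'),g(b))\fequiv\defsc{\overline{q}}{u}\delta(u,g(b))\)), so compactness cannot be run over the family of such \(b\) directly. A correct completion goes, e.g., as follows: \(\{b:\phi(x,b)\in p\}\) and \(\{b:\neg\phi(x,b)\in p\}\) are projections of type-definable sets, hence type-definable in \(\Uu\); they are complementary by your existence claim together with the completeness claim (the latter being the automorphism-juggling you assert but do not carry out — note it also needs stable embeddedness of \(\St_C\) to pass from \(\tp(f(a')/\St_C(b))\) to \(\tp(f(a')/Cb)\)); hence both are definable by compactness, and \(C\)-definable by invariance. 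Without some such step you have only constructed an \(\mathrm{Aut}(\Uu/C)\)-invariant extension, not a definable one. (b) In (1), the right-to-left direction: the interalgebraicity of \(\St_C(a)\) and \(\St_{\acl(C)}(a)\) over \(\acl(C)\) is true (via codes of finite sets) and it does transfer the independence hypothesis, but it does not descend the conclusion. Given \(b'\equiv_{C\St_C(a)}b\), the automorphism taking \(b\) to \(b'\) fixes \(C\) and \(\St_C(a)\) pointwise but need only permute \(\acl(C)\) and \(\St_{\acl(C)}(a)\) setwise, so the \(\acl(C)\)-domination hypothesis cannot be applied to it; the ``same orbit-counting manipulation'' as in the forward direction is unavailable, since there the extra parameters were fixed pointwise by construction. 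Controlling these finite orbits is the actual content of Corollary~3.31(iii) of the reference, and it is missing here.
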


\begin{prop}\label{P:stdom}
Assume \(\tp(a/C)\) is stably dominated.
\begin{enumerate}
\item\label{sym} If \(q\) is a global \(\acl(C)\)-definable type and \(b\models
  q|\acl(C)\), then \(a \ind_C b\) implies \(b\ind_C a\). In particular, if
  \(\tp(b/C)\) is stably dominated, \(a \ind_C b\) if and only if
  \(b\ind_C a\).
\item\label{rtrans} \(a\ind_C b d\) if and only if \(a\ind_C b\) and
  \(a\ind_{Cb}d\).
\item\label{ltrans} If \(\tp(b/Ca)\) is stably dominated, then so is
  \(\tp(ab/C)\).
\item\label{aclstdom} If \(b\in\acl(Ca)\), then \(\tp(b/C)\) is stably
  dominated.
\end{enumerate}
\end{prop}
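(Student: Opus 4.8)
The plan is to fix the base once and for all: since $\ind$ is computed over $\acl(C)$ and, by Proposition~\ref{P:stdomdef}.(1), stable domination is insensitive to passing to $\acl(C)$, I may assume throughout that $C=\acl(C)$. Write $p$ for the unique global $C$-definable extension of $\tp(a/C)$ furnished by Proposition~\ref{P:stdomdef}.(2), dominated via $f=\theta_C$, and set $s_a:=f(a)=\St_C(a)$. The master tool is the characterization $a\models p|B\iff\St_C(B)\ind_C s_a$, together with the remark after Notation~\ref{N:ind} that on the stable, stably embedded structure $\St_C$ the relation $\ind$ is ordinary forking independence, hence symmetric, transitive and stationary over $C=\acl(C)$; I also use freely that the stable-part functor is well behaved under algebraic closure, so that $a\ind_C b$ is equivalent to $\St_C(b)\ind_C s_a$. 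For (\ref{sym}) I would argue as follows. From $a\ind_C b$ I get $\St_C(b)\ind_C s_a$, hence $s_a\ind_C\St_C(b)$ by symmetry in $\St_C$. Realize $b^*\models q|\acl(Ca)$; since the pushforward of the $C$-invariant type $q$ to the stable part is $C$-invariant, $\tp(\St_C(b^*)/\acl(Ca))$ is nonforking over $C$, so $\St_C(b^*)\ind_C s_a$ and therefore $a\models p|\acl(Cb^*)$. Consequently both $(b,a)$ and $(b^*,a)$ realize the single $C$-definable type $q\otimes p$ over $C$, because $b,b^*\models q|C$ while $a\models p|\acl(Cb)$ and $a\models p|\acl(Cb^*)$. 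Hence $\tp(ab/C)=\tp(ab^*/C)$, so some $\sigma\in\mathrm{Aut}(\Uu/C)$ fixes $a$ and sends $b^*$ to $b$; as $q$ is $C$-invariant and $\sigma$ fixes $\acl(Ca)$ setwise, $b=\sigma(b^*)\models q|\acl(Ca)$, i.e. $b\ind_C a$. The ``in particular'' clause then follows by symmetry of roles, taking for $q$ the unique $C$-definable extension of $\tp(b/C)$.

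For (\ref{rtrans}) I would push everything into the stable part. The forward implication $a\ind_C bd\Rightarrow a\ind_C b$ is monotonicity of forking in $\St_C$, since $\St_C(b)\subseteq\St_C(bd)$. Given $a\ind_C b$, the type $\tp(a/\acl(Cb))$ is a nonforking extension of the stably dominated $\tp(a/C)$, hence itself stably dominated via $f$, so the characterization applies over the base $\acl(Cb)$ and identifies $a\ind_{Cb}d$ with an independence statement for $s_a$ inside $\St_{\acl(Cb)}$. The claimed equivalence then becomes exactly the transitivity of forking for $s_a$ against $\St_C(b)$ and $\St_C(bd)$, once one matches $\St_{\acl(Cb)}$ with the localization of $\St_C$ at $\St_C(b)$. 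This base-change identification of the stable-part functor is where the real work lies, and I expect it to be the main obstacle.

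For (\ref{ltrans}), let $g$ dominate the stably dominated $\tp(b/\acl(Ca))$ (reducing to $\acl(Ca)$ again by Proposition~\ref{P:stdomdef}.(1)). Lemma~\ref{L:concat} then produces the global $C$-definable extension $r$ of $\tp(ab/C)$ obtained by concatenating $p$ with the $\acl(Ca)$-definable family dominating $b$. I claim $\tp(ab/C)$ is stably dominated via $ab\mapsto\St_C(ab)$: given $d$ with $\St_C(d)\ind_C\St_C(ab)$, monotonicity gives $\St_C(d)\ind_C s_a$, so domination of $a$ yields $\tp(d/Cs_a)\vdash\tp(d/Ca)$; transferring the independence to the base $\acl(Ca)$ via (\ref{rtrans}) and the same base-change identification supplies the hypothesis needed to apply domination of $b$, whence $\tp(d/\acl(Ca)\,\St_{\acl(Ca)}(b))\vdash\tp(d/Cab)$. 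Chaining the two implications, and checking that the stable parameters $s_a$ and $\St_{\acl(Ca)}(b)$ are accounted for by $\St_C(ab)$ together with $a$, gives $\tp(d/C\St_C(ab))\vdash\tp(d/Cab)$. Gluing the two dominations across the change of base is the delicate step here.

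For (\ref{aclstdom}), the type $\tp(b/Ca)$ is algebraic, hence trivially stably dominated (its stable part is the finite set of $Ca$-conjugates of $b$), so (\ref{ltrans}) makes $\tp(ab/C)$ stably dominated. To descend to $\tp(b/C)$ I would use that $b\in\acl(Ca)$ forces $\St_C(ab)\subseteq\acl(\St_C(a))$ by the acl-behaviour above: given $d$ with $\St_C(d)\ind_C\St_C(b)$, choose $a^*$ with $\tp(a^*b/C)=\tp(ab/C)$ whose stable part is independent from $\St_C(d)$ over $C\St_C(b)$, obtain $\St_C(d)\ind_C\St_C(a^*)$ by transitivity, and apply domination of $a^*$ together with $b\in\acl(Ca^*)$ to reach $\tp(d/C\St_C(a^*))\vdash\tp(d/Cb)$. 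The point that I expect to require the most care is descending the dominating parameter from $\St_C(a^*)$ down to $\St_C(b)$, that is, absorbing the auxiliary stable part of $a^*$ using the independence arranged over $C\St_C(b)$ and Lemma~\ref{L:def2} to control the algebraic datum $b$ over $Ca^*$. In all four parts the recurring, and in my view hardest, ingredient is the interaction of the stable-part functor $B\mapsto\St_C(B)$ with $\acl$ and with change of base, which is what lets one transport the symmetry and transitivity available in $\St_C$ back to statements about $\ind$.
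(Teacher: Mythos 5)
Your part (\ref{sym}) is essentially right: identifying the witnessing type with $p$ via the uniqueness in Proposition \ref{P:stdomdef}.(2), pushing $q$ forward to $\St_C$, and comparing $(b,a)$ and $(b^*,a)$ against $q\tensor p$ all work. One slip: an $\acl(C)$-definable type need not be $\mathrm{Aut}(\Uu/C)$-invariant, so you cannot conclude $b\models q|\acl(Ca)$; but $b\models\sigma(q)|\acl(Ca)$ with $\sigma(q)$ still $\acl(C)$-definable, and that is all $b\ind_C a$ requires. Part (\ref{rtrans}), however, you leave unfinished exactly where you should not: the backward direction does not need any ``matching of $\St_{\acl(Cb)}$ with a localization of $\St_C$''. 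If $a\ind_C b$, then $\tp(a/\acl(Cb))$ is stably dominated by part (1) of Proposition \ref{P:base change}, hence by Proposition \ref{P:stdomdef}.(2) it has a \emph{unique} $\acl(Cb)$-definable extension, which must be $p$ itself (as $p$ is $\acl(C)$-definable and $a\models p|\acl(Cb)$). So any $\acl(Cb)$-definable type witnessing $a\ind_{Cb}d$ equals $p$, giving $a\models p|\acl(Cbd)$, which by the characterization in Proposition \ref{P:stdomdef}.(2) over $C$ is precisely $a\ind_C bd$. Your detour through $\St_{\acl(Cb)}$ creates the obstacle you then fail to remove.

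The genuine gaps are in (\ref{ltrans}) and (\ref{aclstdom}), and they sit exactly at the steps you flag as ``delicate'' and do not carry out. In (\ref{ltrans}), to apply stable domination of $\tp(b/\acl(Ca))$ to the tuple $d$ you need both $\St_{\acl(Ca)}(d)\ind_{\acl(Ca)}\St_{\acl(Ca)}(b)$ and that $\tp(d/C\St_C(ab))$ controls $\tp(d/\acl(Ca)\St_{\acl(Ca)}(b))$; your hypothesis only gives $\St_C(d)\ind_C\St_C(ab)$. The passage is not bookkeeping: $\St_{\acl(Ca)}$ contains stable sets that are not $C$-definable, and $\St_{\acl(Ca)}(d)$ can be strictly larger than anything generated by $\St_C(d)$ and $\acl(Ca)$, so neither monotonicity nor transitivity inside $\St_C$ yields the needed independence, and the entailment over the larger base does not follow from the one over $C\St_C(ab)$. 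This transfer is exactly the content of \cite[Proposition 6.11]{HasHruMac-Book} (which the paper cites rather than reproves), and its proof rests on the strong-germ machinery of Proposition \ref{P:stgerms}, not on formal manipulation of $\ind$. Likewise in (\ref{aclstdom}): after producing $a^*$ with $\St_C(a^*)\ind_{C\St_C(b)}\St_C(d)$ and obtaining $\tp(d/C\St_C(a^*b))\vdash\tp(d/Ca^*b)$, the descent to $\tp(d/C\St_C(b))\vdash\tp(d/Cb)$ is still missing: to show that two realizations $d\equiv_{C\St_C(b)}d'$ agree over $Cb$, you must choose a single $a^*$ independent from both and then match the types of $d$ and $d'$ over the larger set $C\St_C(a^*b)$, which is not something your independence choices provide; this absorption is \cite[Corollary 6.12]{HasHruMac-Book}. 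So (1) stands, (2) is repairable by stationarity, but (3) and (4) --- precisely the parts the paper quotes from the literature --- remain unproved in your write-up.
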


\begin{proof}
(1) and (2) are easy to check. (3) is \cite[Proposition\ 6.11]{HasHruMac-Book} and (4) is \cite[Corollary\ 6.12]{HasHruMac-Book}.
\end{proof}

Let \(p\) be a global \(C\)-invariant type. We say that \(p\) is stably dominated over \(C\) is \(p|C\) is stably dominated.

\begin{prop}[{\cite[Proposition\ 4.1 and
        Theorem\ 4.9]{HasHruMac-Book}}]\label{P:base change} Let
  \(B\supseteq C\) and \(p\) be a global \(C\)-invariant type.
\begin{enumerate}
\item Let \(f\) be a pro-\(C\)-definable function. If \(p\) is stably
  dominated over \(C\) via \(f\), then it is also stably dominated
  over \(B\) via \(f\).
\item\label{descent} If \(p\) is stably dominated over \(B\) and \(\tp(B/C)\) has a global \(C\)-invariant extension, then \(p\) is stably dominated over \(C\).
\end{enumerate}
\end{prop}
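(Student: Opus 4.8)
The plan is to handle the two parts separately, reducing throughout to $C=\acl(C)$ and $B=\acl(B)$ via Proposition~\ref{P:stdomdef}.(1), and exploiting the characterisation of Proposition~\ref{P:stdomdef}.(2): over such a base, $a\models p|B$ holds precisely when $\St_C(B)\ind_C f(a)$, and stable domination of $\tp(a/C)$ may be tested against the canonical map $\theta_C$ enumerating $\St_C(a)$. The only external inputs are symmetry and transitivity of non-forking in the stable structure $\St_C$ and the fact that $\St_C$ is stably embedded.

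For part (1), fix $a\models p|B$. As $f$ takes values in $\St_C\subseteq\St_B$ it is a legitimate pro-$B$-definable map, so I must check that every tuple $d$ with $\St_B(d)\ind_B f(a)$ satisfies $\tp(d/Bf(a))\vdash\tp(d/Ba)$. The crux is to promote the hypothesis to an independence over $C$. Since $C\subseteq B$ and $\St_C\subseteq\St_B$ we have $\St_C(Bd)\subseteq\St_B(d)$, whence $f(a)\ind_B\St_C(Bd)$; because $f(a)$ and $\St_C(Bd)$ both lie in the stably embedded set $\St_C$, this forking is computed over $C\cup\St_C(B)$, giving $f(a)\ind_{C\St_C(B)}\St_C(Bd)$. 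Combined with $f(a)\ind_C\St_C(B)$, which is exactly the content of $a\models p|B$ by Proposition~\ref{P:stdomdef}.(2), transitivity and symmetry of non-forking in $\St_C$ yield $\St_C(Bd)\ind_C f(a)$. Applying stable domination of $p|C$ via $f$ to the tuple $Bd$ gives $\tp(Bd/Cf(a))\vdash\tp(Bd/Ca)$; since any automorphism fixing $Bf(a)\supseteq Cf(a)$ fixes $B$ pointwise, this restricts to $\tp(d/Bf(a))\vdash\tp(d/Ba)$, as required.

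For part (2), I would test stable domination of $\tp(a/C)$ against $\theta_C$, aiming to show that any $d$ with $\St_C(d)\ind_C\theta_C(a)$ satisfies $\tp(d/C\theta_C(a))\vdash\tp(d/Ca)$. Let $q$ be a global $C$-invariant extension of $\tp(B/C)$. The idea is to use $q$ to produce a copy $B'\equiv_C B$ that is suitably independent from $a$ and from the test tuple $d$ over $C$, while still arranging $a\models p|B'$. Since $p$ is $C$-invariant, an automorphism over $C$ carrying $B$ to $B'$ transports the stable domination of $p$ over $B$ to stable domination of $p$ over $B'$, witnessed by some $B'$-definable $g$. One then runs the $B'$-level domination to obtain $\tp(d/B'g(a))\vdash\tp(d/B'a)$, and finally must descend this conclusion back to the base $C$, replacing $g$ by $\theta_C$.

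The main obstacle is exactly this last descent. Concretely, one must show that the extra stable content of $\St_{B'}(a)$ beyond $\theta_C(a)=\St_C(a)$ is generated by $\St_C(B')$-internal data independent from $a$ over $C$, so that it creates no new forking and the domination over $B'$ collapses to domination over $C$ along $\theta_C$. This is precisely where the $C$-invariance of $\tp(B/C)$ is indispensable: it is what forces the copy $B'$ to be generic enough over $a$ that its stable part encodes no information about $a$, and it is also what permits the simultaneous positioning of $B'$ relative to the test tuple $d$. The failure of this genericity for an arbitrary base is what necessitates hypothesis (E) and underlies the open question on descent raised after the definition of metastability; the remaining ingredients (symmetry, transitivity, and the reduction of forking of $\St_C$-tuples to parameters in $C\cup\St_C(B)$ by stable embeddedness) are routine.
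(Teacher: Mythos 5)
First, a point of comparison: the paper offers no proof of this proposition at all --- it is imported from \cite{HasHruMac-Book} (Proposition 4.1 for part (1), Theorem 4.9 for part (2)) --- so your proposal must stand as a self-contained argument. Your part (1) does stand: after the reduction to $C=\acl(C)$ and $B=\acl(B)$, translating $a\models p|B$ into $\St_C(B)\ind_C f(a)$ by Proposition~\ref{P:stdomdef}.(2), reducing forking of $\St_C$-tuples over $B$ to forking over $C\cup\St_C(B)$ by stable embeddedness, concluding $\St_C(Bd)\ind_C f(a)$ by transitivity and symmetry inside the stable part, and then applying domination over $C$ to the whole tuple $Bd$ and restricting the resulting implication of types, is a correct and essentially standard argument.

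Part (2), however, has a genuine gap, and you say so yourself: your final paragraph states what ``one must show'' --- that the stable content of $\St_{B'}(a)$ beyond $\St_C(a)$ creates no new forking, so that domination over $B'$ collapses to domination over $C$ along $\theta_C$ --- and then stops. That step is not a deferrable verification; it \emph{is} the descent theorem, and in \cite{HasHruMac-Book} its proof (Theorem 4.9) is a substantial argument built on Morley sequences of the $C$-invariant extension of $\tp(B/C)$, none of which appears in your sketch. Moreover, a step of your plan presented as unproblematic is already unjustified: you want $B'\equiv_C B$ with $B'$ suitably independent from $a$ and $d$ over $C$ \emph{and} $a\models p|B'$. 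Inferring ``$a$ realizes $p$ over $B'$'' from ``$B'$ realizes the invariant extension $q$ over $Ca$'' is a symmetry-of-realization statement that fails for general invariant types (take $p$ and $q$ both equal to the type at $+\infty$ in a dense linear order: $B'\models q|Ca$ forces $B'>a$, hence $a\not\models p|B'$); the symmetry actually available here, Proposition~\ref{P:stdom}.(\ref{sym}), presupposes that $\tp(a/C)$ is stably dominated, which is precisely the conclusion being sought, while stable domination of $p$ over $B$ yields symmetry only over $B$, not over $C$. So the proposal proves half of the proposition; for the other half it correctly locates the obstruction but does not bridge it.
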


\begin{prop}[Strong germs,
    {\cite[Theorem\ 6.3]{HasHruMac-Book}}]\label{P:stgerms}
Let \(C = \acl(C)\), \(p\) be a global \(C\)-invariant type stably dominated over \(C\) and \(f\) be a definable function defined at \(p\).
\begin{enumerate}
\item The \(p\)-germ of \(f\) is \emph{strong}; i.e. there exists a \(\germ{p}{f}C\)-definable function \(g\) with the same \(p\)-germ as \(f\).
\item If \(f(a)\in\St_{C}\) for \(a\models p\), then \(\germ{p}{f}\in\St_C\).
\end{enumerate}
\end{prop}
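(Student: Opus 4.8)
The plan is to transfer the classical statement that, in a stable theory, germs of definable functions on a stationary type are strong and their canonical parameters lie in the stable structure, from the stable part $\St_C$ to the stably dominated type $p$, using stable domination as the transfer device. Fix $a\models p$, write $f=f(x,b)$, set $e=\germ{p}{f}\in\dcl(Cb)$ for the germ, and let $d=\theta_C(a)$ enumerate $\St_C(a)$, so that $p$ is stably dominated via $\theta_C$ and, by Proposition\ \ref{P:stdomdef}.(2), $a\models p|B$ if and only if $\St_C(B)\ind_C d$. Since $p$ is $C$-definable it is $Ce$-definable as well, so we may freely work over $Ce$.

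For (1) it suffices to prove that $f(a,b)\in\dcl(Cea)$ for $a\models p|Ceb$: the $Ce$-formula isolating $f(a,b)$ from $a$ then defines a $Ce$-definable function $g$ with $g(a)=f(a,b)$, which therefore has the same $p$-germ as $f$, and $Ce=C\germ{p}{f}$ is exactly the required base. To obtain this membership I would take two copies $b_1,b_2\models\tp(b/Ce)$ and a realization $a\models p|Ceb_1b_2$. As $e\in\dcl(Ce)$ is fixed, $b_1$ and $b_2$ carry the germ $e$, so $\defsc{p}{x}\bigl(f(x,b_1)=f(x,b_2)\bigr)$ and hence $c:=f(a,b_1)=f(a,b_2)$; in particular $c\in\dcl(Cab_1)\meet\dcl(Cab_2)$. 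The substance is then to arrange $b_1\ind_{Cea}b_2$ together with $b_1\equiv_{Cea}b_2$, and to read off from the resulting stationarity that the common value $c$ lies in $\dcl(Cea)$. The independence $a\ind_{Ce}b_1b_2$ holds by genericity, and combined with $b_1\ind_{Ce}b_2$ it yields $b_1\ind_{Cea}b_2$ by the transitivity of Proposition\ \ref{P:stdom}.(\ref{rtrans}), while symmetry on the stably dominated side is supplied by Proposition\ \ref{P:stdom}.(\ref{sym}).

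For (2), assume $f(a)\in\St_C$. The key point is that such an $\St_C$-valued function depends on $a$ only through its stable part: I would show $f(a,b)\in\dcl(Cdb)$, so that $f(a,b)=F(\theta_C(a),b)$ for a $C$-definable $F$. This is where domination enters: since $a\models p|Cb$ forces $\St_C(b)\ind_C d$, Proposition\ \ref{P:stdomdef}.(2) gives $\tp(b/Cd)\vdash\tp(b/Ca)$, and two realizations of $p$ with the same $\theta_C$-value and suitably independent from $b$ must then yield the same value of $f(\cdot,b)$ in $\St_C$. Consequently the $p$-germ of $f$ equals the germ of $F(\cdot,b)$ on the definable type $(\theta_C)_\star p$, which concentrates on $\St_C$. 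This last germ is a germ of a definable function on a stationary type inside the stable, stably embedded structure $\St_C$, so its canonical parameter lies in $\St_C$ by the classical theory; since that canonical parameter is $\dcl(C)$-interdefinable with $e$, we conclude $e\in\St_C$.

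The main obstacle is the independence bookkeeping in part (1). The relation $\ind$ of Notation\ \ref{N:ind} is not a full symmetric, transitive independence relation in the ambient unstable theory, so the classical derivation of $b_1\ind_{Cea}b_2$ and of the ``intersection over the base'' conclusion $c\in\dcl(Cea)$ cannot be invoked verbatim. What makes it go through is that every independence statement used is anchored at the stably dominated realization $a$: for these Proposition\ \ref{P:stdom}.(\ref{sym}) and\ (\ref{rtrans}) restore symmetry and transitivity, and Proposition\ \ref{P:stdomdef}.(2) lets the germ computation be pushed into $\St_C$, where stationarity is unconditional. The delicate part is to ensure that the auxiliary parameters $b_1,b_2$ --- which need not be stably dominated --- enter the argument only through $a$, and to meet the invariant-extension hypothesis of Proposition\ \ref{P:stdom}.(\ref{sym}) when flipping an independence, for which property (E) is available.
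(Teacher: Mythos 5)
The paper itself gives no proof of this proposition: it is imported as a black box from \cite[Theorem~6.3]{HasHruMac-Book}. So your sketch has to stand on its own, and it does not: it breaks down at exactly the points where stable domination has to do real work. In part (1), the independence bookkeeping you describe is not available. Proposition~\ref{P:stdom}.(\ref{rtrans}) gives transitivity only when the \emph{stably dominated} type occupies the left slot of $\ind$; deriving $b_1\ind_{Cea}b_2$ from $a\ind_{Ce}b_1b_2$ and $b_1\ind_{Ce}b_2$ requires transitivity and symmetry with the arbitrary tuples $b_1,b_2$ on the left, which is precisely what fails outside the stable part. Moreover, both Notation~\ref{N:ind} and the hypothesis on $q$ in Proposition~\ref{P:stdom}.(\ref{sym}) require $\tp(b_i/\acl(Ce))$ to admit a global $\acl(Ce)$-\emph{definable} extension; property (E) supplies only \emph{invariant} extensions, and in $\ACVF$ such definable extensions genuinely need not exist (e.g.\ when $b$ involves an irrational cut in $\Gamma$), so even the initial arrangement ``$b_1\ind_{Ce}b_2$'' may be impossible. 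Finally, even granting $b_1\ind_{Cea}b_2$ and $b_1\equiv_{Cea}b_2$, there is no ``stationarity'' over $Cea$: this base is not algebraically closed and neither $a$ nor $b_i$ lies in a stable structure. Pushing forward the definable type witnessing $b_1\ind_{Cea}b_2$ yields at best $f(a,b_1)\in\acl(Cea)$, not the required $\dcl(Cea)$.

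More structurally, your argument aims at the wrong target: to get $f(a,b)\in\dcl(Cea)$ one must show $f(a,b')=f(a,b)$ for an \emph{arbitrary} conjugate $b'$ of $b$ over $Cea$, not for a second realization $b_2$ that you are free to choose. The standard repair (needed already in the purely stable case) is a third-realization argument: given $b'$, produce $b_2\equiv_{Ce}b$ with $a\models p|Cebb_2$ \emph{and} $a\models p|Ceb'b_2$ simultaneously, then apply germ equality twice. By Proposition~\ref{P:stdomdef}.(2) this amounts to controlling the \emph{joint} stable parts $\St_{Ce}(bb_2)$ and $\St_{Ce}(b'b_2)$ against $\theta_C(a)$, knowing only that $\St_{Ce}(b)$ and $\St_{Ce}(b')$ are \emph{separately} independent from it; nothing in your sketch addresses this, and it is the actual content of the theorem. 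The same conjugacy-versus-equality gap undermines part (2): domination shows that suitable pair types over $C\theta_C(a)$ are determined, i.e.\ it produces automorphisms moving $a$ while fixing $\theta_C(a)$ and $b$, which only gives $f(a,b)\equiv_{C\theta_C(a)b}f(a',b)$; your key claim $f(a,b)\in\dcl(C\theta_C(a)b)$ is an equality of values that does not follow from this and again needs the joint-stable-part analysis. This is why the present paper quotes \cite[Theorem~6.3]{HasHruMac-Book} rather than reproving it.
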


Let us now recall the definition of metastability. Let \(\Gamma\) be
an \(\emptyset\)-definable stably embedded set. We will also assume
that $\Gamma$ is orthogonal to the stable part: no infinite definable
subset of $\eq{\Gamma}$ is stable. For any \(C\) and \(a\), let
\(\Gamma_C(a)\) denote \((C\cup\eq{\Gamma})\cap\dcl(Ca)\).

\begin{defn}\label{def:meta}
The theory \(T\) is \emph{metastable} (over \(\Gamma\)) if, for any \(C\):
\begin{enumerate}
\item (Metastability bases) There exists \(D\supseteq C\) such that
  for any tuple \(a\), \(\tp(a/\Gamma_D(a))\) is stably dominated.
\item (Invariant extension property) If \(C = \acl(C)\), then, for all
  tuple \(a\), there exists a global \(C\)-invariant type \(p\) such
  that \(a\models p|C\).
\end{enumerate}
\end{defn}

A \(D\) such as in (1) is called a metastability basis.

\begin{rem}\label{R:add Gamma}
  Since \(\Gamma\) is orthogonal to the stable part, if \(\tp(a/C)\)
  is stably dominated, so is \(\tp(a/C\gamma)\) for any tuple
  \(\gamma\in\Gamma\). It follows that if \(C\) is a metastability
  basis, so is \(C\gamma\).
\end{rem}

\begin{prop}[Orthogonality to \(\Gamma\), {\cite[Corollary\ 10.8]{HasHruMac-Book}}]\label{P:orth}
Assume that the theory \(T\) is metastable over \(\Gamma\). A global
type \(C\)-invariant type \(p\) is stably dominated if and only if for
any definable map \(g:p\to \eq{\Gamma}\), the \(p\)-germ of \(g\) is
constant (equivalently, \(g_\star p\) is a realized type).
\end{prop}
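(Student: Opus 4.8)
# Proof Proposal for Proposition \ref{P:orth}

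The plan is to prove both directions of the equivalence, with the forward direction being nearly immediate and the reverse direction carrying the real content.

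For the forward implication, suppose $p$ is stably dominated over $C$, and let $g : p \to \eq{\Gamma}$ be a definable map defined at $p$. By Proposition \ref{P:stgerms}.(1), applied over $\acl(C)$, the $p$-germ of $g$ is strong, so there is a $\germ{p}{g}{\acl(C)}$-definable function realizing the same germ; moreover, since $g$ takes values in the stable part's orthogonal complement, the pushforward $g_\star p$ concentrates on $\eq{\Gamma}$. The key point is that $p$ stably dominated implies $g_\star p$ is stably dominated as well (being a pushforward of a stably dominated type), while it also concentrates on $\eq{\Gamma}$. Because $\Gamma$ is orthogonal to the stable part, no infinite subset of $\eq{\Gamma}$ is stable, so the only stably dominated types concentrating on $\eq{\Gamma}$ are the realized (algebraic) ones. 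Hence $g_\star p$ is realized, and equivalently the $p$-germ of $g$ is constant.

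For the reverse implication---the harder direction---I would assume that every definable $g : p \to \eq{\Gamma}$ has constant germ, and deduce stable domination. Working over $C = \acl(C)$, let $D \supseteq C$ be a metastability basis (using metastability of $T$), so that $\tp(a/\Gamma_D(a))$ is stably dominated for $a \models p|D$. The hypothesis that all $\Gamma$-valued germs are constant should force $\Gamma_D(a)$ to contribute nothing new over $C$: any element of $\Gamma_D(a) \setminus \dcl(C)$ would arise as $g(a)$ for some definable $g$ (over $D$) with nonconstant germ, contradicting the hypothesis after descending the germ to $C$ via strong germs. The main obstacle is managing the base change between $C$ and the larger metastability basis $D$: I expect to invoke Proposition \ref{P:base change}.(\ref{descent}) to descend stable domination from $D$ back to $C$, which requires that $\tp(D/C)$ have a global $C$-invariant extension---this is exactly where the invariant extension property built into metastability (Definition \ref{def:meta}.(2)) must be used.

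Concretely, the chain of reasoning is: constancy of all $\Gamma$-germs of $p$ gives $\Gamma_D(a) \subseteq \acl(C) = C$ for $a \models p|D$; hence $\tp(a/\Gamma_D(a)) = \tp(a/C')$ for some $C' \subseteq \dcl(D)$ algebraic over $C$, which is stably dominated by choice of metastability basis; finally, descent via Proposition \ref{P:base change} transfers stable domination down to $C$. I would expect the delicate bookkeeping to lie in verifying that the germ-constancy hypothesis genuinely captures \emph{all} of $\Gamma_D(a)$, including possibly infinite tuples of $\Gamma$-coordinates, rather than just single definable functions; this likely requires a compactness or coordinatization argument showing that $\Gamma_D(a)$ is generated by countably many definable functions $g_i(a)$, each of which must then be constant on $p$.
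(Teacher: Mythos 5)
The first thing to note is that the paper offers no proof of this proposition at all: it is imported verbatim from \cite[Corollary~10.8]{HasHruMac-Book}, so there is no internal argument to compare yours against. Judged on its own merits, your proposal reconstructs what is essentially the standard argument, and its architecture is sound in both directions. Forward: for $g$ defined over $B \supseteq C$, base change (Proposition~\ref{P:base change}.(1)) makes $p$ stably dominated over $\acl(B)$; the pushforward $g_\star p$ restricted to $\acl(B)$ is then stably dominated by Proposition~\ref{P:stdom}.(\ref{aclstdom}); and orthogonality (no infinite definable subset of $\eq{\Gamma}$ is stable) forces the stable part of any element of $\eq{\Gamma}$ over $\acl(B)$ to be algebraic, whence domination collapses $g_\star p$ to a realized type. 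Your appeal to strong germs here is unnecessary, and the claim ``stably dominated types on $\eq{\Gamma}$ are realized'' does deserve the two-line argument (domination via an algebraic stable part forces the element itself into $\dcl$ of the base), but these are cosmetic. Reverse: metastability basis, then descent via Proposition~\ref{P:base change}.(\ref{descent}) using the invariant extension property --- this is exactly the right use of the two clauses of Definition~\ref{def:meta}.

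One claim in your reverse direction is, however, false as literally stated: germ constancy does \emph{not} give $\Gamma_D(a) \subseteq \acl(C) = C$. Indeed $\Gamma_D(a) \supseteq D$, and $\eq{\Gamma} \cap \dcl(D)$ is typically far larger than $\eq{\Gamma}\cap C$ (a metastability basis carries many $\Gamma$-points of its own). What the hypothesis actually gives is: each $\gamma \in \eq{\Gamma} \cap \dcl(Da)$ equals $g(a)$ for some $D$-definable $g$; since $p$ is $D$-invariant, the realized type $g_\star p$ concentrates on a point fixed by all automorphisms over $D$, hence lying in $\dcl(D)$; therefore $\Gamma_D(a) \subseteq \dcl(D)$. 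This weaker containment is exactly what your argument needs --- it gives $\dcl(\Gamma_D(a)) = \dcl(D)$, so $\tp(a/D) = p|D$ is stably dominated, and descent finishes --- and your very next sentence (``$C' \subseteq \dcl(D)$'') already uses the correct form, so the slip does not derail the proof; but it should be fixed. Finally, your closing worry about infinite tuples of $\Gamma$-coordinates requiring a compactness or coordinatization argument is unfounded: $\Gamma_D(a)$ is treated element by element, and once it is sandwiched between $D$ and $\dcl(D)$ nothing more is needed.
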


Write $g(p)$ for the constant value of the $p$-germ. The property of
$p$ in the proposition above is referred to as {\em orthogonality of
  $p$ to $\Gamma$}.  

\subsection{Ranks and weights}\label{S:ranks}

Let us now recall our ``finite rank'' assumptions. But, first let us
recall the definition of internality.

\begin{defn}
  A definable set $X$ is $\Gamma$-{\em internal} if $X \subseteq
  \dcl(F\Gamma)$ for some finite set $F$; equivalently for any $M
  \prec M' \models T$, $X(M') \subseteq \dcl(M\Gamma(M'))$.

  The same condition with $\acl$ replacing $\dcl$ is called
  \emph{almost internality}.
\end{defn}

Thus a definable $X$ is almost $\Gamma$-internal if over some finite
set $F$ of parameters and for some finite $m$, $X$ admits a definable
$m$-to-one function $f: X \to Y/E$ for some definable $Y \subseteq
\dcl(\Gamma^n)$ and definable equivalence relation $E$. When $\Gamma$
eliminates imaginaries, $E$ does not need to be mentioned. Note that
$X$ is $\Gamma$-internal whenever we can choose $m = 1$.

A definable set \(D\) is called \(o\)-\emph{minimal} if there exists a
definable linear ordering on \(D\) such that every definable subset of
\(D\) (with parameters) is a finite union of intervals and
points. There is a natural notion of dimension for definable subsets
of $D^m$, such that $D$ is dimension one (see \cite{vdD-book}).

\begin{defn}\label{D:AFD}
\(T\) has \AFD if:
\begin{enumerate}
\item \(\Gamma\) is \(o\)-minimal.
\item Morley rank (in the stable part, denoted \(\RM\)) is uniformly
  finite and definable in families: if \((D_t)_t\) is a definable
  family of definable sets, then, when \(D_t\) is stable, the Morley
  rank of \(D_t\) is finite and bounded uniformly in \(t\) and for all
  \(m\in\Zz_{\geq 0}\), \(\{t : \RM(D_t) = m\}\) is definable.
\item For all definable \(D\), the Morley rank of \(f(D)\), where \(f\)
  ranges over all functions definable with parameters whose range is
  stable, takes a maximum value \(\dimst(D)\in\Zz_{\geq 0}\).
\item For all definable \(D\), the \(o\)-minimal dimension of
  \(f(D)\), where \(f\) ranges over all functions definable with
  parameters whose range is \(\Gamma\)-internal, takes a maximum value
  \(\dimo(D)\in\Zz_{\geq 0}\).
\end{enumerate}
\end{defn}

Note that if \(\Gamma\) is an \(o\)-minimal group, it
eliminates imaginaries. So, in (4), we could equally well ask that
$f(D) \subseteq \Gamma^n$ for some $n$.

\begin{defn}
  If \(X = \projlim_i X_i\) is pro-definable, we define \(\dimst(X) =
  \max\{\dimst(X_i)\mid i\}\in\Zz_{\geq 0}\cup\{\infty\}\) and \(\dimo(X) = \max\{\dimo(X_i)\mid
  i\}\in\Zz_{\geq 0}\cup\{\infty\}\).

  We define $\dimst(a/B) := \min \{\dimst(D): a \in D\text{ and $D$ is
    $B$-definable}\}$ and $\dimo(a/B) := \min \{\dimo(D): a \in
  D\text{ and $D$ is $B$-definable}\}$.
\end{defn}

Note that if \(D\) is a stable definable set then \(\dimst(D) =
\RM(D)\) and, similarly, for all tuples \(a\in \St_C\), \(\dimst(a/C)
= \RM(a/C)\). Note that, in general, we may have $\dimst(d/B) >
\dimst(\St_B(d)/B)$. Similar statements hold for \(\dimo\).

A set $A$ is \(\acl\)-finitely generated over $B \subseteq A$ if \(A
\subseteq \acl(Ba)\) for some finite tuple $a$ from $A$. If \(c
\in\acl(Ba)\) is a finite tuple, then \(\dimst(ca/B) = \dimst(a/B)\).
In particular, it makes sense to speak of the stable dimension
(respectively the \(o\)-minimal dimension) over \(B\) of any set finitely
\(\acl\)-generated over \(B\).

\begin{lem}\label{L:afg}\AFD
Let \(C\) be a set of parameters and $a$ be a finite tuple. Then
$\Gamma_C(a)$ and $\St_C(a)$ are \(\acl\)-finitely generated over
$C$.
\end{lem}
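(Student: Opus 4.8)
The plan is to prove both assertions by one device. Suppose $S\subseteq\dcl(Ca)$ is a set equipped with a rank $\delta(\cdot/C)$ taking values in $\Zz_{\geq 0}$, and that $\delta$ has the two features: (i) it is \emph{bounded} on finite subtuples of $S$, say $\delta(e/C)\leq N$ for every finite tuple $e$ from $S$; and (ii) for a finite tuple $e$ from $S$ and $s\in S$, one has $\delta(se/C)\geq\delta(e/C)+\delta(s/Ce)$ with $\delta(s/Ce)\geq 1$ whenever $s\notin\acl(Ce)$. Granting (i)--(ii), I would pick a finite tuple $e$ from $S$ maximizing $\delta(e/C)$, possible since the values are bounded natural numbers. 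Then for any $s\in S$, if $s\notin\acl(Ce)$ we would get $\delta(se/C)\geq\delta(e/C)+1>\delta(e/C)$, contradicting maximality; hence $S\subseteq\acl(Ce)$, so $S$ is $\acl$-finitely generated over $C$ by the tuple $e\in S$.

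For $\St_C(a)=\dcl(Ca)\meet\St_C$ I would take for $\delta$ the Lascar $U$-rank computed in the stable structure $\St_C$. Boundedness comes from $\dimst$: any finite tuple $e$ from $\St_C(a)$ is $f(a)$ for a single $C$-definable function $f$ whose range (a product of stable sets) is stable, so, choosing a $C$-definable $D\ni a$ with $\dimst(D)=\dimst(a/C)$, we get $U(e/C)\leq\RM(e/C)\leq\RM(f(D))\leq\dimst(a/C)$, finite by \AFD. Feature (ii) is the Lascar inequality $U(e/C)+U(s/Ce)\leq U(se/C)$, valid in any stable theory, together with $U(s/Ce)\geq 1$ for $s\notin\acl(Ce)$.

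For $\Gamma_C(a)=(C\cup\eq{\Gamma})\meet\dcl(Ca)$ I would take $\delta=\dimo$. Since $\Gamma$ is an $o$-minimal group it eliminates imaginaries, so $\eq{\Gamma}$ is $\Gamma$-internal and any finite tuple $\gamma$ from $\Gamma_C(a)$ is $g(a)$ for a $C$-definable $g$ with $\Gamma$-internal range; hence $\dimo(\gamma/C)\leq\dimo(a/C)$, finite by \AFD, giving (i). Feature (ii) is additivity of $o$-minimal dimension, $\dim(\gamma\gamma'/C)=\dim(\gamma'/C\gamma)+\dim(\gamma/C)$, combined with the fact that $\dim(\gamma'/C\gamma)=0$ exactly when $\gamma'\in\acl(C\gamma)$.

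The routine parts are the verifications that a finite subtuple is realized as $f(a)$ for a single function landing in a stable (resp.\ $\Gamma$-internal) set, so that the $\dimst$ (resp.\ $\dimo$) maximum bounds its rank. The only genuinely delicate point is the lower additivity (ii): I deliberately use the Lascar $U$-rank rather than Morley rank on the stable side, since the lower Lascar inequality $U(se/C)\geq U(s/Ce)+U(e/C)$ holds unconditionally in stable theories (whereas the analogous lower bound for Morley rank can fail), while the finiteness in \AFD serves only to bound $U$ via $U\leq\RM\leq\dimst$. On the $\Gamma$-side the additivity is unconditional from $o$-minimality.
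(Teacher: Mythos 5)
Your proof is correct and takes essentially the same route as the paper's: bound the rank of any finite tuple from $\St_C(a)$ (resp.\ $\Gamma_C(a)$) by $\dimst(a/C)$ (resp.\ $\dimo(a/C)$) using the maximum guaranteed by \AFD, then conclude finite $\acl$-generation by a rank-increase argument. Your explicit appeal to Lascar $U$-rank on the stable side (rather than Morley rank, whose lower additivity can fail) is a careful justification of the step the paper's one-line proof leaves implicit.
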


\begin{proof}
  Any tuple $d \in \St_C(a)$ can be written $d=h(a)$ for some
  $C$-definable function \(h\). By the dimension bound in \AFD,
  $\dimst(d/C) \leq \dimst(a/C)$. It follows that if $d_i \in
  \St_C(a)$ then for sufficiently large \(n\), $d_n \in
  \acl(Cd_1,\ldots,d_{n-1})$. So $\St_C(a)$ is finitely
  \(\acl\)-generated. The proof for \(\Gamma_C(a)\) is the same.
\end{proof}

\begin{lem}\label{L:nfcp}
  \AFD Let $f: P \to Q $ be a definable map between definable sets $P$
  and $Q$.  Let $P_a = f^{-1}(a)$. Then there exists $m$ such that
  if $P_a$ is finite, then $|P_a| \leq m$.
\end{lem}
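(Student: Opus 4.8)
The plan is to prove the bound structurally: reduce to algebraic fibres, coordinatize each finite fibre by its $\Gamma$-part and its stable part via metastability, and bound each coordinate uniformly using the two finiteness inputs packaged in \AFD.

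First I would fix a finite set $C$ over which $f$ is defined and observe that whenever $P_a$ is finite, every $p\in P_a$ lies in $\acl(Ca)$, being one of finitely many solutions of $f(x)=a$; hence $\St_{Ca}(p)$ and $\Gamma_{Ca}(p)$ are contained in $\acl(Ca)$. The two uniform-finiteness tools I will use are: (i) o-minimality of $\Gamma$, which yields for any definable family of finite subsets of $\eq{\Gamma}$ a uniform bound on their cardinalities (uniform finiteness in o-minimal structures, applicable because $\Gamma$ is stably embedded, o-minimal, and eliminates imaginaries); and (ii) the uniform finiteness and definability of Morley rank in the stable part from \AFD, which forces definable families of finite subsets of a stable set to have uniformly bounded cardinalities (the failure of the finite cover property in $\St$).

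Next, fix $a$ with $P_a$ finite. Since $P_a$ is finite I can choose a single metastability basis $D\supseteq Ca$, and enlarging $D$ by the finitely many tuples $\Gamma_{Ca}(p)$ for $p\in P_a$ keeps it a metastability basis by Remark \ref{R:add Gamma}. For each $p\in P_a$ set $\gamma_p:=\Gamma_D(p)$ and $s_p:=\St_D(p)$; by metastability $\tp(p/\gamma_p)$ is stably dominated, and $p\in\acl(D)$ forces $\gamma_p,s_p\in\acl(D)$. Since the image of $p\mapsto(s_p,\gamma_p)$ is contained in $\{s_p:p\}\times\{\gamma_p:p\}$, we get
\[|P_a|\ \le\ \bigl|\{\gamma_p:p\in P_a\}\bigr|\cdot\bigl|\{s_p:p\in P_a\}\bigr|\cdot \max_{(s,\gamma)}\bigl|\{p\in P_a: s_p=s,\ \gamma_p=\gamma\}\bigr|.\]
The first factor is bounded using (i) and the second using (ii); both families are definable over $a$, as images of $P_a$ under the $\Gamma$-part and stable-part maps, which is exactly what makes the o-minimal and stable bounds applicable.

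It remains to bound the last factor, the fibres of $p\mapsto(s_p,\gamma_p)$, and here lies the crux. On such a fibre all elements share the same stable part $s$ and the same $\Gamma$-part $\gamma$ over $D$; by stable domination over $\acl(D)$ (Proposition \ref{P:stdomdef}) they realize a single type over $\acl(D)\cup s$, and being algebraic over $D$ this type has finitely many realizations. The hard part is to make this fibre count \emph{uniform} in $a$: the degree of the relevant algebraic type must be bounded independently of $a$, which I expect to extract from the definability of Morley rank in families in \AFD together with a compactness argument absorbing the dependence of $D$ on $a$. I anticipate this uniformity step---rather than the two elementary finiteness inputs---to be the main obstacle, precisely because the metastability basis $D$ and the domination datum both vary with the parameter $a$.
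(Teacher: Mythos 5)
Your proposal has a genuine gap --- in fact two: the one you acknowledge (uniformity in $a$ of the fibre count of $p\mapsto(s_p,\gamma_p)$) and an unacknowledged one of exactly the same nature affecting your first two factors. The sets $\{\gamma_p : p\in P_a\}$ and $\{s_p : p\in P_a\}$ are \emph{not} definable families over $a$: they are defined relative to the metastability basis $D$, and $D$ is neither a definable object nor does it vary definably with $a$ (in $\ACVF$, metastability bases are maximally complete submodels --- large, and produced by transfinite constructions). Uniform finiteness in $o$-minimal structures and the failure of the finite cover property in the stable part both apply only to \emph{definable} families of finite sets, so neither tool attaches to your decomposition as stated. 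Since all three factors in your product lack a uniform bound, none of the uniformity --- which is the entire content of the lemma --- is actually established; the proposal is a decomposition of the problem, not a proof.

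The missing idea is to run compactness in the contrapositive direction, which is what the paper does: it suffices to show that whenever $P_a$ is \emph{infinite}, there is a $\emptyset$-definable $Q'\ni a$ such that $P_b$ is infinite for all $b\in Q'$. For this one needs metastability only at the single parameter $a$: take a metastability basis $M$ containing $a$ and $c\in P_a\sminus M$; either $\Gamma_M(c)\neq\Gamma(M)$, which gives $\dimo(P_a)>0$, or $\tp(c/M)$ is stably dominated with stable part outside $M$, which gives $\dimst(P_a)>0$. The conclusion ($\dimst(P_a)>0$ or $\dimo(P_a)>0$) no longer mentions $M$, and positive dimension is witnessed by definable data: a definable map carrying $P_a$ onto a member of a definable family of stable sets of fixed positive Morley rank (using definability of $\RM$ in families from \AFD), or the $o$-minimal analogue. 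That witnessing statement is a formula in the parameter $y$ which holds at $y=a$ and implies $P_y$ is infinite, so compactness finishes the proof. This inversion completely sidesteps the need to track how the metastable decomposition varies with the parameter, which is precisely the obstacle on which your approach founders.
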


\begin{proof}
  Say $f,P,Q$ are $\emptyset$-definable. By compactness, it suffices
  to show that if $P_a$ is infinite then there exists a
  $\emptyset$-definable set $Q'$ with $a \in Q'$ and $P_b$ infinite
  for all $b \in Q'$.

  \begin{claim}
    If $P_a$ is infinite then either $\dimst(P_a) >0$ or $\dimo(P_a) >
    0$.
  \end{claim}

  \begin{proof}
    Let $M$ be a metastability base, with $a \in M$. Let $c \in P_a \setminus
    M$.  If $\Gamma_{M}(c) \neq \Gamma(M)$ then $\dimo(P_a) > 0$.
    Otherwise, by metastability, $\tp(c/M)$ is stably dominated, say
    via $f$.  If $f(c) \in M$, then $\tp(b/M) \vdash \tp(b/M c)$ for
    all $b$, and taking $b=c$ it follows that $c \in M$.  Thus $f(c)
    \in \St_M \setminus M$. It follows that $\dimst(P_a)>0$.
  \end{proof}

  If $\dimst(P_a) >0$, then there exists a definable family of stable
  definable sets $D_t$ with $\dimst(D_t) = k > 0$, and a definable
  function $f(x,u)$ such that for some $b$ and $t$, $f(P_a,b) = D_t$.
  Then the formula $(\exists u)(\exists t) f(P_y,u) = D_t$ is true of
  $y=a$, and implies that $P_y$ is infinite. The case $\dimo(P_a)>0$
  is similar.
\end{proof}

\begin{defn}
\label{D:Aom}
\(T\) has \Aom if, in addition to \AFD, any \emph{countable} \(C\) is contained
in a metastability basis \(M\) which is an \emph{\(\aleph_1\)-saturated} model
and such that for any finitely \(\acl\)-generated \(G\subseteq \Gamma\) and
\(S\subseteq \St_{M\cup G}\) over \(M\), isolated types over \(M\cup S\) are
dense.
\end{defn}

Say $\tp(a/C)$ is {\em strongly stably dominated} if there exists
$\phi(x,c) \in \tp(a/\St_C(a))$ such that for any tuple $b$ with
$\St_C(b)\ind_C \St_C(a)$, $\tp(a/\St_C(a)b)$ is isolated via $\phi$. In
particular, $\tp(a/\St_C(a))$ is isolated via $\phi$. Moreover,
$\tp(a/C)$ is stably dominated, since $\tp(a/\St_C(a))$ implies $\phi$
which implies $\tp(a/ \St_C(a)b)$ for any $b$ as above.

When $\tp(a/C)$ is strongly stably dominated, the parameters $c$ of
$\phi$ may be written as $c_0h(a)$ for some $C$-definable function $h$
into $\St_C$ and tuple $c_0\in C$. In this situation we say that
$\tp(a/C)$ is strongly stably dominated via $h$. Conversely, if
$\tp(a/C)$ is stably dominated via $h$ and $\tp(a/Ch(a))$ is isolated,
then $\tp(a/C)$ is strongly stably dominated via $h$.

\begin{lem}
\label{iso2}
\Aom Let $D$ be $C_0$-definable, for some countable \(C_0\) and $h: D \to S$ a
definable map to a stable definable set of maximal possible dimension $\dimst(S)
= \dimst(D)$. Then there exists a model $M$ containing $C_0$ and $a \in D$ such
that, with $C := \Gamma_M(a)$, we have $\dimst(D) = \dimst(h(a)/C) =
\dimst(\St_{C}(a)/{C})$, and $\tp(a/C)$ is strongly stably dominated via $h$.
\end{lem}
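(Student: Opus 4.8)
The plan is to realize the maximal stable dimension inside a suitable model, and then to upgrade ordinary stable domination to \emph{strong} stable domination using the density of isolated types guaranteed by \Aom. First I would use \Aom to find a metastability basis $M$ which is a model and contains $C_0$. Since $h: D \to S$ witnesses $\dimst(S) = \dimst(D)$, I would like to choose $a \in D$ so that $h(a)$ achieves the maximal stable dimension over an appropriate base. The natural base is $C := \Gamma_M(a)$; by the definition of metastability basis, $\tp(a/C)$ is stably dominated. The point of passing from $M$ to $C = \Gamma_M(a)$ is that the $\Gamma$-part of $a$ over $M$ is absorbed into the base, which is exactly what makes $\tp(a/C)$ stably dominated while keeping $a$ generic enough that $h(a)$ and $\St_C(a)$ both have full stable dimension.

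Next I would pin down the dimension equalities. By maximality of $\dimst(D)$ and the fact that $h$ ranges into $S$ with $\dimst(S) = \dimst(D)$, a generic choice of $a$ (realizing a type over $C$ of maximal stable dimension, which exists by the finiteness in \AFD) forces $\dimst(h(a)/C) = \dimst(D)$. Because $h(a) \in \St_C(a)$, we have $\dimst(h(a)/C) \leq \dimst(\St_C(a)/C) \leq \dimst(a/C) \leq \dimst(D)$, so all these quantities coincide and equal $\dimst(D)$. This squeezing argument is the routine part; the substantive content is the chain of inequalities together with the maximality hypothesis on $h$.

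The heart of the argument is upgrading to \emph{strong} stable domination via $h$. By the remark preceding the lemma, it suffices to show that $\tp(a/C)$ is stably dominated via $h$ and that $\tp(a/C h(a))$ is isolated. Stable domination via $h$ (rather than via the full $\theta_C$) should follow from the dimension equality $\dimst(h(a)/C) = \dimst(\St_C(a)/C)$: when $h(a)$ and $\St_C(a)$ have the same stable dimension over $C$, the map $h$ already captures the stable part up to algebraicity, so domination through $\theta_C$ descends to domination through $h$ (invoking Proposition\ \ref{P:stdom}.(\ref{aclstdom}) to handle the algebraic closure). For isolation, I would apply the density-of-isolated-types clause in \Aom: with $G$ the $\acl$-finitely generated subgroup of $\Gamma$ given by $\Gamma_M(a)$ (finitely generated by Lemma\ \ref{L:afg}) and $S := \St_{C}(a) \subseteq \St_{M \cup G}$, isolated types over $M \cup S$ are dense, which lets me isolate $\tp(a / C h(a))$ by a single formula $\phi(x, c_0 h(a))$.

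The main obstacle I anticipate is the interface between the density hypothesis of \Aom, which is phrased over $M \cup S$ with $S \subseteq \St_{M \cup G}$, and the base $C = \Gamma_M(a)$ over which I want strong stable domination. I will need to check carefully that the relevant $\St$-part $\St_C(a)$ really lies in $\St_{M \cup G}$ for the correct $G$, and that isolation of the type over $M \cup S$ transfers to isolation of $\tp(a/C h(a))$ — in particular that the parameters of the isolating formula can be arranged in the form $c_0 h(a)$ with $c_0 \in C$ and $h(a)$ the designated stable coordinate. Controlling exactly which base the isolated type is dense over, and ensuring the extra $\Gamma$-parameters do not spoil either the stable domination or the isolation, is where the care is required; the dimension bookkeeping and the appeals to Propositions\ \ref{P:stdomdef}--\ref{P:stdom} are comparatively mechanical.
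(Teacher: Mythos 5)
There is a genuine gap, and it sits exactly where you flagged your ``main obstacle'': you cannot get the isolation step by applying the density clause of \Aom to the point $a$ you have already chosen. Density of isolated types says that every consistent formula over $M\cup S$ extends to an isolated type over $M\cup S$; it does \emph{not} say that the type of a pre-selected realization is isolated. A point chosen to be generic (of maximal stable dimension) will in general \emph{not} have an isolated type over $C\,h(a)$, so your single-point plan --- pick $a$ generic, then ``isolate $\tp(a/Ch(a))$'' --- asserts precisely what the lemma is supposed to construct. Worse, if you do invoke density you obtain a \emph{different} point $a$, and then nothing in your argument guarantees that this new point retains the properties you proved for the generic one: the base $C=\Gamma_M(a)$ itself moves with $a$, and the dimension equalities $\dimst(h(a)/C)=\dimst(\St_C(a)/C)=\dimst(D)$ were derived from genericity, which the new point need not enjoy.

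The paper resolves this with a two-stage argument containing two ideas absent from your proposal. First, the generic point $a_0$ is used only to set up the data: $C:=\Gamma_M(a_0)$, $B:=\St_C(a_0)$, and --- crucially --- a $B$-definable set $D'\subseteq D$ containing $a_0$ on which the condition $B\subseteq\acl(Ch(x))$ holds \emph{definably}. Density of isolated types is then applied \emph{inside $D'$} to produce a possibly different point $a\in D'$ with $\tp(a/B)$ isolated; membership in $D'$ is what transfers the interalgebraicity $B\subseteq\acl(Ch(a))$ (hence the dimension identities and domination via $h$, and, via atomicity of $\tp(B/Ch(a))$, the isolation of $\tp(a/Ch(a))$) to the new point. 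Second, to handle the circularity of the base, $a_0$ is chosen in advance so that $\dimo(\Gamma_M(a_0)/M)$ is \emph{maximal} given the other constraints; combined with the chain $C\subseteq B\cap\Gamma\subseteq\acl(Ca)\cap\Gamma\subseteq\Gamma_M(a)$ this forces $\Gamma_M(a)=C$, so the new point has the same $\Gamma$-part as the old one, metastability applies to $\tp(a/C)$, and $C$ is legitimately of the form $\Gamma_M(a)$ as the statement requires. Without the definable set $D'$ and the $\dimo$-maximality trick, the interface you worried about cannot be crossed, and the proof does not go through.
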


\begin{proof}
  We may assume that $h$ is defined over $C_0$. Let $a_0\in D$ be such
  that $\dimst(D) = \dimst(h(a_0)/C_0)$. Extend $C_0$ to a
  metastability basis $M$ as in \Aom, with $h(a_0)$ independent from
  $M$ over \(C_0\). Then $ \dimst(D) = \dimst(h(a_0)/M) =
  \dimst(\St_{M}(a_0)/ M)$. Choose $a_0$ such that
  $\dimo(\Gamma_M(a_0)/M)$ is as large as possible---given the other
  constraints. Let $C:=\Gamma_M(a_0)$ and $B := \St_C(a_0)$; then, by
  Lemma\ \ref{L:afg}, $C$ and $B$ are finitely generated over
  $M$. Moreover, $\dimst(h(a_0)/C) = \dimst(D) \geq \dimst(B/C)$ so
  $B\subseteq\acl(Ch(a_0))$. Let $D'$ be $B$-definable set contained
  in $D$, containing $a_0$, and such that for any $a\in D'$, $B\subseteq
  \acl(Ch(a))$. By \Aom, there exists $a \in D'$ such that $\tp(a/B)$
  is isolated. By choice of $D'$ we have $B \subseteq \acl(Ch(a))$. So
  $\tp(B/Ch(a))$ is atomic and, therefore, $\tp(a/Ch(a))$ is isolated.

  Similarly, by maximality of $\dimo(\Gamma_M(a_0) / M)$ and the fact
  that $\Gamma_M(a_0) \subseteq B\cap\Gamma \subseteq
  \acl(Ca)\cap\Gamma\subseteq \Gamma_M(a)$, we have $\Gamma_M(a) = \Gamma_M(a_0)
  =C$. By metastability, $\tp(a/C)$ is stably dominated. But
  $\dimst(\St_C(a)/C)\leq \dimst(D) = \dimst(B/C)$ and $B\subseteq
  \acl(Ch(a))\subseteq \acl(\St_C(a))$, so
  $\St_C(a)\subseteq\acl(B)\subseteq\acl(Ch(a))$ and $\tp(a/C)$ is
  stably dominated via $h$. It follows that $\tp(a/C)$ is strongly
  stably dominated via $h$.
\end{proof}

Finally, let us introduce weight.

\begin{defn}\label{D:weight}
  Let \(p\) be a definable type and \(X\) be a pro-definable set. We
  say that \(X\) has \emph{\(p\)-weight} smaller or equal to \(n\) if
  whenever $b \in X$, $(a_1,\ldots,a_{n}) \models p^{\tensor n}$, we
  have $a_i \models p | b $ for some $i$. The set \(X\) has bounded
  weight if for some $n$, for every stably dominated type \(p\)
  concentrating on \(X\), \(X\) has \(p\)-weight smaller or equal to
  $n$.
\end{defn}

For all definable $D$ and stably dominated types $p$ concentrating on
$D$, the $p$-weight of $D$ is bounded by $\dimst(D)$. It follows that
if $\AFD$ holds, every definable set has bounded weight. In $\ACVF$,
for example, the weight of any definable subset of a variety $V$ is
bounded by the dimension of $V$.

\subsection{\texorpdfstring{Some \(o\)-minimal lemmas}{Some o-minimal lemmas}}

This subsection contains some lemmas on $o$-minimal partial orders and
groups. The former will yield a generic type of limit stably dominated
groups. The latter will be used to improve some statements from
``almost internal'' to ``internal''.


\stepcounter{thm}

\begin{lem}\label{L:almost-int}
Let $G$ be a definable group. Assume $G$ is almost internal to a
stably embedded definable set $\Gamma$. Then there exists a finite
normal subgroup $N$ of $G$ with $G/N$ internal to $\Gamma$.
\end{lem}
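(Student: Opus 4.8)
The plan is to express almost-internality through automorphisms and to locate the obstruction to genuine internality as a single finite normal subgroup. Adding to the language the finite tuple witnessing almost-internality, I may assume $G$ is $\emptyset$-definable and fix a $\emptyset$-definable finite-to-one map $\pi\colon G\to V$ with $V\subseteq\dcl(\Gamma)$ and all fibres of size at most $m$ (the uniform bound coming from Lemma~\ref{L:nfcp}). Let $\Sigma:=\mathrm{Aut}(\Uu/\Gamma)$. Since $G$ is $\emptyset$-definable each $\sigma\in\Sigma$ restricts to a group automorphism of $G$; since $V\subseteq\dcl(\Gamma)$ it fixes $V$ pointwise, so $\pi(\sigma g)=\sigma(\pi g)=\pi(g)$ and $\sigma$ maps every $g$ inside its own fibre. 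Hence all $\Sigma$-orbits on $G$ have size at most $m$.

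Set $K:=\langle\, g^{-1}\sigma(g)\mid g\in G,\ \sigma\in\Sigma\,\rangle$. The cocycle identity $(gh)^{-1}\sigma(gh)=h^{-1}\!\left(g^{-1}\sigma(g)\right)\!h\cdot\left(h^{-1}\sigma(h)\right)$ shows that a conjugate of a generator is again a product of generators, so $K\normal G$ and $K$ is $\Sigma$-invariant. For a normal subgroup $N$, the quotient $G/N$ lies in $\dcl(\Gamma)$ — equivalently is $\Gamma$-internal — exactly when $\Sigma$ fixes $G/N$ pointwise, i.e.\ when $g^{-1}\sigma(g)\in N$ for all $g,\sigma$, i.e.\ when $K\subseteq N$. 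Thus the lemma is equivalent to the single assertion that $K$ is finite, and everything reduces to proving this.

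Finiteness of $K$ is the heart of the matter: bounded orbits do not by themselves force the generated subgroup to be finite, and the canonical finite-to-one $\Gamma$-internal quotient $G\to G/{\equiv_\Gamma}$ (where $g\equiv_\Gamma g'$ iff they have the same type over $\Gamma$, so that the classes are exactly the $\Sigma$-orbits) is not a group homomorphism. The group law must therefore be used to straighten it. The plan is to produce a definable homomorphism $\rho\colon G\to W$ onto a $\Gamma$-internal group $W$ with finite kernel; granting this, $\rho(g)\in\dcl(\Gamma)$ is $\Sigma$-fixed, so $\rho(g^{-1}\sigma(g))=\rho(g)^{-1}\sigma(\rho(g))=1$, whence $K\subseteq\ker\rho$ is finite. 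To construct $\rho$ I would run the group-chunk/germ method for covers internal to a stably embedded set: using stable embeddedness of $\Gamma$, the $\Gamma$-germ of the translated map $x\mapsto\pi(gx)$ should depend on $g$ only through a $\Gamma$-internal datum $\rho(g)$, and the germ of multiplication makes $g\mapsto\rho(g)$ a homomorphism into a definable $\Gamma$-internal group $W$, with $\ker\rho$ finite because $\rho$ refines $\pi$ up to the bounded fibres. The delicate, work-absorbing step — the main obstacle — is precisely the verification that these germs are definable over $\Gamma$ and assemble into an honest definable group $W$ admitting a finite-kernel homomorphism from $G$; this is where stable embeddedness of $\Gamma$ and the uniform fibre bound $m$ are essential.
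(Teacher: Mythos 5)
Your outer framing is workable but it is also circular at exactly the point where work is needed. The first two paragraphs are essentially correct (with two caveats: the equivalence ``$G/N$ internal iff $\Sigma$ fixes $G/N$ pointwise'' holds in the direction you actually use, but internality permits \emph{new} parameters, which $\Sigma$ need not fix, so it is an implication rather than an equivalence; and citing Lemma~\ref{L:nfcp} is off, since that lemma assumes \AFD while the present one does not --- plain compactness already bounds the fibres of a definable finite-to-one map). More importantly, they are dispensable: the moment you have a definable homomorphism $\rho\colon G\to W$ with $W$ $\Gamma$-internal and $\ker\rho$ finite, you are done outright, since $N=\ker\rho$ is finite normal and $G/N\cong\rho(G)\subseteq W$ is internal. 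No $\Sigma$, no $K$. So the entire content of the lemma sits in your third paragraph, and there you do not give an argument: you announce that the ``$\Gamma$-germ'' of $x\mapsto\pi(g\cdot x)$ should depend on $g$ only through a $\Gamma$-internal datum, call the verification ``the delicate, work-absorbing step --- the main obstacle'', and stop. As stated the step is not even well posed: a germ must be taken with respect to some notion of genericity (a definable type, or an ideal of small sets), and $G$ here need not admit any definable generic --- that is precisely the difficulty in this unstable setting. Likewise, finiteness of $\ker\rho$ ``because $\rho$ refines $\pi$ up to the bounded fibres'' is an assertion, not a proof.

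The paper's proof consists exactly of the device you are missing. Take $f\colon G\to Y$, $Y\subseteq\dcl(\Gamma)$, of \emph{minimal} degree: $m(Y)$ is the least $m$ for which some definable $m$-to-one map into a subset of $\dcl(\Gamma)$ exists, and $f$ is chosen $m(Y)$-to-one. Let $I$ be the ideal of definable $Y'\subseteq Y$ over which this degree can be lowered, and $F$ the dual filter; $F$ is the substitute for a generic type. Minimality forces that for each $g\in G$ the set of $y$ for which $g\cdot f^{-1}(y)$ is not exactly one fibre lies in $I$ (otherwise $x\mapsto(f(x),f(g\cdot x))$ would lower the degree there), so each $g$ induces a germ $g_\star$, modulo $F$, of a bijection of $Y$, and $g\mapsto g_\star/\!\sim$ is a homomorphism into the group of such germs. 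Its kernel $N$ is finite by pigeonhole: any $m+1$ elements of $N$ stabilize setwise a common fibre $f^{-1}(y)$ with $|f^{-1}(y)|=m$ (using that $F$ is a filter and $\{y:|f^{-1}(y)|=m\}\in F$), so two of them agree at a point of that fibre and are therefore equal. Only after this does the paper invoke the automorphism argument you started from: each $g_\star$ is a $\Gamma$-definable germ, hence fixed by any $\sigma$ fixing $\Gamma$, so $\sigma(g)N=gN$, and internality of $G/N$ follows from stable embeddedness of $\Gamma$. Without the minimality-of-degree/ideal mechanism, or an equivalent replacement, your sketch cannot be completed, so the proposal has a genuine gap at its core.
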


\begin{proof}
The assumption implies the existence of a definable finite-to-one
function $f: G \to Y$, where $Y \subseteq \dcl(\Gamma)$.  Given a
definable $Y' \subseteq Y$, let $m(Y')$ be the least integer $m$ such
that (possibly over some more parameters), there exists a definable
$m$-to-one map $f^{-1}(Y') \to Z$, for some definable $Z \subseteq
\dcl(\Gamma)$. We may assume that \(f\) is \(m(Y)\)-to-one. Let $I$ be
the family of all definable subsets $Y'$ of $Y$ with ($Y'=\emptyset$
or) $m(Y') < m(Y)$. This is clearly an ideal (closed under finite
unions, and definable subsets). Let $F := \{Y \sminus Y'\mid Y' \in
I\}$ be the dual filter. For $g \in G$, let $D(g)$ be the set of $y
\in Y$ such that for some (necessarily unique) $y'$, $g\cdot f^{-1}(y)
= f^{-1}(y')$; and define $g_\star(y) = y'$. The function $x \mapsto
(f(x),f(g\cdot x))$ shows that $\{y\mid |f (g\cdot f^{-1}(y))| > 1 \}
\in I$. Since \(X := \{y\mid |f^{-1}(y)| = m\} \in F\) and \((Y\sminus
I) \cap X \subseteq D(g)\), it follows that \(D(g)\in F\).

For any definable \(Z\subseteq Y\), let $F|Z = \{W \cap Z: W \in F
\}$. Let $G_0$ be the set of bijections $\phi: Y' \to Y''$ with
$Y',Y'' \in F$, carrying the filter $F|Y'$ to $F|Y''$. Note that
\(g_\star:D(g)\to D(g^{-1})\) lies in \(G_0\). Write $\phi\sim \phi'$ if
$\phi$ and $\phi'$ agree on some common subset of their domains, lying
in $F$; and let $G' = G_0/\sim$. Composition induces a group structure
on $G'$ and we obtain a homomorphism $G \to G'$, $g \mapsto g_\star/
\sim$.  Let $N$ be the kernel of this homomorphism. Let us prove that
\(|N|\leq m(Y)\). For suppose $n_0,\ldots,n_{m}$ are elements of
$N$. Then for some $y \in \cap D(n_i)$ we have $(n_i)_\star(y) = y$.  It
follows that $n_i (f^{-1}(y)) = f^{-1}(y)$. Fix any \(g\in f^{-1}(y)\)
and since $|f^{-1}(y)|=m$, for some $i \neq j$ we have $n_i \cdot g =
n_j \cdot g$ and hence \(n_i = n_j\).

Let \(\sigma\) be an automorphism fixing \(\Gamma\). Then for any \(g\in G\),
since \(g_\star\) is a function between \(\Gamma\)-internal sets,
\(\sigma(g)_\star = \sigma(g_\star) = g_\star\) and hence \(g\cdot N =
\sigma(g)\cdot N\). As $\Gamma$ is stably embedded, it follows that \(G/N\) is
\(\Gamma\)-internal.
\end{proof}

\begin{lem}
\label{L:almost-int-2}
Let $G$ be a definable group. Assume $G$ is almost internal to an \(o\)-minimal
stably embedded definable set $\Gamma$. Then $G$ is $\Gamma$-internal.
\end{lem}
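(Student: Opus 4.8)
The plan is to use Lemma~\ref{L:almost-int} to reduce the problem to a finite cover of a set that is genuinely internal to $\Gamma$, and then to exploit the $o$-minimal geometry of $\Gamma$ to trivialize that cover piecewise. By Lemma~\ref{L:almost-int} there is a finite normal subgroup $N$ of $G$ with $H := G/N$ internal to $\Gamma$. Since $\Gamma$ is $o$-minimal, and so eliminates imaginaries, internality of $H$ lets me fix a finite parameter set $F$ and an $F$-definable bijection of $H$ with a definable subset $H' \subseteq \Gamma^m$; I identify $H$ with $H'$. Writing $\pi : G \to H$ for the quotient map, the fibres of $\pi$ are exactly the cosets $gN$, so $\pi$ is a finite-to-one definable surjection onto the $o$-minimal set $H$. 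It suffices to produce, over some finite parameter set $F'$, a definable injection $G \hookrightarrow \Gamma^{k}$: then $G \subseteq \dcl(F'\Gamma)$ and $G$ is $\Gamma$-internal.

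To build such an injection I would decompose $H$ into finitely many cells $C_1,\dots,C_r$ and trivialize the cover over each cell. Over a single cell $C_i$ the crucial claim is that $\pi$ admits a definable section $s_i : C_i \to G$; granting this, $(c,n)\mapsto s_i(c)\,n$ is a definable bijection $C_i \times N \to \pi^{-1}(C_i)$ (injectivity is immediate since distinct $c$ give distinct fibres and $s_i(c)N$ is a coset). As there are finitely many cells and $N$ is finite, I may then adjoin the finitely many elements of $N$ as parameters, send them to distinct points $\delta_1,\dots,\delta_{|N|}$ of $\Gamma$, tag the cell index $i$ by distinct points of $\Gamma$ as well, and assemble the resulting maps on the pieces $\pi^{-1}(C_i)$ into a single definable injection $G \to \Gamma^{m+2}$ over a finite parameter set. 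The base case $\dimo(H)=0$ is trivial, since then $H$, and hence $G$, is finite and so internal.

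The main obstacle is exactly the existence of the definable local sections $s_i$, and this is where $o$-minimality is indispensable. It cannot be obtained by naive definable choice on the fibres: the fibres $\pi^{-1}(c)$ are finite subsets of $G$, not of $\Gamma^k$, and any $F\Gamma$-definable function takes equal values on elements of a fibre that are conjugate over $F\Gamma$, so the order on $\Gamma$ does not by itself separate the sheets of the cover. Moreover a global section need not exist in general (a finite self-cover of a one-dimensional group in an $o$-minimal expansion of a field already has nontrivial monodromy), which is precisely why I work cell by cell. The point is that a cell is definably contractible, so the finite cover over $C_i$ has trivial monodromy and is definably trivial; concretely I would argue by induction on $\dimo(H)$, presenting $C_i$ as a box, lifting along its one-dimensional slices from a chosen base lift, and invoking the $o$-minimal triviality of finite definable covers over an interval to make the lift definable. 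Making this covering-theoretic input precise for a cover whose sheets are separated only in the ambient theory (rather than inside $\Gamma$) is the delicate part, and is the step I expect to require the most care.
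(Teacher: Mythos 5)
Your reduction via Lemma~\ref{L:almost-int} and your assembly of the cell-by-cell data are fine, but the step you yourself flag as delicate --- existence of definable local sections $s_i : C_i \to G$ of $\pi$ over cells --- is a genuine gap, and it is not a technical wrinkle to be smoothed out later: it is essentially the entire content of the lemma. The monodromy/contractibility argument you invoke is a statement about covers living \emph{inside} an $o$-minimal (or at least definable-topological) structure; here the total space $G$ is an abstract definable set of the ambient theory, carrying no definable topology and not yet known to be $\Gamma$-internal, so ``lifting along one-dimensional slices'' has no meaning. Worse, the principle your argument would establish is false, because the section-existence step uses nothing about the group structure beyond the fact that fibres are cosets: if it worked, it would show that \emph{every} definable set admitting a finite-to-one definable map onto a $\Gamma$-internal set is $\Gamma$-internal, i.e.\ that almost internality implies internality for sets. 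This fails: consider a two-sorted structure consisting of an $o$-minimal $\Gamma$ and a set $X$ with a $2$-to-$1$ map to $\Gamma$, equipped only with the fibration and the fibrewise involution. Automorphisms may swap the two sheets independently over each point, so $X$ is almost $\Gamma$-internal but not $\Gamma$-internal, and no definable section exists over any infinite definable set, cells included. Even retaining the group structure (a torsor under a finite central $N$), finiteness of the fibres does not by itself yield local sections: in $\ACF$ of characteristic $\neq 2$ the squaring map $\Gm \to \Gm$ is a central $\Zz/2\Zz$-quotient admitting no definable section over any infinite definable set, since $\sqrt{t}$ is not definable from $t$.

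This is exactly where the paper's proof diverges from yours, and irreparably so: it never chooses lifts upstairs. After reducing, as you do, to $B := G/N$ connected, and further --- by induction on $|N|$ --- to the case where $G$ has no proper definable subgroup of finite index (whence $N$ is central), the paper makes its choices \emph{downstairs}, inside the $o$-minimal group $B$, where definable choice is actually available: by \cite{Edm} there is a definable $\alpha : Y \to B$ with $\alpha(b)^n = b$ on the set $Y$ of $n$-th powers ($n = |N|$), and similarly definable choices of commutator representatives on $[B,B]$. These lift \emph{canonically}, with no choice at all, precisely because $N$ is central of exponent dividing $n$: the maps $b \mapsto a^n$ (for any $a$ with $f(a)=\alpha(b)$) and $b \mapsto [a_1,a_2]$ are well defined, exhibiting $f^{-1}(Y)$ and $f^{-1}([B,B])$ inside $\dcl(N,\Gamma)$. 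The lemma then follows from a purely $o$-minimal group-theoretic fact (the Claim in the paper's proof, resting on \cite{PetPilSta}): for a definably connected $B$, one has $(Y \cup [B,B])^{(k)} = B$ for some finite $k$. In short, the algebra (centrality, roots, commutators, bounded generation) is what replaces the covering theory you are missing; if you tried to prove your local sections exist, you would be forced back to this kind of argument, since a posteriori their existence is equivalent to the internality you are trying to prove.
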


\begin{proof}
By Lemma\ \ref{L:almost-int}, there exists a definable surjective
homomorphism $f: G \to B$ with $B$ a group definable over $\Gamma$,
and $N = \ker(f)$ a group of finite size $n$. Let $B^0$ be the
connected component of the identity in $B$; then $B/B^0$ is finite,
and it suffices to prove the lemma for $f^{-1}(B^0)$.  Assume
therefore that $B$ is connected.
  
If $G$ has a proper definable subgroup $G_1$ of finite index, then
$f(G_1) = B$ by connectedness of $B$. It follows that $N$ is not
contained in $G_1$, so $N_1 = N \meet G_1$ has smaller size than $N$.
Hence using induction on the size of the kernel, $G_1$ is
$\Gamma$-internal; hence so is $G$. Thus we may assume $G$ has no
proper definable subgroups of finite index. Since the action of $G$ on
$N$ by conjugation has kernel of finite index, $N$ must be central.

Let $Y = \{g^n: g \in B\} $.  By \cite[Theorem\ 7.2]{Edm}, there
exists a definable function $\alpha: Y \to B$ with $ \alpha(b)^n =
b$ for all $b\in B$. Define $\beta: Y \to G$ by $\beta(b)= a^n$ where $f(a)=\alpha(b)$;
this does not depend on the choice of $a$, and we have $f(\beta(b)) =
f(a)^n = \alpha(b)^n = b$. It follows that $f^{-1}(Y) = N \beta(Y)
\subseteq \dcl(N,\Gamma)$ is $\Gamma$-internal.

Similarly, let $[B,B] = \{[g,h]: g,h \in B\}$.  As above, there exists
a definable $\alpha_1: [B,B] \to B$ such that $(\exists y)
[\alpha_1(b),y] = b$, and $\alpha_2: [B,B] \to B$ such that
$[\alpha_1(b),\alpha_2(b)] = b$.  Define $\beta': [B,B] \to G$ by
$\beta'(b) = [a_1,a_2]$ where $f(a_i) = \alpha_i(b)$. Again $\beta'$
is definable and well-defined, and shows that $f^{-1}([B,B])$ is
$\Gamma$-internal.

Hence for any $k$, letting $X^{(k)} = \{x_1...x_k: x_1,...,x_k \in
X\}$, $ (f^{-1}(Y \union [B,B]))^{(k)} = f^{-1}( (Y \union
[B,B])^{(k)} )$ is $\Gamma$-internal.  So we are done once we show:

\begin{claim}
Let $B$ be any definably connected group definable in an $o$-minimal
structure.  Let $Y=Y_n(B) = \{g^n: g \in B\}$.  Then for some $k \in
\Zz_{\geq 0}$, $(Y \union [B,B]) ^{(k)} = B$.
\end{claim}

\begin{proof}
If the Claim holds for a normal subgroup $H$ of $B$ with bound $k'$,
and also for $B/H$ (with bound $k''$), then it is easily seen to hold
for $B$ (with bound $k'+k''$.)
 
We use induction on $\dimo(B)$.  If $B$ has a non-trivial proper
connected definable normal subgroup $H$, then the statement holds for
$H$ and for $B/H$.  We may thus assume $B$ has no such subgroups $H$.
Thus any definable normal subgroup of $B$ is finite and by
connectedness, central.

If $B$ is centerless, it is definably simple. By \cite{PetPilSta}, $B$
is elementarily equivalent to a simple Lie group.  In this case it is
easy to see that every element is the product of a bounded number of
commutators, by considering root subgroups.

Since the center $Z$ of $B$ is finite, then $B/Z$ is definably simple and
so the claim holds (say with $k_{B/Z}$).  Let $W_l = (Y \union [B,B])
^{(l)}$, and let $\pi: B \to B/Z$ be the quotient homomorphism.  Then
$\pi(W_l)= B/Z$ for $l \geq k_{B/Z}$.  For such $l$, \(W_{5l}\cap Z =
W_{l}\cap Z\) implies that \(W_{4l} = W_{2l}\). Thus $W_{2k}$ is a
subgroup of $B$ mapping onto $B/Z$, for some $k \leq k_{B/Z}5^{|Z|}$.
By connectedness $W_{2k}=B$.
\end{proof}

This concludes the proof.
\end{proof}

We conclude this section with a quick discussion of definable
compactness. First, one can define the limit of a function along a
definable type, generalizing the limit along a one-dimensional
curve. Let $X$ be a definable space over an $o$-minimal structure
$\Gamma$.

\begin{defn}\label{D:G-limit}
  Given a definable set $D$, a definable type $p$ on $D$, and a
  definable function $g: D \to X$, we define $\lim_p g = x \in X$
  if for any definable neighborhood $U$ of $x$, $g_\star p$ concentrates on
  $U$.
\end{defn}

When \(X\) is \(\Gamma\cup\{-\infty,+\infty\}\), the limit always exists. Consider the definable
type $g_\star p$. It must be of the form $r_\infty$,
$r_{-\infty}$, $r_a := (x=a)$, $r_a^+$ the type of elements infinitesimally bigger than $a$,
or $r_a^-$. By definition $\lim_p g $ is $\infty, - \infty, a,a,a$ in the respective cases.  

The following definition is equivalent to the one in \cite{PetSte}.

\begin{defn}\label{D:defcomp}
The definable set $X$ is {\em definably compact} if for any definable type $p$ on $\Gamma$ and
 any definable function $f: \Gamma \to X$, $\lim_p f$ exists.
\end{defn}

\begin{defn}
Let $A$ be a definable Abelian group.
\begin{itemize}
\item A definable set is called {\em generic}
if finitely many translates of that set cover the group.

\item Say $A$ has
the property (NG) if the non-generic definable sets form an ideal.
\end{itemize}
\end{defn}

\begin{prop}[{\cite[Corollary\ 3.9]{PetPil-Gen}}]\label{P:defcomp NG}
Any definably compact definable Abelian group $A$ in an $o$-minimal theory has (NG).
\end{prop}

Moreover any definable subsemigroup of \(A\) is a group
(cf. \cite[Theorem\ 5.1]{PetPil-Gen}). We include the deduction of
the latter fact in a case we will need.

\begin{lem}\label{L:semigroup}
Let $A$ be an Abelian group with (NG).  Let $Y$ be a definable semi-group of $A$, such that $Y-Y=A$. Then $Y=A$.
\end{lem}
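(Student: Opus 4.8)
Let $A$ be an Abelian group with (NG). Let $Y$ be a definable semi-group of $A$, such that $Y - Y = A$. Then $Y = A$.

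Here is my plan.

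The plan is to exploit the interplay between the ideal structure of non-generic sets, guaranteed by (NG), and the semigroup closure of $Y$. First I would observe that since $Y - Y = A$, the group is covered by the countable (in fact, at most a single translate level) union $\bigcup_{a \in A}(Y + a)$ in a controlled way; more precisely, $Y - Y = A$ means every element of $A$ is a difference of two elements of $Y$, so $A = \bigcup_{y \in Y}(Y - y)$. I would first argue that $Y$ itself must be generic. Indeed, if $Y$ were non-generic, then by (NG) the translates $Y - y$ are all non-generic, and one would like to derive a contradiction with $Y - Y = A$ covering the whole group; the cleanest route is to show directly that $Y - Y$ non-generic-ness propagates, i.e. that the difference set of a non-generic set stays within the ideal in a way incompatible with equalling $A$ (which is certainly generic). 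So the key first step is: \emph{$Y$ is generic.}

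Granting that $Y$ is generic, the main work is to upgrade ``generic subsemigroup'' to ``equals the whole group.'' The standard argument is the \emph{Stabilizer}/genericity trick: since $Y$ is generic, consider $S = Y - Y$, which by hypothesis is all of $A$, and use that $Y$ being a semigroup means $Y + Y \subseteq Y$. From genericity of $Y$, finitely many translates $a_1 + Y, \dots, a_n + Y$ cover $A$. The core claim I would push is that $Y$ is in fact a subgroup: one shows $0 \in Y$ and $-Y \subseteq Y$ using that for generic $Y$ and any $a$, the intersection $Y \cap (a + Y)$ is still generic (this is where (NG), i.e. that non-generics form an ideal, is essential, since it guarantees that the intersection of two generics is generic — equivalently the complement of a generic is non-generic). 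Concretely, for any $a \in A$, genericity of $Y$ gives that $Y \cap (a - Y)$ is nonempty (both generic, complement non-generic, so they meet), which yields $y_1 = a - y_2$ with $y_i \in Y$, i.e. $a = y_1 + y_2 \in Y + Y \subseteq Y$. Since $a$ was arbitrary, $Y = A$.

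Let me make the final step precise, as it is the crux. I want to show every $a \in A$ lies in $Y$. Since $Y$ is generic, its complement $A \setminus Y$ is non-generic, hence so is the translate $a - (A \setminus Y) = A \setminus (a - Y)$; thus $a - Y$ is generic and $Y \cap (a - Y)$ is the intersection of two generic sets, whose complement is a union of two non-generics, hence non-generic by (NG), so $Y \cap (a - Y) \neq \emptyset$. Picking $y_1 \in Y \cap (a - Y)$ gives $y_1 \in Y$ and $a - y_1 \in Y$, so $a = y_1 + (a - y_1) \in Y + Y \subseteq Y$, using the semigroup property. Hence $A \subseteq Y$ and $Y = A$.

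The main obstacle I anticipate is the very first step — establishing that $Y$ is generic — since all the rest follows formally from (NG) and the semigroup law. A priori $Y - Y = A$ does not obviously force $Y$ generic. The hard part will be ruling out that $Y$ is non-generic: I would argue by contradiction, using that if $Y$ is non-generic then by (NG) every translate $c + Y$ is non-generic and the ideal is closed under finite unions, so no \emph{finite} union of translates of $Y$ is generic; I must then leverage $Y - Y = A$ together with a saturation/compactness argument to show that the covering of $A$ by differences forces a \emph{finite} subcover by translates of $Y$, contradicting non-genericity. This compactness reduction (from the abstract covering $A = \bigcup_y (Y - y)$ to a finite one, which is legitimate because $A \setminus (Y - Y)$ is a definable set that is empty) is the delicate point, and is precisely where the definability hypotheses and the structure of the $o$-minimal ambient theory enter.
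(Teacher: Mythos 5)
Your proposal has two genuine gaps, one in each half, and both are fatal as stated. First, the genericity of $Y$: the ``compactness reduction'' you lean on does not exist. The covering $A = \bigcup_{y \in Y} (Y - y)$ is indexed by the definable set $Y$ itself, not by a small set of parameters, so saturation gives no finite subcover: in a saturated model an element can avoid $Y - y$ for every $y$ in any small subset of $Y$ while still lying in $Y - Y$ (witnessed only by new elements of $Y$). Worse, no argument of this shape can be repaired, because your route to the finite subcover never uses (NG) in a load-bearing way (once finitely many translates of $Y$ cover $A$, genericity of $Y$ is immediate by definition, with no appeal to the ideal), and the statement ``a definable semigroup with $Y - Y = A$ is generic'' is simply false without (NG): in $(\Zz, +, <)$ take $Y = \mathbb{N}$; then $Y$ is a definable subsemigroup with $Y - Y = \Zz$, yet no finite union of translates of $\mathbb{N}$ covers $\Zz$. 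So any correct proof of genericity must make essential use of (NG) together with the semigroup law, which yours does not.

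Second, your ``crux'' step rests on the principle that the complement of a generic set is non-generic, and this does not follow from (NG). (NG) gives the converse direction only (if $S$ is non-generic then $A \setminus S$ is generic, else $A$ would be a union of two non-generics); a set and its complement can both be generic, e.g.\ the even and the odd integers in the stable group $(\Zz,+)$, where non-generics do form an ideal. Consequently two generic sets can be disjoint, so your ``two generics must meet'' argument collapses. A telling symptom: your final step never uses the hypothesis $Y - Y = A$, and without it the conclusion is false (any proper definable subgroup of finite index is a generic subsemigroup). The paper's proof runs in the opposite order and uses $Y - Y = A$ at both stages, via the same translation trick: first it shows directly that $A \setminus Y$ is non-generic --- if it were generic, some finite intersection $\bigcap_{i}(c_i + Y)$ would be empty; using $Y - Y = A$ choose $b_i \in Y$ with $b_i + c_i \in Y$, translate everything by $b = \sum_i b_i$ so that each $c_i$ may be assumed to lie in $Y$, and then the semigroup law puts $\sum_i c_i$ in every $c_i + Y$, a contradiction. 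Only then does (NG) yield that $Y$ is generic. Finally, from a finite cover $A = \bigcup_i (d_i + Y)$, one again uses $Y - Y = A$ to find $e \in Y$ with every $e + d_i \in Y$, whence $A = e + A = \bigcup_i ((e + d_i) + Y) \subseteq Y$ by absorption $Y + Y \subseteq Y$. The recurring move you are missing is translating the relevant parameters \emph{into} $Y$ so that the semigroup property can absorb the translates; the purely Boolean calculus of generics and complements is not available under (NG) alone.
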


\begin{proof}
Note first that  $A \sminus Y$ is not generic. Otherwise, some finite intersection $\bigcap_{i=1}^n (c_i+Y) = \emptyset$. 
By assumption, there exist $b_i \in Y$ with $b_i+c_i \in Y$. If $b:=\sum b_i$ then $b + c_i \in Y$; so by translating we may assume each $c_i \in Y$. But then $\sum c_i \in c_i + Y$
for each $i$, a contradiction. 

As $A$ is generic, $Y$ cannot be in the non-generic ideal, so $Y$ is
generic and $\bigcup_{i=1}^n d_i + Y = G$ for some $d_i \in G$. Again
we find $e \in Y$ with $e+d_i \in Y$.  We have $\union _{i=1}^n
(e+d_i) + Y = G$. But $e+d_i + Y \subseteq Y$.  So $Y = G$.
\end{proof}

\subsection{Valued fields: imaginaries and resolution}

Let $K$ be an algebraically closed valued field, with valuation ring
$\Oo$, maximal ideal \(\fM\) and value group \(\Gamma\).  The
geometric language for valued fields has a sort for the valued field
itself, and certain other sorts.  In particular, there is a sort $S_n$
such that $S_n(K)$ is the space of free $\Oo$-modules in $K^n$,
equivalently $S_n(K) = \GL_n(K)/\GL_n(\Oo)$. Until the end of Section\ 2.5, we work
in a model \(M\) of the theory \(\ACVF\) of algebraically closed
fields in the geometric language.

By a {\em substructure}, we mean a subset of $M$, closed under
definable functions. For any geometric sort $S$ and substructure
$A\subseteq M$, $S(A)$ denotes $S\cap A$; in particular $K(A)$ is the
set of field points of $A$. A substructure $A\subseteq M$ is called
{\em resolved} if $A\subseteq\dcl(K(A))$.  When $\Gamma(A) \neq 0$ and
$A$ is algebraically closed this just amounts to saying that $A \prec
M$. Recall that, in any first order theory, if $A$ is a substructure
of a model $M$, $M$ is {\em prime} over $A$ if any elementary map $A
\to N$ into another model, extends to an elementary map $M \to N$; and
$M$ is {\em minimal} over $A$ if there is no $M' \prec M$ with $A
\subset M'$. If a minimal model over $A$ and a prime one exist, then
any two minimal or prime models over $A$ are isomorphic.

\begin{prop}\label{P:resolution}
  Let $A$ be a substructure of $M$, finitely generated over a subfield
  $L$ of $K$, and assume $\Gamma(A)\neq 0$. Then there exists a
  minimal prime model $\tA$ over $A$ which enjoys the following
  properties.
  \begin{enumerate}
  \item $\tA$ is a minimal resolution of $A$. Moreover it is the
    unique minimal resolution, up to isomorphism over $A$.  It is
    atomic over $A$.
  \item $\St_L(\tA) = \St_L(A)$.
  \item Let $A \leq A'$, with $A'$ finitely generated over $A$.  Then
    $\tA $ embeds into $\tA'$ over $A$. If $A \leq A' \leq \tA$, then
    $\tA$ is the prime resolution of $A'$.
  \item For a valued field extension $L'$ of $L$, let $L'(A)$ be the
    structure generated by $L' \union A$.  Then $\alg{L'(\tA)}$ is a
    prime resolution of $L'(A)$.
  \item If $\tp(A/L)$ is stably dominated, then $\tp(\tA / L)$ is
    stably dominated.
  \item If $\tp(A'/A)$ is stably dominated, and $A'\ind_{A} \tA$, then
    there exists a prime resolution $\tA'$ of $A'$ such that
    $\tp(\tA'/\tA)$ is stably dominated.
  \end{enumerate}
\end{prop}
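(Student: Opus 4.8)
The proposition packages several statements about a minimal prime resolution $\tA$ of a finitely generated substructure $A$ with $\Gamma(A) \neq 0$. The plan is to build $\tA$ first, then verify the six clauses largely in the order stated, since later clauses lean on earlier ones (and on the stable-domination machinery of \S\ref{S:stdom}).

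\textbf{Construction and clause (1).} The plan is to construct $\tA$ by repeatedly adjoining field points witnessing the imaginary elements of $A$. Concretely, each geometric sort outside $K$ is (a quotient of) a lattice space $S_n = \GL_n(K)/\GL_n(\Oo)$ or one of the related sorts; an element of $S_n(A)$ is represented by a lattice, and choosing an $\Oo$-basis for that lattice over $\dcl$ of the existing field points resolves it into $K$. I would make this precise by induction on the (finite) generating data of $A$ over $L$, at each step adjoining a generic basis of a lattice. The key point is that this adjunction is \emph{atomic}: given the lattice as a parameter, the space of its bases is a torsor under $\GL_n(\Oo)$, whose generic type over the residue field is isolated (the relevant residue data is an algebraically closed field, which is $\aleph_0$-categorical over finite parameters in the stable part). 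Iterating, $\tA$ is atomic over $A$, hence prime; minimality follows because no field point can be dropped without losing resolution; uniqueness up to $A$-isomorphism follows from primeness plus minimality as recalled in the text just before the statement. I expect this to be the \textbf{main obstacle}: one must control exactly what is adjoined so that the resolution is simultaneously atomic, minimal, and does not enlarge the stable part — i.e. clause (2).

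\textbf{Clauses (2)–(4).} For (2), the content is that resolving the lattices adds no new elements of $\St_L$. Since each adjoined basis is generic in a $\GL_n(\Oo)$-torsor over the data already present, its image in the stable part (the residue of the basis) is interalgebraic over $L$ with residue data already computed from the lattice itself; I would argue that $\St_L(\tA) = \St_L(A)$ by checking that the generic basis contributes nothing to $\dcl \cap \St_L$ beyond the residue of the lattice, using orthogonality of $\Gamma$ to the stable part and the fact that the torsor structure is governed by residue-field data already in $A$. Clause (3) is a functoriality statement: given $A \leq A'$, the inductive construction of $\tA$ can be carried out \emph{inside} $\tA'$ because $\tA'$ already resolves $A$ and is atomic, so by primeness of $\tA$ over $A$ it embeds into $\tA'$; the ``prime resolution of $A'$'' clause follows since $\tA$ resolves any intermediate $A'$ with $A \leq A' \leq \tA$ and primeness/minimality transfer. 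For (4) I would run the same construction after base change to $L'$, noting that generic bases over $L$ remain generic over $L'$ once one passes to $\alg{L'(\tA)}$, and that algebraic closure is needed to keep the residue field algebraically closed so the atomicity argument of (1) still applies.

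\textbf{Clauses (5)–(6): preserving stable domination.} For (5), assume $\tp(A/L)$ is stably dominated. Since $\tA$ is atomic over $A$ and $\tA \subseteq \dcl(K(\tA)) \subseteq \acl(A \cup \{\text{adjoined bases}\})$, I would realize $\tA$ as $A$ together with finitely many elements each algebraic (in fact generic-basis-like) over $A$; the plan is to build $\tp(\tA/L)$ as an iterated construction $\tp(A/L) \cup \tp(\text{basis}/A)$ where each fiber type is itself stably dominated. Then Proposition\ \ref{P:stdom}.(\ref{ltrans}) (if $\tp(b/Ca)$ is stably dominated then so is $\tp(ab/C)$) applied inductively gives that $\tp(\tA/L)$ is stably dominated. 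The generic-basis type over the lattice is stably dominated because it is controlled entirely by its residue (an $\ACF$-generic basis of the residual vector space), so its domination map lands in $\St$. For (6) the setup is relative: $\tp(A'/A)$ stably dominated and $A' \ind_A \tA$. Here I would first resolve $A'$ relative to the already-chosen resolution $\tA$, adjoining generic bases of the lattices of $A'$ over $\tA$; the independence $A' \ind_A \tA$ together with Proposition\ \ref{P:base change}.(\ref{descent}) (descent, available since by property (E) the relevant types have invariant extensions) lets me transfer stable domination of $\tp(A'/A)$ to $\tp(A'/\tA)$ and then, as in (5), up to $\tp(\tA'/\tA)$ via Proposition\ \ref{P:stdom}.(\ref{ltrans}). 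The delicate point throughout (5)–(6) is ensuring the fiber types of the resolution are genuinely stably dominated and not merely $\Gamma$-tinged; this is exactly where the lattice-residue description and orthogonality of $\Gamma$ to the stable part do the work.
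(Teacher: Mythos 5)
There is a genuine gap, and it sits at the heart of your construction in clause (1). You propose to build $\tA$ by adjoining \emph{generic} bases of the lattices coded in $A$, and you justify atomicity by claiming the generic type of a $\GL_n(\Oo)$-torsor is isolated. This is false: over any base whose residue field is infinite, the generic basis type requires infinitely many conditions (the residues of the basis must avoid every element algebraic over the residue data of the base), and no single formula implies them, exactly as the generic type of $\Aa^n$ over an infinite field is never isolated in $\ACF$. Worse, the same choice destroys clause (2): a generic basis of a lattice $\Lambda$ induces a generic basis of the $k$-vector space $\Lambda/\fM\Lambda$, i.e.\ genuinely new elements of $\St_L$, so $\St_L(\tA)\supsetneq \St_L(A)$ for your $\tA$. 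So your construction satisfies neither atomicity nor stable-part preservation; the two requirements pull in opposite directions, and reconciling them is precisely the hard content of \cite[Theorem~11.14]{HasHruMac-Book}, which the paper simply cites (together with the facts $k(\tA)=k(\acl(A))$ and $\Gamma(\tA)=\Gamma(\acl(A))$). That argument makes essential use of the hypothesis $\Gamma(A)\neq 0$ — which your proof never invokes anywhere — to produce isolated types in the relevant torsors that are \emph{not} the naive generic ones.

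The gap propagates to clauses (5)--(6). Your plan there is iterated transitivity (Proposition~\ref{P:stdom}.(\ref{ltrans})) applied to the fiber types of your resolution, which you take to be stably dominated because they are "generic basis" types. Once the construction is corrected, the fiber types of the true prime resolution are not generic torsor types, and they need not be stably dominated at all absent the hypothesis on $\tp(A/L)$: e.g.\ when $\Gamma(A)\not\subseteq\Qq\Gamma(L)$, the resolution adjoins a point of an annulus $\{x:\val(x)=\gamma\}$ whose type over $A$ is orthogonal to the stable part in the wrong way. Proving that the fibers \emph{are} stably dominated when $\tp(A/L)$ is would essentially be proving (5) itself. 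The paper avoids this circle entirely: it passes to a maximal immediate (maximally complete) extension $\overline L$ chosen with $\tp(\tA/\overline L)$ a sequentially independent extension, uses $\Gamma(\alg{\overline L(\tA)})=\Gamma(\overline L)$ (via clause (4) and $\Gamma(\tA)=\Gamma(\acl(A))$) together with \cite[Theorem~10.12]{HasHruMac-Book} to get stable domination over $\overline L$, and then comes back down by descent (Proposition~\ref{P:base change}.(\ref{descent})). A smaller slip: in (6) you invoke descent to pass from base $A$ to the larger base $\tA$, but that direction is base change (Proposition~\ref{P:base change}.(1)) applied to the invariant extension of $\tp(A'/A)$, using $A'\ind_A\tA$; descent goes the other way.
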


\begin{proof}
  The existence, uniqueness and minimality of $\tA$ are shown in
  \cite[Theorem 11.14]{HasHruMac-Book}. It is also shown there that
  $k(\tA)=k(\acl(A))$ and $\Gamma(\tA)=\Gamma(\acl(A))$, where $k$ is
  the residue field; and that $\tA/A$ is atomic, i.e. $\tp(c/A)$ is
  isolated for any tuple $c$ from $A$.

  \begin{enumerate}\setcounter{enumi}{1}
  \item Since $L$ is a field, for any $B =\acl(B) $ with $L \subset
    B$, $\St_L(B) = \dcl(B,k(B))$.
  \item This is immediate from the definition of prime resolution:
    since $\tA'$ is a resolution of $A$, $\tA$ embeds into $\tA'$. If
    $A \leq A' \leq \tA$, then $\tA$ is clearly a minimal resolution
    of $A'$; hence by (1) it is the prime resolution.
  \item Let $B$ be the prime resolution of $L'(A)$.  Then $\tA$ embeds
    into $B$.  Within $B$, $\alg{L'(\tA)}$ is a resolution of $L'(A)$;
    by minimality of $B$, $B = \alg{L'(\tA)}$.
  \item We may assume $L = \alg{L}$. Let $p$ be an $L$-invariant extension of $\tp(\tA/L)$. Let $\overline{L}$ be a maximal immediate extension of $L$.  Choose it in such a way that $\tA\models p|\overline{L}$. Since $A/L$ is stably dominated, $\Gamma(
    \alg{\overline{L}(A)}) = \Gamma(\overline{L})$. According to (4),
    $\alg{\overline{L}(\tA)}$ is the prime resolution of
    $\alg{\overline{L}(A)}$, and so $\Gamma(\alg{\overline{L}(\tA)}) =
    \Gamma(\overline{L})$. Since $\overline{L}$ is a metastability basis, $\tp(\tA / \overline{L})$ is stably
    dominated. Using descent (cf. Proposition\ \ref{P:base
      change}.(\ref{descent})), $\tp(\tA/L)$ is stably dominated.

  \item Since $A'\ind_{A}\tA$ and $\tp(A'/A)$ is stably dominated,
    $\tp(A'/\tA)$ is stably dominated. By (5), $\tp(B/\tA)$ is stably
    dominated, where $B$ is a resolution of $\tA(A')$. But $B$
    contains a resolution of $A'$ whose type over $\tA$ is therefore
    stably dominated.\qedhere
  \end{enumerate}
\end{proof}

\begin{prop}\label{P:ACVF hyp}
  The theory $\ACVF$ is metastable, with \AFD and \Aom.
\end{prop}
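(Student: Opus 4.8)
The plan is to verify the three packages of axioms separately, leaning on \cite{HasHruMac-Book} for the facts about $\ACVF$ that are already in the literature, and reserving a direct argument for the density clause of \Aom, which is where the resolution results of Proposition\ \ref{P:resolution} genuinely enter. Throughout, the metastability bases will be taken to be maximally complete models.

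First, for metastability (Definition\ \ref{def:meta}). Any set of parameters $C$ is contained in a maximally complete model: using resolution (Proposition\ \ref{P:resolution}) one may first replace $\acl(C)$ by a resolved, field-generated base without altering $\Gamma$ or the stable part, and then pass to a maximally complete immediate extension, which is algebraically closed and hence a model $M$ of $\ACVF$. That such an $M$ is a metastability basis, i.e. that $\tp(a/\Gamma_M(a))$ is stably dominated for every tuple $a$, is the metastability theorem of \cite{HasHruMac-Book}. For the invariant extension property I would work over $C=\acl(C)$ and split a type into its $\Gamma$-part and its stable part: the projection to $\eq{\Gamma}$ extends to a global $C$-invariant type because $\Gamma$ is $o$-minimal and, in a linearly ordered structure, one can extend types invariantly along cuts, while the stably dominated remainder carries a canonical $C$-invariant (indeed $C$-definable) extension by Proposition\ \ref{P:stdomdef}.(2). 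The two are then assembled into a single global $C$-invariant type by the concatenation construction of Lemma\ \ref{L:concat}.

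Next, for \AFD (Definition\ \ref{D:AFD}). Condition (1) holds because the value group of an algebraically closed valued field is a nontrivial divisible ordered abelian group, hence $o$-minimal and stably embedded with no additional induced structure. For (2), the stable part of $\ACVF$ is internal to the residue field $k$, an algebraically closed field and thus strongly minimal, so Morley rank coincides with algebraic dimension, is finite, and is definable in families, as developed in \cite{HasHruMac-Book}. For (3) and (4) the key point is a uniform bound: for $a$ in a definable set $D$, both $\RM(f(a))$ for stable-valued $f$ and the $o$-minimal dimension of any $\Gamma$-internal image $g(a)$ are bounded by the algebraic dimension of $D$, uniformly in $a$. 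Being bounded families of nonnegative integers, the suprema defining $\dimst(D)$ and $\dimo(D)$ are attained.

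Finally, for \Aom (Definition\ \ref{D:Aom}). The metastability bases produced above are already models, so the first requirement is met, and the remaining, and main, task is the density of isolated types over $M \cup S$, where $G \subseteq \Gamma$ is finitely $\acl$-generated and $S \subseteq \St_{M \cup G}$. Here I would exploit that the two sources of genericity behave well over a \emph{model}: in the $\Gamma$-coordinates, $M$ being a model makes $\Gamma(M)$ dense with each of its points definable, so specializing a $\Gamma$-coordinate to an attained value of $M$ yields an isolated type while still meeting any given definable constraint; and in the stable direction $S$ lives in the $k$-internal stable part, where $k$ is $\omega$-stable, so isolated types are dense. The content is to glue these two facts into isolation of the full type over $M \cup S$, and this is exactly what atomicity of resolutions provides: by Proposition\ \ref{P:resolution}.(1) a resolution is atomic over its base, so once the $\Gamma$- and residue-data are chosen generically-isolated, the realizing field point can be taken with isolated type over $M \cup S$. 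I expect this gluing, namely controlling the interaction of the $o$-minimal and $\omega$-stable contributions through resolution, to be the main obstacle; the metastability and \AFD verifications are essentially bookkeeping over \cite{HasHruMac-Book}.
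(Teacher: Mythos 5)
Your treatment of metastability bases, of \AFD, and of the density clause of \Aom is essentially the paper's: maximally complete models as bases via \cite{HasHruMac-Book}, the bound of $\dimst$ and $\dimo$ by Zariski dimension for clauses (3)--(4) of \AFD, and Proposition~\ref{P:resolution} for \Aom. On the last point your argument is more convoluted than necessary: there is no need to ``choose $\Gamma$- and residue-data generically-isolated first''. The paper's argument is simply that the prime resolution $N$ of $C = M\cup G\cup S$ is a model of $\ACVF$ atomic over $C$; every consistent formula over $C$ is realized in $N$ (by model completeness), and any such realization has isolated type over $C$. The preliminary specialization step you describe is not meaningful and, if taken literally, adds nothing.

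The genuine gap is your proof of the invariant extension property, clause (2) of Definition~\ref{def:meta}. You propose to split a type over an arbitrary $C=\acl(C)$ into its $\Gamma$-part and a ``stably dominated remainder'', extend each invariantly, and concatenate. But that decomposition exists exactly when $C$ is a metastability basis; the invariant extension property, however, is required for \emph{every} algebraically closed $C$. For a general such $C$ the type $\tp(a/C\,\Gamma_C(a))$ need not be stably dominated, so there is no remainder to which Proposition~\ref{P:stdomdef}.(2) applies. Concretely, let $C$ be generated by an algebraically closed valued field that is not maximally complete, and let $a$ be a pseudo-limit of a pseudo-Cauchy sequence in $C$ with no pseudo-limit in $C$. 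Then $C(a)$ is an immediate extension, so $\Gamma_C(a)\subseteq\Gamma(C)$ and $\St_C(a)\subseteq\acl(C)$; if $\tp(a/C)$ were stably dominated via some $\alpha$, then $\alpha(a)\in\acl(C)$, the independence hypothesis in the definition of domination would be vacuous, and taking $b=a$ would give $\tp(a/C\alpha(a))\vdash\tp(a/Ca)$, forcing $a\in\dcl(C\alpha(a))\subseteq\acl(C)$, a contradiction. This failure is precisely why metastability bases must be maximally complete, and it is why the paper records as an open question whether the invariant extension property follows from the existence of metastability bases when $\Gamma$ is $o$-minimal --- a question your argument, were it correct, would answer. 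The paper instead quotes the property directly from \cite[Corollary~8.16]{HasHruMac-Book} (where it is proved by $\ACVF$-specific means, not by splitting off a stably dominated part), and your proof needs that citation or an argument of that kind at this step.
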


\begin{proof}
  The fact that, in $\ACVF$, maximally complete submodels are
  metastability bases is proved in
  \cite[Theorem\ 12.18.(ii)]{HasHruMac-Book} and the existence of
  invariant extensions is shown in
  \cite[Corollary\ 8.16]{HasHruMac-Book}. The fact that \AFD holds
  follows from the fact that the value group $\Gamma$ is a divisible
  ordered Abelian group, that the stable part, being internal to an
  algebraically closed field, has finite Morley rank and that, for any
  definable subset $D$ of the field, $\dimst(D)$ and $\dimo(D)$ are
  bounded by the dimension of the Zariski closure of $D$.

  To see that \Aom holds, let $M \models \ACVF$ and $C$ be finitely generated
  over $M$---$C$ might contain imaginaries. By Proposition\ \ref{P:resolution},
  the resolution $N$ of $C$ is a model of $\ACVF$ which is atomic over $M$.
  Hence isolated types over $C$ are dense. Also, note that a maximally complete
  model of \(\ACVF\) is \(\aleph_1\)-saturated if and only if its value group
  and residue fields are.
\end{proof}

\begin{defn}\label{D:pure im}
\begin{enumerate}
\item The (imaginary) element \(e\in M\) is said to be \emph{purely
  imaginary} over \(C\subseteq M\) if \(\acl(Ce)\) contains no field
  elements other than those in $\acl(C)$.
\item An pro-definable set \(D\) is \emph{purely imaginary} if
  there exists no pro-\(C\)-definable map (with parameters) from $D$ onto an
  infinite subset of the field $K$.
\end{enumerate}
\end{defn}

Note that \(D\) is purely imaginary if and only if, for all \(C\) such
that \(D\) is \(C\)-definable, any \(e\in D\) is purely imaginary over
\(C\). For every $\alpha \in \Gamma$, let $\alpha\Oo := \{x \in K: \val(x) \geq
\alpha\}$, and $\alpha\fM : = \{x \in K: \val(x) > \alpha\}$.

\begin{lem}\label{L:se1}The following are equivalent:
\begin{enumerate}
\item $e$ is purely imaginary over $C$.
\item $\dcl(Ce) \meet K \subseteq \acl(C)$.
\item For some $\beta_0 \leq 0 \leq \beta_1 \in \Gamma_C(e)$ and $d\in (\beta_0 \Oo/\beta_1 \fM)^n$, $e\in\dcl(\acl(C)\beta_0\beta_1d)$. 
\end{enumerate}
\end{lem}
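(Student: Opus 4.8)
The plan is to prove the cycle $(1)\Rightarrow(2)\Rightarrow(3)\Rightarrow(1)$, since $(3)$ is the concrete structural statement while $(1)$ and $(2)$ are the ``soft'' characterizations. For the implication $(1)\Rightarrow(2)$ I would argue contrapositively: if $\dcl(Ce)\meet K\not\subseteq\acl(C)$, pick a field element $t\in\dcl(Ce)\setminus\acl(C)$; then $t\in\acl(Ce)$ and $t$ is a genuine field element witnessing that $e$ is not purely imaginary. The only subtlety is matching the definitions: ``purely imaginary'' is phrased with $\acl(Ce)$ containing no new \emph{field} elements, whereas $(2)$ uses $\dcl(Ce)$; the containment $\dcl\subseteq\acl$ gives $(1)\Rightarrow(2)$ immediately, and the reverse direction within the cycle will recover the $\acl$ version, so I must be careful that the full cycle genuinely closes both formulations.

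\textbf{The geometric content.}
The implication $(3)\Rightarrow(1)$ should be routine: if $e\in\dcl(C,\beta_0,\beta_1,\beta_0\Oo/\beta_1\fM)$ with $\beta_0,\beta_1\in\Gamma_C(e)$, then any field element in $\acl(Ce)$ lies in $\acl$ of $C$ together with elements of $\Gamma$ and of the stable quotient $\beta_0\Oo/\beta_1\fM$. The key point is that both $\Gamma$ and the $k$-vector space $\beta_0\Oo/\beta_1\fM$ are \emph{orthogonal to / internal to the residue field}, hence contribute no new field points over $C$: elements of $\Gamma$ are purely imaginary by the standard structure theory of $\ACVF$, and $\beta_0\Oo/\beta_1\fM$ is (a twist of) a $k$-vector space, a stable sort which likewise adds no field elements beyond $\acl(C)$. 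So $\acl(Ce)\meet K\subseteq\acl(C)$.

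\textbf{The hard direction.}
The main work is $(2)\Rightarrow(3)$, which is where the actual classification of imaginaries in $\ACVF$ enters. Here I would invoke the description of the geometric sorts: an imaginary $e$ is coded, up to interdefinability over $C$, by data built from the value group $\Gamma$ and from lattices and their torsion quotients (the sorts $S_n$ and $T_n$ of \cite{HasHruMac-ACVF,HasHruMac-Book}). Starting from $e$, let $\beta_0,\beta_1\in\Gamma_C(e)$ be chosen so that $\beta_0\le 0\le\beta_1$ and the relevant lattice data associated to $e$ is ``trapped'' between $\beta_0\Oo$ and $\beta_1\fM$; the hypothesis $(2)$ that no new field elements appear forces the field-theoretic part of the code to collapse into $\acl(C)$, leaving only the bounded quotient $\beta_0\Oo/\beta_1\fM$ and the value-group parameters $\beta_0,\beta_1$ to carry $e$. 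The obstacle I expect is precisely the bookkeeping needed to show such $\beta_0,\beta_1$ can be found \emph{inside} $\Gamma_C(e)=(C\cup\eq\Gamma)\cap\dcl(Ce)$ rather than over extra parameters, and to reduce an arbitrary geometric imaginary to the single normal form $\beta_0\Oo/\beta_1\fM$; this uses finiteness of the relevant quotients and the explicit generation of the imaginary sorts by the $S_n$ and $T_n$. Once the normal form is in hand, $(3)$ follows, closing the cycle.
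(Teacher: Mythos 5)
There are two genuine gaps, one in each substantive implication. First, your justification of $(3)\Rightarrow(1)$ rests on a false claim: for $\beta_0<\beta_1$ the quotient $\beta_0\Oo/\beta_1\fM$ is \emph{neither} stable \emph{nor} internal to the residue field. The map sending a coset $x+\beta_1\fM$ to $\min(\val(x),\beta_1)$ is well defined and definable (ultrametrically, adding an element of $\beta_1\fM$ changes neither side of the minimum), and it maps $\beta_0\Oo/\beta_1\fM$ onto the infinite interval $[\beta_0,\beta_1]\subseteq\Gamma$; a definable set admitting a definable map onto an infinite linear order is unstable, and a $k$-internal set would be stable. Only in the degenerate case $\beta_0=\beta_1=0$ is this sort the $k$-vector space you describe, and that is not the case that matters here. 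So ``stable sorts contribute no new field points'' cannot be invoked, and a genuinely valuation-theoretic argument is required. The paper's is a counting argument: in a suitable model of $\ACVF$, $\Oo/\alpha\fM$ is countable while every ball is uncountable, so no ball --- hence no infinite definable subset of $K$ --- is a definable image of $(\beta_0\Oo/\beta_1\fM)^n$; this gives $(3)\Rightarrow(2)$. (Alternatively one could argue via Lemma~\ref{L:se2}, since a ball maps definably onto an unbounded subset of $\Gamma$ while definable maps from $(\beta_0\Oo/\beta_1\fM)^n$ to $\Gamma$ are bounded; either way this step is a real lemma, not softness.)

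Second, in $(2)\Rightarrow(3)$ what you defer as ``bookkeeping'' is in fact the entire proof. The paper's argument runs: by elimination of imaginaries \cite[Theorem~1.0.2]{HasHruMac-ACVF} together with (2), $e$ is a canonical parameter, over $\acl(C)$, of an $e$-definable $\Oo$-submodule $\Lambda\subseteq K^m$; the $K$-vector space $V=K\tensor_{\Oo}\Lambda$ is coded by a Grassmannian point which by (2) lies in $K_0=K\meet\acl(C)$, so $V$ may be identified with $K^l$ over $K_0$; dividing out $V'=\{v: K\cdot v\subseteq\Lambda\}$, one gets $\Lambda\subseteq\beta_0\Oo^l$ for some $\beta_0\in\Gamma_C(e)$, and a separate definability argument produces $\beta_1\in\dcl(Ce)$ with $\{\sum_i r_iv_i:\val(r_i)>\beta_1\}\subseteq\Lambda$, so that $\Lambda$ is determined by its image $D$ in $(\beta_0\Oo/\beta_1\fM)^l$. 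Even at that point one is not done: to conclude $e\in\dcl(C,\beta_0,\beta_1,\beta_0\Oo/\beta_1\fM)$ one needs $D$ to be definable \emph{over the sort} $\beta_0\Oo/\beta_1\fM$ itself, i.e.\ the stable embeddedness of this quotient, which the paper imports from \cite{HruTat}. None of these steps --- the module coding, the trapping of $\Lambda$ between $\beta_0\Oo^l$ and $\beta_1\fM^l$ with parameters inside $\Gamma_C(e)$, and the stable embeddedness --- appears in your sketch, and the last is not routine. As written, the only implication you have actually established is $(1)\Rightarrow(2)$; note also that $(2)\Rightarrow(1)$ has a cheap direct proof (the finite set of conjugates of a field element over $Ce$ is coded by a tuple of field elements lying in $\dcl(Ce)$), so routing that direction through $(3)$, as your cycle does, needlessly makes it hostage to the hard implication.
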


\begin{proof}
Note that (1) implies (2) trivially. Let us now prove that (2) implies
(1). Let $d \in \acl(Ce)$ be a field element.  The finite set of
conjugates of $d$ over $Ce$ is coded by a tuple $d'$ of field elements
and $d' \in \dcl(Ce)$.  By (2), $d' \in \acl(C)$.  Since $d \in
\acl(d')$ we have $d \in \acl(C)$.

Let us now assume (3) and prove (2). Let $a = g(e) \in K$ for some $C$-definable map $g$. By (3), there exists an $\acl(C\beta_0\beta_1)$-definable map $f$ with domain $(\beta_0 \Oo/\beta_1 \fM)^n$ and whose range contains $e$. Then $g\circ f : (\beta_0 \Oo/\beta_1 \fM)^n\to K$  is definable. However, its image cannot contain a ball since in some models of $\ACVF$, $\beta_0\Oo/\beta_1 \fM$ is
countable while every ball is uncountable. Hence, the image of $g\circ f$ is finite and $a\in\acl(C)$.

Finally, let us prove that (2) implies (3): By
\cite[Theorem\ 1.0.2]{HasHruMac-ACVF}, and using (2), there exists an
$e$-definable $\Oo$-submodule $\Lambda$ of $K^m$ (for some $m$), such
that $e$ is a canonical parameter for $\Lambda$, over $\acl(C)$. The
$K$-vector space $V=K \tensor_{\Oo} \Lambda$ is coded by an element
$w$ of some Grassmanian $G_{m,l}$. Note that $w\in\dcl(Ce)\meet K = \acl(C)\meet K =: K_0$. Since $K_0$ is an algebraically closed field, $V$ is
$K_0$-isomorphic to $K^l$, so we may assume $V=K^l$. Dually, let $V' =
\{v \in V: K\cdot v \subseteq \Lambda \}$.  Then $V'$ is a $K_0$-definable $K$-vector subspace of $V$. Replacing $\Lambda$ by
the image in $V/V'$, we may assume $V'=(0)$.  It follows, by
\cite[Lemma\ 2.2.4]{HasHruMac-ACVF}, that $V' \subseteq \beta_0 \Oo^l$ for some
$\beta_0 <0$. The set of all such $\beta_0$ is $\acl(C)e$-definable. Since o-minimal groups have Skolem functions, we may choose $\beta_0$ in \(\Gamma_C(e)\).

Let $v_1,\ldots,v_l$ be a standard basis for $V=K^l$.  Since $v_i \in
K \tensor_{\Oo} \Lambda$, we have $c_i v_i \in \Lambda$ for some $c_i
\in \Oo$.  So $\sum r_i c_i v_i \in \Lambda$ for all $r_1,\ldots,r_l
\in \Oo$.  Let $\Lambda'(\beta)= \{\sum r_i v_i: \val(r_i) > \beta
\}$.  Then $\Lambda'(\beta) \subseteq \Lambda$ for sufficiently large
$\beta$; namely for $\beta \geq \max_i\val(c_i)$. The set $\{\beta:
\Lambda'(\beta) \subseteq \Lambda \}$ is definable, hence it contains
$\{\beta: \beta > \beta_1 \}$ for some $Ce$-definable $\beta_1$. It
follows that $\Lambda$ is determined by its image \(D\) in $(\beta_0
\Oo / \beta_1 \fM)^l$.  Pick $c \in \beta_0 \Oo / \beta_1 \fM$; then $r
\mapsto r\cdot c$ is an isomorphism $\Oo / (\beta_1-\beta_0)\fM \to b_0 \Oo
/ b_1\fM$.  By \cite{HruTat}, $\Oo / (\beta_1-\beta_0)\fM $ is stably
embedded; hence so is $\beta_0 \Oo / \beta_1 \fM$. Let $d\in (\beta_0 \Oo / \beta_1 \fM)^n$ be such that \(D\) is $\beta_0\beta_1d$-definable. Then $e\in\dcl(\acl(C)\beta_0\beta_1d)$, as required.
\end{proof}

\begin{rem}\label{R:Geom pure im}
As we can see from the proof above, every geometric sort other than
$K$ is purely imaginary.
\end{rem}

\begin{defn}
  \begin{enumerate}
  \item An element \(e\in M\) is said to be \emph{boundedly imaginary}
    over \(C\subseteq M\) if for any $\gamma\in\Gamma_C(e)$,
    $\tp(\gamma/C)$ is bounded, i.e. it is neither the type at
    \(+\infty\) nor the one at \(-\infty\).
  \item An \(\infty\)-definable set \(D\) is \emph{boundedly
      imaginary} if there exists no definable map (with parameters)
    from $D$ onto an unbounded subset of $\Gamma$.
  \end{enumerate}
\end{defn}

Note that \(D\) is boundedly imaginary if and only if any $e\in D$ is boundedly imaginary over any $C$ over which $D$ is defined.

\begin{lem}\label{L:se2}
Any definable function $f: (\alpha \Oo / \beta \fM)^n \to \Gamma$ is bounded.
\end{lem}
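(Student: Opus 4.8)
The plan is to reduce the statement to the bounded imaginariness of individual elements, and then to exploit that the only source of $\Gamma$-information in $\alpha\Oo/\beta\fM$ is the valuation of a representative, which is trapped in the interval $[\alpha,\beta]$.

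First I would fix a base $C$ over which $f$ and the set are defined, arranging $\alpha,\beta\in\Gamma(C)$. Since $f(s)\in\Gamma_C(s)$ for every $s\in(\alpha\Oo/\beta\fM)^n$, and $f((\alpha\Oo/\beta\fM)^n)$ is a $C$-definable subset of the $o$-minimal set $\Gamma$, this image is bounded as soon as it meets neither of the types at $+\infty$ nor $-\infty$ over $C$. By the definition of boundedly imaginary it therefore suffices to prove that every $s\in(\alpha\Oo/\beta\fM)^n$ is boundedly imaginary over $C$, i.e.\ that every element of $\Gamma_C(s)$ has bounded type over $C$.

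To see this I would pass from the imaginary sort to field representatives. A $C$-definable $\gamma=h(s)\in\Gamma_C(s)$ lifts to a $C$-definable $H:(\alpha\Oo)^n\to\Gamma$ that is invariant under translating each coordinate by an element of $\beta\fM$. Here I would use the description of definable $\Gamma$-valued functions on the field in $\ACVF$: piecewise, such an $H$ is a $\Qq$-linear combination of valuations $\val(g_j(x))$ of $C$-definable field-valued functions $g_j$, plus a constant in $\Gamma(C)$. Evaluated on $x\in(\alpha\Oo)^n$, each $\val(g_j(x))$ is bounded below by an element of $\Gamma(C)$, since the arguments range over the bounded set $\alpha\Oo$; and the invariance of $H$ modulo $\beta\fM$ forces these valuations to be insensitive to the part of $x$ of valuation exceeding $\beta$, so that they are also bounded above in terms of $\beta$ and the $g_j$. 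Thus $\gamma$ is a $\Qq$-combination of elements of $[\alpha,\beta]$ and of $\Gamma(C)$, and so has bounded type over $C$. The natural map $\val:\alpha\Oo/\beta\fM\to[\alpha,\beta]$ already exhibits one such bounded nonconstant function, and the content of the lemma is precisely that none can escape this range.

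The conceptual reason behind the capping is that in $\ACVF$ the unbounded part of $\Gamma$ is orthogonal to, equivalently independent from, the stably embedded sort $\alpha\Oo/\beta\fM$ (its stable embeddedness being the content of the result of \cite{HruTat} already invoked in Lemma\ \ref{L:se1}). In automorphism-theoretic terms, there are automorphisms over $C$ that fix $\alpha\Oo/\beta\fM$ pointwise, being congruent to the identity modulo $\beta\fM$, yet move every element of $\Gamma$ of unbounded type over $C$; since $\gamma\in\dcl(Cs)$ is fixed by every automorphism fixing $C$ and $s$, it cannot have unbounded type. I expect the main obstacle to be making this capping rigorous: one must show that quotienting by $\beta\fM$ genuinely discards all valuative information of height above $\beta$, so that no definable function can read off an unbounded value from a coset alone. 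This is exactly where the fine structure of $\ACVF$ (quantifier elimination in the geometric sorts and the stable embeddedness of $\Oo/\gamma\fM$) is indispensable, and where one must check carefully that the extremal valuations are genuinely confined to $[\alpha,\beta]$ rather than leaking off to $\pm\infty$.
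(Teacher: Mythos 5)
Your reduction in the first paragraph is sound: since the image of $f$ is a $C$-definable subset of the $o$-minimal set $\Gamma$, unboundedness would force some value $f(s)$ to realize the type at $+\infty$ or $-\infty$ over $C$, so it suffices to show each element of the sort is boundedly imaginary over $C$. The problem is the central ``capping'' step, which you assert rather than prove --- and, as stated, it is false. Writing $H = f\circ\pi$ on $(\alpha\Oo)^n$ and decomposing it piecewise as $\Qq$-combinations of $\val(g_j(x))$, it is \emph{not} true that invariance of $H$ under $\beta\fM$-translation bounds each $\val(g_j(x))$ above: $\val(g_j(x))$ blows up as $x$ approaches a zero of $g_j$, and the pieces of the decomposition need not themselves be $\beta\fM$-invariant, so invariance of $H$ gives no direct control of the individual valuations on a piece. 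The statement can be repaired for $n=1$ by factoring $g(x)=c\prod_i(x-a_i)$: if $\val(x-a_i)>\beta$ for some root $a_i$, then $x\equiv a_i \pmod{\beta\fM}$ and invariance gives $H(x)=H(a_i)\in\Gamma(\acl(C))$, while otherwise every $\val(x-a_i)\leq\beta$ and $\val(g(x))$ is capped by $\val(c)+\deg(g)\cdot\beta$; but this argument (which you do not give) uses the linear factorization and so does not extend to $n>1$, where you would additionally need the paper's induction via $F(x)=\sup_y f(x,y)$ or a genuinely multivariable structure theorem. Your fallback ``conceptual reason'' is circular: the existence of automorphisms fixing $C\cup\alpha\Oo/\beta\fM$ pointwise while moving every unbounded element of $\Gamma$ is essentially equivalent to the orthogonality you are trying to establish, since it presupposes that the type at infinity over $C$ remains complete over $C$ together with the sort.

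For contrast, the paper avoids analyzing the function altogether and analyzes its \emph{fibers}: for $\gamma$ realizing the type $q$ at infinity over $C$, the pullback $X(\gamma)\subseteq\Oo$ of $f^{-1}(\gamma)$ is a finite Boolean combination of balls whose valuative radii lie in $[0,\beta]$; by $o$-minimality these radii are constant on $q$, and then \cite[Proposition~2.4.4]{HasHruMac-ACVF} (no definable function from a finite cover of $\Gamma$ into balls of constant radius has infinite image) forces $X(\gamma)$ itself to be constant on $q$. Since distinct nonempty fibers are disjoint, $X(\gamma)=\emptyset$ for $\gamma\models q$, i.e.\ $f$ is bounded; the case $n>1$ follows by taking suprema in one variable. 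If you want to salvage your approach, you should prove the $n=1$ capping via the root-factorization argument above and then borrow the paper's supremum induction; as written, the proof has a genuine gap at its key step, which you yourself flag.
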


\begin{proof}
Suppose first $n=1$.  Since parameters are allowed, we may assume
$\alpha=0$, and consider $f: \Oo / \beta \fM \to \Gamma$, defined over
$C$. Let $q$ be the type of elements of $\Gamma$ greater than any
element of $\Gamma(C)$.  For $\gamma \models q$, let $X(\gamma) $ be
the pullback to $\Oo$ of $f^{-1}(\gamma) $.  This is a finite Boolean
combination of balls of valuative radii
$\delta_1(\gamma),\ldots,\delta_m(\gamma)$, with $0 \leq
\delta_i(\gamma) \leq \beta$.  But any $C$-definable function into a
bounded interval in $\Gamma$ is constant on $q$.  Thus $X(\gamma)$ is
a finite Boolean combination of balls of constant valuative radii
$\delta_1,\ldots,\delta_m$.  However, it is shown in
\cite[Proposition\ 2.4.4]{HasHruMac-ACVF} that any definable function
on a finite cover of $\Gamma$ into balls of constant radius has finite
image.  Hence $X(\gamma)$ is constant on $q$.  But if $X(\gamma) =
X(\gamma') \neq \emptyset$ then for any $x \in X(\gamma)$, $\gamma =
f(x + \beta \fM )= \gamma'$.  It follows that $X(\gamma) = \emptyset$
for $\gamma \models q$, i.e. $f$ is bounded.  Now, given $f: (\alpha
\Oo / \beta \fM)^2 \to \Gamma$, let $F(x) = \sup \{f(x,y): y \in
\alpha \Oo / \beta \fM \}$.  Then $F$ is bounded, so $f$ is bounded.
This shows the case $n=2$, and the general case is similar.
\end{proof}

\begin{cor}\label{C:bdd-im}
  Let $D$ be a $C$-definable set in $\ACVF$. Then the following are
  equivalent:
  \begin{enumerate}
  \item \(D\) is boundedly imaginary.
  \item There exists a definable surjective map $g: (\Oo / \beta
    \fM)^n \to D$.
  \item There is an $\acl(C)$-definable surjective map $g: (\beta_0\Oo / \beta_1\fM)^n \to D$, where $\beta_0 \leq 0 \leq \beta_1 \in \Gamma(C)$.
  \end{enumerate}
\end{cor}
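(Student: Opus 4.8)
The plan is to prove the cycle (3)$\Rightarrow$(2)$\Rightarrow$(1)$\Rightarrow$(3), treating the first two implications as routine and concentrating on the last. For (2)$\Rightarrow$(1): if $g\colon(\Oo/\beta\fM)^n\to D$ is a definable surjection and $f\colon D\to\Gamma$ is any definable map, then $f\circ g$ is a definable map $(\Oo/\beta\fM)^n\to\Gamma$, hence bounded by Lemma\ \ref{L:se2}; since $g$ is onto, $f(D)$ is bounded, so no definable map has unbounded image and $D$ is boundedly imaginary. For (3)$\Rightarrow$(2): set $\beta:=\beta_1-\beta_0\geq 0$ and choose $c\in K$ with $\val(c)=\beta_0$, so that multiplication by $c$ is a definable isomorphism $\Oo/\beta\fM\to\beta_0\Oo/\beta_1\fM$; it then suffices to build a definable surjection from a power of $\beta_0\Oo/\beta_1\fM$ onto $D$. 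By (3) every $e\in D$ is the value of some $C$-definable function on a tuple from $\beta_0\Oo/\beta_1\fM$, and by compactness finitely many such functions cover $D$; these amalgamate into a single surjection from one fixed power $(\beta_0\Oo/\beta_1\fM)^n$, using that the sort is infinite to separate the finitely many cases definably. Transporting along the isomorphism gives (2).

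The substance is (1)$\Rightarrow$(3). First I would observe that a boundedly imaginary set is purely imaginary: if there were a definable $h\colon D\to K$ with infinite image, that image would contain a closed ball $a+\gamma\Oo$, and then the definable map sending $e$ to $\val(h(e)-a)$ when $h(e)\in a+\gamma\Oo$ and to $\gamma$ otherwise would carry $D$ onto the unbounded set $[\gamma,\infty)\subseteq\Gamma$, contradicting (1). Hence by Lemma\ \ref{L:se1} every $e\in D$ lies in $\dcl(C,\beta_0\Oo/\beta_1\fM)$ for some $\beta_0\le 0\le\beta_1$ in $\Gamma_C(e)$; the task is to make these radii uniform and then to absorb them.

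To get uniformity I would rerun the construction of Lemma\ \ref{L:se1} in a family. Using the classification of imaginaries \cite[Theorem\ 1.0.2]{HasHruMac-ACVF}, after partitioning $D$ into finitely many $C$-definable pieces of constant dimension, code it by a $C$-definable family $e\mapsto\Lambda_e$ of $\Oo$-submodules of a fixed $K^m$; pure imaginarity forces the span $K\Lambda_e$ to be defined over $K\meet\acl(C)$, so the normalisations of Lemma\ \ref{L:se1} go through over $C$. This produces genuine $C$-definable functions $\beta_0,\beta_1\colon D\to\Gamma$ with $(\beta_1(e)\fM)^m\subseteq\Lambda_e\subseteq(\beta_0(e)\Oo)^m$ and with $e$ interdefinable over $C$ with the image $\overline{\Lambda}_e$ of $\Lambda_e$ in $(\beta_0(e)\Oo/\beta_1(e)\fM)^m$. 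Since $D$ is boundedly imaginary, the $C$-definable sets $\beta_0(D),\beta_1(D)\subseteq\Gamma$ must be bounded (an unbounded definable subset of the $o$-minimal $\Gamma$ would realise the type at $\pm\infty$ over $C$, forbidden for elements of $\Gamma_C(e)$); hence $\beta_0^{*}:=\min(0,\inf\beta_0(D))$ and $\beta_1^{*}:=\max(0,\sup\beta_1(D))$ lie in $\Gamma(C)$ and satisfy $\beta_0^{*}\le\beta_0(e)$, $\beta_1^{*}\ge\beta_1(e)$ for all $e$.

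Finally I would absorb the radii. Because $\beta_0^{*}\le\beta_0(e)$ and $\beta_1^{*}\ge\beta_1(e)$, the module $\Lambda_e$ is the full preimage of its image $\overline{\Lambda}_e$ in the fixed sort $(\beta_0^{*}\Oo/\beta_1^{*}\fM)^m$, so $e$ is interdefinable over $C$ with $\overline{\Lambda}_e$ (note $\beta_0^{*},\beta_1^{*}\in\Gamma(C)\subseteq C$). Now $\beta_0^{*}\Oo/\beta_1^{*}\fM$ is stably embedded (the content of the appeal to \cite{HruTat} in Lemma\ \ref{L:se1}), so the canonical parameter of the definable set $\overline{\Lambda}_e\subseteq(\beta_0^{*}\Oo/\beta_1^{*}\fM)^m$ lies in $\dcl(C\cup(\beta_0^{*}\Oo/\beta_1^{*}\fM))$, whence $e\in\dcl(C,\beta_0^{*}\Oo/\beta_1^{*}\fM)$, which is (3). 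The main obstacle is precisely the uniformity of the third paragraph: Lemma\ \ref{L:se1} is argued one element at a time, so I must verify that the coding $e\mapsto\Lambda_e$, the reduction to $V'=(0)$, and the extraction of the radii are all arrangeable $C$-definably in families—splitting $D$ into finitely many definable pieces to keep the ambient dimension $m$ constant. Once $\beta_0,\beta_1$ are honest $C$-definable functions of $e$, bounded imaginarity bounds them automatically and stable embeddedness of the quotient sort absorbs the remaining parameters.
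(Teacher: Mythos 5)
Your implications (2)$\Rightarrow$(1) and (3)$\Rightarrow$(2) are exactly the paper's (the first is Lemma~\ref{L:se2}, the second is the step the paper dismisses as easy), and your observation that boundedly imaginary sets are purely imaginary is the remark the paper makes just before the corollary. The divergence is at (1)$\Rightarrow$(3), where you propose to re-run the construction of Lemma~\ref{L:se1} uniformly in families: a uniform coding $e\mapsto\Lambda_e$, a uniform reduction to $V'=(0)$, and a definable extraction of the radii. You correctly flag this uniformization as the main obstacle, but you leave it undischarged, and it is a genuine obligation: it amounts to proving a family version of Lemma~\ref{L:se1} (uniform elimination-of-imaginaries coding, finiteness of the image of the map sending $e$ to the code of $K\Lambda_e$, definable choice of $\beta_0(e),\beta_1(e)$), and, because you re-run the proof instead of citing the lemma, you also take on the $\acl(C)$-versus-$C$ bookkeeping that the statement of Lemma~\ref{L:se1}.(3) has already packaged for you.

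The paper's proof shows that none of this work is needed, because the only uniformity required is already built into the statement of Lemma~\ref{L:se1}.(3): the radii it produces lie in $\Gamma_C(e)$, which \emph{by definition} means $\beta_i=f_i(e)$ for some $C$-definable functions $f_i$. So fix $e\in D$ and apply the lemma pointwise. Hypothesis (1) forces each $f_i$ to be bounded on $D$ with $C$-definable bounds $\beta_0^\ast\leq 0\leq\beta_1^\ast$; since $\beta_0^\ast\leq\beta_0(e)\leq 0\leq\beta_1(e)\leq\beta_1^\ast$, the elements $\beta_0(e),\beta_1(e)$ are recoverable over $C$ from elements of the single sort $\beta_0^\ast\Oo/\beta_1^\ast\fM$ (as valuations of classes), and every element of $\beta_0(e)\Oo/\beta_1(e)\fM$ is definable over $C\beta_0(e)\beta_1(e)$ from any class of $\beta_0^\ast\Oo/\beta_1^\ast\fM$ inside it; hence $e\in\dcl(C,\beta_0^\ast\Oo/\beta_1^\ast\fM)$. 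The quantification over $e\in D$ is then handled by compactness at the level of this dcl statement itself: finitely many pairs of $C$-definable radii (with finitely many $C$-definable functions witnessing the dcl) cover $D$, and one takes their minimum and maximum. In other words, your final paragraph (bounding the radius functions and absorbing the radii into a coarser fixed sort) is exactly right, but your third paragraph can be deleted wholesale: $\Gamma_C(e)$ hands you the $C$-definable radius functions for free, so there is no construction to uniformize.
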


Note that infinite definable subsets of \(K\) contain balls and that balls are not boundedly imaginary: given any point $a$ in a ball $b$, the set $\{\val(x-a)\mid x\in b\}$ is not bounded. It follows that boundedly imaginary definable
sets are purely imaginary.

\begin{proof}
  By Lemma\ \ref{L:se2}, (1) follows from (2) and (3) implies (2)
  easily. To prove that (1) implies (3), by compactness it suffices to
  fix $e \in D$ and find $\beta_0 \leq 0 \leq \beta_1 \in \Gamma(C)$ and an $\acl(C)$-definable map $g: (\beta_0\Oo / \beta_1\fM)^n \to D$ whose range contains $e$. Since
  $e$ is purely imaginary over $C$, by Lemma\ \ref{L:se1}.(3), this does hold with \(\beta_0, \beta_1\in\Gamma_C(e)\). But $\beta_i=f_i(e)$
  for some $C$-definable functions $f_i$; by (1), $f_i$ is bounded on
  $D$, and the upper and lower bounds are $C$-definable.
\end{proof}

It follows, by compactness, that any \(\infty\)-definable boundedly
imaginary \(D\) is contained in a definable boundedly imaginary
\(D'\).

Let us conclude this section with a characterization of independence of stably dominated types of field elements in $\ACVF$ in terms of a \emph{maximum modulus} principle.

\begin{prop}[Maximum Modulus, {\cite[Theorem\ 14.12]{HasHruMac-Book}}]
\label{P:maxmod}
Let $p$ be a stably dominated $C$-definable type concentrating on an
affine variety $V$ defined over $K \meet C$. Let $P=p|C$ and $F$ be a
regular function on $V$ over $L \supseteq C$.  Then $\val(F)$ has an
infimum $\gamma_{\min}^F \in \Gamma(L)$ on $P$.  Moreover for $a
\models P$, $a \models p|L$ if and only if $\val(F(a)) =
\gamma_{\min}^F$ for all such $F$.
\end{prop}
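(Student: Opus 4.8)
The plan is to treat the two assertions separately, fixing a single regular function $F$ for the existence of $\gamma_{\min}^F$ and then combining over all $F$ for the characterization of $p|L$. By the first part of Proposition~\ref{P:stdomdef} we may assume $C=\acl(C)$, and we identify $p$ with its unique global $C$-definable extension furnished by the second part of that proposition, so that $p$ is stably dominated over $C$ via some $\alpha: p\to\St_C$; recall that for $B\supseteq C$ one has $a\models p|B$ exactly when $\St_C(B)\ind_C\alpha(a)$. Since $\ACVF$ is metastable (Proposition~\ref{P:ACVF hyp}), orthogonality to $\Gamma$ is available.

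First I would produce $\gamma_{\min}^F$. The composite $g=\val\circ F$ is an $L$-definable map $p\to\eq{\Gamma}$, so by orthogonality to $\Gamma$ (Proposition~\ref{P:orth}) its $p$-germ is constant; set $\gamma_{\min}^F:=g(p)$. As $g$ is $L$-definable and $p$ is $L$-invariant, $g(p)$ is fixed by every automorphism fixing $L$, so $\gamma_{\min}^F\in\dcl(L)\meet\Gamma=\Gamma(L)$ (one can also invoke the strong-germ statement, Proposition~\ref{P:stgerms}). In particular $p|L\vdash \val(F(x))=\gamma_{\min}^F$, which already gives the forward direction of the characterization and shows the value $\gamma_{\min}^F$ is attained on $P$ by any $a\models p|L$.

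The substance is the lower bound $\val(F(a))\geq\gamma_{\min}^F$ for every $a\models P$, i.e. that $\gamma_{\min}^F$ is genuinely the infimum; I expect this to be the main obstacle, as abstract orthogonality is not enough—failure of genericity is precisely an upward jump of the valuation at a non-generic specialization, an asymmetry between ``$a$ generic over the parameters of $F$'' and ``those parameters generic over $a$.'' Here I would descend to the residue field: choosing $c$ with $\val(c)=\gamma_{\min}^F$, the reduction of $F/c$ is a regular function $\bar F$ on the canonical reduction of $V$ attached to $p$, and $p$ concentrates on the locus $\val(F)\geq\gamma_{\min}^F$ where $F/c\in\Oo$; genericity of the stable datum $\alpha(a)$ is then equivalent to $\bar F(\overline{a})\neq 0$, i.e. to $\val(F(a))=\gamma_{\min}^F$, while any specialization can only raise the valuation. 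This is exactly the non-Archimedean maximum modulus mechanism, and it yields both $\val(F(a))\geq\gamma_{\min}^F$ on all of $P$ and equality characterizing genericity function by function.

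Finally, to obtain the full characterization I would combine all $F$. The forward direction is done; for the converse, suppose $\val(F(a))=\gamma_{\min}^F$ for every regular $F$ over $L$. Applying this to the shifted functions $F-\ell$ with $\ell\in L$ forces the residues $\res(F(a)/c_F)$ to avoid every relation over the residue data of $L$, so that $\alpha(a)=\St_C(a)$ is independent from $\St_C(L)$ over $C$; by the characterization of $p|B$ recalled above (Proposition~\ref{P:stdomdef}) this means $a\models p|L$. The delicate point in this last step, again of $\ACVF$-structural nature, is that the valuations of all regular functions over $L$ really do recover the entire stable part $\St_C(a)$ rather than a proper reduct of it; this is what links the purely valuative condition to the independence $\St_C(L)\ind_C\alpha(a)$.
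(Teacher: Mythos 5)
The paper itself offers no proof to compare against: Proposition~\ref{P:maxmod} is imported wholesale from \cite[Theorem~14.12]{HasHruMac-Book}, and the text moves straight on to Corollary~\ref{C:maxmod2}. So your attempt has to stand on its own. Its easy half does: orthogonality to $\Gamma$ (Proposition~\ref{P:orth}) together with $L$-definability of $\val\circ F$ and $L$-invariance of $p$ gives a constant generic value $\gamma_{\min}^F\in\Gamma(L)$, hence the forward implication. You also correctly locate the substance in the inequality $\val(F(a))\geq\gamma_{\min}^F$ for \emph{every} $a\models P=p|C$.

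But your mechanism for that inequality is circular. The ``canonical reduction of $V$ attached to $p$'' is built from $R=\{G:\defsc{p}{x}\,\val(G(x))\geq 0\}$ and its ideal $I=\{G:\defsc{p}{x}\,\val(G(x))>0\}$, and the reduction $\overline{a}$ of a point $a$ is the map $G\mapsto\res(G(a))$. For $\overline{a}$ to be defined at all --- equivalently, for ``any specialization can only raise the valuation'' to apply to $a$ --- one needs $\val(G(a))\geq 0$ for every $G\in R$, which is precisely the statement being proved, and for all regular functions with nonnegative generic value at once. Note that for a general definable map into $\Gamma$ (e.g.\ $-\val(F)$) the infimum claim is false, so regularity of $F$ must enter through some genuine tool, not just through the formal reduction formalism. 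For the Gauss point of a polydisk the inequality is the ultrametric inequality, because the monomials form a separated generating set; for an arbitrary stably dominated type the whole difficulty is to rule out extra cancellation in $F=\sum_i\ell_i m_i$ when the coefficients $\ell_i$ are \emph{not} generic over $a$. The proof in \cite{HasHruMac-Book} does this by passing (via invariant extensions and symmetry of stably dominated types) to a maximally complete base and invoking separated bases there --- the same no-cancellation input this paper itself borrows, as \cite[Lemma~12.4]{HasHruMac-Book}, at the analogous step in the proof of Theorem~\ref{T:genchar3}. Nothing in your sketch substitutes for it. Finally, your converse direction is likewise only asserted (that the valuations of all regular $F$ over $L$ recover $\St_C(a)$ up to the required independence); that direction, however, has a short honest proof you should use instead: by quantifier elimination in $\ACVF$ the equalities $\val(F(a))=\gamma_{\min}^F$, over all regular $F$ on $V$ over $L$, determine a complete type over (the field points of) $L$, which $p|L$ satisfies --- exactly the argument the paper runs inside the proof of Proposition~\ref{P:genchar2}.
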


\begin{cor}[{\cite[Theorem\ 14.13]{HasHruMac-Book}}]\label{C:maxmod2}
  Let $U$ and $V$ be varieties over the algebraically closed valued
  field $C$.  Let $p$ and $q$ be stably dominated types over $C$ of
  elements of $U$ and $V$ respectively.  Let $F$ be a regular function
  on $U \times V$.  Then there exists $\gamma_F \in \Gamma$
  such that:
  \begin{enumerate}
  \item If $(a,b) \models p \tensor q$ then $\val(F(a,b)) = \gamma_F$.
  \item For any $a \models p$, $b \models q$, we have $\val(F(a,b))
    \geq \gamma_F$.
  \item Assume $U$ and $V$ are affine.  If $a \models p$ and $b
    \models q$ and $\val(F(a,b)) = \gamma_F$ for all regular $F$ on $U
    \times V$, then $(a,b) \models p \tensor q$.
  \end{enumerate}
\end{cor}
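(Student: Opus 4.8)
The plan is to reduce all three assertions to the one-variable Maximum Modulus principle (Proposition~\ref{P:maxmod}): for (1) I apply it to the product type on \(U\times V\), and for (2) and (3) to the regular functions \(b\mapsto F(a,b)\) obtained by freezing a generic first coordinate. Before anything else I record three reductions. First, \(p\tensor q\) is itself stably dominated over \(C\): if \((a,b)\models(p\tensor q)|C\) then \(a\models p|C\) and \(b\models q|Ca\), and since \(q\) is stably dominated over \(C\) it remains so over \(Ca\) by Proposition~\ref{P:base change}(1), so \(\tp(b/Ca)\) is stably dominated and hence \(\tp(ab/C)=(p\tensor q)|C\) is stably dominated by Proposition~\ref{P:stdom}.(\ref{ltrans}). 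Second, I may assume \(U\) and \(V\) are affine: cover them by affine opens defined over the field part of \(C\); since \(p\) and \(q\) are complete \(C\)-definable types, each concentrates on a single such open \(U_0\), \(V_0\), and I replace \(U,V\) by \(U_0,V_0\), so that \(U\times V\) is affine. Third, I take \(F\) defined over \(C\) (enlarging \(C\) by the parameters of \(F\) if necessary, which preserves stable domination of \(p\) and \(q\) by Proposition~\ref{P:base change}(1)).

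Parts (1) and (2) come together. Applying Proposition~\ref{P:maxmod} to the stably dominated type \(p\tensor q\) on \(U\times V\) with \(L=C\), every realization of \((p\tensor q)|C\) attains the infimum of \(\val(F)\); setting \(\gamma_F\) to be that infimum gives \(\val(F(a,b))=\gamma_F\) for all \((a,b)\models(p\tensor q)|C\), which is (1). For (2), fix \(a\models p|C\) and regard \(b\mapsto F(a,b)\) as a regular function on \(V\) over \(Ca\). By Proposition~\ref{P:maxmod} applied to \(q\) over base \(C\) with \(L=Ca\), for \(b\models q|C\) we have \(\val(F(a,b))\geq\gamma^{F(a,\cdot)}_{\min}\), with equality exactly when \(b\models q|Ca\). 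But if \(b\models q|Ca\) then \((a,b)\models(p\tensor q)|C\) (Definition~\ref{D:freeprod}), so by (1) the value there is \(\gamma_F\); hence \(\gamma^{F(a,\cdot)}_{\min}=\gamma_F\) and \(\val(F(a,b))\geq\gamma_F\) for every \(b\models q|C\). As \(a\) was arbitrary, this is (2).

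For (3), write \(R_U,R_V\) for the coordinate rings over \(C\) and suppose \(a\models p|C\), \(b\models q|C\) satisfy \(\val(F(a,b))=\gamma_F\) for every regular \(F\) on \(U\times V\). It suffices to show \(b\models q|Ca\), for then \((a,b)\models p\tensor q\). By the ``moreover'' clause of Proposition~\ref{P:maxmod} (for \(q\) over \(C\) with \(L=Ca\)), this amounts to \(\val(G(b))=\gamma^G_{\min}\) for every regular \(G\) on \(V\) over \(Ca\). The functions \(F(a,\cdot)\) with \(F\in R_U\tensor_C R_V\) are precisely the regular functions on \(V\) over \(C[a]\); a general \(G\) over \(Ca\) is such a function divided by some \(d\in C[a]\), and dividing by the constant \(d\) shifts both \(\val(G(b))\) and \(\gamma^G_{\min}\) by \(\val(d)\). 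So it is enough to verify the equality for each slice \(F(a,\cdot)\); since \(\gamma^{F(a,\cdot)}_{\min}=\gamma_F\) by the previous paragraph, the hypothesis \(\val(F(a,b))=\gamma_F\) is exactly the required \(\val(F(a,b))=\gamma^{F(a,\cdot)}_{\min}\). Hence \(b\models q|Ca\).

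The main obstacle is this converse direction (3): one must see that the Maximum Modulus criterion for \(q\) over \(Ca\), which a priori ranges over \emph{all} regular functions on \(V\) over \(Ca\), can be tested on the explicitly available slices \(F(a,\cdot)\). This rests on identifying \(\{F(a,\cdot)\}\) with the regular functions over \(C[a]\) up to \(C(a)\)-scalars, and it is precisely the equality \(\gamma^{F(a,\cdot)}_{\min}=\gamma_F\) supplied by (1)--(2) that lets the hypothesis of (3) feed into that criterion; the affine reductions of the first paragraph are what make these slice functions regular in the first place.
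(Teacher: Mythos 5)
Your proof is correct and takes essentially the same route as the paper's: reduce to the affine case and apply Proposition~\ref{P:maxmod} to $q$ over the base $Ca$, using stable domination of $p$ to make $\gamma_F$ independent of $a$ --- your extra steps (checking that $p\tensor q$ is stably dominated so that (1) follows from Proposition~\ref{P:maxmod} applied to the product type, and reducing the $Ca$-criterion in (3) to the slices $F(a,\cdot)$ modulo denominators in $C[a]$) merely fill in details the paper leaves implicit. One small caveat: the phrase ``with equality exactly when $b\models q|Ca$'' overstates Proposition~\ref{P:maxmod}, whose criterion characterizes $q|Ca$ only via equality for \emph{all} regular functions over $Ca$ simultaneously, but since you only use the valid implication ($b\models q|Ca$ forces equality), the argument is unaffected.
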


\begin{proof}
  Since $V$ admits a finite cover by open affines, and $q$
  concentrates on one of these affines, we may assume $V$ is affine.
  Let $a \models p$.  Then the statement follows from
  Proposition\ \ref{P:maxmod} applied to $q$ over $Ca$. As $p$ is stably
  dominated, the value of $\gamma_F$ does not depend on $a$.
\end{proof}

\subsection{Definable directed partial orders and cofinal types}

In this subsection, we show that nicely behaved definable filters can be
completed to a definable type (see \cref{complete filter}). This in turn implies
that definable directed orders admit definable cofinal types.

\begin{defn}
A {\em definable  filter basis} on a definable set $X$ is a definable family
$\cB$ of definable subsets of $X$, forming a filter basis; i.e. if $U,V \in \cB$
then there exists $W \in \cB$ with $W \subseteq U \meet V$.   We also assume
$\emptyset \notin \cB$.
\end{defn}

This is a strengthening of the earlier notion of definable filter (see
Section~\ref{S:def filter}): if \(\cB\) is a definable filter basis, then the
filter generated by \(\cB\) is definable. The converse does not hold in general.

We will be needing two operations on filters bases:
\begin{itemize}
\item Let $f: X \to Y$ be definable, and let $\cB$ be a definable filter basis
on $X$. Then the pushfoward \(\push{f}{\cB}  = \{f(U):  U \in \cB \}\) is a
definable filter basis.
\item Let \(\cB\) be a filter basis on \(X\times Y\) for some (pro)-definable
set \(X\) and \(Y\) (over some model \(M\)) and let \(a\in X\). Assume that, for
every \(U\in \cB\), \(U_a = U \cap \{a\}\times Y \neq \emptyset\) --- we say
that \(\tp(a/M)\) is consistent with (the filter generated by) \(\cB\). Then
\(\cB_a = \{U_a \mid U\in\cB\}\) is a filter basis.
\end{itemize}

Let \(M\) be an \(\aleph_1\)-saturated model and let \(\Gamma\) be a stably
embedded \(o\)-minimal \(M\)-definable set. Further assume that, for any finite
tuple \(a\), \(\gamma = \Gamma(Ma)\) is countably \(\dcl\)-generated over \(M\)
and \(\tp(a/M\gamma)\) is definable: for any formula \(\phi(x,y,z)\), the set
\(\{c\in M \mid \phi(x,c,\gamma)\in \tp(a/M\Gamma(Ma))\}\) is definable.

Assuming \Aom, any countable set \(C\) is contained in such an \(M\).

\begin{lem}
\label{ind complete filter}
Let \(X\) be a countably pro-\(M\)-definable set. Let \(\pi\) be a filter on
\(X\times \Gamma\) which is generated by countably many \(M\)-definable filter
bases. Let \(a\in X\) be such that \(\tp(a/M)\) is definable and consistent with
\(\pi\). Then there exists an \(M\)-definable type \(q \supseteq p\)
concentrating on \(X\times \Gamma\) which is consistent with \(\pi\).
\end{lem}

\begin{proof}
If there exists a \(\gamma\in \dcl(Ma)\) such that \(a\gamma\models \pi\), we
can choose \(q = \tp(a\gamma/M)\). Now, assume that there is not such
\(\gamma\). For every family of definable functions \(g_m : X \to \Gamma\), we
define the following equivalence relation : for all \(m, n\in M\), we say that
\(m \sim_g n\) if there exists \(U \in \pi\) such that \([g_m(a),g_n(a)] \cap
U_a = \emptyset\).

\begin{claim}
The relation \(\sim_g\) has finitely many classes which are all definable.
\end{claim}

\begin{proof}
Let \(U = (U_b)_b\) be a definable filter basis contained in \(\pi\). We say
that \(m \sim_{g,U} n\) if there exists a \(b \in M\) with \([g_{m}(a),g_{n}(a)]
\cap U_{b,a} = \emptyset\). This is an equivalence relation on the set of \(m\)
such that there exists a \(b\) with \(g_m(a) \nin U_{b,a}\) --- whose complement
we can see as another class. Since \(\Gamma\) is \(o\)-minimal, each \(U_{b,a}\)
is union of at most \(n\) intervals. Then, since the \(U_{b,a}\) form a filter
basis, there are at most \(n+2\) classes. Moreover, the relation \(\sim_g\) is
the intersection of all the \(\sim_{g,U}\). Choosing (countably many)
representatives for all the classes of the \(\sim_{g,U}\), we see that all the
classes of \(\sim_g\) are represented in the \(\aleph_1\)-saturated model \(M\).

Now, for any \(m \in M\), the class of \(m\) is the union, as \(U\) ranges over
definable filter bases contained in \(\pi\), of the sets \(\{m' \mid \exists b\
[g_m(a),g_m'(a)] \cap U_{b,a} = \emptyset\}\) which are definable. So all the
classes are (countably) ind-\(M\)-definable. Hence, by compactness and
\(\aleph_1\)-saturation of \(M\), there are finitely many classes and they are
all definable.
\end{proof}

Let \(E_g\) denote the \(\sim_g\)-class of tuples \(m\) such that there exists a
\(U\in\pi\) such that \((-\infty,g_m(a)] \cap U_{a} = \emptyset\) --- or \(E_g =
\emptyset\) if no such \(m\) exists. Let \(q\supseteq p\) be the type such that
\(q(x,y)\vdash g_m(x) < y\) if and only of \(m\in E_g\). By construction, it is
definable and consistent with \(\pi\).
\end{proof}

\begin{prop}
\label{complete filter}
Let \(\cB\) be an \(M\)-definable filter basis on some \(M\)-definable set
\(X\). Then \(\cB\) is consistent with an \(M\)-definable type.
\end{prop}

\begin{proof}
Let \(\fF\) consist of all $M$-definable functions $X \to \Gamma$, seen as a
pro-definable function. Using \cref{ind complete filter} iteratively, we find an
\(M\)-definable type $\tp(\gamma/M)$ of tuples from $\Gamma$ which is consistent
with $\push{\fF}{\pi}$.

Note that, at stage \(\alpha\), we have a definable type $\tp(\gamma/M)$ of
tuples from $\Gamma$ consistent with $\push{\fF}{\pi}$. So there exists some
\(a\models\restr{\pi}{M}\) with \(\gamma\in\Gamma(\acl(Ma))\) (via the first
$\alpha$ functions in $\fF$). So \(\gamma\) is countably \(\dcl\)-generated over
\(M\), we may assume that \(\gamma\) is a countable tuple to apply \cref{ind
complete filter}. In the end, we have \(a\models\restr{\pi}{M}\) with \(\fF(a) =
\gamma\). Then $\tp(a/M\gamma)$ is definable and, by transitivity, \(\tp(a/M)\)
is definable.
\end{proof}

\begin{cor}
\label{cofinal-def}
Let \(\leq\) be an \(M\)-definable directed partial order on an \(M\)-definable
set \(P\). Then there exists an \(M\)-definable type \(p\) cofinal in \(P\) :
for any $c \in P$, we have $\models \defsc{p}{x} x \geq c$.
\end{cor}

\begin{proof}
Consider the \(M\)-definable filter basis of all cones \(\{x \mid b \leq x\}\).
\end{proof}

\begin{discussion}
If there exists a definable weakly order preserving
map $j: \Gamma \to P$ with cofinal image, then we can use the
definable type at $\infty$ of $\Gamma$, $r_\infty$, to obtain a
cofinal definable type of $P$, namely $j_\star r_\infty$.

In general, it is not always possible to find a
one-dimensional cofinal subset of $P$.  For instance, when $\Gamma$ is
a divisible ordered Abelian group, consider the product of two closed
intervals of incommensurable sizes; or the subdiagonal part of a
square.
\end{discussion}

\section{Groups with definable generics}\label{S:DefGen}

A pro-definable group is a pro-definable set with a pro-definable
group law. We will, in a few cases, need to consider a larger class of
groups. A \emph{piecewise pro-definable group} is a piecewise
pro-definable set $G = \projlim_i G_i$ together with pro-definable maps
$m_i: G_i \times G_i \to G_{j}$, for some $j\geq i$, compatible with
the inclusions, and inducing a group structure $m: G \times G \to
G$. By a pro-definable subgroup $H$ of $G$ we mean a pro-definable
subset $H \subset G_i$ for some definable piece $G_i$ of $G$, such
that $m(H^2) \subset H$ and $(H,m)$ is a subgroup of $(G,m)$.

Let us now fix \(G\) a pro-definable group.

\subsection{Definable generics}

For all filters \(\pi\) concentrating on \(G\) and \(g\in
G\), let \(\transl{g}{\pi}:= \{\phi(x,a)\mid \pi(x)\vdash \phi(g\cdot
x,a)\}\). Similarly, we define \(\rtransl{\pi}{g}:= \{\phi(x,a)\mid
\pi(x)\vdash \phi(x\cdot g,a)\}\). Note that if \(\pi\) is \(C\)-definable, then
\(\transl{g}{\pi}\) and \(\rtransl{\pi}{g}\) are \(Cg\)-definable.

\begin{defn}\label{D:gentype}
Let \(\pi\) be a filter concentrating on \(G\). We say that \(\pi\) is
\emph{right generic}\footnote{When $\pi$ is a complete type, this notion is usually referred to as a definable f-generic in the literature.} in \(G\) over $C$ if, for all \(g\in G\)
\(\transl{g}{\pi}\) is \(C\)-definable. We say that it is \emph{left generic}
over $C$ if, for all \(g\in G\), \(\rtransl{\pi}{g}\) is
\(C\)-definable.
\end{defn}

We say that \(G\) admits a generic filter if there
exists a filter \(\pi\) concentrating on \(G\) which is left or
right generic in \(G\) (over some small set of parameters).

\begin{lem}
Let \(\pi\) be a definable filter concentrating on \(G\) then
\(\pi\) is right generic if and only if \(\pi\) has boundedly many
left translations by \(G\). Moreover if \(\pi\) is right generic then for
every formula \(\phi\), the set \(\{\restr{(\transl{g}{\pi})}{\phi}\mid
g\in G\}\) is finite.
\end{lem}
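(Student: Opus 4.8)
The plan is to analyze the left translates one formula at a time. Fix a small set $C_0$ over which $\pi$, the group $G$, and its multiplication are all definable, so in particular $\pi$ is $C_0$-definable. For each formula $\phi(x,z)$ (where $x$ is the $G$-variable) set $\Theta_\phi(w,z) := \defsc{\pi}{x}\phi(w\cdot x,z)$; this is a $C_0$-formula, since $\phi$ and $w\cdot x$ involve only finitely many coordinates of their arguments. By the definition of $\transl{g}{\pi}$, the restriction $\restr{(\transl{g}{\pi})}{\phi}$ is exactly the set defined by $\Theta_\phi(g,z)$ in the variable $z$. Introduce the $C_0$-definable equivalence relation $g\,E_\phi\,g' \iff {}\models\forall z\,(\Theta_\phi(g,z)\fequiv\Theta_\phi(g',z))$, so that $\restr{(\transl{g}{\pi})}{\phi} = \restr{(\transl{g'}{\pi})}{\phi}$ iff $g\,E_\phi\,g'$, and $\transl{g}{\pi} = \transl{g'}{\pi}$ iff $g\,E_\phi\,g'$ for every $\phi$. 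Thus the distinct left translates are in bijection with $G/\bigcap_\phi E_\phi$, and for each fixed $\phi$ the family $\{\restr{(\transl{g}{\pi})}{\phi}\mid g\in G\}$ is parametrized by the canonical parameters $e_\phi(g)$ of the $E_\phi$-classes, a $C_0$-definable function of $g$.

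For the easy direction, suppose $\pi$ is right generic over some small $C$. Then every $\transl{g}{\pi}$ is $C$-definable, and a $C$-definable filter is determined by its definition scheme, a map from formulas to $C$-formulas; the number of such schemes is bounded by a cardinal depending only on $|C|+|T|$ and not on $\Uu$. Hence there are boundedly many left translates.

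For the converse I would use bounded orbits. Since $\pi$ is $C_0$-definable and $G,m$ are over $C_0$, every $\sigma\in\mathrm{Aut}(\Uu/C_0)$ satisfies $\sigma(\transl{g}{\pi}) = \transl{\sigma(g)}{\pi}$; so the $\mathrm{Aut}(\Uu/C_0)$-orbit of each $\transl{g}{\pi}$ is contained in $\{\transl{h}{\pi}\mid h\in G\}$, which is bounded by hypothesis. Fixing $\phi$, the canonical parameter $e_\phi(g)$ is then a tuple (real, by elimination of imaginaries) whose $\mathrm{Aut}(\Uu/C_0)$-orbit is bounded, hence finite, so $e_\phi(g)\in\acl(C_0)$ for all $g$ and $\phi$. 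Consequently every $\restr{(\transl{g}{\pi})}{\phi}$, and therefore every $\transl{g}{\pi}$, is $\acl(C_0)$-definable; that is, $\pi$ is right generic over the small set $\acl(C_0)$.

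Finally, the moreover clause. Assuming $\pi$ right generic, the previous two paragraphs give boundedly many translates, so for each $\phi$ the family $\{\restr{(\transl{g}{\pi})}{\phi}\mid g\in G\}$, equivalently the image of $G$ in the quotient by $E_\phi$, is bounded. The main point is that a uniformly definable family of sets has either finitely many or at least $|\Uu|$-many distinct members: concretely, $\Theta_\phi$ factors through a definable piece $G_i$ of $G$, giving a $C_0$-definable equivalence relation on $G_i$ whose quotient $G_i/E_\phi$ is a genuine definable set, and an infinite definable set has size $|\Uu|$, so boundedly many classes forces finitely many. The only bookkeeping is that $g$ ranges over the pro-definable $G$ rather than over all of $G_i$; but the image of $G$ is $\mathrm{Aut}(\Uu/C_0)$-invariant, and were it infinite one could extract a distinguishing indiscernible sequence inside the type-definable index set and stretch it to length $|\Uu|$, again contradicting boundedness. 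Hence $\{\restr{(\transl{g}{\pi})}{\phi}\mid g\in G\}$ is finite. I expect this last step---upgrading \emph{bounded} to \emph{finite} for the pro-definably indexed family---to be the only real obstacle; the two implications of the main equivalence are otherwise formal.
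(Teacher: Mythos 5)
Your proof is correct, and its overall skeleton is the paper's: the forward direction counts $C$-definable filters, and the finiteness clause comes from the relatively definable equivalence relation $g_1\sim g_2$ iff $\restr{(\transl{g_1}{\pi})}{\phi}=\restr{(\transl{g_2}{\pi})}{\phi}$, defined by $(\forall z)\,\defsc{\pi}{x}\phi(g_1\cdot x,z)\fequiv\defsc{\pi}{x}\phi(g_2\cdot x,z)$. Where you genuinely diverge is the converse. The paper's argument is more elementary: if the orbit is bounded, pick one representative $g_i$ per translate and let $C$ be $C_0$ together with these $g_i$; this is small and every translate is $C$-definable. You instead pass to the canonical parameter $e_\phi(g)$ of each $\phi$-restriction (using elimination of imaginaries), note that $\sigma(e_\phi(g))=e_\phi(\sigma(g))$ for $\sigma\in\mathrm{Aut}(\Uu/C_0)$, and conclude from boundedness of the orbit that $e_\phi(g)\in\acl(C_0)$. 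This costs you homogeneity of $\Uu$ and the fact that elements with bounded orbit are algebraic, but buys a stronger and more canonical conclusion: all translates are definable over $\acl(C_0)$, with no group elements named in the base. Finally, for the moreover clause the paper simply asserts that a relatively definable equivalence relation with boundedly many classes has finitely many; you correctly identify the point it leaves implicit --- since $G$ is pro-definable, the classes are only counted on a type-definable image --- and your compactness argument fills it in (the indiscernible extraction is unnecessary: finite satisfiability of ``pairwise inequivalent elements of the image of $G$'' in arbitrarily many variables already yields unboundedly many classes).
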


\begin{proof}
If \(\pi\) is right generic, then its orbit must be bounded since
there are only boundedly many \(C\)-definable filters. Conversely, if
the orbit of \(\pi\) is bounded we can find a small \(C\)
such that every translate is \(C\)-definable.

Let us now assume that \(\pi\) is right generic and \(\phi\) be
some formula. Then the equivalence relation \(g_1\sim g_2\) if
\(\restr{(\transl{g_1}{\pi})}{\phi} = \restr{(\transl{g_2}{\pi})}{\phi}\)
is definable by \((\forall y)\defsc{\pi}{x}\phi(g_1\cdot x,y)\fequiv
\defsc{\pi}{x}\phi(g_2\cdot x,y)\). Since this equivalence relation has
boundedly many classes, it must have finitely many.
\end{proof}

\begin{rem}\label{R:inv filter}
Let \(\pi\) be a right generic filter of $G$. Then it follows easily
from the above that $\mu := \bigcap_{g\in G}\transl{g}{\pi}$ is a
definable filter concentrating on $G$. It is obviously invariant under
left translation by $G$.
\end{rem}

\begin{prop}\label{P:prodefgp}
Assume \(G\) is pro-\(C\)-definable and admits a (right) generic filter over
\(C\). Then \(G\) is pro-\(C\)-definably isomorphic to a pro-limit of
\(C\)-definable groups. In particular, if \(G\) is
\(\infty\)-\(C\)-definable, \(G\) is \(C\)-definably isomorphic to
\(\bigcap_i G_i\) where the \(G_i\) are all \(C\)-definable subgroups
of some \(C\)-definable group \(H\).
\end{prop}

\begin{proof}
By remark\ \ref{R:inv filter}, there is a \(C\)-definable
$G$-invariant filter \(\pi\) concentrating on $G$. Let us first assume
that $G = \bigcap_i X_i$ is $\infty$-definable. By compactness, there
exists $i_{0}\in I$ such that if $x$, $y$ and $z\in X_{i_{0}}$ then
$(x\cdot y)\cdot z = x\cdot (y\cdot z)$ and $x \cdot 1 = 1 \cdot x =
x$. Because \(I\) is filtered, we may assume that $i_0$ is the smallest
element of $I$. Let $Y_{i} = \{a\in X_{i_{0}}\mid \pi(x)\vdash a \cdot
x\in X_i\}$, which is \(C\)-definable.

\begin{claim}
$G = \bigcap_{i\in I} Y_{i}$.
\end{claim}

\begin{proof}
Let $a\in G$ and \(c\models \pi\). Then \(a \cdot c\in G\subseteq X_i\) and
\(a\in Y_i\). Conversely, let $a\in \bigcap_{i} Y_{i}$. Then
\(\pi(x)\vdash a\cdot x\in \bigcap_i{X_i} = G\). Let \(c\models \pi\), we have
\(b := a \cdot c \in G\) and hence \(a = b c^{-1}\in G\).
\end{proof}

Let $i_1$ be such that $Y_{i_1}\cdot Y_{i_1}\subseteq X_{i_0}$ and
$S_i := \{a\in Y_i\mid Y_i\cdot a\subseteq Y_i\}$, which is
\(C\)-definable.

\begin{claim}\label{C:monoid}
For all $i\geq i_1$, $(S_{i},\cdot)$ is a monoid containing \(G\).
\end{claim}

\begin{proof}
Pick any $a$ and $c\in S_{i}$. For all $x\in Y_{i}$, $x \cdot a\in
Y_{i}$ and $x\cdot c \in Y_{i}$, hence $x\cdot a\cdot c \in Y_{i}$
and, since \(a\in Y_i\), \(a\cdot c\in Y_i\), so $a\cdot c\in S_i$. Since we obviously have
\(1\in S_i\), \(S_i\) is a monoid. Now, let $a\in G\subseteq Y_{i_1}$
and $c\in Y_{i}\subseteq Y_{i_1}$. We have \(c\cdot a\in
X_{i_0}\). Moreover, \(\pi(x)\vdash c\cdot x\in X_i\) and, therefore, by $G$-invariance of $\pi$,
\(\transl{a}{\pi} = \pi \vdash c\cdot a\cdot x \in X_i\). It follows that
\(c\cdot a\in Y_i\) and \(a\in S_i\).
\end{proof}

Then $G = \bigcap_{i\geq i_1} H_i$, where $H_i$ is the group of
invertible elements in $S_i$.

Let us now assume that \(G = \projlim_{i\in I} X_i\). For all $i\in
I$, let $G_i = \{a\in G\mid \pi^{\tensor 2}(x,y)\vdash
\rho_i(x^{-1}\cdot a\cdot y) = \rho_i(x^{-1}\cdot y)=
\rho_i(x^{-1}\cdot a^{-1}\cdot y)\}$, where \(\rho_i : G \to X_i\) is
the projection.

\begin{claim}
\(G_i \normal G\).
\end{claim}

\begin{proof}
By definition, if \(a\in G_i\), then \(a^{-1}\in G_i\). Let now \(a\),
\(c\in G_i\). For all \((x,y)\models \pi^{\tensor 2} |C a c\),
\(c\cdot y\models \transl{c}{\pi} |C a c x = \pi|C a c x\). It
follows that \((x^{-1}\cdot a\cdot (c\cdot y))_i = (x^{-1}\cdot
(c\cdot y))_i = (x\cdot y)_i\). Similarly \((x^{-1}\cdot (a\cdot
c)^{-1}\cdot y)_i = ((c\cdot x)^{-1}\cdot a^{-1} \cdot y)_i = (x\cdot
y)_i\). Finally, if \(a\in G_i\), \(c\in G\) and \((x,y)\models
\pi^{\tensor 2} |C a c\), then \((c\cdot x,c\cdot y)\models
(\transl{c}{\pi})^{\tensor 2} | C a c = \pi^{\tensor 2} | C a c\). It
  follows that \((x^{-1}\cdot c^{-1}\cdot a\cdot c\cdot y)_i =
  ((c\cdot x)^{-1}\cdot a\cdot (c\cdot y))_i = ((c\cdot x)^{-1}\cdot
  (c\cdot y))_i = (x\cdot y)_i\).
\end{proof}

Since \(G_i\) is relatively \(C\)-definable, \(H_i := G/G_i\) is
an \(\infty\)-\(C\)-definable group.

\begin{claim}
The natural map \(G \to H := \projlim H_{i}\) is a group isomorphism.
\end{claim}

\begin{proof}
Let \(a\in \bigcap_i G_i\) and \((x,y)\models \pi^{\tensor 2}| C
a\). Then for all \(i\in I\), \((x^{-1}\cdot a \cdot y)_i =
(x^{-1}\cdot y)_i\) and thus \(x^{-1}\cdot a \cdot y = x^{-1}\cdot
y\). It follows that \(a = 1\). Surjectivity follows, by compactness,
from the fact that each map \(G\to H_i\) is surjective.
\end{proof}

The image of \(\pi\) in \(H_i\) is generic, so we can conclude by the
\(\infty\)-definable case.
\end{proof}

Similar results hold for rings:

\begin{prop}
Let \(R\) be an pro-\(C\)-definable ring. Assume that \((R,+)\) admits a generic filter over \(C\) and that there exists a \(C\)-definable filter \(\pi\) concentrating on \(R^\star\) such that the stabilizer of \(\pi\) under (left) multiplication generates \(R\). Then \(R\) is pro-\(C\)-definably isomorphic to a pro-limit of
\(C\)-definable rings. In particular, if \(R\) is
\(\infty\)-\(C\)-definable, \(R\) is \(C\)-definably isomorphic to an intersection of \(C\)-definable subrings of some \(C\)-definable ring.
\end{prop}

The above hypothesis on \(R\) hold in particular if both the additive group and the units of \(R\) admit a generic filter over \(C\) and that the units generate \(R\).

\begin{proof}
Let us first assume \(R = \bigcap_{i\in I}P_i\) is \(\infty\)-definable. By Proposition\ \ref{P:prodefgp}, we may assume that the \(P_i\) are subgroups of some group \((P,+)\) and by compactness we may assume that multiplication is an associative bilinear map on \(P\) and that \(1\) is the identity. For all \(i\in I\), let $Q_{i} = \{a\in P \mid \pi(x)\vdash a\cdot x\in P_i\}$ which is $C$-definable. It is easily checked that \(Q_i\) is a subgroup of \(P\). Moreover, the multiplicative stabilizer of \(\pi\) is a subset of \(Q_i\) which therefore contains \(R\). Conversely, if \(a\in\bigcap_i Q_i\), then for any \(c\models\pi|C a\), \(a\cdot c\in \bigcap_i P_i = R\) and hence \(a\in Rc^{-1}= R\). By compactness, there exists $i_{0}$ such that \(Q_{i_0}\cdot Q_{i_0}\subseteq P\). Let \(R_i = \{a\in Q_{i}\mid Q_{i}\cdot a\subseteq Q_{i}\}\). One can check that, for all \(i\geq i_0\), \(R_i\) is a ring, and proceeding as in Claim\ \ref{C:monoid}, we can show that if \(a\) stabilizes \(\pi\) multiplicatively, and \(c\in Q_i\), then \(c\cdot a \in Q_i\). So \(R_i\) contains \(R\) and \(R = \bigcap_{i\geq i_0}R_i\).

Now, let \(R = \projlim_{i\in I} P_i\). By, Proposition\ \ref{P:prodefgp}, we may assume that each \(P_i\) is an additive group. For all \(i\in I\), let \(J_i := \{a\in R\mid \pi^{\tensor 2} (x,y)\vdash (x^{-1}\cdot a\cdot y)_{i} = 0_i\}\), which is relatively \(C\)-definable. It is clear that \(J_i\) is an additive subgroup. Now pick any \(a\in J_i\) and let \(S_a := \{c\in R\mid c\cdot a\in J_i\}\). Then \(S_a\) is an additive subgroup. Moreover, if \(c\) stabilizes \(\pi\), then so does \(c^{-1}\) and hence for any \((x,y)\models \pi^{\tensor 2}\), \((c^{-1}\cdot x,y)\models \pi^{\tensor 2}\). Since \(a\in J_i\), it follows that \((x^{-1}\cdot c\cdot a \cdot y)_i = ((c^{-1}\cdot x)^{-1}\cdot a\cdot y)_i = 0_i\), so \(c\in S_a\) and hence \(S_a = R\). Similarly, we show that \(J_i\) is a two sided ideal. Bijectivity of the ring homomorphism \(R\to \projlim_i R/J_i\) follows as in the group case and, since \(R/J_i\) is an \(\infty\)-\(C\)-definable ring, we conclude by the \(\infty\)-definable case.
\end{proof}

\subsection{Stabilizers}\label{S:Stab}

The stabilizer of a definable filter can be viewed as an adjoint
notion for the generic of a group. In this section, we will assume
that \(G\) is a pro-\(\emptyset\)-definable group.

Let \(\Delta(x)\) be a set of formulas \(\phi(x,y)\) where \(x\)
ranges over \(G\), which is preserved by \(G\) (on the
left): for all \(\phi(x,m)\) instance of \(\Delta\) and \(g\in G\),
\(\phi(g\cdot x,m)\) is equivalent to an instance of \(\Delta\).

\begin{defn}
Let \(\pi\) be a definable filter concentrating on
\(G\). We define \[\Stab_{\Delta}(\pi) := \{g\in G \mid
\restr{\transl{g}{\pi}}{\Delta} = \restr{\pi}{\Delta}\}.\]

If \(\Delta\) is the set of all formulas, we write \(\Stab(\pi)\) for
\(\Stab_{\Delta}(\pi)\).
\end{defn}

Note that since \(\Delta\) is preserved by \(G\), \(G\) acts on
restrictions of filters to instances of \(\Delta\). It follows that
\(\Stab_{\Delta}(\pi)\) is a pro-definable subgroup of \(G\).

\begin{rem}
Assume \(G\) is a pro-limit of \(\emptyset\)-definable groups. If
\(\phi(x,y)\) is any formula where \(x\) ranges over \(G\), then
\(\phi'(x,y,t) = \phi(t\cdot x,y)\) where \(t\) also ranges over \(G\)
is preserved by \(G\). It follows that any finite set of
formulas is contained in a finite set \(\Delta\) preserved by
\(G\). For such a \(\Delta\), \(\Stab_{\Delta}(\pi)\) is a
relatively definable subgroup of \(G\).

For infinite \(\Delta\), it follows that \(\Stab_{\Delta}(\pi) =
\bigcap_{\phi\in\Delta}\Stab_{\phi'}(\pi)\) is an intersection of
relatively definable subgroups of \(G\), i.e. a pro-limit of definable
groups.
\end{rem}

\begin{rem}
If \(\pi\) concentrates on \(\Stab(\pi)\), then \(\pi\) is generic in
\(\Stab(\pi)\).
\end{rem}

\begin{lem} Assume $G$ admits a (right) generic type, then there is a smallest pro-definable subgroup \(G^0\leq G\) of
bounded index and \(G/G^0\) is pro-finite. In fact, \(G^0\) is the left stabilizer of any right generic of \(G\) (and the right stabilizer of any left generic).
\end{lem}

\begin{proof}
Let $p$ be a right generic type of $G$ and $H\leq G$ be a pro-definable subgroup of bounded index. Then $H = \bigcap_i X_i$ is an intersection of relatively definable subsets and for each $i$, finitely many translates of $X_i$ cover $G$. It follows that $p$ concentrates on a coset of $H$ and hence that $\Stab(p)\leq H$. Since $p$ has boundedly many translates, $\Stab(p)$ is itself a pro-definable subgroup of bonded index and it is therefore the smallest one. In particular $G^0 = \Stab(p)$ does not depend on the choice of $p$.

Furthermore, for every formula \(\phi\) preserved by \(G\)---which exist since \(G\) is
a pro-limit of definable groups---\(G^{0}_{\phi}\) has finite index in
\(G\) and \(G/G^{0} = \projlim_{\phi}G/G^{0}_{\phi}\) is pro-finite.
\end{proof}

Any generic type of $G$ has a (unique) translate that concentrates on $G^0$. It is called a principal generic of $G$. If \(G = G^{0}\), we say that \(G\) is connected.

If \(\pi\) and \(\mu\) are definable filters, we also define
\(\Stab(\pi,\mu) := \{g\in G\mid \transl{g}{\pi} = \mu\}\). It might
be empty, but when it is non-empty, it is a left torsor of
\(\Stab(\mu)\) and a right torsor of \(\Stab(\pi)\). In particular,
these two groups are then conjugate (by any element in
\(\Stab(\pi,\mu)\)).

We will also consider the following more general variant of the
stabilizer. 

\begin{defn}\label{D:Gen Stab}
  Let $\pi$ be a definable filter. Let $\fF(\pi)$ be the semigroup under composition of $\Uu$-definable functions $h$ such
  that $\push{h}{\pi} = \pi$. This semigroup has a quotient
  consisting of the $\pi$-germs of elements of $\fF(\pi)$. The
  invertible germs form a group, denoted $\fG(p)$.
\end{defn}

Note that given an $\emptyset$-definable family $(h_a)_a$ of functions defined on $\pi$, the set of $a$ such that $\push{(h_a)}{\pi} = \pi$ is a $\infty$-definable set. It follows that $\fF(\pi)$ and $\fG(\pi)$ are piecewise pro-definable. If $\pi$ is a filter concentrating on a pro-definable group $G$, then the stabilizer
$\Stab(\pi)$ embeds naturally into $\fG(p)$: $c$ maps to
the germ of left translation by $c$.

\subsection{Symmetric generics}

Let \(p\) be a definable type. We say that \(p\) is symmetric
if for all definable type \(q\) and \(C\) such that \(p\) and
\(q\) are defined over \(C\), \(a\models p|\acl(C)\) and \(b\models
q|\acl(Ca)\), then \(a\models
p|\acl(Cb)\). Proposition\ \ref{P:stdom}.(\ref{sym}) exactly states that
stably dominated types are symmetric.

\begin{lem}\label{L:gen sym}
Assume \(G\) admits a symmetric right generic type, then left and
right generics coincide, they are all symmetric and there is a unique
left (respectively right) orbit of generics. In particular, there are
only boundedly many generics in \(G\).
\end{lem}

\begin{proof}
Let \(p\) be a symmetric right generic. Then \(p^{-1}\) is a symmetric
left generic. Let \(q\) be any left generic and \(C\) be a model over
which \(p\) and \(q\) are defined, \(a\models p|C\) and \(b\models
q|\acl(Ca)\). By definition \(b a \models
\rtransl{q}{a}|\acl(Ca)\). Since \(p\) is symmetric, \(a\models
p|\acl(Cb)\) and hence \(b a\models\transl{b}{p}|\acl(Cb)\). So
\(\rtransl{q}{a}\) and \(\transl{b}{p}\) agree on \(C\) but, since
both are \(C\)-definable and \(C\) is a model, \(\rtransl{q}{a} =
\transl{b}{p}\). Since right and left translation commute, it follows
that \(p\) is left generic. Moreover if \(q_1\) and \(q_2\) are both
left generic, then, by the previous argument, we find \(b_1\) and
\(b_2\) and \(a\) such that \(\rtransl{q_1}{a} = \transl{b_1}{p}\) and
\(\rtransl{q_2}{a} = \transl{b_2}{p}\). It follows that
\(\transl{b_2b_1^{-1}}{q_1} = q_2\) and all left generics are left
translates of one another. Since \(p\) is a symmetric left generic,
all left generics are symmetric.

By a similar argument starting with any (symmetric) left
generic, we get that all left generics are right generic, that all
right generics are right translates of each other and that all right
generics are symmetric (and hence left generic). We have proved that
left and right genericity coincide and that they are all left and
right translates of each other.
\end{proof}

\begin{lem}\label{R:1genr}
  Assume \(G\) admits a symmetric generic type \(p\).  Then the
  following are equivalent:
  \begin{enumerate}
  \item $p$ is the unique generic type of $G$.
  \item For all $g \in G$, $\transl{g}p = p$.
  \item $G$ is connected.
  \end{enumerate}
\end{lem}

\begin{proof}
Assume (1).  By definition of genericity, for any $g \in G$,
$\transl{g}{p}$ is right generic. Hence by uniqueness, $\transl{g}{p}
= p$. Conversely given (2), let $q$ be generic.  Then by
Lemma\ \ref{L:gen sym}, $q = \transl{g}{p}$ for some $g$.  By (2),
$p=q$. The equivalence with (3) is immediate, given that \(G^{0}\) is
the left stabilizer of any generic.
\end{proof}

In particular, if $G$ admits a symmetric generic type, then there is a unique principal generic.

\begin{lem}\label{L:defbdd}
Assume \(G\) is pro-\(C\)-definable and admits a symmetric
generic type . Then \(G^{0}\) is pro-\(C_0\)-definable for some
\(C_0\subseteq C\) of size at most \(|\cL|+|x|\) where \(x\) ranges
over \(G\).
\end{lem}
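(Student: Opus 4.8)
The plan is to exploit the canonicity of $G^{0}$. By Lemma~\ref{L:gen sym} the group $G^{0}$ is the \emph{smallest} pro-definable subgroup of $G$ of bounded index, and it equals the (left and right) stabilizer $\Stab(p)$ for \emph{every} generic $p$; in particular it does not depend on any choice. Fixing a symmetric generic $p$ (which exists by hypothesis), I would first record the purely group-theoretic description
\[
G^{0} = \bigcap\{H : H \text{ a finite-index relatively definable subgroup of } G\}.
\]
The inclusion $\subseteq$ is immediate: any such $H$ has bounded index, so $G^{0}\subseteq H$ by minimality. For $\supseteq$, recall from \S\ref{S:Stab} that $G^{0}=\Stab(p)=\bigcap_{\phi}\Stab_{\phi'}(p)$, where $\phi'(x,y,t)=\phi(t\cdot x,y)$, and that each $\Stab_{\phi'}(p)$ is relatively definable; its index is finite because a right generic has only finitely many $\phi'$-restrictions of its translates (the lemma preceding Remark~\ref{R:inv filter}). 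Hence each $\Stab_{\phi'}(p)$ is itself one of the $H$, so the displayed intersection is contained in $G^{0}$.

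The virtue of this description is that it refers only to $G$, not to the (possibly large) base over which $p$ is defined. Fix a presentation of $G$ as a pro-$C$-definable group and let $C_{0}\subseteq C$ be the set of parameters occurring in it: since the index set of the projective system has size at most $|x|$, and each piece, transition map, and the group law are defined over finitely many parameters, $|C_{0}|\leq|\cL|+|x|$. For each formula $\psi(x,z)$ (with $x$ ranging over $G$) and each $n\in\Zz_{>0}$ I would set
\[
N_{\psi,n} := \bigcap\{\psi(\Uu,b) : b\in\Uu,\ \psi(\Uu,b)\text{ is a subgroup of }G\text{ of index}\leq n\}.
\]
The condition ``$\psi(\cdot,b)$ is a subgroup of $G$ of index $\leq n$'' is type-definable over $C_{0}$: closure under the group operations is a conjunction of $C_{0}$-conditions, and ``index $\leq n$'' says that among any $n+1$ elements two lie in a common coset. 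Writing this condition as $\bigwedge_{k}\alpha_{k}(z)$ with $\alpha_{k}$ over $C_{0}$, one has $x\notin N_{\psi,n}$ iff $\bigwedge_{k}\alpha_{k}(z)\wedge\neg\psi(x,z)$ is consistent, and by compactness this is witnessed by a finite subconjunction; thus $N_{\psi,n}=\bigcap_{F}\{x:\forall z(\bigwedge_{k\in F}\alpha_{k}(z)\to\psi(x,z))\}$ is type-definable over $C_{0}$.

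It then remains to verify $G^{0}=\bigcap_{\psi,n}N_{\psi,n}$. Each $N_{\psi,n}$ is an intersection of finite-index relatively definable subgroups of $G$, so $G^{0}\subseteq N_{\psi,n}$ by the displayed description, giving $G^{0}\subseteq\bigcap_{\psi,n}N_{\psi,n}$. Conversely, each $\Stab_{\phi'}(p)$ is relatively definable, hence of the form $\psi(\Uu,b)$ for some formula $\psi$ and parameter $b$, and it is a subgroup of some finite index $n$; therefore $N_{\psi,n}\subseteq\Stab_{\phi'}(p)$, and intersecting yields $\bigcap_{\psi,n}N_{\psi,n}\subseteq\bigcap_{\phi}\Stab_{\phi'}(p)=G^{0}$. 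So $G^{0}=\bigcap_{\psi,n}N_{\psi,n}$ is pro-$C_{0}$-definable. As there are at most $|\cL|+|x|$ formulas $\psi$ and countably many $n$, this is a small intersection, and $|C_{0}|\leq|\cL|+|x|$ as required.

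The step I expect to be the main obstacle is the second paragraph: passing from ``$G^{0}$ is an intersection of finite-index relatively definable subgroups defined with arbitrary parameters'' to ``$G^{0}$ is type-definable over the parameters $C_{0}$ of $G$ alone''. The device that makes this work \emph{without} any chain condition on definable subgroups (which is unavailable at this level of generality) is to stratify by the index bound $n$ and, for each $(\psi,n)$, to intersect \emph{all} admissible instances at once; this replaces the non-canonical $\Stab_{\phi'}(p)$ by the canonical $N_{\psi,n}$ at the cost of only countably many extra indices, and it is precisely the compactness argument identifying $N_{\psi,n}$ as type-definable over $C_{0}$ that must be handled carefully.
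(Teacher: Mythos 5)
Your high-level strategy is sound and genuinely different from the paper's proof, which is much shorter: there one observes that the principal generic $p$ is canonical, hence fixed by every automorphism fixing $C$ pointwise (such automorphisms fix $G$ setwise, permute its generics, and fix $G^0$, hence fix the unique generic concentrating on $G^0$), hence $C$-definable; since a definition scheme involves at most $|\cL|+|x|$ formulas, each with finitely many parameters from $C$, one extracts $C_0$ and takes $G^0=\Stab(p)$. Your replacement of the generic by the canonical family of \emph{all} finite-index relatively definable subgroups is a legitimate alternative: your first paragraph is correct, and your third is correct up to one point you should make explicit. The instances $\psi(\Uu,b)$ are subsets of the ambient coordinate space, not of $G$, so $N_{\psi,n}\subseteq\Stab_{\phi'}(p)$ does not literally make sense; to see that $\bigcap_{\psi,n}N_{\psi,n}\subseteq G$ (after which your argument works), note that each formula of the partial type presenting $G$ is itself an admissible instance, since it cuts out in $G$ the index-one subgroup $G$.

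The genuine problem is your second paragraph, precisely the step you flag as the crux; as written it fails, for two reasons whose errors happen to cancel in the truth value of the claim but not in the proof. First, the condition ``$\psi(\Uu,b)\cap G$ is a subgroup of $G$ of index $\le n$'' is \emph{not} type-definable over $C_0$: it has the shape $\forall\bar x\,\bigl(\bar x\in G^{m}\wedge\theta(\bar x,b)\to\theta'(\bar x,b)\bigr)$, where ``$\bar x\in G^m$'' is an infinite conjunction sitting in the \emph{hypothesis} of an implication; by compactness (inconsistency of a small partial type is witnessed by a finite fragment) this condition is a small \emph{disjunction} $\bigvee_j\beta_j(z)$ of $C_0$-formulas, i.e.\ ind-definable, not ``a conjunction of $C_0$-conditions''. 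Second, even granting your premise, your compactness step is inverted: consistency of $\{\alpha_k(z)\}_k\cup\{\neg\psi(x,z)\}$ is equivalent to consistency of \emph{every} finite subconjunction, not to the existence of a finite witness; from your premise the correct conclusion would be $N_{\psi,n}=\bigcup_F\{x:\forall z(\bigwedge_{k\in F}\alpha_k(z)\to\psi(x,z))\}$, an increasing union of definable sets, which is useless here (and the intersection you wrote collapses, via $F=\emptyset$, to $\{x:\forall z\,\psi(x,z)\}$). The claim you need is nevertheless true, for the dual reason: writing admissibility as $\bigvee_j\beta_j(z)$, one gets
\[
N_{\psi,n}\;=\;\bigcap_j\bigl\{x:\forall z\,\bigl(\beta_j(z)\to\psi(x,z)\bigr)\bigr\},
\]
an intersection of $C_0$-definable sets, hence pro-definable over $C_0$. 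With this correction, and the remark above about including the defining formulas of $G$ among the admissible instances, your argument does prove the lemma with the stated bound on $|C_0|$.
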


\begin{proof}
Let \(p\) be the principal generic of \(G\). Since \(p\) is fixed by every
automorphism fixing \(G\) globally, it is \(C\)-definable. Moreover,
it is determined by the function
\(\phi(x,y)\mapsto\defsc{p}{x}\phi(x,y)\) and there are at most
\(|\cL|+|x|\) formulas \(\phi\) to consider. So \(p\) is
\(C_0\)-definable for some \(C_0\subseteq C\) of size at most
\(|\cL|+|x|\) and hence so is \(G^{0} = \Stab(p)\).
\end{proof}

\subsection{Group chunks}

This idea, in the context of algebraic groups, is due to Weil. Let $G$
be a definable group, and \(\pi\) a left \(G\)-invariant definable
filter concentrating on \(G\). The $\pi\tensor \pi$-germ of
multiplication is called the \emph{group chunk} corresponding to
$(G,\pi)$.

\begin{defn}
  An \emph{abstract group chunk} over \(C\) is a \(C\)-definable filter
  \(\pi\) and a pro-\(C\)-definable map \(F\) defined on
  \(\pi^{\tensor 2}\) (or $\pi\tensor \pi$-germ of such a function)
  such that:
\begin{enumerate}
\item For all \(a\models\pi|C\), \(\push{(F_a)}{\pi} = \pi\), where
  \(F_a(x) = F(a,x)\).
\item For all \(a,b\models \pi^{\tensor 2}|C\), \(a\in\dcl(C,b,F(a,b))\)
  and \(b\in\dcl(C,a,F(a,b))\). 
\item \(\pi^{\tensor 3}(x,y,z) \vdash F(x,F(y,z)) = F(F(x,y),z)\).
\end{enumerate}
\end{defn}

\begin{prop}\label{P:gpchunk}
Let \((\pi,F)\) be an abstract group chunk over $C$. Then \((\pi,F)\) is
pro-$C$-definably isomorphic to the group chunk of a pro-$C$-definable group
\(G\): there exists an injective pro-$C$-definable map \(f: \pi \to G\)
such that \(\pi^{\tensor 2}(x,y)\vdash f(F(x,y)) = f(x)\cdot f(y)\)
and \(G = \Stab(\push{f}{\pi})\).
\end{prop}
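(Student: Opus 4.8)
The plan is to run Weil's group-chunk argument in the language of $\pi$-germs, building $G$ out of the germs of the left translations determined by $F$. For $a\models\pi$ let $\lambda_a$ denote the $\Uu$-definable partial function $y\mapsto F(a,y)$, and set $f(a):=\germ{\pi}{\lambda_a}$, its $\pi$-germ. Axiom (1), namely $\push{F}{\pi^{\tensor 2}}=\pi$, guarantees that $\lambda_a$ carries $\pi$-generic points to realisations of $\pi$: if $a\models\pi|C$ and $z\models\pi|Ca$ then $F(a,z)\models\pi|C$. Thus the germs $f(a)$ act on realisations of $\pi$, and the target group $G$ will be the group they generate, with $\mu:=\push{f}{\pi}$ as its intended generic filter. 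I would then produce an injective pro-$C$-definable $f$ satisfying $\pi^{\tensor 2}(x,y)\vdash f(F(x,y))=f(x)\cdot f(y)$ and identify $G$ with $\Stab(\mu)$.

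First I would check that the germs compose correctly. Choosing $a\models\pi|C$, then $b\models\pi|Ca$, and a test point $z\models\pi|Cab$, the triple $(a,b,z)$ realises $\pi^{\tensor 3}$; axiom (1) applied over $Ca$ keeps $F(b,z)$ generic over $a$, so $\lambda_a$ may legitimately be evaluated at it, and associativity (axiom (3)) gives $F(a,F(b,z))=F(F(a,b),z)$, i.e. on germs
\[\germ{\pi}{\lambda_a}\cdot\germ{\pi}{\lambda_b}=\germ{\pi}{\lambda_{F(a,b)}}.\]
Since $F(a,b)\models\pi$ by axiom (1), the family $\{f(c):c\models\pi\}$ is closed under composition, and the displayed identity is exactly $f(F(x,y))=f(x)\cdot f(y)$. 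Associativity of the composition law is inherited from associativity of $F$, while a neutral germ and inverses are supplied by the cancellation axiom (2): recovering $z$ from $F(a,z)$ via $z\in\dcl(C,a,F(a,z))$ yields an inverse germ. As in Weil's theorem, $G$ is then the group \emph{generated} by these germs, every element being expressible as a product of two germs attached to independent generic realisations.

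Injectivity of $f$ follows from axiom (2). If $f(a)=f(b)$ then $F(a,z)=F(b,z)$ for $z\models\pi|Cab$; let $g$ be the $C$-definable function attached to $\pi^{\tensor 2}$ witnessing $x\in\dcl(C,y,F(x,y))$, so $g(z,F(c,z))=c$ whenever $(c,z)\models\pi^{\tensor 2}$. Applying this to $(a,z)$ and $(b,z)$, which both realise $\pi^{\tensor 2}$ since $z\models\pi|Ca$ and $z\models\pi|Cb$, gives $a=g(z,F(a,z))=g(z,F(b,z))=b$. For pro-$C$-definability I would code germs by their canonical parameters: the relation ``$\lambda_a,\lambda_{a'}$ have the same $\pi$-germ'' is $C$-definable, so $f(a)$ is a well-defined imaginary and composition and inversion are pro-$C$-definable in these codes (the bookkeeping is carried out in \cite[Propositions\ 3.1 and 3.5]{Rid-Gpchk}). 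Finally $\mu=\push{f}{\pi}$ is a $G$-invariant definable filter concentrating on $G$: by the composition identity $\transl{f(b)}{\mu}$ is again realised by germs $f(F(b,a))$ with $F(b,a)\models\pi$, so $f(b)\in\Stab(\mu)$ and hence $G\subseteq\Stab(\mu)$; conversely a germ stabilising $\mu$ must agree with some $f(a)$ on the generic, giving $\Stab(\mu)=G$. Proposition\ \ref{P:prodefgp}, applied to the generic filter $\mu$, then presents $G$ as a pro-limit of definable groups.

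The main obstacle is the genericity bookkeeping concealed in the second step. Because $\pi$ need not be literally invariant under the maps $\lambda_a$ (axiom (1) controls only the pushforward $\push{F}{\pi^{\tensor 2}}$, not the behaviour of $F(a,-)$ over parameters containing $a$), one cannot simply assert $\germ{\pi}{\lambda_a}\in\fG(\pi)$; every composition and every germ identity must be verified on test points chosen in the precise independent–generic configuration above. Concatenating the relevant $C$-definable and $Ca$-definable types via Lemma\ \ref{L:concat}, and keeping intermediate values such as $F(b,z)$ generic exactly where they must be, is what makes the composition well defined on germs and lets it descend to a genuine pro-$C$-definable group law with $\Stab(\push{f}{\pi})=G$. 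Once these independent choices are made coherent, the homomorphism property, injectivity, and pro-definability follow as sketched.
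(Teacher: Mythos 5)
You take essentially the same route as the paper: $f(a)$ is the $\pi$-germ of $F(a,\cdot)$, invertible by axiom (2) and lying in $\fG(\pi)$, the map $f$ is injective, and $G$ is the subgroup of $\fG(\pi)$ generated by these germs, identified with $\Stab(\push{f}{\pi})$ and made pro-definable by the Weil-style reduction to words of bounded length. The one step you delegate to ``Weil's theorem'' --- which should read ``germ times inverse germ'' $f(a)\cdot f(b)^{-1}$, not ``product of two germs'' --- is exactly what the paper proves with its chain $e_0,\ldots,e_4$ (absorbing a fresh generic germ from the right, one factor at a time), and it is the crux of pro-definability, since without that bounded-length reduction the generated group is a priori only an increasing union of pro-definable pieces.
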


\begin{proof}
  Let $P= \pi|C$. For any $a \in P$, by (2), the $\pi$-germ $f(a)$ of
  $F(a,x)$ is invertible. It is therefore an element of the group
  $\fG(\pi)$ (cf. Definition\ \ref{D:Gen Stab}). By (2) also, the map
  $f$ is injective. Let $G$ be the subgroup of $\fG(\pi)$ generated by
  the elements $(f(a))_{a\in P}$ and their inverses. Let $I$ enumerate
  all finite sets of the variables of $\pi$. For all $i\in I$, and
  $g\in\fG(\pi)$, let $g_i$ denote the $\pi$-germ of $\rho_i\circ h$,
  where $h$ is any map whose $\pi$-germ is $g$ and $\rho_i$ is the
  projection to the variables in $i$.
  
  To show that $G$ can be identified with a pro-definable group, it
  suffices to check that:
  \begin{itemize}
  \item Any element of $G$ is a product  $f(a) \cdot f(b)^{-1}$ for some $a,b\in P$.
  \item The set $\{(a_1,\ldots,a_6)\in P^6\mid (f(a_1)\cdot
    f(a_2)^{-1}\cdot f(a_3)\cdot f(a_4)^{-1})_i = (f(a_5)\cdot
    f(a_6)^{-1})_i\}$ is relatively definable.
  \end{itemize}

  The second point is immediate from the definability of $\pi$. As for
  the first point, it suffices to show that a product $f(a) \cdot
  f(b)^{-1} \cdot f(c) \cdot f(d)^{-1}$ has the required form. Note
  that, by (1), when $a\in P$ and \(b\models \pi|Da\) for any
  \(D\supseteq C\), \(F(a,b) \models \pi|Da\) and there exists
  \(c\models \pi|Da\) such that \(F(a,c) = b\) and, by (3),
  $f(a)^{-1}\cdot f(b) = f(c)$. Thus given any $a,b,c,d \in P$, let
  $e_0 \models \pi | C a b c d$; then $f(d)^{-1} \cdot f(e_0) =
  f(e_1)$ for some $e_1 \models \pi | C a b c d$; so $f(c)\cdot f(e_1)
  = f(e_2)$ for some $e_2 \models \pi | C a b d$; continuing this way,
  we obtain that $f(a)\cdot f(b)^{-1}\cdot f(c) \cdot f(d)^{-1}\cdot
  f(e_0)= f(e_4)$, for some \(e_4 \models \pi | C a b d\). Therefore,
  $f(a)\cdot f(b)^{-1} \cdot f(c) \cdot f(d)^{-1} = f(e_4)\cdot
  f(e_0)^{-1}$ as required.

  Finally, if \(a\in P\) and \(b\models \pi|Da\) for some \(D\supseteq
  C\), \(f(a)\cdot f(b) = f(F(a,b))\) where \(F(a,b)\models\pi|Da\) so
  \(\transl{f(a)}{\push{f}{\pi}} = \push{f}{\pi}\) and we do have \(G
  = \Stab(\push{f}{\pi})\). This concludes the construction of $G$ and
  the proof of its various properties.
\end{proof}

The uniqueness of \(G\) in Proposition\ \ref{P:gpchunk} is guaranteed by
the following:

\begin{prop}\label{P:gpchunk fct}
  Let \(G_1\) and \(G_2\) be pro-\(C\)-definable groups, \(\pi\) be a
  left \(G_1\)-invariant \(C\)-definable filter concentrating on
  \(G_1\) and \(f : \pi \to G_2\) be a pro-\(C\)-definable map such
  that \(\pi^{\tensor 2}(x,y)\vdash f(x\cdot y) = f(x)\cdot
  f(y)\). Then there exists a unique pro-\(C\)-definable homomorphism
  \(g:G_1\to G_2\) such that \(\pi(x)\vdash f(x) = g(x)\).
\end{prop}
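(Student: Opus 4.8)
The plan is to extend $f$ off the filter by a germ formula at a generic realization. Fix a small model $M$ containing both the parameters over which $\pi$, $f$, $G_1$ and $G_2$ are defined and a given element $h\in G_1$, choose $a\models\pi|M$, and set $g(h):=f(h\cdot a)\cdot f(a)^{-1}$. This is legitimate because left $G_1$-invariance of $\pi$ means $\transl{h}{\pi}=\pi$, i.e. left translation by $h$ pushes $\pi$ forward to itself; hence $h\cdot a\models\pi|M$ and both $f(h\cdot a)$ and $f(a)$ are defined. In effect the definition of $g(h)$ is that of the (constant value of the) $\pi$-germ of the definable function $x\mapsto f(h\cdot x)\cdot f(x)^{-1}$, so it manifestly depends only on $C\cup\{h\}$ once we know it is independent of $a$.

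The main work is to check that $g(h)$ does not depend on $a$; I expect this to be the only genuine obstacle, precisely because $\pi$ is not assumed symmetric and $\pi^{\tensor 2}$ is asymmetric, so the cocycle identity $\pi^{\tensor 2}(x,y)\vdash f(x\cdot y)=f(x)\cdot f(y)$ may only be applied with the second coordinate realizing $\pi$ over the first. I would first reduce to comparing $a\models\pi|M$ with $b\models\pi|Ma$, since any two realizations of $\pi|M$ are linked by a common $b\models\pi|Maa'$. For such a pair the trick is to pass to $c:=a^{-1}\cdot b$: left invariance gives $c\models\pi|Ma$, so $(a,c)\models\pi^{\tensor 2}|M$ and the cocycle identity yields $f(b)=f(a)\cdot f(c)$; since $h\in M$ we also have $h\cdot a\models\pi|M$ and $(h\cdot a,c)\models\pi^{\tensor 2}|M$, whence $f(h\cdot b)=f((h\cdot a)\cdot c)=f(h\cdot a)\cdot f(c)$. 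Combining these gives $f(h\cdot b)f(b)^{-1}=f(h\cdot a)f(c)f(c)^{-1}f(a)^{-1}=f(h\cdot a)f(a)^{-1}$, as needed.

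Granting well-definedness, the remaining properties are formal. For $\pi(x)\vdash f(x)=g(x)$: if $h\models\pi$, pick $a\models\pi|Mh$, so $(h,a)\models\pi^{\tensor 2}$ and $g(h)=f(h\cdot a)f(a)^{-1}=f(h)f(a)f(a)^{-1}=f(h)$. For the homomorphism property: given $h_1,h_2\in G_1$ and $a\models\pi|M$ (with $M\ni h_1,h_2$), evaluate $g(h_1)$ at the realization $h_2\cdot a$, which again lies in $\pi|M$; then $g(h_1)g(h_2)=f(h_1h_2 a)f(h_2 a)^{-1}\cdot f(h_2 a)f(a)^{-1}=f(h_1h_2 a)f(a)^{-1}=g(h_1h_2)$. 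Pro-definability follows since $g(h)=v$ is exactly the germ condition $\defsc{\pi}{x}\big(f(h\cdot x)\cdot f(x)^{-1}=v\big)$: reading this off coordinate by coordinate in $G_2$ exhibits the graph of $g$ as pro-definable, and we have just shown it to be the graph of a total function $G_1\to G_2$.

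Finally, uniqueness: if $g'$ is another pro-definable homomorphism with $\pi(x)\vdash f(x)=g'(x)$, then for any $h\in G_1$ and $a\models\pi|Mh$ both $a$ and $h\cdot a$ realize $\pi$, so $g'(a)=f(a)$ and $g'(h\cdot a)=f(h\cdot a)$; being a homomorphism, $g'(h)=g'(h\cdot a)g'(a)^{-1}=f(h\cdot a)f(a)^{-1}=g(h)$, whence $g'=g$.
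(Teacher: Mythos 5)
Your proof is correct and follows essentially the same route as the paper's: you define $g(h)$ as the constant value of the $\pi$-germ of $x \mapsto f(h\cdot x)\cdot f(x)^{-1}$ (the paper's $g(a\cdot b^{-1}) = f(a)\cdot f(b)^{-1}$, with the identical germ formula $\defsc{\pi}{x}\, f(h\cdot x) = v\cdot f(x)$ used for pro-definability), and your well-definedness step via the fresh generic $c = a^{-1}\cdot b$ is the same cocycle manipulation the paper carries out by right-multiplying by $f(e)$ with $e \models \pi|\acl(Cabcd)$. The uniqueness argument (writing $h = (h\cdot a)\cdot a^{-1}$ with both factors realizing $\pi$) is the paper's verbatim.
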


\begin{proof}
  Uniqueness of $g$ is clear, since $\pi|C$ generates $G$: if $g \in
  G$, $a \models \pi | Cg$, then $g\cdot a \models\pi|Cg$ and $g =
  (g\cdot a)\cdot a^{-1}$.

  Existence is also clear, provided we show that, for all
  $a,b,c,d\models\pi|C$ such that $a\cdot b^{-1}=c\cdot d^{-1}$,
  $f(a)\cdot f(b)^{-1}=f(c)\cdot f(d)^{-1}$. It suffices to show that
  $f(a)\cdot f(b)^{-1}\cdot f(e) =f(c)\cdot f(d)^{-1}\cdot f(e)$ for
  any $e\models \pi|C a b c d$. But it follows from our
  hypothesis on $f$ that $f(b)^{-1}\cdot f(e) = f(b^{-1}\cdot e)$ and
  $f(d)^{-1}\cdot f(e) = f(d^{-1}\cdot e)$. Moreover, $b^{-1}\cdot
  e\models p|Cabcd$, so $f(a)\cdot f(b^{-1}\cdot e) = f(a\cdot
  b^{-1}\cdot e)$. Similarly, $f(c)\cdot f(d^{-1}\cdot e) = f(c\cdot
  d^{-1}\cdot e)$. Since $a\cdot b^{-1}\cdot e = c\cdot d^{-1}\cdot
  e$, the equality holds.

  Since $g(a) = b$ if and only if $\defsc{p}{x}f(a\cdot x) = b\cdot
  f(x)$, $g$ is pro-\(C\)-definable.
\end{proof}

Analogous statements for group actions exist.

\subsection{Products of types in groups}\label{S:convolution}

Let $G$ be a piecewise pro-definable group, let $p_1,\ldots,p_n$ be definable
types of elements of $G$, and let $w$ be an element of the free group
$F$ on generators $\{1,\ldots,n\}$. The words of the free group will
be denoted by expressions such as $1 \overline{2} 3$; $\overline{2}$ is the
inverse of the generator $2$.

We construct a definable type $p_w = w_\star(p_1,\ldots,p_n)$. Let $a_i
\models p_i | \acl(Ca_1 \ldots a_{i-1})$.  Let $a_w =
w(a_1,\ldots,a_n)$ be the image of $w$ under the homomorphism $F \to
G$ with $i \mapsto a_i$.  Let $p_w | C = \tp(a_w/C)$.

If $w$ is the product of the generators $1,2$, we write $p_1 \star
p_2$ for $p_w$. We denote \(p_{123\ldots n}\) as \(\bigoast_{i=1}^n
p_i\). Note that if \(G\) is Abelian and \(p\) is symmetric, then the
order of enumeration does not actually matter. If a single type $p$ is
given, rather than a sequence $p_w$ will refer to the sequence
$(p,p,\ldots,p)$. Let $p^{\star n}$ denote $p_{123\cdots n}$, $p^{\pm
  2n} = p_{\overline{1} 2 \overline{3} 4 \cdots (2n)}$ and
$p^{\pm 2n+1} = p_{1 \overline{2} 3 \overline{4} \cdots
  \overline{(2n)} (2n+1)}$.

\section{Stably dominated groups}\label{S:stdomgp}

In this section, $T$ is assumed to be metastable.

\begin{defn}\label{D:stdomgp}
A pro-definable group \(G\) is \emph{stably dominated} if \(G\) has
a stably dominated generic type.
\end{defn}

Since stably dominated types are symmetric, it follows from
Lemma\ \ref{L:gen sym} that all generics of \(G\) are both left and
right generics and they are all stably dominated.

\begin{rem}\label{R:gmpres}
The class of stably dominated pro-definable groups is closed under
Cartesian products and image under a definable group homomorphism. If
\(G\) and \(H\) are stably dominated pro-definable groups and \(p\)
(respectively \(q\)) is a stably dominated generic of \(G\)
(respectively \(H\)), then \(p\tensor q\) is a stably dominated
generic of \(G\times H\). If \(f:G\to H\) is a pro-definable
surjective group homomorphism, then \(\push{f}{p}\) is a stably
dominated generic of \(H\).
\end{rem}

\begin{lem}\label{L:stdomext}
Let \(G\) be a pro-definable group and \(N \leq G\) be a stably
dominated pro-definable subgroup. Assume that there exists a stably
dominated type concentrating on $G/N$ whose orbit under
$G$-translations is bounded. Then $G$ is stably dominated.

In particular, if $N \normal G$ and $N$, $G/N$ are stably dominated,
then so is $G$.
\end{lem}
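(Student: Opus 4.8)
The plan is to produce a single stably dominated type $r$ on $G$ whose orbit under left $G$-translation is bounded; this suffices, because a stably dominated type is symmetric (Proposition~\ref{P:stdom}.(\ref{sym})) and a definable type with boundedly many left translates is right generic, so by Lemma~\ref{L:gen sym} it is a two-sided generic, whence $G$ is stably dominated by Definition~\ref{D:stdomgp}. Fix $C=\acl(C)$ over which $N$, the quotient map $\nu:G\to G/N$, and the given type $\bar q$ on $G/N$ are all defined, and let $p$ be a stably dominated (hence symmetric) generic of $N$, which we may take $C$-definable.

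The heart of the argument is to equip each fibre $\nu^{-1}(\bar b)$, for $\bar b\models\bar q$, with a stably dominated type $q_{\bar b}$, definable over $\acl(C\bar b)$ and uniformly in $\bar b$. Each such fibre is a (right) torsor under $N$. Let $N^0$ be the connected component given by Lemma~\ref{L:gen sym} and $p^0$ its principal generic; then $p^0$ is again stably dominated, since all generics of $N$ are translates of one another (Lemma~\ref{L:gen sym}), and by Remark~\ref{R:1genr} we have $\transl{n}{p^0}=p^0$ for $n\in N^0$. Consequently, for $t_0\in\nu^{-1}(\bar b)$ the left-translate $t_0\cdot p^0$ depends only on the coset $t_0N^0$, giving a well-defined stably dominated type on the $N^0$-subtorsor $t_0N^0$. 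To single one out over $\acl(C\bar b)$ I choose an $\acl(C\bar b)$-rational point of the pro-finite torsor $\nu^{-1}(\bar b)/N^0$: at each finite level this is a nonempty finite $\acl(C\bar b)$-definable set, hence has a point in the algebraically closed base $\acl(C\bar b)$, and one passes to the pro-limit by saturation. This yields $q_{\bar b}$, uniformly in $\bar b$.

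Concatenating $\bar q$ with the family $(q_{\bar b})_{\bar b}$ via Lemma~\ref{L:concat}, in its refinement for families definable over $\acl(C\bar b)$, produces a definable type $r$ on $G$ characterized by: $b\models r$ iff $\nu(b)\models\bar q$ and $b\models q_{\nu(b)}$. To see that $r$ is stably dominated, take $b\models r|C$ and put $\bar b=\nu(b)$; then $\tp(\bar b/C)$ is stably dominated, and $\tp(b/C\bar b)$ is stably dominated because it is $q_{\bar b}$, which lives over $\acl(C\bar b)$, and stable domination depends only on the algebraic closure by Proposition~\ref{P:stdomdef}.(1). By transitivity, Proposition~\ref{P:stdom}.(\ref{ltrans}), $\tp(\bar b,b/C)$ is stably dominated; and since $\bar b=\nu(b)\in\dcl(Cb)$, so is $\tp(b/C)$.

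It remains to bound the left-orbit of $r$. For $g\in G$ the pushforward $\push{\nu}{(\transl{g}{r})}$ equals $\transl{g}{\bar q}$, which ranges over the bounded $G$-orbit of $\bar q$; and for a fixed value of this pushforward the fibre part of $\transl{g}{r}$ is a translate of $q_{\bar b}$, hence lies among the boundedly many generics of the torsor furnished by the boundedly many generics of $N$ (Lemma~\ref{L:gen sym}). Thus $\transl{g}{r}$ takes boundedly many values, so $r$ is a stably dominated generic and $G$ is stably dominated. For the final assertion, when $N\normal G$ and $G/N$ is stably dominated, take $\bar q$ to be a generic of the group $G/N$; since $N$ acts trivially on $G/N$ by left translation (by normality), the $G$-orbit of $\bar q$ coincides with its $(G/N)$-orbit, which is bounded, and the main statement applies. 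The main obstacle is exactly the uniform construction of the fibre types $q_{\bar b}$ over $\acl(C\bar b)$, that is, controlling the disconnectedness of $N$ through the rational point on the pro-finite quotient torsor.
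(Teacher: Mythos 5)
Your skeleton is essentially the paper's: pass to $N^0$ with its invariant principal generic $p^0$, so that $\transl{t_0}{p^0}$ depends only on the coset $s=t_0N^0$; use pro-finiteness of the fibres of $G/N^0\to G/N$ to pick such a coset in $\acl(C\bar b)$ (the paper packages this same move as lifting $\bar q$ to a definable type $q_0$ on $G/N^0$, automatically stably dominated by Proposition~\ref{P:stdom}.(\ref{aclstdom})); concatenate via Lemma~\ref{L:concat} and its $\acl$-refinement; finish by transitivity and boundedness of the orbit. Those parts, including the boundedness count and the reduction of the ``in particular'' clause via normality, are fine.

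The gap is the justification that $\tp(b/C\bar b)$ is stably dominated, i.e.\ that $q_{\bar b}=\transl{t_0}{p^0}$ is stably dominated \emph{over} $\acl(C\bar b)$. What you actually have is: $\transl{t_0}{p^0}$ is stably dominated over $Ct_0$ (base change, Proposition~\ref{P:base change}.(1), then translating by the $Ct_0$-definable bijection $x\mapsto t_0\cdot x$), and the global type $q_{\bar b}$ is $\acl(C\bar b)$-definable. But $t_0\notin\acl(C\bar b)$ in general, and a global type that is definable over a small set while stably dominated over a larger one is not thereby stably dominated over the small set: that implication is exactly descent, Proposition~\ref{P:base change}.(\ref{descent}), and it requires $\tp(t_0/\acl(C\bar b))$ to have a global invariant extension --- which is supplied by clause (E) of metastability. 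Proposition~\ref{P:stdomdef}.(1), which you cite instead, only exchanges a base $C$ with its algebraic closure $\acl(C)$; it says nothing about descending from the base $Ct_0$ down to $Cs$. This is precisely the point where the paper invokes descent (and it is the very reason the invariant extension property is built into the definition of metastability; the paper even records as an open question whether descent holds without it). So the gap is fillable by citing the right proposition, but as written the crucial step of your proof rests on an inapplicable one.
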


\begin{proof}
Let \(p\) be the generic type of $N^0$ and $q$ be a stably dominated
type of $G/N$, with bounded orbit under \(G\)-translations. Let \(C\)
be such that \(G\), \(N\) are defined over \(C\) and \(p\), \(q\) are
stably dominated over \(C\).

Assume first that \(N = N^0\). For all \(n\in N\), \(\transl{n}{p} =
p\). Thus, if \(c=d n\) for some \(d\in G\) and \(n \in N\), then \(\transl{c}{p} =
\transl{d n}{p} = \transl{d}{p}\). So the type $\transl{c}{p}$ depends
only on the coset $s=c N$. We denote it $p_s$. Let us now define the
type \(r\) as follows: a realization of \(r\) over \(B\supseteq C\) is
a realization of $p_s|Bs$ where $s \models q|B$. Note that, for any $c\in G$, a realization of $\transl{c}{r}|Bc$ is a realization of $p_s|Bcs$ where $s \models \transl{c}{q}|Bc$. It follows that $r$ has at most as many $G$-translates as $q$.

Since \(p\) is stably dominated over \(C\), for any \(c\in s\), \(p_s\) is stably dominated over \(Cc\). By descent
(Proposition\ \ref{P:base change}.(\ref{descent})), \it is, in fact, stably
dominated over
\(Cs\). It follows, by transitivity
(Proposition\ \ref{P:stdom}.(\ref{ltrans})), that \(r\)
is stably dominated over \(C\). That concludes the proof in the case
\(N = N^0\).

In general, the map \(\pi : G/N^0 \to G/N\) has pro-finite fibers and
hence for any \(s\in G/N^0\), \(s \in\acl(C\pi(s))\). It follows, by
Proposition\ \ref{P:stdom}.(\ref{aclstdom}), that any \(q_0\) such that
\(\push{\pi}{q_0}= q\) is stably dominated over \(C\). Fix such a
\(q_0\), then \(q_0\) has bounded orbit under \(G\)-translations and,
by the above, \(G\) is stably dominated.
\end{proof}

\begin{lem}\label{L:stdomsubgp}
Let \(G\) be a stably dominated pro-definable group and \(N\leq G\) be
a pro-definable subgroup. Let \(\eta: G \to G/N\) be the map
\(\eta(g) = g\cdot N\). Assume that there exists a pro-definable \(Y \subset
G\) such that \(\restr{\eta}{Y}\) is surjective, bounded to one. Then \(N\) is
stably dominated.
\end{lem}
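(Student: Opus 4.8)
The plan is to produce a stably dominated generic type of $N$ by splitting off the ``$N$-part'' of a generic of $G$, using the bounded-to-one section $Y$. Fix $C = \acl(C)$ over which $G$, $N$, $Y$ and $\eta$ are all defined and over which $G$ carries a stably dominated generic type $p$ (enlarging $C$ and invoking Proposition\ \ref{P:base change}.(1) if necessary). Since $p$ is stably dominated it is symmetric, so by Lemma\ \ref{L:gen sym} it is both left and right generic; in particular both orbits $\{\transl{g}{p}\mid g\in G\}$ and $\{\rtransl{p}{g}\mid g\in G\}$ are bounded. Let $a \models p|C$ and set $s := \eta(a) \in G/N$. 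As $\restr{\eta}{Y}$ is surjective and bounded-to-one, the fibre $(\restr{\eta}{Y})^{-1}(s)$ is non-empty and bounded, hence contained in $\acl(Cs) \subseteq \acl(Ca)$; choose $y$ in it. Then $y$ depends only on $s$ and $y \in \acl(Ca)$, so $b := y^{-1}a$ lies in $N$ and in $\acl(Ca)$. By Proposition\ \ref{P:stdom}.(\ref{aclstdom}), the type $q := \tp(b/C)$ is stably dominated and concentrates on $N$; this is my candidate generic.

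The crux is that the ``take the $N$-part'' operation intertwines right translation by $N$. Indeed $\eta$ is invariant under right multiplication by $N$, since $\eta(gn) = gnN = gN = \eta(g)$ for $n \in N$, and $y$ was chosen as a function of $s = \eta(a)$ alone. Writing $\phi(a) := y^{-1}a = b$, for any $n \in N$ we have $\eta(an) = \eta(a) = s$, so the same $y$ serves for $an$ and
\[\phi(an) = y^{-1}(an) = (y^{-1}a)n = \phi(a)\,n = bn.\]
Consequently $\rtransl{q}{n} = \tp(bn/C) = \push{\phi}{(\rtransl{p}{n})}$, that is, each right translate of $q$ is the push-forward, along the single $C$-algebraic finite-to-one map $\phi$, of the corresponding right translate of $p$.

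It then remains only to transfer boundedness. Because $p$ is left generic in $G$, the family $\{\rtransl{p}{n}\mid n \in N\} \subseteq \{\rtransl{p}{g}\mid g \in G\}$ is bounded; and since $\phi$ is bounded-to-one, the image family $\{\push{\phi}{(\rtransl{p}{n})}\mid n\in N\} = \{\rtransl{q}{n}\mid n \in N\}$ is again bounded. Thus $q$ has boundedly many right translates, so $q$ is left generic in $N$. As $q$ is stably dominated it is symmetric, so $N$ admits a symmetric generic and Lemma\ \ref{L:gen sym} promotes $q$ to a genuine (two-sided) generic of $N$. Hence $N$ has a stably dominated generic, i.e. $N$ is stably dominated by Definition\ \ref{D:stdomgp}.

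The main obstacle is the second paragraph: one must arrange that the choice of $y$ depends canonically on $\eta(a)$ and not on $a$ itself — this is precisely where the bounded-to-one hypothesis on $\restr{\eta}{Y}$ is used, to land $y$ in $\acl(C\eta(a))$ — and then check that the resulting $N$-part map $\phi$ really commutes with right $N$-translation and that push-forward along a finite-to-one $C$-algebraic map preserves boundedness of the orbit. Everything else is a direct application of Proposition\ \ref{P:stdom}.(\ref{aclstdom}) and Lemma\ \ref{L:gen sym}.
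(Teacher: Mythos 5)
Your opening move is exactly the paper's: the fibre of $\restr{\eta}{Y}$ over $\eta(a)$ is bounded, hence contained in $\acl(C\eta(a))$, so $b=y^{-1}a\in N\cap\acl(Ca)$ and $q=\tp(b/C)$ is stably dominated by Proposition~\ref{P:stdom}.(\ref{aclstdom}); the intended endgame (bounded $N$-orbit, then symmetry and Lemma~\ref{L:gen sym}) is also the paper's. The gap is in the step you yourself call the crux. The identity $\rtransl{q}{n}=\tp(b\cdot n/C)$, asserted for one realization $b$ fixed in advance and for \emph{all} $n\in N$ simultaneously, is false: the left-hand side depends only on $q$ and $n$, while the right-hand side depends on the joint type of $(b,n)$. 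Concretely, take $n=b^{-1}\in N$: then $\tp(b\cdot n/C)$ is the realized type of the identity element, whereas $\rtransl{q}{b^{-1}}$ is the type of $b'\cdot b^{-1}$ with $b'\models q|Cb$, which is not the type of the identity unless $q$ is realized. A translate of a definable type must be evaluated on realizations generic over the translating element; this is why the paper's proof picks, for each $n$, a fresh $g\models p|Cn$, and even then it still needs the coherence fact that $h^{-1}g\in\acl(Cg)$ realizes over $Cn$ the \emph{unique} $C$-definable extension of its type over $C$ (Lemma~\ref{L:def2} together with the uniqueness in Proposition~\ref{P:stdomdef}) before it may assert $h^{-1}gn\models\rtransl{q}{n}|Cn$.

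A second defect compounds this: $\phi$ is not a pro-definable map. The fibre $Y\cap\eta^{-1}(s)$ is bounded but in general admits no definable selection, and $y$ is only algebraic over $C\eta(a)$; so $\push{\phi}{(\rtransl{p}{n})}$, in the sense of Definition~\ref{D:pushfwd}, is simply not defined, and the step ``the image of a bounded family of types under $\phi$ is bounded'' has no meaning as written (note also that bounded-to-one-ness of $\phi$ is beside the point there; what you need is that $\phi$ be a single definable function, which it is not). The paper replaces your $\phi$ by the bounded \emph{set} of types $Q=\{\tp(h^{-1}g/C)\mid g\models p|C,\ h\in Y,\ h\cdot N=g\cdot N\}$, and replaces your appeal to boundedness of $\{\rtransl{p}{n}\mid n\in N\}$ by the sharper fact that $\rtransl{p}{n}=p$ as an identity of \emph{global} types for $n\in N\cap G^0$ (since $G^0=\Stab(p)$ by Lemma~\ref{L:gen sym}); this is what guarantees that $\rtransl{q}{n}$ is again the canonical extension of a type in $Q$, and the orbit under all of $N$ is then bounded because $N/(N\cap G^0)$ embeds in the pro-finite group $G/G^0$. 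Your variant---quoting only the boundedness of the family of $N$-translates of $p$---can be made to work, and would bypass the $G^0$ step, but only by running the fibre argument uniformly over every base $B\supseteq Cn$, so as to bound the family of global types $\rtransl{q}{n}$ and not merely their restrictions to $C$: a bounded set of restrictions does not by itself bound the set of global translates, since a translate of a $C$-definable stably dominated type can have stably dominated restriction to $C$ without being $C$-invariant. None of this machinery appears in your write-up, so as it stands the proof fails at its central step.
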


\begin{proof}
Let \(p\) be the principal generic of \(G\), \(C\) be such that \(G\),
\(N\), \(Y\), \(p\) are \(C\)-definable. Let \(Q\) be the set of types
\(\tp(h^{-1}\cdot g/C)\) where \(g\models p|C\), \(h\in Y\) and
\(h\cdot N = g\cdot N\). Because there are boundedly many choices for
\(h\), the set \(Q\) is bounded. Moreover, for all \(g\models p|C\),
\(h^{-1}\cdot g\in\acl(Cg)\) and hence, by
Proposition\ \ref{P:stdom}.(\ref{aclstdom}), the types in \(Q\) are
all stably dominated types concentrating on \(N\).

Let us now show that \(Q\) is \(N\cap G^0\)-invariant. Pick \(n\in
N\cap G^0\), \(g\models p|C n\), \(h\in Y\) such that \(h\cdot N = g\cdot N\)
and \(q := \tp(h^{-1}\cdot g/C) \in Q\). We have that \(h^{-1}\cdot
g\cdot n\models \rtransl{q}{n}|C n\), but since \(g\cdot n\models
\rtransl{p}{n} = p\) and \(g\cdot n\cdot N= g\cdot N = h\cdot N\),
\(\rtransl{q}{n}|C = \tp(h^{-1}\cdot g\cdot n/C) \in Q\).

It follows that the size of the orbits of the types in \(Q\) is bounded by $|Q|\cdot|N/N\cap G^0|$. Recall that $Q$ is bounded and that \(N/N\cap G^0\) can be embedded in \(G/G^0\) whose size is bounded. So all the types in \(Q\) are generic.
\end{proof}

\begin{cor}\label{C:stdomsubgpvf}
Let \(G\) be an algebraic group, \(N\leq G\) an algebraic
subgroup. Let \(H\leq G\) be definable in \(\ACVF\) and stably
dominated. Then \(H\cap N\) is stably dominated.
\end{cor}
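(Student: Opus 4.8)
The plan is to apply Lemma~\ref{L:stdomsubgp} to the stably dominated group $H$, playing the role of the ambient group, and to its subgroup $H \meet N$, playing the role of the subgroup $N$ of that lemma. The quotient map $\eta : H \to H/(H\meet N)$ is exactly the restriction to $H$ of the algebraic quotient morphism $\pi : G \to G/N$; by Chevalley's theorem $G/N$ is a quasi-projective variety, and $\eta$ has image the definable subgroup $HN/N \cong H/(H\meet N)$, which is stably dominated by Remark~\ref{R:gmpres}. Thus the single thing that has to be supplied is a pro-definable $Y \subseteq H$ with $\restr{\eta}{Y}$ surjective and bounded-to-one; granting such a $Y$, Lemma~\ref{L:stdomsubgp} immediately yields that $H \meet N$ is stably dominated.

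To build $Y$ I would first replace $N$ by the Zariski closure $\overline{H\meet N}$, an algebraic subgroup contained in $N$: since $H \meet N \subseteq \overline{H\meet N} \le N$ we have $H \meet N = H \meet \overline{H\meet N}$, so we may assume $H \meet N$ is Zariski dense in $N$, and in particular that each fibre $g(H\meet N)$ is Zariski dense in the coset $gN$. Next, over the algebraically closed field $K$ the surjective morphism of varieties $\pi$ admits a constructible bounded-to-one section: there is an integer $m$ and a $K$-constructible $W \subseteq G$ with $\restr{\pi}{W} : W \to G/N$ surjective and at most $m$-to-one, obtained by Noetherian induction on $G/N$, cutting each fibre by generic hyperplanes down to dimension zero. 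Finally, using that $H/(H\meet N)$ is a homogeneous space under left $H$-translation, so that boundedly many $H$-translates of any generic definable piece cover it, I would translate the local data around by elements of $H$ and take a finite subcover, assembling a $C$-definable $Y$ inside $H$ that is bounded-to-one and surjective onto $H/(H\meet N)$.

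The main obstacle is precisely the requirement that the transversal lie in $H$: a Zariski-generic algebraic slice of $gN$ need not meet the definable subset $g(H\meet N)$ at all (for instance $\Oo$ is Zariski dense in $\Ga = K$ yet misses a Zariski-generic point), so a bare constructible section of $\pi$ does not restrict into $H$. This is where stable domination of $H$ must be used: for $g$ realizing the stably dominated generic $p$ of $H$, the fibre over $e := \eta(g)$ contains representatives algebraic over $Cg$, and the strong-germ property (Proposition~\ref{P:stgerms}) should allow one to pin such representatives down definably from the coset code $e$ rather than from $g$ itself, which is what makes the translated section a genuine $C$-definable transversal. Once $Y$ is in hand, the argument of Lemma~\ref{L:stdomsubgp} takes over verbatim: for $h \in Y$ with $hN = gN$ one forms $h^{-1}g \in H \meet N$, which lies in $\acl(Cg)$ because $Y \meet gN$ is bounded, and the resulting bounded, $(H\meet N)\meet H^0$-invariant family of stably dominated types on $H \meet N$ consists of generics, establishing that $H \meet N$ is stably dominated.
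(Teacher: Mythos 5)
Your overall frame coincides with the paper's: reduce to Lemma~\ref{L:stdomsubgp} applied to $H$ and $H\meet N$, after identifying the coset space $G/N$ with a subset of a Cartesian power of the field sort, so that everything hinges on producing a pro-definable $Y\subseteq H$ mapping onto $H/(H\meet N)$ boundedly-to-one. You also correctly diagnose that a constructible section of $\pi\colon G\to G/N$ obtained by generic hyperplane slicing is useless, since it need not meet $H$ at all. But at exactly that point your argument has a genuine gap: the sentence claiming that the strong-germ property ``should allow one to pin such representatives down definably from the coset code $e$ rather than from $g$ itself'' is the entire difficulty, and no argument is given for it. Proposition~\ref{P:stgerms} codes the germ of a definable function you \emph{already have} on a stably dominated type; it cannot manufacture a section of $\eta$, because to apply it you would first need a definable map $e\mapsto(\text{point of the fibre over }e)$, which is precisely what is missing. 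Moreover, representatives of the fibre that are algebraic over $Cg$ are trivially available ($g$ itself) and of no use: what is needed is representatives algebraic over $C\eta(g)$, and this is needed for \emph{every} $h\in H$, not only for $g$ realizing the generic, since Lemma~\ref{L:stdomsubgp} requires $\restr{\eta}{Y}$ to be surjective.

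The idea that actually fills the gap is not stable domination of $H$ at all, but resolution, i.e.\ the special role of the field sort in $\ACVF$. Fix a model $M_0\models\ACVF$ over which $G$, $N$, $H$ are defined. Since $\eta(h)$ is a tuple of \emph{field} elements, $M:=\alg{M_0(\eta(h))}$ is again a model of $\ACVF$. The fibre $H\meet h\cdot N=(\restr{\eta}{H})^{-1}(\eta(h))$ is a nonempty set definable over $M$, hence it contains a point of $M$, i.e.\ a point algebraic over $M_0\eta(h)$. Compactness then yields a definable subset $Z$ of the graph of $\restr{\eta}{H}$ projecting onto the coset space with finite fibres, and $Y$ is its projection to $H$. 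Note that this step uses nothing about $H$ beyond definability; stable domination enters only afterwards, inside Lemma~\ref{L:stdomsubgp}. Your example of $\Oo$ Zariski dense in $\Ga$ correctly shows why Zariski-generic slicing fails, but the remedy is the model-theoretic one above, not strong germs; your Zariski-closure reduction and the translation/finite-subcover sketch then become unnecessary.
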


\begin{proof}
By elimination of imaginaries in algebraically closed fields, the
coset space \(X := G/N\) can be identified with a subset of some
Cartesian power of the field sort. Let \(\eta\) be the projection
\(G\to X\). Let \(M_0\models\ACVF\) be such that \(G\), \(N\) and
\(H\) are defined over \(M_0\). For all \(h\in H\), \(M =
\alg{M_0(\eta(h))}\models\ACVF\), so there exists \(y\in H(M)\cap
h\cdot N\). By compactness, we find a definable subset \(Z\) of the
graph of \(\restr{\eta}{H}\) with finite projection to \(X\). Let
\(Y\subseteq H\) be the projection of $Z$ on \(H\). Then, applying
Lemma\ \ref{L:stdomsubgp} to \(H\), \(H\cap N\) and \(Y\), we get that
\(H\cap N\) is stably dominated.
\end{proof}

\subsection{Domination via a group homomorphism}

\begin{prop}\label{P:groupdom}
  Let $G$ be a stably dominated pro-definable group. There exists a
  pro-definable stable group $\fg$, and a pro-definable homomorphism
  $\theta: G \to \fg$, such that the generics of $G$ are stably dominated
  via $\theta$.

  If \AFD holds and \(G\) is definable then $\fg$ can be taken to be
  definable.
\end{prop}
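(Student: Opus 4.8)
The plan is to transport the group law of $G$ onto its stable part along the generic type, and then recognize the resulting stable group chunk via Proposition~\ref{P:gpchunk}. Fix a stably dominated generic $p$ of $G$. Since stably dominated types are symmetric (Proposition~\ref{P:stdom}.(\ref{sym})), Lemma~\ref{L:gen sym} applies: all generics are translates of $p$, so it suffices to dominate $p$ itself, and I may take $p$ to be the principal generic, concentrating on $G^0$. Choose $C=\acl(C)$ over which $G$ and $p$ are defined and such that $p$ is stably dominated over $C$ via $\alpha:=\theta_C$; thus $\alpha(a)=\St_C(a)$ for $a\models p$. Write $\bar p:=\push{\alpha}{p}$, a type concentrating on the stable, stably embedded sorts.

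The heart of the construction is a $C$-definable partial operation $\bar m$ on the stable part with $\bar m(\alpha(a),\alpha(b))=\alpha(ab)$ for $(a,b)\models p\tensor p$. This rests on the following fact: for independent generics $a\ind_C b$ one has $\St_C(ab)\subseteq\dcl(C,\St_C(a),\St_C(b))$, and symmetrically each of $\St_C(a)$ and $\St_C(b)$ lies in the definable closure of $C$ together with the stable parts of the other two elements of $\{a,b,ab\}$. Here is where stable domination enters: independence of $a$ and $b$ forces $\alpha(a)\ind_C\alpha(b)$ in the stable structure, and domination together with the strong germ property (Proposition~\ref{P:stgerms}) shows that the stable part of the product is definably generated by those of the factors; this is visible for $\SL_n(\Oo)$, where $\alpha$ is reduction modulo $\fM$ and $\overline{ab}=\bar a\,\bar b$. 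Granting this, $\bar m$ is well defined on $\bar p\tensor\bar p$, the definable-closure clauses give axiom~(2) of an abstract group chunk, $\push{\bar m}{(\bar p\tensor\bar p)}=\bar p$ since $ab\models p$, and associativity descends from $G$. Hence $(\bar p,\bar m)$ is an abstract group chunk over $C$, and Proposition~\ref{P:gpchunk} produces a pro-$C$-definable group $\fg$ with an injective pro-$C$-definable $f:\bar p\to\fg$ satisfying $f(\bar m(u,v))=f(u)\cdot f(v)$ and $\fg=\Stab(\push{f}{\bar p})$. As $\fg$ is generated by realizations of the stable type $\push{f}{\bar p}$, it is stable and stably embedded.

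It remains to turn $f\circ\alpha$ into a homomorphism. The map $f\circ\alpha:p\to\fg$ satisfies $(f\circ\alpha)(xy)=(f\circ\alpha)(x)\cdot(f\circ\alpha)(y)$ on $p\tensor p$, and $p$ is left $G$-invariant; so Proposition~\ref{P:gpchunk fct} yields a unique pro-definable homomorphism $g:G\to\fg$ with $g(x)=f(\alpha(x))$ for $x\models p$. Since $f$ is injective, $g(a)$ and $\alpha(a)$ are interdefinable over $C$ for $a\models p$; as $p$ is stably dominated via $\alpha$, it is therefore stably dominated via $g$, and the same then holds for every generic of $G$.

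Finally, suppose \AFD holds and $G$ is definable. Write $\fg=\projlim_i\fg_i$ with $\fg_i$ stable definable groups and $g_i:=\pi_i\circ g$. For $a\models p$ the tuple $\alpha(a)=\St_C(a)$ is $\acl$-finitely generated over $C$ (Lemma~\ref{L:afg}) with $\dimst(\alpha(a)/C)=\RM(\alpha(a)/C)\le\dimst(G)<\infty$; since $f$ is injective, the ranks $\RM(g_i(a)/C)$ increase to this finite bound and stabilize at some stage $i_0$, so that $\alpha(a)$ and $g_{i_0}(a)$ are interalgebraic over $C$. Then domination via $\alpha$ passes to domination via $g_{i_0}$, and replacing $\fg_{i_0}$ by the definable subgroup generated by $g_{i_0}(G)$ gives a definable $\fg$ with the required properties. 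The main obstacle is the stable-part computation of the second paragraph --- that the stable part of a product of independent generics is definably controlled by the stable parts of the factors --- while everything else is a formal application of the group-chunk machinery together with the finite-rank bounds of \AFD.
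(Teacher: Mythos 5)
Your overall strategy coincides with the paper's: transport multiplication onto the stable part along $\theta_C$, recognize an abstract group chunk, invoke Propositions~\ref{P:gpchunk} and~\ref{P:gpchunk fct}, and under \AFD use $\acl$-finite generation of $\St_C(a)$ (Lemma~\ref{L:afg}) plus rank stabilization to descend to a definable $\fg_{i_0}$. But as written the proof has a step that fails. You build the chunk on $\bar{p}\tensor\bar{p}$ for the single principal generic $p$ and then apply Proposition~\ref{P:gpchunk fct} on the grounds that ``$p$ is left $G$-invariant.'' That claim is false unless $G$ is connected: by Remark~\ref{R:1genr}, $p$ is left $G^0$-invariant, but $\transl{h}{p}\neq p$ for $h\notin G^0$, and a stably dominated group need not be connected (take the product of a nontrivial finite group with $\SL_n(\Oo)$). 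Consequently Proposition~\ref{P:gpchunk fct}, applied with the filter $p$, only produces a homomorphism $G^0\to\fg$, whereas the statement requires $g$ defined on all of $G$: the generics of $G$ live on every coset of $G^0$, so a map defined only on $G^0$ cannot witness their domination, and there is no canonical way to extend a homomorphism from $G^0$ to $G$. The paper's proof is engineered precisely around this point: the germ argument is carried out for \emph{all} pairs of translates $\transl{g_1}{p}$, $\transl{g_2}{p}$, compactness yields a single $C$-definable $F$ with $\theta(a\cdot b)=F(\theta(a),\theta(b))$ uniformly in $g_1,g_2$, and the chunk is taken on the left $G$-invariant definable filter $\pi=\bigcap_{g\in G}\transl{g}{p}$ (cf.\ Remark~\ref{R:inv filter}); it is exactly this invariance that lets Proposition~\ref{P:gpchunk fct} be applied with $G_1=G$.

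The second gap is one you flag yourself: the core fact --- that $\theta(a\cdot b)\in\dcl(C\,\theta(a)\,\theta(b))$, together with the two reverse clauses $\theta(b)\in\dcl(C\,\theta(a)\,\theta(a\cdot b))$ and $\theta(a)\in\dcl(C\,\theta(b)\,\theta(a\cdot b))$ giving axiom (2) of the chunk --- is ``granted'' rather than proved. You name the right ingredients (Proposition~\ref{P:stgerms}, stable embeddedness, domination), but the deduction is the actual content of the paper's proof: for fixed $a$, the $\transl{g}{p}$-germ of $b\mapsto\theta(a\cdot b)$ is strong and lies in $\St_C$, hence is coded in $\St_C(a)$, so $\theta(a\cdot b)=h_{\theta(a)}(b)$ for a $C$-definable $h$; stable embeddedness of $\St_C$ then factors this through $\theta(b)$; and the reverse clauses come from applying stable domination of the translates to the pair $(b,a\cdot b)$. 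Deferring this step, and restricting attention to $p\tensor p$ rather than to arbitrary pairs of translates, means the proposal omits both the heart of the argument and the uniformity needed to repair the first gap. The final \AFD paragraph, by contrast, is essentially the paper's argument and is fine modulo the earlier issues.
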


We say that such a \(G\) is stably dominated via \(\theta\).

\begin{proof}
Let \(p\) be generic in \(G\), \(C\) be such that \(G\) and \(p\) are
\(C\)-definable. Let \(\theta(a)\) enumerate \(\St_C(a)\), and, for
all \(a\), \(b\in G\), let \(f_a(b) = \theta(a\cdot b)\). Fix \(g\in
G\). Since \(\transl{g}{p}\) is stably dominated, by
Proposition\ \ref{P:stgerms}, the \(\transl{g}{p}\)-germ of \(f_a\) is
strong and is in \(\St_C(a) = \theta(a)\). It follows that for all
\(b\models \transl{g}{p}|Ca\), \(f_a(b) = h_{\theta(a)}(b)\) for some
\(C\)-definable map \(h\). Since \(\St_C\) is stably embedded, \(h\)
factors through $\theta(b)$. Indeed, let $c = f_a(b) =
h_{\theta{a}}(b) $.  Since $c \in \St_C$, $\tp(\theta(a)c /
C\theta(b)) \vdash \tp(\theta(a)c/Cb)$.  Thus
$c\in\dcl(\theta(a)\theta(b))$. We have proved that, for all \(g_1\),
\(g_2\in G\), there exists a \(C\)-definable map \(F_{g_1,g_2}\) such
that, for all $a\models \transl{g_1}{p}|C$, $b \models
\transl{g_2}{p}|Ca$, $\theta(ab) =
F_{g_1,g_2}(\theta(a),\theta(b))$. By compactness, we may assume that
\(F = F_{g_1,g_2}\) does not depend on \(g_1\) or \(g_2\).

Recall that for all $g\in G$, \(\transl{g}{p}\) is stably dominated
over \(C\). It follows that, for all \(g_1\), \(g_2\in G\), \(a\models
\transl{g_1}{p}|C\), \(b\models\transl{g_2}{p}|Ca\) and \(c = a\cdot
b\), \(\tp(\theta(b)\theta(c)/C\theta(a))\vdash\tp(b c/Ca)\) and hence
\(\theta(b)\in\dcl(\theta(a),\theta(c)) =
\dcl(\theta(a),F(\theta(a),\theta(b)))\). Symmetrically, we have
\(\theta(a)\in\dcl(\theta(b),F(\theta(a),\theta(b)))\).

Thus, \(F\) is
a group chunk on \(\pi := \bigcap_{g\in
  G}\push{\theta}{\transl{g}{p}}\) and, by
Proposition\ \ref{P:gpchunk}, \(F\) is the restriction of the
multiplication map of some pro-definable group \(\fg\) in \(\St_C\). By
Proposition\ \ref{P:gpchunk fct}, \(\theta\) extends to a
pro-definable group homomorphism from \(G\) to \(\fg\).

Let us now assume \AFD. By Proposition\ \ref{P:prodefgp}, \(\fg :=
\projlim_{i\in I} \fg_i\) where the \(\fg_i\) form a (filtered)
projective system of definable groups. Let \(\theta_i : G \to \fg_i\) be
the canonical projection. Since there are no descending chains of
definable subgroups of $\fg_i$, every $\infty$-definable subgroup is
definable, in particular \(\theta_i(G)\) is definable. Then \(\fg =
\projlim_{i\in I} \theta_i(G)\) and we may assume that every \(\theta_i\) is
surjective.

Let \(a\) realize the principal generic of \(G\) over \(C\).  By
Lemma\ \ref{L:afg}, $\St_C(a)$ is $\acl$-finitely generated over $C$;
say $\St_C(a) \subseteq \acl(Cd)$ for some tuple \(d\in\St_{C}\).
Since $\dimst(d/C(\theta_i(a))$ decreases as $i$ increases, it stabilizes
at some $i_0$ (Note that, if \(C\) is sufficiently large, \(i_0\) does
not depend on \(C\)).  But all $\theta_i(a)$ are in $\acl(Cd)$, and hence
$\theta_i(a) \in \acl(C(\theta_{i_0}(a)))$ for all $i \geq i_0$.  It follows
$\tp(a/C)$ is dominated via $\theta_{i_0}$ and so are all of its left
translates.
\end{proof}

\begin{rem}
If \(\AFD\) holds, we have also proved the following. If \(\theta : H \to
\fg = \projlim_i \fg_i\) is a pro-definable surjective group
homomorphism and that \(\dimst(\fg) = n < \infty\), then there exists
\(i_0\) such that \(\dimst(\fg_{i_0}) = n\). Moreover, since there are no
descending chains of definable subgroups of $\fg_{i_0}$,
\(\theta_{i_0}(G)\) is definable. So we can actually find a pro-definable
surjective group homomorphism onto a \emph{definable} stable group of
Morley rank \(n\).

In such a situation, we say that \(H\) has stable homomorphic image of
Morley rank $n$.
\end{rem}

The homomorphism $\theta$ of Proposition\ \ref{P:groupdom} is not uniquely
determined by $G$; even if $G$ is stable, there may be a non-trivial
homomorphism of this kind. We do however have a maximal one.

\begin{rem}
There exists a pro-$C$-definable stable group $\fg $, and a
pro-$C$-definable homomorphism $\theta: G \to \fg$, maximal in the sense
that any pro-$C$-definable homomorphism $\theta': G \to \fg'$ into a
pro-$C$-definable stable group factors through $\theta$.
\end{rem}

The kernel of this maximal $\theta$ is uniquely determined. If $G$ is
stably dominated it will be stably dominated via this maximal
homomorphism.

\begin{lem}\label{L:groupdomcor}
Let $G$ be a pro-\(C\)-definable group stably dominated via some
pro-\(C\)-definable surjective group homomorphism $\theta: G \to \fg$. Then $\tp(a/C)$
is generic in $G$ (i.e. $a \models r |\acl(C)$ for some generic $r$ of
$G$) if and only if $\tp(\theta(a)/C)$ is generic in $\fg$.
\end{lem}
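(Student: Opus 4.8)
The plan is to prove both implications through the theory of generics in the stable group $\fg$, transported along the homomorphism $g$ by stable domination. Throughout I may assume $C=\acl(C)$ (genericity and stable domination are insensitive to this, by Proposition \ref{P:stdomdef}.(1)) and, after replacing $\fg$ by the subgroup $g(G)$, that $g$ is surjective; note $g(G)$ is again stable. Write $p$ for the principal generic of $G$. I will use freely that, by Lemma \ref{L:gen sym}, the generics of $G$ form a single orbit and are symmetric, and that $G$ is stably dominated via $g$; in particular, for the domination homomorphism of Proposition \ref{P:groupdom} the element $g(b)$ is interalgebraic over $C$ with $\St_C(b)$ for any generic $b$, so that $\St_C(b)\subseteq\acl(Cg(b))$.

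The forward direction is the easy one. If $a\models r|\acl(C)$ for some generic $r$ of $G$, then $g(a)\models \push{g}{r}|\acl(C)$. The identity $\transl{g(c)}{(\push{g}{r})}=\push{g}{(\transl{c}{r})}$ shows that, since $g$ is surjective, every left translate of $\push{g}{r}$ in $\fg$ is the push-forward of a left translate of $r$; as $r$ has boundedly many translates, so does $\push{g}{r}$, which is therefore generic in $\fg$. Hence $\tp(g(a)/C)$ is generic.

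For the backward direction, assume $\tp(g(a)/C)$ is generic. Choose $b\models p|\acl(Ca)$ and set $d=ab$. Since $b$ is generic over $\acl(Ca)$, its image $g(b)$ is generic over $\acl(Cg(a))$, so $g(a)\ind_C g(b)$ in $\fg$ with $g(b)$ generic; consequently $g(d)=g(a)g(b)$ is generic in $\fg$, and by the standard computation for stable groups one has $g(d)\ind_C g(b)$ \emph{precisely because} $g(a)$ is generic (for independent $x,y$ with $y$ generic, $xy\ind_C y$ iff $x$ is generic). On the other side, $d\models \transl{a}{p}|\acl(Ca)$ is a translate of $p$, hence itself a generic of $G$ and stably dominated via $g$. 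I now transfer the independence from $\fg$ to $G$: applying Proposition \ref{P:stdomdef}.(2) to the generic type realized by $d$, the element $d$ is generic over $\acl(Cb)$ iff $\St_C(\acl(Cb))\ind_C g(d)$; and since $\St_C(b)\subseteq\acl(Cg(b))$ while $g(d)\ind_C g(b)$, the right-hand side holds. Thus $d$ is generic over $\acl(Cb)$, and as $a=db^{-1}$ is obtained from $d$ by right translation by $b^{-1}\in\dcl(Cb)$, the type $\tp(a/\acl(Cb))$—and therefore $\tp(a/C)$—is generic.

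The main obstacle is exactly this last transfer of independence across the stable quotient. It relies crucially on $g$ seeing the \emph{entire} stable part of the generics, i.e. on $\St_C(b)\subseteq\acl(Cg(b))$ for generic $b$; a homomorphism onto only part of $\St_C$ would leave room for an element $a$ with $g(a)$ generic but $\St_C(a)$ deficient, and the backward implication would fail. Beyond this, the only delicate point is base-point bookkeeping: one must check that the translated generics ($\transl{a}{p}$ and the type of $d$) are definable over the appropriate algebraically closed sets so that the independence notion of Notation \ref{N:ind} and Proposition \ref{P:stdomdef}.(2) genuinely apply; passing to the connected component $G^0$ streamlines the uniqueness-of-generic statements used here, should that be convenient.
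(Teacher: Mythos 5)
Your overall strategy coincides with the paper's (translate $a$ against an independent generic $b$, push the independence down to $\fg$ using genericity of $g(a)$, pull it back up through stable domination, then right-translate by $b^{-1}$), and your forward direction is fine. But the pivot of your backward direction --- the claim that domination of the generics via $g$ forces $\St_C(b)\subseteq\acl(Cg(b))$ for generic $b$ --- is a genuine gap. It is not among the lemma's hypotheses, and it is not available for an \emph{arbitrary} pro-$C$-definable $g$ via which $G$ is dominated: it does hold for the particular homomorphism built in Proposition \ref{P:groupdom} (there $g$ agrees on generics with an enumeration $\theta$ of $\St_C$), but the lemma quantifies over all such $g$, with no finite-rank hypothesis. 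In fact the claim is false in general. Any stable theory is metastable (with $\Gamma$ trivial and $\St_C$ everything); in $\mathrm{DCF}_0$ take $G=\fg=\Ga$ and $g(x)=x'$, a surjective definable homomorphism whose kernel, the constants, is infinite. Over a model $C$ the generic (differentially transcendental) type is stably dominated via $g$: if $\beta\ind_C b'$ then $b\notin\acl(C\beta b')$ (otherwise Lascar's inequality $U(b/C\beta b')+U(b'/C\beta)\leq U(b/C\beta)$ would give $U(b/C\beta)=\omega$, hence $\beta\ind_C b$ and then $\beta\ind_{Cb'}b$, a contradiction), so $\beta\ind_{Cb'}b$ because $U(b/Cb')=1$, hence $\beta\ind_C b$, and stationarity over $C=\acl(C)$ yields $\tp(\beta/Cb')\vdash\tp(\beta/Cb)$. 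Yet $b\in\St_C(b)$ while $b\notin\acl(Cb')$, so your inclusion fails. Note also that the lemma itself still holds in this example, so your closing assertion that the backward implication ``would fail'' for a homomorphism seeing only part of the stable part is wrong as well: the domination hypothesis is exactly what compensates.

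The repair is the step your argument bypasses. You do not need $g$ to capture $\St_C(b)$ up to algebraicity; you only need the single independence $\St_C(b)\ind_C g(d)$, and this follows from the domination hypothesis applied to $b$ itself. Concretely: since $g(d)\ind_C g(b)$ and $\tp(b/C)$ is stably dominated via $g$, Proposition \ref{P:stdomdef}.(2) with $B=Cg(d)$ (whose stable part is generated over $C$ by $g(d)$) gives $b\models p|Cg(d)$; realizing the $C$-definable type $p$ over $Cg(d)$ then yields $\St_C(b)\ind_C g(d)$, which is exactly what your application of Proposition \ref{P:stdomdef}.(2) to $q$ at $B=\acl(Cb)$ requires. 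With that substitution, your proof becomes the paper's proof.
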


\begin{proof}
  Assume $\tp(\theta(a)/C)$ is generic in $\fg$, then so is
  $\tp(\theta(a)/\acl(C))$. So we may assume $C = \acl(C)$. Let $p$ be a
  generic type of $G$, stably dominated via $\theta: G \to \fg$. Then
  $q=\transl{a}{p}$ is a generic in \(G\) and hence stably dominated
  via \(\theta\). Let $b \models p | Ca$. Note that $a\cdot b \models q
  |Ca$. Since $\theta(a)$ is generic in $\fg$, we have $\theta(a\cdot b) =
  \theta(a)\cdot \theta(b)\ind_C \theta(b)$. Since \(\theta|C\) is stably dominated via
  \(\theta\), \(b\models p|C\theta(a\cdot b)\) and hence \(\St_C(b)\ind_C
  \theta(a\cdot b)\). Since \(q|C\) is stably dominated via \(\theta\), \(a\cdot
  b \models q|Cb\). So $a \models \rtransl{q}{b^{-1}} |Cb$, and in
  particular $a \models \rtransl{q}{b^{-1}}|C$, where
  \(\rtransl{q}{b^{-1}}\) is generic in \(G\).

  The converse is obvious since for any generic \(r\) of \(G\),
  \(\transl{\theta(x)}{\theta_\star r} = \theta_\star(\transl{x}{r})\).
\end{proof}

\begin{cor}\label{C:finindex}
Let $G$ be a pro-definable group stably dominated via some
pro-definable group homomorphism $\theta: G \to \fg$. Let $H\leq G$ be a
relatively definable.  If $\theta(H) = \fg$, then $H$ has finite index in
$G$.
\end{cor}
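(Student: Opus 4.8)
The plan is to use the transfer of genericity in Lemma~\ref{L:groupdomcor} to produce a generic type of $G$ that concentrates on $H$, and then to bound $[G:H]$ by the (bounded) number of translates of that type.

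First I would fix a small set $C$ over which $G$, $\fg$, $g$ and $H$ are all definable. Since $G$ is stably dominated, it carries a generic type $p$; and because $g(H)=\fg$ forces $g(G)=\fg$, the push-forward $g_\star p$ is a stably dominated generic of $\fg$ by Remark~\ref{R:gmpres}. Choose $\gamma \models g_\star p \,|\, C$, a generic element of $\fg$ over $C$. As $g(H)=\fg$, there is some $a \in H$ with $g(a)=\gamma$, so $\tp(g(a)/C)$ is generic in $\fg$; Lemma~\ref{L:groupdomcor} then gives that $\tp(a/C)$ is generic in $G$, i.e. $a \models r\,|\,\acl(C)$ for some generic type $r$ of $G$. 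Since $H$ is $\acl(C)$-definable and $a \in H$, we get $r \vdash x \in H$, so $r$ concentrates on $H$.

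Next I would exploit genericity of $r$. For each $g \in G$ the translate $\transl{g}{r}$ concentrates on the coset $gH$: by definition $\transl{g}{r} = \{\phi(x,a)\mid r(x)\vdash\phi(g\cdot x,a)\}$, and taking $\phi(x) = (x \in gH)$ we need $r(x)\vdash (g\cdot x\in gH)$, which is just $r \vdash x\in H$. Hence if $\transl{g}{r}=\transl{g'}{r}$ then the cosets $gH$ and $g'H$ meet, forcing $gH=g'H$; so distinct cosets of $H$ yield distinct translates of $r$. Therefore $[G:H]$ is at most the number of translates of $r$. But every translate of $r$ is again a generic type of $G$, and by Lemma~\ref{L:gen sym} there are only boundedly many generics; thus $[G:H]$ is bounded.

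Finally, a relatively definable subgroup of bounded index must have finite index: if $[G:H]$ were infinite then, since for each $n$ the formula asserting the existence of $n$ elements in pairwise distinct cosets of $H$ is consistent, saturation of $\Uu$ would produce more cosets than any fixed bound, contradicting boundedness. Hence $[G:H]<\infty$. The only genuinely delicate step is the first paragraph: one must guarantee that the lifted element $a\in H$ over a generic $\gamma$ of $\fg$ is itself generic in $G$, which is exactly Lemma~\ref{L:groupdomcor} applied in the direction ``$\tp(g(a)/C)$ generic $\Rightarrow \tp(a/C)$ generic'' and is where stable domination of $G$ via $g$ is used; the remaining steps are formal consequences of genericity and saturation.
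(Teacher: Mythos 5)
Your proof is correct and follows essentially the same route as the paper: fix a base $C$, pick $a\in H$ with $g(a)$ generic in $\fg$ over $C$ (possible since $g(H)=\fg$), and apply Lemma~\ref{L:groupdomcor} to conclude that $\tp(a/C)$ is generic in $G$, hence that a generic of $G$ concentrates on $H$. The paper compresses the remaining step into one sentence ("so $H$ has finite index"), while you correctly spell out the implicit argument — translates of that generic live on distinct cosets, Lemma~\ref{L:gen sym} bounds the number of generics, and compactness upgrades bounded index to finite index for a relatively definable subgroup.
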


\begin{proof}
Work over some $C$ over which \(G\) and \(H\) are definable. Let
\(b\in H\) be such that \(\theta(b)\) is generic in \(\fg\) over \(C\). By
Lemma\ \ref{L:groupdomcor}, $\tp(b/C)$ is generic in $G$. Thus a
generic of $G$ lies in $H$, so $H$ has finite index in $G$.
\end{proof}

\begin{cor}\label{C:def G0}\Aom
Let $G$ be a \(C\)-definable group stably dominated via some
\(C\)-definable group homomorphism $\theta: G \to \fg$. Then $G^0$ is
definable, i.e. $[G:G^0]$ is finite.
\end{cor}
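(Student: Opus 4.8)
The plan is to produce an explicit definable subgroup of finite index and show it equals $G^0$. First I would normalize the data. By Proposition~\ref{P:groupdom} (applicable since \Aom includes \AFD) I may assume $\fg$ is \emph{definable} and that $G$ is stably dominated via a homomorphism $g$ with the extra feature, visible in that proof, that $\St_C(a)\subseteq\acl(Cg(a))$ for $a$ realizing a generic; replacing $\fg$ by $g(G)$, I may also assume $g$ surjective. Since $\fg$ is a stable definable group, \AFD gives it finite Morley rank, so the descending chain condition on definable subgroups holds and the connected component $\fg^0$ is \emph{definable}, of finite index in $\fg$.

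Next I would set $H:=g^{-1}(\fg^0)$, a relatively definable subgroup with $[G:H]=[\fg:\fg^0]<\infty$, which contains $G^0$ (a finite-index subgroup has bounded index, and $G^0$ lies in every bounded-index pro-definable subgroup by Lemma~\ref{L:gen sym}). The principal generic $p$ of $G$ concentrates on $G^0\subseteq H$, hence is a generic of $H$, so $H^0=\Stab(p)\cap H=G^0$. Thus it suffices to prove that $H$ is \emph{connected}: then $G^0=H^0=H$ is definable. Replacing $(G,\fg)$ by $(H,\fg^0)$, I am reduced to the case where $\fg$ is connected, and must show $G$ is connected.

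Now I would invoke Remark~\ref{R:1genr}: as $\fg$ is connected, its unique generic $\push{g}{p}$ is translation-invariant, so every generic $q$ of $G$ satisfies $\push{g}{q}=\push{g}{p}$; by the same remark $G$ is connected as soon as it has a \emph{unique} generic. So everything reduces to the determinacy assertion that \emph{two generics $p,p'$ of $G$, both stably dominated via $g$ and with $\push{g}{p}=\push{g}{p'}$, coincide}. To prove it I would work over $C=\acl(C)$ carrying all the data, take $a\models p|C$ and $a'\models p'|C$, and note $g(a)\equiv_C g(a')$. Using $\St_C(a)\subseteq\acl(Cg(a))$ (and symmetrically for $a'$), the stable parts are interalgebraic with the $g$-images, so after an automorphism over $C$ I may arrange $\St_C(a)\equiv_C\St_C(a')$. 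Since a stably dominated type over $C=\acl(C)$ is recovered from the type of its stable part — the nonstable part being controlled by $\St_C(a)$ through domination, and isolated over $\St_C(a)$ by \Aom — it follows that $\tp(a/C)=\tp(a'/C)$, whence $p=p'$ by uniqueness of the $C$-definable extension in Proposition~\ref{P:stdomdef}.(2).

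The hard part will be exactly this determinacy step — upgrading ``$p$ and $p'$ have the same stable image'' to ``$p=p'$.'' The two ingredients I would assemble are (i) interalgebraicity of $\St_C(a)$ with $g(a)$, which I extract from the construction in Proposition~\ref{P:groupdom} rather than from the bare hypothesis of domination via an arbitrary $g$, and (ii) the reconstruction of a stably dominated type from the type of its stable part over an algebraically closed base. Everything else — finite Morley rank of $\fg$, definability of $\fg^0$, the index bookkeeping, and the identifications $G^0=H^0=H$ — is routine. I note that Lemma~\ref{L:groupdomcor} is the qualitative shadow of (ii), and Corollary~\ref{C:finindex} already delivers $[G:H]<\infty$; the genuinely new content is the connectedness of $H$.
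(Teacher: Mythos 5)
There is a genuine gap, and it sits exactly where you predicted the difficulty would be: the ``determinacy'' assertion is false, and with it the reduction to proving that $H=g^{-1}(\fg^0)$ is connected. Take, in $\ACVF$ of characteristic $\neq 2$, the group $G=\{\pm 1\}\times\Ga(\Oo)$, with $\fg=(k,+)$ and $g(\epsilon,x)=\res(x)$. The two types $p_{\epsilon}=\tp((\epsilon,a)/C)$, $a$ generic in $\Oo$, are both generics of $G$ and both stably dominated via $g$ (since $\epsilon\in\dcl(C)$ and the generic of $\Oo$ is dominated via $\res$); moreover $\push{g}{p_{1}}=\push{g}{p_{-1}}$ and $\fg$ is connected. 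Yet $p_{1}\neq p_{-1}$, $G^0=\{1\}\times\Ga(\Oo)$ has index $2$, and $H=g^{-1}(\fg^0)=G\neq G^0$. So two generics dominated via $g$ with the same push-forward need not coincide, $H$ need not be connected, and your intended conclusion $G^0=H$ fails. The corollary claims only $[G:G^0]<\infty$; your route, if it worked, would prove the strictly stronger, and false, statement $G^0=g^{-1}(\fg^0)$.

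The faulty inference is the sentence ``after an automorphism over $C$ I may arrange $\St_C(a)\equiv_C\St_C(a')$''. Knowing $g(a)=g(a')$ and $\St_C(a),\St_C(a')\subseteq\acl(Cg(a))$ does not make the enumerated stable parts conjugate over $C$: algebraic closure leaves a finite ambiguity, and that ambiguity lives \emph{inside} the stable part, because finite definable sets are stable. In the example the $C$-definable map $(\epsilon,x)\mapsto\epsilon$ lands in a finite (hence stable) set, takes value $1$ on realizations of $p_1$ and $-1$ on realizations of $p_{-1}$, and no automorphism over $C$ can match these. (Your other ingredient is correct---conjugacy over $C=\acl(C)$ of the enumerated stable parts does imply equality of the types, precisely because characteristic functions into finite sets occur in the enumeration---but for that very reason conjugacy of stable parts is equivalent to what you want to prove, not a stepping stone towards it.) This finite ambiguity in the fibres of $g$ is exactly what $[G:G^0]$ measures, so it cannot be argued away; it has to be captured. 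That is what the paper's proof does: after reducing to $\fg$ connected as you do, it takes a metastability basis $C$ (a model, as in \Aom) containing representatives of every coset of $G^0$, uses the \emph{density} of isolated types from \Aom to find a realization $b$ of the principal generic $p$ with $\tp(b/Cg(b))$ isolated, shows via Lemma~\ref{L:groupdomcor} that the partial type $\{x\in G_i\mid i\in I\}\cup\{g(x)=g(b)\}$ generates $\tp(b/Cg(b))$, where $G^0=\bigcap_i G_i$ with each $G_i$ definable of finite index (here the automorphism trick you attempted does work, because the constraint $x\in G^0$ removes the finite ambiguity), and then uses isolation plus compactness to replace the intersection by a single $G_{i_0}$, which is then shown to equal $G^0$. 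Note also that \Aom asserts density of isolated types, not that $\tp(a/C\St_C(a))$ is isolated for a given $a$; choosing the good realization $b$ is an essential part of the argument, not automatic.
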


\begin{proof}
We may assume \(\theta\) surjective. Note that \(\theta^{-1}(\fg^0)\) is a definable subgroup of \(G\) with
finite index, so we may assume that \(\fg\) is connected. Since
\([G:G^0]\) is bounded, we can find \(C\) such that every coset of
\(G^0\) in \(G\) contains a point from \(C\). We can also assume that
\(C\) is a metastability basis as in the definition of \Aom. Let
\(a_0\) be such that \(\theta(a_0)\) is generic in \(\fg\) over \(C\). By
\Aom, there exists \(a\in G\) such that \(\theta(a) = \theta(a_0)\) and
\(\tp(a/C\theta(a))\) is isolated. Since \(\theta(a)\) is generic in \(\fg\)
over \(C\), by Lemma\ \ref{L:groupdomcor}, \(\tp(a/C)\) is generic in
\(G\). Let \(p\) be the principal generic of \(G\). Then there exists
\(c\in G(C)\) such that \(b := c\cdot a\models p|C\). It follows that
\(\tp(c\cdot a/C\theta(a)) = \tp(b/C\theta(b))\) is isolated.

On the other hand, let \(G^0 = \bigcap_i G_i\) where $\{(G_i)_{i\in
  I}\}$ is a bounded family of definable subgroups of finite index,
closed under intersections. Note that by Lemma\ \ref{L:groupdomcor},
\(\{x\in G_i\mid i\in I\}\cup\{\theta(x) = \theta(b)\}\) generates
\(\tp(b/C\theta(b))\). This type being isolated, there exists \(i_0\in I\)
such that it is generated by \(\theta(x) = \theta(b)\) and \(x\in
G_{i_0}\). Since \(\fg\) is connected, any generic of \(G_i\) has
a realization \(d\) such that \(\theta(d) = \theta(b)\) and hence is equal to
\(p\). So $G_i=G^0$, and $G^0$ is definable.
\end{proof}

Here is a characterization of generics that does not explicitly mention $\theta$ and $\fg$.

\begin{cor}    
Let $G$ be a stably dominated pro-$C$-definable group, $\dimst(G) <
\infty$.  Then the generic types of $G$ over $C$ are precisely the
types $\tp(c/C)$ such that for $h$ generic in $G$ over $Cc$,
$\dimst(\St_C(h\cdot c) / Ch)$ is maximal (as \(c\) varies).
\end{cor}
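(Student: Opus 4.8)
The plan is to transport the whole question into the stable group $\fg$ attached to $G$ by Proposition\ \ref{P:groupdom}, via the homomorphism $g : G \to \fg$ for which the generics of $G$ are stably dominated. Recall that for this $g$ one has $\RM(\fg) = \dimst(G)$, and that for every $a$ generic in $G$ over $C$ the image $g(a)$ lies in $\St_C(a)$ and is interalgebraic with it over $C$; since $\dimst(G) < \infty$, all stable dimensions occurring below are finite and coincide with honest Morley ranks inside $\fg$. By Lemma\ \ref{L:groupdomcor}, $\tp(c/C)$ is generic in $G$ iff $g(c)$ is generic in $\fg$ over $C$, i.e. iff $\RM(g(c)/C) = \RM(\fg) = \dimst(G)$. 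Hence the corollary follows at once, in both directions, from the single identity
\[\dimst(\St_C(h\cdot c)/Ch) = \RM(g(c)/C)\]
valid whenever $h$ is generic in $G$ over $Cc$: the left-hand side equals $\dimst(G)$ exactly when $g(c)$ is generic.

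First I would rewrite the left-hand side as a Morley rank inside $\fg$. Since $h$ is generic over $Cc$, it is generic over $C$, and so is the right translate $h\cdot c$ (a translate of a generic is generic, using that $G$, being stably dominated, has symmetric generics). Thus $\St_C(h)$ and $\St_C(h\cdot c)$ are interalgebraic over $C$ with $g(h)$ and $g(h\cdot c) = g(h)\,g(c)$ respectively. Using that $\St_C$ is stable and stably embedded, $\dimst(e/Ch)$ for a tuple $e$ from $\fg$ equals the Morley rank of $e$ over $C\,\St_C(h)$, and $\St_C(h)$ is interalgebraic with $g(h)$ over $C$; therefore
\[\dimst(\St_C(h\cdot c)/Ch) = \RM\bigl(g(h)\,g(c) \,/\, Cg(h)\bigr) = \RM(g(c)/Cg(h)),\]
the final equality because $g(h) \in \dcl(Ch)$ is invertible in $\fg$, so left multiplication by it is a definable bijection fixing the base $Cg(h)$.

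Next I would eliminate the dependence on $h$. Applying Lemma\ \ref{L:groupdomcor} over the base $Cc$ (stable domination via $g$ persists over $Cc$ by Proposition\ \ref{P:base change}.(1)), genericity of $h$ over $Cc$ gives that $g(h)$ is generic in $\fg$ over $Cc$, hence over $Cg(c)$; thus $g(h) \ind_C g(c)$. By symmetry of nonforking in the stable group $\fg$ we get $g(c) \ind_C g(h)$, and since in finite Morley rank a nonforking extension preserves rank, $\RM(g(c)/Cg(h)) = \RM(g(c)/C)$. Chaining this with the previous display yields the desired identity, and with it the statement.

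The hard part will be the middle step: justifying that the stable dimension $\dimst(\St_C(h\cdot c)/Ch)$, computed over the \emph{unstable} base $Ch$, equals the Morley rank of $g(h)\,g(c)$ computed inside $\fg$ over $Cg(h)$. This rests on combining three ingredients with some care — the interalgebraicity over $C$ of the stable parts of the generic elements $h$ and $h\cdot c$ with their $g$-images, the stable embeddedness of $\St_C$ (used to replace the base $Ch$ by its stable part $\St_C(h)$), and the multiplicativity $g(h\cdot c) = g(h)\,g(c)$. Everything else is a direct transfer along $g$ together with standard forking symmetry in $\fg$.
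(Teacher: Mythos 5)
Your proof is correct and follows essentially the same route as the paper's: both transfer the question into the stable group $\fg$ of Proposition~\ref{P:groupdom} via Lemma~\ref{L:groupdomcor}, and both rest on the interalgebraicity over $C$ of $\St_C(h\cdot c)$ with $g(h)\cdot g(c)$ for the generic element $h\cdot c$, together with invariance of rank under translation by $g(h)$. The only difference is organizational: the paper treats the two implications separately --- using mere monotonicity of $\dimst$ for one direction and symmetry of the stably dominated generic of $G$ (so that $h\cdot c$ is generic over $Ch$) for the converse --- whereas you fold both into the single identity $\dimst(\St_C(h\cdot c)/Ch) = \RM(g(c)/C)$, proved by invoking forking symmetry and rank preservation inside $\fg$ instead.
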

 
\begin{proof}
Let $\theta$ and $\fg$ be as in Proposition\ \ref{P:groupdom}. Since
$h\cdot c$ is generic in $G$ over $C$, $\acl(\St_C(h\cdot c)) =
\acl(C\theta(h\cdot c)) = \acl(C\theta(h)\theta(c))$. So $\dimst(\St_C(h\cdot c)/Ch) = \dimst(\theta(h)\cdot \theta(c)/Ch) =
\dimst(\theta(c)/Ch) \leq \dimst(\theta(c)/C) \leq \dimst(\fg)$. 

If $c\models p|\acl(C)$ for some stably dominated generic
$p$, by symmetry, $c\models p|\acl(Ch)$ and hence $h\cdot c$ is
generic in $G$ over $Ch$. Thus $\theta(h\cdot c)$ is generic in $\fg$ and
\(\dimst(\St_C(h\cdot c)/Ch) = \dimst(\fg)\) is maximal. Conversely, if \(\dimst(\St_C(h\cdot c)/Ch)\) is maximal, \(\dimst(\St_C(h\cdot
c)/Ch) = \dimst(\theta(c)/C) = \dimst(\fg)\). It follows that $\theta(c)$ is
generic in $\fg$ over $C$ and, by Lemma\ \ref{L:groupdomcor}, $c$ is generic in
\(G\) over $C$.
\end{proof}

Note that the proof uses, rather than proves, the existence of a
generic.

\begin{rem}
If $G$ is piecewise-definable, in a superstable theory, and $p$ is a
type of maximal rank in $G$, then $p$ is a translate of a generic of
$\Stab(p)$, a definable subgroup of $G$.

For stably dominated types, the analog need not hold. In $\ACVF$,
consider $G = \Ga(\Oo)^2$, fix \(\gamma\) a positive element of the
value group and let $p$ be the stably dominated type of elements such
that $x$ is generic in $\Oo$ and $y$ is generic (over $x$) in the
closed ball of radius $\gamma$ centered at $x^2$. Then $\dimst(G) = 2$
and the stable part of \(p\) has dimension $2$. But, although $p$ has
maximal rank, it is not a generic type. However, we do have that
$p^{\star 2}$ is a stably dominated generic of $G$.
\end{rem}

Let us now prove a certain converse to Proposition\ \ref{P:groupdom}:

\begin{prop}\label{P:a2def}
Let $G$ be a pro-definable group. Assume $G$ admits a surjective
pro-definable homomorphism $\theta:G \to \fg$, with $\fg$ stable and with
$\dimst(\fg) = \dimst(G) = n < \infty$. Then, there exists a
pro-definable subgroup $T$ of $G$ with:
\begin{enumerate}
\item $T$ connected stably dominated;
\item $T$ normal and $G/T$ almost internal to $\Gamma$;
\item $\theta(T)= \fg^0$;
\end{enumerate}

and $T$ is uniquely determined by (1,2) or by (1,3). Moreover, \(T\)
is stably dominated via \(\restr{\theta}{T}\).
\end{prop}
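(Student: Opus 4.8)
The plan is to treat this as a converse to Proposition \ref{P:groupdom}: I first manufacture a stably dominated type $p$ on $G$ lying over the principal generic $s$ of $\fg^{0}$ and dominated via $g$, then recover $T$ as the connected component attached to $p$. For the first step, fix a base $C_{0}$ over which $G$, $g$, $\fg$ are defined, extend it to a metastability basis $M$, and let $s$ be the principal generic of $\fg^{0}$ over $M$ (a definable type, as $\fg$ is stable). Choose $a\in G$ with $g(a)\models s\,|\,M$ and set $C:=\Gamma_{M}(a)$; by metastability $\tp(a/C)$ is stably dominated. Since $\Gamma$ is orthogonal to the stable part, $g(a)$ is still generic in $\fg^{0}$ over $C$, so $\dimst(g(a)/C)=\dimst(\fg)=n$. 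As $g(a)\in\St_{C}(a)$ and $\dimst(\St_{C}(a)/C)\le\dimst(a/C)\le\dimst(G)=n$, all these stable dimensions equal $n$; working inside the stable part (where $\dimst=\RM$) this forces $\St_{C}(a)\subseteq\acl(Cg(a))$. Hence $p:=\tp(a/C)$ is stably dominated via $g$ with $g_{\star}p=s$.

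The heart of the argument is to realise $p$ as (a translate of) the principal generic of a connected stably dominated subgroup $T\le G$. Stably dominated types are symmetric (Proposition \ref{P:stdom}.(\ref{sym})), so I can invoke the group-chunk machinery of \S\ref{S:Stab}--\S\ref{S:convolution}: the $p$-germ of multiplication defines an abstract group chunk, which by Proposition \ref{P:gpchunk} is the chunk of a connected pro-definable group $T$, and Proposition \ref{P:gpchunk fct} identifies $T$ with a pro-definable subgroup of $G$ whose principal generic $p_{0}$ satisfies $p=\transl{c}{p_{0}}$ for some $c$. The delicate point is that, unlike its pushforward $s$, the type $p$ need \emph{not} have bounded $G$-orbit (indeed $G$ itself need not be stably dominated); one runs the stabiliser computation on the stable side, using that $s$ is a genuine group-generic of the connected stable group $\fg^{0}$ and that $p$ is stably dominated via $g$. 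Granting this, $T$ is connected stably dominated, and applying $g$ shows $g_{\star}p_{0}$ is the principal generic of the connected subgroup $g(T)\le\fg$; since this generic is $s$, we get $g(T)=\fg^{0}$. Finally $p_{0}$ inherits domination via $g$, so $T$ is stably dominated via $\restr{g}{T}$, giving the ``moreover'' clause.

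Next I would show $G/T$ is almost internal to $\Gamma$. Let $N:=\ker g$. A stable-dimension count gives $\dimst(N)=0$: if a generic $c$ of $N$ over a metastability basis had nontrivial stable part, combining it via the group law with an independent generic of $\fg$ would produce a type of $G$ of stable dimension $>n$. Thus the generic of $N$ (over $\Gamma_{M}(c)$) has stable part in $\acl(M)$ and, being stably dominated, is almost $\Gamma$-internal; hence $N$ is almost $\Gamma$-internal. Reducing to the case where $\fg$ is definable of finite Morley rank (so $\fg^{0}$ has \emph{finite} index, via the remark after Proposition \ref{P:groupdom}), the subgroup $G_{1}:=g^{-1}(\fg^{0})\normal G$ has finite index and satisfies $G_{1}=T\cdot N$ because $g(T)=\fg^{0}=g(G_{1})$; therefore $G_{1}/T\cong N/(N\cap T)$ is a quotient of $N$, still almost $\Gamma$-internal, and $G/T$ is a finite extension of it, hence almost $\Gamma$-internal as well.

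It remains to prove uniqueness, from which normality follows formally. If $T'$ is connected stably dominated with $g(T')=\fg^{0}$, then $\dimst(T')=n$ forces $T'$ to be stably dominated via $\restr{g}{T'}$, so by Proposition \ref{P:stdomdef}.(2) its principal generic is the \emph{unique} $\acl(C)$-definable type stably dominated via $g$ with pushforward $s$; this coincides with $p_{0}$, whence $T'=T$. If instead $T'$ is connected stably dominated with $G/T'$ almost $\Gamma$-internal, then $\dimst(G/T')=0$ gives $\dimst(T')=n$, and composing $g$ with $\fg\to\fg/\overline{g(T')}$ yields a group that is simultaneously a quotient of the stable group $\fg$ and of the almost $\Gamma$-internal group $G/T'$; by orthogonality of $\Gamma$ to the stable part it is finite, so $\overline{g(T')}\supseteq\fg^{0}$, and connectedness of $g(T')$ gives $g(T')=\fg^{0}$, reducing to the previous case. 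Finally, for any $h\in G$ the conjugate $T^{h}$ is again connected stably dominated with $g(T^{h})=g(h)\fg^{0}g(h)^{-1}=\fg^{0}$, so $T^{h}=T$ by uniqueness; thus $T$ is normal. The main obstacle is the group-chunk step of the second paragraph, where the failure of boundedness of the $G$-orbit of $p$ must be circumvented by descending the stabiliser calculation to the connected stable quotient $\fg^{0}$.
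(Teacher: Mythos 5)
Your first paragraph is sound and matches the opening move of the paper's own proof (producing a definable type $p$, stably dominated via $g$, whose pushforward is generic in $\fg$). Beyond that point, however, there are genuine gaps. The step you yourself call ``the heart of the argument'' is never carried out: the group-chunk machinery of Propositions~\ref{P:gpchunk} and~\ref{P:gpchunk fct} requires a filter $\pi$ with $\push{m}{\pi^{\tensor 2}} = \pi$, and $p$ has no such invariance --- as you concede, its $G$-orbit is unbounded since $G$ need not be stably dominated --- and your proposed fix (``run the stabiliser computation on the stable side'') is a phrase, not an argument; ``Granting this'' marks the hole. What actually closes this gap in the paper is a uniqueness-of-orbit claim: \emph{any two} definable stably dominated types $q,r$ on $G$ whose pushforwards are generic in $\fg$ are translates of one another. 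This follows from the dimension computation you already have: $\St_B(a)\subseteq\acl(Bg(a))$ and $\St_B(c)\subseteq\acl(Bg(c))$ for $c=a\cdot b^{-1}$, whence $\St_B(c)\ind_B\St_B(a)$, so domination gives $a\models q|Bc$ and $b\models r|Bc$, i.e.\ $q=\transl{c}{r}$. One then sets $T:=\Stab(q)$, checks it is normal and independent of $q$, and observes that $\tp(a\cdot b^{-1})$ is left-$T$-invariant, hence the unique generic of $T$. Nothing in your outline substitutes for this claim. It is also what repairs your uniqueness argument for (1,3): Proposition~\ref{P:stdomdef}.(2) gives uniqueness of the \emph{definable extension of a fixed} stably dominated type, not uniqueness of an $\acl(C)$-definable type dominated via $g$ with a given pushforward; the latter is false as stated, since translating the generic of $T$ by a $C$-point of $\ker(g)$ outside $T$ produces another such type.

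Second, your route to (2) rests on a false lemma. You claim $\dimst(\ker g)=0$, hence that $N=\ker g$ is almost $\Gamma$-internal. In $\ACVF$ take $G=(\Oo,+)$, $\fg=(k,+)$, $g=\res$: then $\dimst(G)=\dimst(\fg)=1$, but $N=\fM$ has $\dimst(\fM)=1$ (for $t\in\fM$ nonzero, $x\mapsto\res(x/t)$ maps $t\Oo\subseteq\fM$ onto $k$), and $\fM$ is certainly not almost $\Gamma$-internal. Your dimension count fails because the stable part of a product $a\cdot c$ need not see both $g(a)$ and the stable part of $c$; nothing forces $\dimst$ to add up across the group law. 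What is true is that $N/(N\meet T)$ is almost $\Gamma$-internal (in the example $N\subseteq T=\Oo$, so this quotient is trivial), but that statement is essentially equivalent to (2) itself and cannot be obtained by studying $N$ alone. The paper instead proves (2) directly: writing $T=\bigcap_i T_i$ with $T_i$ relatively definable, it shows each coset $a\cdot T_i$ lies in $\acl(\Gamma_C(a))$ --- after translating $a$ by a generic of $T$ one may assume $g(a)$ is generic, and then the unique-orbit claim provides an $\acl(\Gamma_C(a))$-definable type in the orbit which concentrates precisely on the coset $a\cdot T$. So both the construction of $T$ and property (2) hinge on the same missing claim, and the kernel-based shortcut you propose is not salvageable in the form given.
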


\begin{proof}
Let $Z$ be the collection of types $q$ of elements of $G$, definable
over some set of parameters, with $\theta_\star q$ generic in $\fg$, and
$q$ stably dominated.

\begin{claim}\label{C:Z non empty}
\(Z\neq\emptyset\).
\end{claim}

\begin{proof}
Let \(C=\acl(C)\) be a metastability basis over which \(G\) and \(\theta\)
are pro-definable. Let \(a \in G\) be such that \(\theta(a)\) generic in
\(\fg\) over \(\acl(C)\). Let \(\gamma := \Gamma_C(a)\). Any
relatively $\acl(C\gamma)$-definable subset of $\fg$ is already
relatively $C$-definable, so \(\theta(a)\) is generic in \(\fg\) over
\(\acl(C\gamma)\). By metastability, \(\tp(a/\acl(C\gamma))\) is
stably dominated and extends to a unique \(\acl(C\gamma)\)-definable
type which is in \(Z\).
\end{proof}

\begin{claim}\label{C:unique orbit}
Any two elements of \(Z\) are left and right translates of each other.
\end{claim}

\begin{proof}
Pick any $q,r \in Z$; say both are $B$-definable for some \(B =
\acl(B)\) over which \(G\) and \(\theta\) are also defined. Let $a \models
q | B$, $b \models r | B a$ and let \(c = a\cdot b^{-1}\). Then $\theta(a)$
and $\theta(b)$ are $B$-independent generic elements of $\fg$. Hence so are
$\theta(a)$ and $\theta(a)\cdot \theta(b)^{-1} = \theta(c)$. Since $\dimst(\theta(a)/B) = n =
\dimst(G) \geq \dimst(\St_B(a)/B) \geq \dimst(\theta(a)/B)$, we have
$\St_B(a) \subseteq \acl(B\theta(a))$. Similarly \(\St_B(c) \subseteq
\acl(B\theta(c))\). Thus $\St_B(c) \ind_B \St_B(a)$.  By stable domination,
$a \models q | B c$.  Similarly $b \models r | B c$. It follows that
$q=\transl{c}{r}$. By the symmetric argument, $q=\rtransl{r}{d}$ where
\(d = b^{-1}a\).
\end{proof}

It follows that the pro-definable subgroup \(T := \Stab(q) = \Stab(\rtransl{q}{c})\) does not depend on the choice of $q\in Z$. Also, since one can easily check that $Z$ is closed under left translation, $T$ is normal. Moreover if $a$ and $b$ are
independent realizations of $q$ over $B$ and \(c = a\cdot b^{-1}\),
then the proof of Claim\ \ref{C:unique orbit} shows that
\(\transl{c}{q} = q\) and hence \(r := \rtransl{q}{b^{-1}}\) is the
unique generic type of \(T\). Thus \(T\) is connected stably
dominated. Since \(\theta(T)\) contains a generic of \(\fg\) and is
connected, we have \(\theta(T) = \fg^{0}\). Also, as in the above claim, we
have that \(\St_C(c) \subseteq \acl(\theta(c))\). It follows that
\(\tp(c/C)\), and hence \(r\), is stably dominated via
\(\restr{\theta}{T}\).

Since \(T = \Stab(q)\), it is the intersection of a family of
relatively definable subgroups \((T_i)_{i\in I}\) of \(G\).

\begin{claim}
For all \(i\in I\), $G/T_i$ is almost $\Gamma$-internal.
\end{claim}

\begin{proof}
Pick any \(a \in G\) and let \(C\) be a metastability basis over which
\(G\), \(T\) and \(\theta\) are defined. If \(c\models r|Ca\), then
\(a\cdot c \in a\cdot T\) and \(\theta(a\cdot c) = \theta(a)\cdot \theta(c)\) is
generic in \(\fg\) over \(\acl(C)\). So we may assume that \(\theta(a)\) is
generic in \(\fg\) over \(\acl(C)\) without changing the coset
\(a\cdot T\). By the proof of Claim\ \ref{C:Z non empty}, there exists
a generic \(q\), \(\acl(\Gamma_C(a))\)-definable, such that \(a\models
q|\Gamma_C(a)\). By the proof of Claim\ \ref{C:unique orbit}, \(q\)
concentrates on \(a\cdot T\). It follows that for all \(i\in I\),
\(a\cdot T_i \in \acl(\Gamma_C(a))\) and hence $G/T_i$ is almost
$\Gamma$-internal.
\end{proof}

Let us now prove the uniqueness of $T$. Let $T'$ be a connected stably
dominated pro-definable subgroup of $G$. Then \(T'/(T\cap T') \simeq
T\cdot T'/T\leq G/T\) is both almost \(\Gamma\)-internal and connected
stably dominated, hence trivial. So \(T'\leq T\). If \(G/T'\) is
almost \(\Gamma\)-internal, the symmetric proof shows that \(T\leq
T'\). So \(T\) is indeed characterized by (1,2). Now assume that
\(\theta(T') = \fg^0\) and let $T' = \bigcap_j T_j'$ where the $T_j'$ are
relatively definable. By Corollary\ \ref{C:finindex}, \([T:T_j'\cap
  T]\) is finite and therefore \(T\leq T_j'\). It follows that \(T\leq
T'\) and \(T\) is also characterized by (1,3).
\end{proof}

\subsection{Stably dominated subgroups of maximal rank}

While not all stably dominated subgroups are definable, we will show that
subgroups whose residual rank is $\dimst(G)$ are.  

\begin{lem}\label{L:a3def}
Let $T$ be an $\infty$-$C$-definable connected and stably dominated
normal subgroup of some definable group $G$ with $G/T$ almost internal
to $\Gamma$.
\begin{itemize}
\item[\AFD] There exists a $C$-definable subgroup stably dominated $S$
  of $G$ with $S^0=T$.
\item[\Aom] $T$ itself is definable.
\end{itemize}
\end{lem}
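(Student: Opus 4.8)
The plan is to reduce both bullets to one construction under \AFD: a single $C$-definable subgroup $S$ with $T \leq S$ and $[S:T]$ finite. Granting such an $S$, its connected component is exactly $T$. Indeed $S^0 \cap T$ has finite, hence bounded, index in $T$, so by connectedness of $T$ we get $T \leq S^0$; and $[S^0 : T] \leq [S:T] < \infty$, so $T$ is a proper-or-equal bounded-index pro-definable subgroup of the connected group $S^0$, forcing $T = S^0$. Such an $S$ is automatically stably dominated: $S^0 = T$ is stably dominated by hypothesis, $S/S^0 = S/T$ is finite and so stably dominated, and $T \normal S$ (as $T \normal G$), so Lemma~\ref{L:stdomext} applies. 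This gives the \AFD bullet, and the \Aom bullet then follows formally: once $S$ is definable, Proposition~\ref{P:groupdom} provides a $C$-definable homomorphism $g \colon S \to \fg$ into a definable stable group witnessing stable domination, whence Corollary~\ref{C:def G0} (which uses \Aom) shows $S^0$ is definable, i.e. $T = S^0$ is definable.

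It remains to build $S$ under \AFD, which I would obtain as $S := \ker\lambda$ for a $C$-definable homomorphism $\lambda \colon G \to B$ onto a $\Gamma$-internal group with $T \leq \ker\lambda$ and $[\ker\lambda : T]$ finite; then $S = \lambda^{-1}(e)$ is $C$-definable with the required properties. First I would convert the almost-internality of $G/T$ into an honest definable datum. Using \AFD\,(4), choose a $C$-definable $f \colon G \to Y$ with $Y$ $\Gamma$-internal whose $o$-minimal image has the maximal possible dimension $\dimo(G)$; since $T$ is orthogonal to $\Gamma$, $f$ has finite image on each $T$-coset, so after replacing $f$ by $g \mapsto f(gT)$ (a finite set of bounded size, by Lemma~\ref{L:nfcp}, valued in the still $\Gamma$-internal set of such subsets) I may assume $f$ is constant on $T$-cosets. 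Maximality of $\dimo$ then forces the induced map $G/T \to Y$ to have finite fibres, uniformly bounded again by Lemma~\ref{L:nfcp}; thus $f$ is finite-to-one modulo $T$. Second I would run the germ-and-filter construction from the proof of Lemma~\ref{L:almost-int}, relativised to $T$: for $g \in G$ the assignment $y \mapsto y'$ determined generically (along the maximal-index filter) by $g \cdot f^{-1}(y) = f^{-1}(y')$ defines a germ $g_\star$, the map $g \mapsto g_\star$ is a $C$-definable homomorphism $\lambda$ to a group of germs $B$, its kernel $N$ contains $T$, and the counting argument of Lemma~\ref{L:almost-int}—now applied to the finitely many $T$-cosets inside a single fibre of $f$—yields $[N:T] \leq m(Y) < \infty$. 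Finally, any automorphism fixing $\Gamma$ fixes each $g_\star$, so $B = G/N$ is $\Gamma$-internal.

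The main obstacle is exactly this relativisation. Because $T$ is only $\infty$-definable, $G/T$ is not a definable group and Lemma~\ref{L:almost-int} cannot be invoked as a black box. I therefore expect the two delicate points to be: (i) producing the genuinely $T$-invariant, finite-to-one-modulo-$T$ map $f$ out of the pro-definable almost-internality of $G/T$—this is where \AFD is indispensable, through the existence of a maximal $o$-minimal dimension and the uniform finiteness of Lemma~\ref{L:nfcp}; and (ii) verifying that the invariance and counting steps of Lemma~\ref{L:almost-int} survive when each fibre of $f$ is a union of finitely many $T$-cosets rather than a finite set. All remaining points are routine: normality of $T$ in $G$ passes to $S$, the finiteness of $[S:T]$ gives $S^0 = T$ by connectedness, and stable domination of $S$ is delivered by Lemma~\ref{L:stdomext}.
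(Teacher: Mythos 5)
Your endgame (a $C$-definable $S \supseteq T$ of small index forces $S^0 = T$, $S$ stably dominated by Lemma~\ref{L:stdomext}, and the \Aom bullet via Proposition~\ref{P:groupdom} and Corollary~\ref{C:def G0}) is exactly the paper's, and bounded index would do in place of finite. The gap is in your construction of $S$, and it is genuine: the claim ``since $T$ is orthogonal to $\Gamma$, $f$ has finite image on each $T$-coset'' is false. Orthogonality to $\Gamma$ (Proposition~\ref{P:orth}) is a property of the \emph{generic} type of $T$ only: it makes $f$ constant on realizations of the generic, but says nothing about the other elements of $T$. Concretely, in $\ACVF$ take $G=\Gm(K)$ and $T=\Gm(\Oo)$ (so $G/T\simeq\Gamma$ is $\Gamma$-internal, a bona fide instance of the lemma), and let $f(x)=\val(x)$ when $\val(x)\neq 0$ and $f(x)=\val(x-1)$ when $\val(x)=0$. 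Then $f$ is definable with image of $o$-minimal dimension $1=\dimo(G)$, so it is a legitimate choice of your maximal $f$, yet $f(T)=\{\val(u-1)\mid u\in\Gm(\Oo)\}$ is infinite. (Even more degenerately, $G=T=\Ga(\Oo)$ with $f=\val$ already refutes the claim.) So the very first step of your relativisation --- replacing $f$ by $g\mapsto f(gT)$, which needs $f(gT)$ to be a finite set of uniformly bounded size --- collapses, and with it the homomorphism $\lambda$, the bound $[N:T]\leq m(Y)$, and your point (ii), which never gets off the ground.

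The idea you are missing lets one avoid relativising Lemma~\ref{L:almost-int} to the $\infty$-definable $T$ altogether. Since $T$ is connected stably dominated, it is the stabilizer of its principal generic, hence a directed intersection $T=\bigcap_i S_i$ of $C$-definable subgroups of $G$. Each $G/S_i$ is an honest definable group and is almost $\Gamma$-internal (being a quotient of $G/T$), so Lemma~\ref{L:almost-int} applies to it as a black box: some quotient of $G/S_i$ by a finite normal subgroup is $\Gamma$-internal, and one defines $\dimo(G/S_i)$ as its $o$-minimal dimension. \AFD bounds these dimensions independently of $i$, so they stabilize; for $j\geq i$ beyond that point the natural map $G/S_j\to G/S_i$ has zero-dimensional fibers, hence finite kernel $S_i/S_j$. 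Intersecting over $j$ gives $[S_i:T]$ bounded, so $S:=S_i$ is the desired group, stably dominated with $S^0=T$, and the \Aom bullet follows from Corollary~\ref{C:def G0} as you said. The moral is that the hard content is precisely converting the pro-definable, boundedly-many-cosets witness of almost $\Gamma$-internality of the hyperdefinable quotient $G/T$ into a definable, finite-to-one datum; the paper does this not by building a single map on $G$, but by approximating $T$ from above by definable subgroups and letting \AFD stabilize the $o$-minimal dimensions of the definable quotients.
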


\begin{proof}
Let us assume \AFD. Let \(p\) be the principal generic of \(T\), then \(T
= \Stab(p)\) is an intersection of $C$-definable subgroups \(S_i\) of
\(G\); and \(G/S_i\) is almost \(\Gamma\)-internal. By
Proposition\ \ref{P:groupdom}, there exists a definable homomorphism
$\theta: T \to \fg$, with $\fg$ stable, and $T$ stably dominated via
$\theta$. By compactness, $\theta$ extends to a definable homomorphism on some
$S_i$; replacing $G$ by this $S_i$, we may assume $\theta: G \to \fg$.

By Lemma\ \ref{L:almost-int}, some quotient of $G/S_i$ by a finite
normal subgroup is $\Gamma$-internal. Define $\dimo(G/S_i)$ to be the
$o$-minimal dimension of any such quotient. By \AFD, for any $a \in G$
and any \(B\), $\dimo(\Gamma_B(a)/B)$ is finite, so $\dimo(G/S_i)$ is
bounded independently of $i$. Thus for some $i$, for all $j \geq i$,
$\dimo(G/S_j) = \dimo(G/S_i)$. It follows that the natural map $G/S_j
\to G/S_i$ has zero-dimensional fibers, hence it has finite kernel.
This shows that for some $i$, for all $j \geq i$, $S_i/S_j$ is
finite. Thus \([S_i:T]\) is bounded, \(S_i\) is stably dominated,
\(S_i^0 = T\), and (1) holds.

If we assume \Aom, definability of $T$ follows from
Corollary\ \ref{C:def G0}.
\end{proof}

\begin{cor}\label{C:maxrkdef}
  Let $G$ be a definable group, $H$ be a connected
  $\infty$-$C$-definable stably dominated subgroup of $G$, with a
  stable homomorphic image of Morley rank $n:=\dimst(G)$.
  \begin{itemize}
  \item[\AFD] There exists a $C$-definable stably dominated subgroup
    $S$ of $G$ with $S^0=H$.
  \item[\Aom] $H$ itself is definable.
  \end{itemize}
\end{cor}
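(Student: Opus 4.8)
The plan is to reduce to Lemma \ref{L:a3def} by manufacturing a \emph{definable} group $G'$ with $H \normal G'$ and $G'/H$ almost $\Gamma$-internal, so that $G'$ can play the role of the ambient group in that lemma. The gap to be bridged is precisely that the corollary assumes neither that $H$ is normal in $G$ nor that $G/H$ is almost $\Gamma$-internal; only the dimension hypothesis $\dimst(G)=n$ is available, and this is exactly the input Proposition \ref{P:a2def} is designed to exploit.

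First I would record the structure coming from the hypotheses. Since $H$ is connected and stably dominated, its unique principal generic $p$ is $C$-definable and $H=\Stab(p)$ computed in $G$: indeed $H\le\Stab(p)$ because $H$ stabilises its own generic, while $\Stab(p)\le H$ because $p$ concentrates on $H$, so for $g\in\Stab(p)$ and $a\models p$ one has $a\in H$ and $ga\models\transl{g}{p}=p$, whence $g=(ga)a^{-1}\in H$. By the discussion in \S\ref{S:Stab}, $\Stab(p)=\bigcap_i S_i$ is an intersection of relatively $C$-definable subgroups $S_i$ of $G$. The assumption that $H$ has a stable homomorphic image of Morley rank $n$ provides a pro-$C$-definable surjective homomorphism $g:H\to\fg$ onto a definable stable group $\fg$ with $\RM(\fg)=n$. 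As in the proof of Lemma \ref{L:a3def}, a compactness argument extends $g$ to a $C$-definable, still surjective, homomorphism $\tilde g:S_{i_0}\to\fg$ for some index $i_0$. Put $G':=S_{i_0}$, a $C$-definable subgroup of $G$ containing $H$.

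Next I would feed $G'$ into Proposition \ref{P:a2def}. Since $G'\le G$ we have $\dimst(G')\le\dimst(G)=n$, while $\tilde g$ exhibits a stable image of Morley rank $n$, so $\dimst(G')=\dimst(\fg)=n$. The proposition then yields a pro-definable normal subgroup $T\normal G'$ that is connected, stably dominated, satisfies $\tilde g(T)=\fg^0$, with $G'/T$ almost $\Gamma$-internal; moreover $T$ is the unique connected stably dominated subgroup of $G'$ with $\tilde g(T)=\fg^0$. I claim $T=H$. The group $\fg=g(H)$ is the image of the connected group $H$, hence connected, so $\fg=\fg^0$ and $\tilde g(H)=\fg=\fg^0$; since $H$ is itself connected and stably dominated, the uniqueness clause forces $T=H$. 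Consequently $H\normal G'$ and $G'/H=G'/T$ is almost $\Gamma$-internal.

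At this point $H$ is an $\infty$-$C$-definable connected stably dominated normal subgroup of the definable group $G'$ with $G'/H$ almost $\Gamma$-internal, which is exactly the hypothesis of Lemma \ref{L:a3def} (with $G'$ for $G$ and $H$ for $T$). Applying that lemma gives, under \AFD, a $C$-definable stably dominated subgroup $S\le G'$ with $S^0=H$ — and $S\le G'\le G$ is the required subgroup — and, under \Aom, the definability of $H$ itself. The crux I expect is the passage from the bare dimension hypothesis to the almost $\Gamma$-internality of $G'/H$, that is, the identification $T=H$: it rests on $\fg$ being connected (so that $\tilde g(H)=\fg^0$) together with the uniqueness of $T$ under conditions (1,3) of Proposition \ref{P:a2def}, which is what lets us conclude $G'/H$ almost $\Gamma$-internal without circularity.
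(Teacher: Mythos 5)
Your proof is correct and follows essentially the same route as the paper: extend the stable quotient map $g$ by compactness to a definable subgroup $G'=S_{i_0}$ containing $H$ (the paper phrases this as "we may assume $g$ extends to $G$"), use the dimension sandwich $n=\dimst(\fg)\le\dimst(G')\le\dimst(G)=n$ to apply Proposition~\ref{P:a2def}, identify $H$ with the canonical subgroup $T$ via the uniqueness clause (1,3) using connectedness of $\fg$, and then conclude with Lemma~\ref{L:a3def}. The only difference is expository: you spell out the stabilizer identity $H=\Stab(p)$ and the uniqueness argument in more detail than the paper does.
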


Note that the group $S$ is stably dominated and has a stable
homomorphic image of Morley rank $n:=\dimst(G)$.

\begin{proof}
Assume \AFD. Let \(\theta : H \to \fg\) be a definable surjective
homomorphism with \(\fg\) definable stable and \(\dimst(\fg) =
n\). Since \(H\) is the stabilizer of its principal generic, \(H =
\bigcap_i H_i\) for some definable subgroups \(H_i\) of \(G\). By
compactness, \(\theta\) extends to a definable surjective group
homomorphism \(H_{i_0} \to \fg\). Note that \(n = \dimst(\fg) \leq
\dimst(H_{i_0})\leq \dimst(G) = n\) so we may assume that \(\theta\)
extends to \(G\). Note also that \(\theta(H) = \fg = \Stab(\theta_\star p) =
\fg^0\). By the uniqueness in Proposition\ \ref{P:a2def}, \(H\) is
normal in \(G\) and \(G/H\) is almost \(\Gamma\)-internal. We can now
conclude using Lemma\ \ref{L:a3def}.
\end{proof}

\section{Abelian groups}

We will use the notation of \S\ref{S:Stab} and \S\ref{S:convolution}.

\begin{lem}\label{L:abelian1}
Let \(A\) be a piecewise pro-definable Abelian group and \(p\) a
symmetric definable type concentrating on \(A\). Assume that \(A\) has
\(p\)-weight strictly smaller than \(2n\). Then there exists a
pro-definable connected \(H\leq A\) with generic \(p^{\pm 2n}\) such
that \(p\) concentrates on a coset of \(H\).
\end{lem}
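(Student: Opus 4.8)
The plan is to take $q := p^{\pm 2n}$ and exhibit it as the principal generic of $H := \Stab(q)$. Write $q_m := p^{\pm 2m}$, so $q = q_n$, and first record the formal properties of these types. Because $p$ is symmetric, the reverse of a Morley sequence in $p$ is again one, and since $p^{\pm 2m}$ contains $m$ positive and $m$ negative signs, each $q_m$ is symmetric with $q_m^{-1} = q_m$. Concatenating independent Morley sequences gives the convolution law $q_m \star q_{m'} = q_{m+m'}$: if $c \models q_m$ and $c' \models q_{m'}|Cc$, then $c \cdot c'$ is the alternating sum over the concatenated (hence Morley) sequence, which realizes $q_{m+m'}$. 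In particular $\Stab(q_k) \subseteq \Stab(q_{k+1})$ for every $k$, since $q_{k+1} = q_k \star q_1$ and left translation commutes with $\star$.

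The crux is to use the weight hypothesis to collapse this chain: I claim $q_n = q_{n+1}$ as global definable types, whence $q_m = q_n$ for all $m \geq n$ and in particular $q_n \star q_n = q_n$. The idea is that $q_n$ already incorporates $2n$ mutually independent realizations of $p$, which strictly exceeds the weight $2n-1$, so any further alternating pair must be absorbed rather than producing a genuinely new type. Concretely, realizing $q_{n+1}$ from a Morley sequence $a_1,\ldots,a_{2n+2}$, one applies the weight inequality to force the increment $a_{2n+1}-a_{2n+2}$ to stabilize the type of the partial sum $a_1 - \cdots - a_{2n}$, giving $q_{n+1}=q_n$. Making this absorption precise — choosing the right element to which weight is applied and controlling the bases over which the independences are read — is the main obstacle.

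Granting the idempotency $q_n \star q_n = q_n$, put $H := \Stab(q_n)$. Since this is a \emph{global} identity of definable types and $q_n$ is symmetric, every realization $c \models q_n$ satisfies $\transl{c}{q_n} = q_n$, i.e.\ $c \in H$, so $q_n$ concentrates on $H$. By the remark that a filter concentrating on its stabilizer is generic there, $q_n$ is generic in $H$; being symmetric, Lemma\ \ref{L:gen sym} applies and yields $H^0 = \Stab(q_n) = H$, so $H$ is connected with principal generic $q_n = p^{\pm 2n}$.

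Finally, to see that $p$ concentrates on a coset of $H$: for independent $a,a' \models p$ we have $a - a' \models p^{\pm 2} = q_1$, and $\transl{(a-a')}{q_n} = q_1 \star q_n = q_{n+1} = q_n$, so $a - a' \in H$. Thus any two independent realizations of $p$ lie in the same coset of $H$, and interpolating an independent third realization shows that all realizations of $p$ lie in one coset; that is, $p$ concentrates on a coset of $H$, completing the argument.
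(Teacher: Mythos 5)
Your soft steps are fine and track the paper's: the convolution identities $q_m \star q_{m'} = q_{m+m'}$, the monotonicity $\Stab(q_k)\subseteq \Stab(q_{k+1})$, the appeal to Lemma\ \ref{L:gen sym} for connectedness, and the final interpolation are all correct in outline. But the proposal is not a proof, because the one place where the hypothesis of the lemma (the weight bound) must do any work is exactly the step you declare to be ``the main obstacle'' and do not carry out: the claim $q_{n+1}=q_n$. Everything after that point is routine, so the entire mathematical content of the lemma sits inside the gap. Moreover, the absorption you sketch is not what the weight hypothesis can deliver. Applied to the alternating product $b$ of a Morley sequence $(a_1,\ldots,a_{2n})\models p^{\tensor 2n}|C$, weight says only that some \emph{single generator} satisfies $a_i\models p|Cb$; it says nothing about the increment $a_{2n+1}\cdot a_{2n+2}^{-1}$ ``stabilizing the partial sum''. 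The paper converts the weight conclusion into a group-theoretic statement by three further moves: (i) symmetry of $p$ plus commutativity make $\tp(a_1,\ldots,a_{2n}/Cb)$ invariant under permuting the pairs $(a_{2i-1},a_{2i})$, so one may take $i=1$ or $i=2n$; (ii) symmetry exchanges the independence, giving $b\models p^{\pm 2n}|Ca_1$; (iii) writing the alternating product so that $a_1$ enters inverted, $b=a_1^{-1}a_2a_3^{-1}\cdots a_{2n}$, the cancellation $a_1\cdot b=a_2a_3^{-1}\cdots a_{2n}\models p^{\pm(2n-1)}|Ca_1$ identifies $\transl{a_1}{p^{\pm 2n}}$ with $p^{\pm(2n-1)}$, i.e.\ $a_1\in X:=\Stab(p^{\pm 2n},p^{\pm(2n-1)})$. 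Thus what weight yields directly is that $p$ concentrates on the coset $X$ of $H:=\Stab(p^{\pm 2n})$; from this, $p^{\pm 2}$ concentrates on $X\cdot X^{-1}=H$, hence $p^{\pm 2n}$ concentrates on $H$ and is its generic, and your idempotency $q_1\star q_n=q_n$ drops out as a corollary. The deduction runs in the opposite order from the one you propose, and I do not see how to obtain $q_{n+1}=q_n$ first except by essentially reproducing this argument.

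There is also a recurring local-to-global conflation that would need attention even if the idempotency were granted. From the identity of definable types $q_n\star q_n=q_n$ you infer that every realization $c\models q_n$ satisfies $\transl{c}{q_n}=q_n$, and at the end you write $\transl{(a-a')}{q_n}=q_1\star q_n$. In both cases the convolution identity only gives that $\transl{c}{q_n}$ agrees with $q_n$ on formulas whose parameters lie in a base $B$ with $c\models q_n|B$, whereas membership in $\Stab(q_n)$ is a global statement, including parameters that are not independent from $c$. Closing this requires an extra ingredient --- for instance that $\Stab(q_n)$ and its cosets are $\infty$-definable over $C$, so membership depends only on the type over $C$, combined with a uniqueness property of symmetric definable types ($\transl{c}{q_n}$ and $q_n$ are both symmetric, $Cc$-definable, and agree on $Cc$) --- which is the same point the paper exercises, tersely, when it concludes $a_1\in X$. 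As written, your inference is not justified.
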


\begin{proof}
  Let $(a_1,a_2,\ldots,a_{2n}) \models p^{\tensor {2n}}|C$, where
  \(C\) is such that \(A\) is pro-\(C\)-definable and \(p\) is
  \(C\)-definable. Let \(b = a_1^{-1}\cdot a_2 \cdot \ldots \cdot
  a_{2n}\models p^{\pm 2n}|C\). By the weight assumption, $ {a_i}
  \models p|Cb$ for some $i$.  Say $i$ is odd. Since \(p\) is
  symmetric and \(A\) is Abelian, $\tp(a_1,a_2,\ldots,a_{2n}/b)$
  is $\Sym(n)$-invariant (for the action by permutation on the pairs
  \(a_{2i-1},a_{2i}\)), so \({a_1} \models p|Cb\) and hence \(b\models
  p^{\pm 2n}|Ca_1\). Since \(a_1\cdot b\models\transl{a_1}{p^{\pm
      2n}}|Ca_1\) and \(a_1\cdot b = a_2\cdot\ldots a_{2n}\models p^{\pm 2n
    -1}|Ca_1\), it follows that \(a_1\in X:= \Stab(p^{\pm 2n},p^{\pm
    2n-1})\). If \(i\) is even, then \(a_{2n}\models p|Cb^{-1}\) and
  hence \(a_{2n}\in X\).

  In both cases, \(p\) concentrates on \(X\), a coset of \(H :=
  \Stab(p^{\pm 2n})\). It follows that \(p^{\pm 2}\) and therefore
  \(p^{\pm 2n}\), concentrate on \(H = \Stab(p^{\pm 2n})\). So
  \(p^{\pm 2n}\) is the generic of \(H\).
\end{proof}

\begin{rem}
If \(p\) is stably dominated, so is \(p^{\pm 2n}\) and \(H\) is
connected stably dominated.
\end{rem}

For the rest of this section, we assume \(T\) metastable.

\begin{prop}
\label{P:sums}
Let \(C\) be a metastability basis, let $A$ be a pro-$C$-definable Abelian group
of bounded weight. Let $(B_i)_{i\in I}$ be connected stably dominated
pro-$C$-definable subgroups of $A$. Then there exists a stably dominated
connected pro-definable subgroup $B$ containing all the $B_i$.
\end{prop}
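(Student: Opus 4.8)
The plan is to realize $B$ as the stabilizer of a suitable convolution of the generics of the $B_i$, and to use bounded weight to cut the (possibly infinite) family down to finitely many members. Fix an $\acl$-closed model $C$ over which $A$ and all the $B_i$ are defined, and let $p_i$ be the unique stably dominated generic of $B_i$; since $B_i$ is connected, $\transl{b}{p_i} = p_i$ for every $b \in B_i$ (Remark\ \ref{R:1genr}). Because stably dominated types are symmetric (Proposition\ \ref{P:stdom}.(\ref{sym})) and $A$ is Abelian, for each finite $J \subseteq I$ the convolution $p_J := \bigoast_{i\in J} p_i$ is a well-defined symmetric definable type, independent of the order of enumeration; it is stably dominated by iterating transitivity and $\acl$-closure (Proposition\ \ref{P:stdom}.(\ref{ltrans}) and (\ref{aclstdom})), and it concentrates on the subgroup generated by $\{B_i : i\in J\}$.

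Next I would feed $p_J$ into Lemma\ \ref{L:abelian1}. Since $A$ has bounded weight, fix $n$ with $2n$ strictly above the weight bound, so that $A$ has $p_J$-weight $< 2n$; the lemma then yields a connected subgroup $H_J := \Stab(p_J^{\pm 2n})$ with generic $p_J^{\pm 2n}$, which is stably dominated by the remark following Lemma\ \ref{L:abelian1}. I claim $B_i \le H_J$ for each $i \in J$: for $b \in B_i$, connectedness gives $\transl{b}{p_i} = p_i$, and since the factor $p_i$ occurs in $p_J$ and the group is Abelian, $\transl{b}{p_J} = p_J$; as left translation affects only the leftmost factor of $p_J^{\pm 2n}$, we get $\transl{b}{p_J^{\pm 2n}} = p_J^{\pm 2n}$, i.e. $b \in \Stab(p_J^{\pm 2n}) = H_J$. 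Hence $H_J \le \langle B_i : i\in J\rangle \le H_{J'}$ whenever $J \subseteq J'$, so $(H_J)_{J}$ is a directed increasing family of connected stably dominated subgroups, each containing the corresponding $B_i$.

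It then remains to show this family stabilizes; granting this, if $H_{J_0}$ is the stable value, then $B := H_{J_0}$ is connected and stably dominated and contains every $B_i$ (because $B_i \le H_{J_0 \cup \{i\}} = H_{J_0}$), which proves the proposition. For stabilization I would argue that a proper inclusion $H_J \lneq H_{J'}$ of connected stably dominated subgroups strictly increases the weight of the generic. The quotient $H_{J'}/H_J$ is stably dominated (Remark\ \ref{R:gmpres}) and nontrivial, since a connected group has no proper pro-definable subgroup of bounded index (Lemma\ \ref{L:gen sym}), so its generic has weight $\ge 1$; combined with the fact that the generic of $H_{J'}$ is the independent convolution of the generic of $H_J$ with a lift of the generic of $H_{J'}/H_J$, this forces $w(p_{J'}^{\pm 2n}) > w(p_J^{\pm 2n})$. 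Since all these weights are bounded by the weight bound of $A$, the chain $(H_J)$ cannot grow indefinitely.

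The main obstacle is exactly this last step: establishing that weight is strictly monotone (equivalently, additive over the exact sequence $1 \to H_J \to H_{J'} \to H_{J'}/H_J \to 1$) for generics of connected stably dominated groups. This is the analogue of additivity of weight in stable theories, and I expect it to go through via stable domination—transporting the computation to the stable parts $\St_C$, where the generics become honest stable types and classical weight additivity is available—but the bookkeeping in passing between a stably dominated type and its stable image is the delicate point and the part demanding the most care.
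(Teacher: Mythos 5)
Your finite-stage construction is sound: the convolutions $p_J$, the application of Lemma\ \ref{L:abelian1}, the verification that $B_i \le H_J$ for $i \in J$, and the directedness of $(H_J)_J$ all go through, and indeed they reproduce what the paper proves as its consistency claim. But the stabilization step is not merely delicate bookkeeping --- it is false, and no weight or rank argument can repair it, because the directed family $(H_J)_J$ genuinely need not stabilize. Work in $\ACVF$ with $A = \Ga(K)$ and $B_t = t\Oo$ for $t$ ranging over a small, strictly decreasing sequence in $\Gamma$. Each $B_t$ is connected and stably dominated, and for finite $J$ one computes that $p_J$ is just the generic of $(\min J)\Oo$, so $H_J = (\min J)\Oo$: the chain grows strictly forever. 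This also refutes the monotonicity claim you lean on: for $t' < t$ the inclusion $t\Oo \lneq t'\Oo$ is proper and the quotient $t'\Oo/t\Oo$ is a nontrivial connected stably dominated group, yet the generics of $t\Oo$ and $t'\Oo$ both have weight $1$, each being dominated via a single map onto a rank-one stable set. Weight is not additive over such exact sequences: the stable part of the fibre generic is defined only over the (unstable) coset parameter, not over the base, so it contributes nothing to $\St_C$ of a generic of the larger group. This non-stabilization phenomenon is exactly why the paper introduces limit stably dominated groups (Definition\ \ref{D:lmstd}, Theorem\ \ref{T:lmstd}): in $\ACVF$ the connected stably dominated subgroups form strictly increasing $\Gamma$-indexed families.

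The missing idea is to replace ``stabilize the finite sums'' by ``pass to a limit type by compactness.'' The paper works over a metastability basis $C$, lets $\fF$ enumerate all $C$-definable functions from $A$ into $\Gamma$, and considers the partial type
\[
q_0 = \{\defsc{p_i}{y}\, f(x) = f(y\cdot x) : i \in I,\ f \in \fF\},
\]
whose consistency is precisely your finite-stage argument: a generic of the finite sum $\sum_{i\in I_0} B_i$ realizes the relevant finite fragment, by orthogonality to $\Gamma$. For $c \models q_0$ and $C' = C \cup \fF(c)$, metastability over the basis guarantees that $\tp(c/C')$ is stably dominated --- this is the step where the basis is essential, since $c$ need not realize any of your types $p_J$ --- and its definable extension $p$ satisfies $\transl{a}{p}|C' = p|C'$ for every $a \models p_i$, all $i \in I$ at once, so each $\transl{a}{p}$ lies among the boundedly many definable extensions of $p|C'$. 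Lemma\ \ref{L:abelian1} applied to $p$ then produces a single group $B = \Stab(p^{\pm 2n})$; since the orbit of $p^{\pm 2n}$ under translation by $B_iB$ is bounded, $B_i/(B\cap B_i)$ is bounded, and connectedness of $B_i$ forces $B_i \le B$. So the group containing all the $B_i$ must be extracted from a limit type, not from a terminal member of your chain, which in general does not exist.
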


\begin{proof}
Let $\fF$ be the collection of all $C$-definable functions on \(A\) into
$\Gamma$, seen as a pro-definable function. Then, for any $c \in A$: $\tp(c/
C,\fF(c))$ is stably dominated.

Let $p_i$ be the principal generic of $B_i$. Consider the partial
type: \[q_0 = \{\defsc{p_i}{y} f(x) = f(y\cdot x ) : i \in I, f \in
  \fF \}\]

\begin{claim} $q_0$ is consistent.
\end{claim}

\begin{proof}
It suffices to show that any finite number of formulas, concerning
the types \((p_i)_{i\in I_0}\) for some finite \(I_0\subseteq I\), can
be satisfied. Let $B = \sum_{i\in I_0} B_i$. Note that \(p := \bigoast_{i\in I_0} p_i\) is the unique stably dominated generic of $B$.

If $(a,c) \models p_i  \tensor p|C$, then by genericity of $p$,
$a\cdot c \models p |Ca$. By symmetry,
$a \models p_i|Cc$. Now, for any $f \in \fF$, as $p$ is stably
dominated, and thus orthogonal to $\Gamma$, there exists $\gamma_f$
such that for any $c'\models p|C$, $f(c')=\gamma_f$. In particular,
$f(c)=\gamma_f = f(a\cdot c)$.  Thus
$\models \defsc{p_i}{y} f(c) = f(y\cdot c)$.
Since this is true for each $i\in I_0$ and $ f \in \fF$, $q_0$ is
consistent.
\end{proof}

Let $c \models q_0$, $C' = C \cup \fF(c)$ and \(C''=\acl(C')\). Recall
that \(\tp(c/C')\) is stably dominated. Let \(p\) be the unique
\(C''\)-definable extension of \(\tp(c/C'')\).

\begin{claim}
For all \(a\models p_i\), \(\transl{a}{p}|C' = p|C'\).
\end{claim}

\begin{proof}
  Let \(a\models p_i|\acl(Cc)\) and $X\subseteq A$ be some $C$-definable subset. Let $f_X \in \fF$ be the function sending $x$ to $0\in\Gamma$ if $x\in X$ and $x$ to $\infty\in\Gamma$ if $x\nin X$. Since $c\models q_0$, $f_X(a\cdot c) = f_X(c)$. It follows that  $\tp(a\cdot c/C) = \tp(c/C)$ and thus $\tp(a\cdot c, \fF(a\cdot c)/C) = \tp(c, \fF(c)/C)$. But,
  for all \(f\in\fF\), $f(a\cdot c)=f(c)$, so $\tp(a\cdot c, \fF(c) /C
  ) = \tp(c,\fF(c)/C)$. Since, by symmetry, \(c\models p|\acl(C'a)\),
  this shows that $p|C' = \tp(c/C')=\tp(a\cdot c/C') =
  \transl{a}{p}|C'$.
\end{proof}

Note also that since \(p\) and \(p_i\) are invariant over \(C''\), so
is \(\transl{a}{p}\). It is therefore one of the boundedly many
\(C''\)-definable extensions of \(\tp(c/C')\).

Moreover, by Lemma\ \ref{L:abelian1}, there exist a stably dominated
pro-definable group \(B\) with generic \(p^{\pm 2n}\), for some \(n\),
such that \(p\) concentrates on a coset of \(B\). Since the orbit of
\(p^{\pm 2n}\) under left translation by \(B_i B\) is bounded, it
follows that \(B_i/(B\cap B_i) \simeq (B_i B)/B\) is bounded and, as
\(B_i\) is connected, \(B_i\leq B\).
\end{proof}

In the previous proof, the subgroup \(B\) we constructed is,
\emph{a priori}, pro-\(C''\)-definable, but we can assume it is
pro-\(C'\)-definable:

\begin{lem}\label{L:sums alg}
Let \(A\) be a pro-limit of \(C\)-definable Abelian groups and \(B\leq
A\) be a stably dominated connected pro-\(\acl(C)\)-definable
group. Then, there exists a stably dominated pro-\(C\)-definable
\(B'\leq A\) containing \(B\).
\end{lem}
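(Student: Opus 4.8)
The plan is to realise $B'$ as the subgroup generated by the finitely many $C$-conjugates of $B$, and then to descend to $C$ by invariance. First I would record that, since $B$ is connected, it has a unique principal generic $p$, which is then $\acl(C)$-definable (it is fixed by every automorphism fixing $B$ setwise, and $\mathrm{Aut}(\Uu/\acl(C))$ fixes $B$) and satisfies $B=\Stab(p)$; thus $\sigma(B)=\Stab(\sigma_\star p)$ for every $\sigma\in\mathrm{Aut}(\Uu/C)$, so the $C$-conjugates of $B$ are controlled by those of $p$.

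The crux is to show that there are only finitely many such conjugates, and here I would use the finite-rank hypothesis. By Proposition~\ref{P:stdomdef}.(2) the type $p$ is, over $\acl(C)$, completely determined by the stable type of $\St_{\acl(C)}(a)$ for $a\models p$, and by Lemma~\ref{L:afg} this set is finitely $\acl$-generated over $\acl(C)$. Since Morley rank in the stable part is finite, that stable type has a finite canonical base $e$, and $e\in\acl(C)$; hence $p$, and therefore $B=\Stab(p)$, has only finitely many $C$-conjugates $B=B_1,\dots,B_m$. Each $B_i$ is again a connected stably dominated pro-$\acl(C)$-definable subgroup of $A$. I would then set $B':=B_1+\dots+B_m$: as $A$ is Abelian, the sum map $A^m\to A$ is a homomorphism with image $B'$, so $B'$ is the image of the stably dominated group $B_1\times\dots\times B_m$ under a pro-definable homomorphism; by Remark~\ref{R:gmpres} it is stably dominated (and connected), and clearly $B\leq B'$.

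It remains to see that $B'$ is pro-$C$-definable. Since $\{B_1,\dots,B_m\}$ is permuted by $\mathrm{Aut}(\Uu/C)$, the subgroup $B'$ is $\mathrm{Aut}(\Uu/C)$-invariant. Writing $B'=\bigcap_k H_k$ with each $H_k$ a relatively $\acl(C)$-definable subgroup, I would replace $H_k$ by the intersection $\tilde H_k$ of its (finitely many, since $H_k$ is over $\acl(C)$) $C$-conjugates: each $\tilde H_k$ is a finite intersection of relatively definable subgroups, hence relatively definable, and is $\mathrm{Aut}(\Uu/C)$-invariant, hence $C$-definable by elimination of imaginaries; moreover $B'\subseteq\tilde H_k\subseteq H_k$ by invariance, so $B'=\bigcap_k\tilde H_k$ is pro-$C$-definable, which yields the desired $B'$. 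The one genuinely delicate point is the finiteness of the conjugate family: dimension bookkeeping alone does not suffice, as a proper stably dominated subgroup can have full $\dimst$ (for instance $\Oo\times\gamma\Oo\subsetneq\Oo^2$ in $\ACVF$), and kernels of homomorphisms out of stably dominated groups need not be stably dominated; so it is precisely the finite-rank input capping the number of conjugates that makes the finite sum, and hence pro-$C$-definability, available.
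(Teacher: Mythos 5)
Your strategy --- sum the finitely many $C$-conjugates of $B$ and then descend to $C$ by invariance and elimination of imaginaries --- stands or falls with the finiteness of the conjugate family, and that claim is false: this is a genuine gap, not a fixable detail. Two of the tools you invoke are unavailable. First, Lemma~\ref{L:afg} assumes \AFD, which is not a hypothesis of Lemma~\ref{L:sums alg}: the section assumes only metastability, and the lemma is applied in Proposition~\ref{P:exists-lstdom} under a mere bounded-weight hypothesis, so building \AFD into its proof would break that application. Second, and more fundamentally, Lemma~\ref{L:afg} concerns \emph{finite} tuples, while $B$ is only pro-definable: a realization $a$ of its principal generic $p$ is an infinite tuple, $\St_{\acl(C)}(a)$ need not be $\acl$-finitely generated, and the type that determines $p$ by Proposition~\ref{P:stdomdef}.(2) --- namely all of $\tp(a/\acl(C))$, not the type of its stable part alone --- need not have a finite canonical base. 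Concretely, work in $\ACVF_{0,0}$ with $C=\Qq$ trivially valued, let $c_i=\sqrt{\ell_i}\in\alg{\Qq}$ ($\ell_i$ the $i$-th prime), $B_c=\Oo\cdot(1,c)\leq \Ga^2(K)$, and $B=\projlim_n\prod_{i\leq n}B_{c_i}$ inside $A=\projlim_n\prod_{i\leq n}\Ga^2(K)$. Each $B_{c_i}\cong(\Oo,+)$ is connected and stably dominated, hence so is $B$; but the $C$-conjugates of $B$ are the groups $\projlim_n\prod_{i\leq n}B_{\epsilon_i c_i}$ for $\epsilon\in\{\pm 1\}^{\omega}$, and there are $2^{\aleph_0}$ of them, even though \AFD holds in $\ACVF$. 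At that point your proof has no move left: Remark~\ref{R:gmpres} does not apply to the sum of infinitely many subgroups (a directed union, not the image of a finite product under a homomorphism), and nothing in your argument produces a finite subfamily of conjugates whose sum is $\mathrm{Aut}(\Uu/C)$-invariant. (In this particular example $B+\sigma(B)=\projlim_n\prod_{i\leq n}\Oo^2$ happens to work for a suitable $\sigma$, but that is an accident of the example, not an output of the proof.)

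The paper's proof avoids global finiteness altogether, and this is the idea your proposal is missing: finiteness is true, and is only needed, one finite set of formulas at a time. For each finite set $\Delta$ of formulas preserved by translation, the restrictions $\restr{p_i}{\Delta}$ of the (boundedly many) $C$-conjugates $p_i$ of $p$ form a \emph{finite} set, because each $\Delta$-definition is coded by a finite tuple from $\acl(C)$; the paper convolves a finite set of representatives into $q_{\Delta}=\bigoast_{i\in I_{\Delta}}p_i$, shows --- using commutativity of $A$ and symmetry of stably dominated types --- that every conjugate concentrates on $\Stab_{\Delta}(q_{\Delta})$, deduces that the restrictions $\restr{q_{\Delta}}{\Delta}$ are $C$-invariant and cohere into a single definable type $q$, and sets $B'=\Stab(q)$, which is pro-$C$-definable because each $\Stab_{\Delta}(q)$ is relatively $C$-definable. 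Your closing descent step (intersecting each relatively $\acl(C)$-definable piece with its finitely many $C$-conjugates and invoking elimination of imaginaries) is sound, but it is exactly the part the paper gets for free; the real content of the lemma is the formula-by-formula bookkeeping that replaces your false global finiteness claim, and it needs neither \AFD nor bounded weight.
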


\begin{proof}
Let \(p\) be the principal generic of \(B\) and \((p_i)_{i\in I}\) be
its conjugates over \(C\). For all finite sets \(\Delta\) of formulas
(with parameters in \(C\)) preserved by left translation by \(A\), the set of codes for the defining schemes of the types $\restr{p_i}{\Delta}$, for $i\in I$, is equal to the set of $C$-conjugates of $\restr{p}{\Delta}$. It is therefore finite. Let $I_\Delta\subseteq I$ be a finite set such that for any $i\in I$, there exists $j\in I_{\Delta}$ with $\restr{p_i}{\Delta} = \restr{p_j}{\Delta}$. Let \(q_{\Delta} =
\bigoast_{i\in I_{\Delta}} p_i\).  Note that, since \(A\) is a
pro-limit of definable groups, any formula is contained in such a
\(\Delta\).

\begin{claim}
For any \(i_0\in I\), \(p_{i_0}\) concentrates on
\(\Stab_{\Delta}(q_{\Delta})\).
\end{claim}

\begin{proof}
Let \(c\models p_{i_0}|M\) for some model \(M\) containing \(C\) and
\((a_i)_{i}\models (\tensor_{i\in I_{\Delta}} p_i) | M c\).
Also, pick \(i_{1}\in I_{\Delta}\) such that \(\restr{p_{i_0}}{\Delta}
= \restr{p_{i_1}}{\Delta}\).  For every
\(\phi(x,y)\in\Delta\) and \(m\in M\), we have that \(\phi(x,m)\in
q_{\Delta}\) if and only if \(\models\phi(\sum_i a_i,m)\), which is
equivalent to \(\phi(\sum_{i\neq i_1}a_i + x,m)\in p_{i_1}\), by
symmetry of stably dominated types. Recall that $\Delta$ is preserved by left translation by elements of $A$, so that last formula is also an instance of $\Delta$ and hence \(\phi(\sum_{i\neq i_1}a_i + x,m)\in p_{i_1}\) if and only if \(\phi(\sum_{i\neq i_1}a_i + x,m)\in p_{i_0}\). Since \(p_{i_0}\)
concentrates of \(\Stab_{\Delta}(p_{i_0})\), this is equivalent to
\(\phi(\sum_{i\neq i_1}a_i + c + x,m)\in p_{i_0}\) and hence to
\(\models\phi(c + \sum_i a_i,m)\),
i.e. \(\phi(x,m)\in\transl{c}{q_{\Delta}}\).
\end{proof}

It immediately follows that \(q_{\Delta}\) concentrates on
\(\Stab_{\Delta}(q_{\Delta})\) and that, for any \(J\supseteq
I_{\Delta}\), \(\restr{(\bigoast_{i\in J} p_i)}{\Delta} =
\restr{q_{\Delta}}{\Delta}\).  Let \(q\) be the unique type such that
for all \(\Delta\), \(\restr{q}{\Delta} =
\restr{q_{\Delta}}{\Delta}\).  For all \(i\in I\), \(p_i\)
concentrates on \(\Stab_{\Delta}(q) = \Stab_{\Delta}(q_{\Delta})\) and
hence so does \(q_{\Delta}\).

Note that for all finite set \(\Delta\), \(\Stab_{\Delta}(q)\) is
relatively \(C\)-definable and thus \(B' = \Stab(q)\) is
pro-\(C\)-definable. There remains to show that \(q\) concentrates on
\(B'\).  For all \(\Delta\), let \(\Delta'\supseteq \Delta\) be a
finite set of formulas preserved by left translation by \(A\) such
that \(\Stab_{\Delta}(q)\) is defined by an instance of \(\Delta'\).
Since \(q_{\Delta'}\) concentrates on
\(\Stab_{\Delta'}(q_{\Delta'})\leq \Stab_{\Delta}(q_{\Delta'}) =
\Stab_{\Delta}(q)\), so does \(q\).
\end{proof}


\begin{lem}\label{L:ab-inertial} \AFD
Let \(C\) be a metastability basis which is a model and $A$ be a pro-limit of
$C$-definable Abelian groups with $\dimst(A) = n<\infty$. Then $A$
contains a stably dominated pro-$C\Gamma$-definable subgroup $S$
with stable homomorphic image of dimension $n$.
\end{lem}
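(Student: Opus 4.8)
The plan is to produce $S$ from a single stably dominated type of full stable dimension living over a base of the form $C\gamma$ with $\gamma\in\Gamma$, and then to descend the base of definition. The whole argument rests on one auxiliary fact, which I would isolate first: if $C'=\acl(C')$ and $\tp(a/C')$ is stably dominated, then $\dimst(a/C')=\dimst(\St_{C'}(a)/C')$. The inequality $\geq$ is the general one (any $C'$-definable $D\ni a$ maps, via the enumeration $\theta$ of $\St_{C'}(a)$, to a stable set of Morley rank at least $\RM(\theta(a)/C')$). For $\leq$, let $m=\dimst(\St_{C'}(a)/C')$, pick a $C'$-definable $D\ni a$ of minimal stable dimension, and let $\phi$ (over some $C'e$) realize $\dimst(D)=\RM(\phi(D))$. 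After replacing $e$ by a conjugate $e'$ over $C'$ with $e'\ind_{C'}a$, Proposition~\ref{P:stdomdef}.(2) gives $a\models p|C'e'$, so $\phi(a)$ is generic in $\phi(D)$ over $C'e'$; on the other hand base change (Proposition~\ref{P:base change}.(1)) keeps $\tp(a/C'e')$ stably dominated via $\theta$, whence $\phi(a)\in\acl(C'e'\,\theta(a))$ and $\RM(\phi(a)/C'e')\leq\RM(\theta(a)/C')=m$. Thus $\dimst(D)=\RM(\phi(a)/C'e')\leq m$.

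Next I would locate the type. Since $\dimst(A)=n$, some $C$-definable piece $A_{i_0}$ of $A$ has $\dimst(A_{i_0})=n$, and $A_{i_0}$ cannot be covered by finitely many $C$-definable sets of smaller stable dimension; by compactness there is $a\in A_{i_0}$ with $\dimst(a/C)=n$. By metastability $\tp(a/\Gamma_C(a))$ is stably dominated; set $\gamma:=\Gamma_C(a)$ and $C':=\acl(C\gamma)$. Orthogonality of $\Gamma$ to the stable part guarantees that adjoining the $\Gamma$-tuple $\gamma\in\dcl(Ca)$ does not lower the stable dimension, i.e. $\dimst(a/C')=\dimst(a/C)=n$: otherwise a $C\gamma$-definable $D_\gamma\ni a$ with $\dimst(D_\gamma)<n$, coming from a $C$-definable family $(D_t)_t$, would give the $C$-definable set $\{x: x\in D_{\gamma(x)}\}\ni a$ of stable dimension $<n$ (the $\Gamma$-indexed union not raising $\dimst$ above the fibre bound). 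Combined with the auxiliary fact, $\dimst(\St_{C'}(a)/C')=n$. Let $p$ be the unique $C'$-definable (stably dominated, hence symmetric) extension of $\tp(a/C')$.

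I would then feed $p$ into Lemma~\ref{L:abelian1}. As $A$ has $p$-weight at most $\dimst(A)=n<2n$ (the case $n=0$ being trivial, with $S=\{0\}$), the lemma yields a connected stably dominated pro-$C'$-definable subgroup $H\leq A$ with generic $p^{\pm 2n}$, on a coset of which $p$ concentrates. For the stable dimension of $H$: taking independent $a_1,a_2\models p$, sub/additivity of $\dimst$ along independent stably dominated types (itself a consequence of the auxiliary fact and additivity of Morley rank in $\St_{C'}$) gives $\dimst(a_1^{-1}a_2/C')\geq\dimst(a_1a_2/C')-\dimst(a_1/C')=2n-n=n$, and since $p^{\pm2}$ concentrates on $H$ this forces $\dimst(H)=n$. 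Finally, Lemma~\ref{L:sums alg} applied over the base $C\gamma$ (over which $A$ is still a pro-limit of definable Abelian groups) replaces $H$ by a stably dominated pro-$C\gamma$-definable $S\supseteq H$; this $S$ is pro-$C\Gamma$-definable and satisfies $n=\dimst(H)\leq\dimst(S)\leq\dimst(A)=n$. By the auxiliary fact the stable part of a generic of $S$ has Morley rank $\dimst(S)=n$, so Proposition~\ref{P:groupdom} together with the remark following it (using \AFD) furnishes a surjective pro-definable homomorphism of $S$ onto a definable stable group of Morley rank $n$; that is, $S$ has a stable homomorphic image of dimension $n$.

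The main obstacle is the dimension bookkeeping concentrated in the auxiliary fact and its two companions—that adjoining a $\Gamma$-tuple from $\dcl(Ca)$ does not decrease $\dimst$, and that $\dimst$ is (sub)additive along independent stably dominated types—since all three quietly use orthogonality of $\Gamma$ to the stable part and the fact that for stably dominated types the stable part carries the entire stable dimension. Once these are in place, the group-theoretic steps (Lemmas~\ref{L:abelian1} and~\ref{L:sums alg}, and Proposition~\ref{P:groupdom}) assemble routinely.
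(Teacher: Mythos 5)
Your proof stands or falls with the ``auxiliary fact'' that $\dimst(a/C')=\dimst(\St_{C'}(a)/C')$ whenever $C'=\acl(C')$ and $\tp(a/C')$ is stably dominated, and this fact is false. The paper warns that $\dimst(d/B)>\dimst(\St_B(d)/B)$ can happen, and stable domination does not repair this: Example~\ref{E:noninertial} is an explicit counterexample. Take $C$ a maximally complete model containing $\alg{\Cc(t)}$ and $a=(x,y)$ a generic of $H=\bigcap_n H_n\leq(\Ga\times\Gm)(\Oo)$, where $H_n=\{(x,y):\val(y-c_n(x))\geq\val(t^{n+1})\}$ and $c_n(x)=\sum_{i\leq n}(tx)^i/i!$. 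Then $\tp(a/C)$ is stably dominated and $\St_C(a)\subseteq\dcl(C\,\res(x))$ has Morley rank $1$; but $\tp(a/C)$ is axiomatized by the conditions $(x,y)\in H_n$ together with ``$\res(x)$ generic'', so by compactness any $C$-definable $D\ni a$ contains a set $\{(x,y)\in H_n:\res(x)\notin W\}$ ($W$ finite), which maps onto a rank-$2$ stable set via $(x,y)\mapsto\bigl(\res(x),\res((y-c_n(x))/t^{n+1})\bigr)$; hence $\dimst(a/C)=2>1$. The precise point where your proof of the auxiliary fact breaks is the assertion ``$\phi(a)$ is generic in $\phi(D)$ over $C'e'$'': that $a\models p|C'e'$ only expresses independence of $a$ from $e'$ in the sense of Proposition~\ref{P:stdomdef}; it gives no control on the rank of $\phi(a)$ inside $\phi(D)$. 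In the example above, $\phi(a)$ lies on the rank-$1$ curve $\beta=\alpha^{n+1}/(n+1)!$ inside $\phi(D)$, however $e'$ is chosen.

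This is fatal rather than cosmetic, because your selection criterion admits exactly such bad points: with $A=H_1$ (so $n=\dimst(A)=2$), the generic $a$ of $H$ above satisfies $\dimst(a/C)=2=n$, here $\Gamma_C(a)=\Gamma(C)$ so $C''=C$, and your construction then produces (via Lemma~\ref{L:abelian1} and Lemma~\ref{L:sums alg}) the group $H$ itself, whose maximal stable homomorphic image has Morley rank $1<n$ by Proposition~\ref{P:groupdom} and Example~\ref{E:noninertial}; so the conclusion of the lemma fails for the group you build. The paper sidesteps this by making the choice the other way around: it picks $g\in A$ with $\dimst(\St_C(g)/C)=n$, i.e.\ a point whose \emph{stable part} already has full rank, and then does all the dimension bookkeeping on stable parts (where additivity of Morley rank inside $\St_C$ is legitimate), proving that rank $n$ of the stable part is preserved under the products of independent realizations used in Lemma~\ref{L:abelian1} and Lemma~\ref{L:sums alg}. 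Your parallel rank calculus on $\dimst$ of points (e.g.\ $\dimst((a_1,a_2)/C')=2n$ and the subadditivity you invoke) is justified in your write-up only through the false auxiliary fact, so it inherits the same gap. If you replace your choice of $a$ by the paper's choice of $g$ and restate every dimension estimate in terms of $\dimst(\St_{C'}(\cdot)/C')$, the group-theoretic assembly you describe does go through and coincides with the paper's proof.
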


\begin{proof}
Since \(C\) is a model and Morley rank is definable, we can find a $g \in A$
such that $\dimst(\St_C(g)/C)=n$. Let \(C' = \Gamma_{C}(g)\), \(C'' = \acl(C')\)
and $p = \tp(g/C'')$. So $p$ is stably dominated. Moreover, for all \(B\supseteq
C''\) and \(a\models p|B\), \(\dimst(\St_{B}(a)/B) = n\).

\begin{claim}
For all \(C\), \(a\), \(b\), \(\dimst(\St_C(a,b)/C) \leq
\dimst(\St_{Ca}(b)/Ca)+\dimst(\St_{C}(a)/C)\).
\end{claim}

\begin{proof}
Since Morley rank is definable, it is subadditive. Note that, since Morley rank
is definable, it is subadditive. Indeed, it suffices to check that, by
induction, for every definable function \(f : X\to Y\) whose fibers have
constant Morley rank \(n\), \(\RM(X) = \RM(f(Y)) + n\). Assume not, then there
exist \((S_i)_{i\in\omega}\subseteq X\) definable pairwise disjoint with
\(\RM(S_i)\geq \RM(f(Y)) + n\). We may assume that, for all \(i\), the Morley
rank of the fibers of \(f\) restricted to \(S_i\) is constant. If, for some
\(i\), \(\RM(f(S_i)) < \RM(f(Y))\), then, by induction, \(\RM(S_i) < \RM(f(Y)) +
n\), a contradiction. So, for all \(i\), we have \(\RM(f(S_i)) = \RM(f(Y))\) and
hence we may assume that there is some \(a\in \bigcap_i f(S_i)\neq\emptyset\).
Then, the fibers \(S_{i,a} = S_i\cap f^{-1}(a)\) are all disjoint and
\(\RM(S_{i,a}) < n\). By induction, it follows that \(\RM(S_i) < \RM(f(Y)) +
n\), a contradiction once again.

By subadditivity, we have
\begin{align*}
\dimst(\St_C(a,b)/C) &\leq \dimst(\St_C(a,b)/\St_C(a)) + \dimst(\St_C(a)/C)\\
&=\dimst(\St_C(a,b)/Ca) + \dimst(\St_C(a)/C)\\
&\leq \dimst(\St_{Ca}(b)/Ca)+ \dimst(\St_C(a)/C).\qedhere
\end{align*}
\end{proof}

\begin{claim}\label{C:rk n}
  If \(p\) and \(q\) are stably dominated over \(C\) concentrating on
  \(A\), $a \models p |C$, $b \models q | Ca$, $c = a\cdot b$ and
  \(\dimst(\St_{C}(a)/C) = \dimst(\St_{C}(b/C)) = n\), then
  \(\dimst(\St_C(c)/C) = n\).
\end{claim}

\begin{proof}
Since $\tp(a,b/C)$ is stably dominated, so is $\tp(c/C)$. Since
\(\St_C(a)\ind_C\St_C(b)\), \[\dimst(\St_C(a,b)/C) \geq
\dimst(\St_C(a)\St_C(b)/C) = 2n.\] Moreover, \(\dimst(\St_C(a,b)/C) =
\dimst(\St_C(b,c)/C) \leq \dimst(\St_{Cc}(b)/Cc) + \dimst(\St_C(c)/C) = n
+\dimst(\St_C(c)/C)\).  So $\dimst(\St_C(c)/C) \geq n$.  But $\dimst(A)=n$,
so $\dimst(\St_C(c)/C) =n$.
\end{proof}

By Lemma\ \ref{L:abelian1}, $A$ contains a pro-$C''$-definable stably
dominated group $R$, with generic type $p^{\pm 2m}$ for some \(m\). By
Claim\ \ref{C:rk n}, for any \(a\models p^{\pm 2m} | C''\),
\(\dimst(\St_{C''}(a)/C'') = n\). By Lemma\ \ref{L:sums alg}, we find
\(S\geq R\) pro-\(C'\)-definable with stably dominated type \(q\)---we
can check that, by construction and Claim\ \ref{C:rk n}, for all
\(a\models q|C'\), \(\dimst(\St_{C'}(a)/C') = n\). We can now conclude
with Proposition\ \ref{P:groupdom}.
\end{proof}

\subsection{Limit stably dominated groups}

\begin{defn}\label{D:lmstd}
Let \(G\) be a pro-\(C\)-definable group and \(q\) be a (potentially
infinitary) type over \(C\). For all \(t\models q\), let
\(S_t\) be a pro-\(Ct\)-definable subgroup of \(G\) (uniformly in
\(t\)). We call $(S_t)_{t\models q}$ a \emph{limit stably dominated family}
for $G$ if:
\begin{enumerate}
\item $S_t$ is a connected stably dominated subgroup of $G$.
\item If $W \leq G$ is connected and stably dominated,
  pro-$C$-definable, then $W \leq S_t$ for some $t\models q$.
\item The family \((S_t)_{t\models q}\) is directed: any small set of
  realizations of \(q\) has an upper bound in the order defined by
  \(t_1 \leq t_2\) if \(S_{t_1}\leq S_{t_2}\).
\end{enumerate} 
The group \(H = \bigcup_{t\models q} S_t\) is called the \emph{limit stably
dominated subgroup} of \(G\). If \(G = H\), we say that \(G\) is limit
stably dominated.
\end{defn}

We view \(H\) as the limit of a directed system of stably dominated groups. It
is clearly independent of the particular choice of limit stably dominated family
\((S_t)_{t\models q}\).

\begin{lem}\label{L:lstdom pro-def}
Let \(G\) be a pro-definable group and \(H\leq G\) be its limit stably
dominated subgroup, if it exists. Then \(H\) is pro-definable.
\end{lem}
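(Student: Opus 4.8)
The plan is to show that \(H\), although presented as a directed union, is in fact an intersection of relatively definable subsets of \(G\); by the description of pro-definable subsets recalled in \S\ref{S:prelim}, this is precisely what pro-definability means. I work over a small base \(C\) over which \(G\) and the whole family \((S_t)_{t\models q}\) are defined, using that the \(S_t\) are pro-\(Ct\)-definable \emph{uniformly} in \(t\): there is a pro-\(C\)-definable relation \(S(x,t)\) with \(S_t = \{x\in G : S(x,t)\}\). Writing \(G = \projlim_i G_i\) and presenting \(S(x,t)\) as a conjunction \(\bigwedge_j \chi_j(x,t)\), where each \(\chi_j\) is a \(C\)-formula (a relatively definable condition on \(x\), involving only finitely many coordinates of \(t\)), membership \(x\in S_t\) becomes the partial type \(\{\chi_j(x,t) : j\}\).

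First I would reformulate membership in \(H\). Since \(\Uu\) is sufficiently saturated, for \(x\in G\) one has \(x\in H\), i.e.\ \(x\in S_t\) for some \(t\models q\), if and only if the partial type \(\Sigma_x(t) := q(t)\cup\{\chi_j(x,t) : j\}\) over \(Cx\) is consistent: a realization \(t_0\in\Uu\) witnesses \(t_0\models q\) and \(x\in S_{t_0}\subseteq H\), and conversely any \(t_0\models q\) with \(x\in S_{t_0}\) realizes \(\Sigma_x\). Next, by compactness \(\Sigma_x\) is consistent exactly when it is finitely satisfiable: for every finite \(q_0\subseteq q\) and every finite set \(F\) of indices \(j\) there exists \(t\) with \(\psi_{q_0}(t)\wedge\bigwedge_{j\in F}\chi_j(x,t)\), where \(\psi_{q_0}\) denotes the conjunction of \(q_0\). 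Setting
\[
  \Theta_{q_0,F}(x) \;:=\; \exists t\,\bigl(\psi_{q_0}(t)\wedge \bigwedge_{j\in F}\chi_j(x,t)\bigr),
\]
each \(\Theta_{q_0,F}\) is a genuine \(C\)-formula in finitely many coordinates of \(x\); here it is essential that \(\psi_{q_0}\) and the \(\chi_j\) mention only finitely many coordinates of \(t\), so that the quantifier \(\exists t\) ranges over a finite tuple even when \(q\) is infinitary. Hence \(H = \bigcap_{q_0,F}\{x\in G : \Theta_{q_0,F}(x)\}\) is an intersection of relatively \(C\)-definable subsets of \(G\), and therefore a pro-\(C\)-definable subset, as required.

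The main obstacle is conceptual rather than computational: the notation \(H=\bigcup_{t\models q}S_t\) makes \(H\) look at best ind-pro-definable, and one must recognize that quantifying over the realizations of a \emph{type} (not a definable set) is, by saturation together with compactness, equivalent to an intersection of first-order conditions, so the outer union is converted into an outer intersection. The two points needing care are the uniformity of the family (so that the \(\chi_j\) and the \(\Theta_{q_0,F}\) are honest \(C\)-formulas) and the finite support of each formula in the variable \(t\) (so that an infinitary \(q\) causes no problem). I would stress that neither directedness (condition (3)) nor the completeness property (condition (2)) of a limit stably dominated family is used for pro-definability itself; those hypotheses are needed only to make \(H\) a group and to realize it as a genuine directed limit of the stably dominated groups \(S_t\).
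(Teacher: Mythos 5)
Your proof is correct and takes essentially the same route as the paper's: both turn membership in the union \(H=\bigcup_{t\models q}S_t\), via saturation and compactness, into the finite-satisfiability conditions \(\exists t\,\bigl(t\models q_0\wedge x\in S^i_t\bigr)\) for finite \(q_0\subseteq q\), thereby exhibiting \(H\) as an intersection of relatively \(C\)-definable subsets of \(G\). The only cosmetic difference is that you present the relatively definable pieces of \(S_t\) as a uniform conjunction \(\bigwedge_j\chi_j(x,t)\) and take finite subconjunctions, whereas the paper writes \(S_t=\bigcap_i S^i_t\) for a directed family of relatively definable sets.
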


\begin{proof}
Since the \(S_t\) are pro-definable, there exists a directed family of
relatively definable sets \(S^i_t\subseteq G\) such that \(S_t = \bigcap_i S^i_t\). By
compactness, we have that $a\in H$ if and only if, for all $i$ and all
finite $q_0\subseteq q$, there exists $t$ such that $t\models q_0$ and
$a\in S_t^i$. So $H$ is an intersection of relatively definable
subsets of $G$, i.e. a pro-definable subset.
\end{proof}

\begin{lem}
Let \(G\) be a definable group and \((S_t)_{t\models q}\) a limit
stably dominated family for \(G\). Assume that \(S_t\) is the
connected component of some definable group \(R_t\), then the order
\(t_1 \leq t_2\) is relatively definable.
\end{lem}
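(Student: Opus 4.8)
The plan is to reduce the relation $S_{t_1}\le S_{t_2}$ to the concentration of a single generic type on $R_{t_2}$, and then to observe that these generics form a uniformly definable family. For $t\models q$ let $p_t$ denote the principal generic of $S_t$. Since $S_t$ is connected, $p_t$ is its unique generic (Remark\ \ref{R:1genr}); since $S_t$ is stably dominated, $p_t$ is symmetric (Proposition\ \ref{P:stdom}.(\ref{sym})); and since $S_t$ is $Ct$-definable, $p_t$ is $Ct$-definable (Lemma\ \ref{L:defbdd}). Taking $R_t$ to be given by a uniformly definable family, I first claim
\[ S_{t_1}\le S_{t_2}\quad\Longleftrightarrow\quad p_{t_1}\text{ concentrates on }R_{t_2}\quad\Longleftrightarrow\quad\models\defsc{p_{t_1}}{x}(x\in R_{t_2}).\]
The implication from left to right is immediate. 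For the converse, I use that in the connected group $S_{t_1}$ every element $g$ is a product $a\cdot b$ of two realizations of $p_{t_1}$: take $a\models p_{t_1}$ over a base containing $g$ and set $b=a^{-1}\cdot g$, noting that inversion followed by right translation by $g$ sends the unique generic $p_{t_1}$ to a generic, hence back to $p_{t_1}$ by connectedness, so $b\models p_{t_1}$. Thus if $p_{t_1}$ concentrates on the subgroup $R_{t_2}$ then $a,b\in R_{t_2}$ and $g=a\cdot b\in R_{t_2}$, whence $S_{t_1}\subseteq R_{t_2}$. As $S_{t_1}$ is connected, $S_{t_1}\cap N$ has finite index in $S_{t_1}$ for every finite-index definable $N\le R_{t_2}$, forcing $S_{t_1}\subseteq N$; intersecting over all such $N$ gives $S_{t_1}\subseteq R_{t_2}^0=S_{t_2}$.

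The crux is then to show that the family $(p_t)_{t\models q}$ is uniformly definable: for every formula $\phi(x,y)$ there is a $C$-formula $\Theta_\phi(y,t)$ with $\defsc{p_t}{x}\phi(x,y)\equiv\Theta_\phi(y,t)$ for all $t\models q$. Here I exploit that $q$ is a complete type over $C$ together with the canonicity of $p_t$. Since the family $(S_t)_t$ is $C$-definable and $p_t$ is the unique principal generic of $S_t$, every $\sigma\in\mathrm{Aut}(\Uu/C)$ satisfies $\sigma(p_t)=p_{\sigma(t)}$. Fixing one $t_0\models q$, the formula $\defsc{p_{t_0}}{x}\phi(x,y)$ has the form $\theta(y,h(t_0))$ for some $C$-formula $\theta$ and $C$-definable function $h$ (using that $p_{t_0}$ is $Ct_0$-definable and that $T$ eliminates imaginaries, so its defining parameters lie in $\dcl(Ct_0)$). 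Applying an automorphism over $C$ carrying $t_0$ to an arbitrary $t\models q$ then yields $\defsc{p_t}{x}\phi(x,y)\equiv\theta(y,h(t))$, so $\Theta_\phi(y,t):=\theta(y,h(t))$ is as required.

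Finally I would apply this to the formula $\phi(x,y)$ expressing that $x\in R_y$, where $R_y$ is the uniform formula defining the family $(R_t)$. Then
\[\{(t_1,t_2):S_{t_1}\le S_{t_2}\}=\{(t_1,t_2):\ \models\Theta_\phi(t_2,t_1)\}\]
is cut out by a single $C$-formula, so the order is relatively definable. I expect the uniform definability of the generics to be the only real obstacle; once the equivariance $\sigma(p_t)=p_{\sigma(t)}$ over the complete type $q$ is in place, the characterization of the order and the connectedness manipulations are routine.
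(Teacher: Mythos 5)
Your proof is correct and follows essentially the same route as the paper: both reduce $t_1\le t_2$ to the single condition $\defsc{p_{t_1}}{x}(x\in R_{t_2})$, using that the generic $p_{t_1}$ generates the connected group $S_{t_1}$ (so concentration on $R_{t_2}$ forces $S_{t_1}\le R_{t_2}$) and that a connected subgroup of $R_{t_2}$ must lie in $R_{t_2}^0=S_{t_2}$. The only difference is that you make explicit the uniform definability of $t\mapsto p_t$ over realizations of the complete type $q$, via the automorphism equivariance $\sigma(p_t)=p_{\sigma(t)}$, a point the paper's one-line proof leaves implicit.
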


\begin{proof}
If \(S_{t_1}\leq R_{t_2}\), then \(S_{t_1}\leq S_{t_2}\). It follows
that \(t_1\leq t_2\) if and only if \(S_{t_1} \leq R_{t_2}\),
equivalently \(\defsc{p_{t_1}}{x} x\in R_{t_2}\) where, \(p_{t}\)
denotes the principal generic of \(S_{t}\).
\end{proof}

Two different behaviors are possible, according to whether
or not the direct limit system has a maximal element.  The latter
is equivalent to the existence of a maximal connected stably dominated
subgroup of $G$.

\begin{prop}
\label{P:exists-lstdom}
Let \(A\) be a pro-limit of \(C\)-definable Abelian groups. Assume \(A\) has
bounded weight. Then, a limit stably dominated family for \(A\) exists over
\(C\).
\end{prop}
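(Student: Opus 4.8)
The plan is to produce a uniformly definable, cofinal, directed $\Gamma$-indexed subfamily of the poset of connected stably dominated subgroups, and then feed it to Lemma~\ref{L:cofinal-def}. Fix a metastability basis $C$ (which we may take to be a model) over which $A$ is pro-definable, and let $\mathcal{S}$ be the set of connected stably dominated pro-$C$-definable subgroups of $A$. The first observation is that $\mathcal{S}$ is directed under inclusion: given a small subfamily, Proposition~\ref{P:sums} (this is the step that consumes bounded weight) produces a connected stably dominated pro-$\acl(C)$-definable subgroup containing it, and Lemma~\ref{L:sums alg} descends this bound to a pro-$C$-definable one, again in $\mathcal{S}$. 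It therefore suffices to exhibit a definable $P\subseteq\Gamma^m$ and a single pro-$C$-definable family $(S_t)_{t\in P}$ of connected stably dominated subgroups such that $t_1\preceq t_2:\Leftrightarrow S_{t_1}\le S_{t_2}$ is a definable directed partial order on $P$ and such that every $W\in\mathcal{S}$ satisfies $W\le S_{t_0}$ for some $t_0\in P(\acl(C))$. Granting this, Lemma~\ref{L:cofinal-def} supplies a definable type $q$ cofinal in $(P,\preceq)$; one then checks Definition~\ref{D:lmstd} directly: clause~(1) is the stable domination of each $S_t$; for clause~(3), given $t_1,\dots,t_k\models q$ any $t\models q|Ct_1\cdots t_k$ dominates them because $q$ is a \emph{definable} cofinal type; and clause~(2) follows from the cofinality requirement together with $q$ concentrating above every point of $P$, since $W\le S_{t_0}\le S_t$ for $t\models q|Ct_0$. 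The resulting $H=\bigcup_{t\models q}S_t$ is then the desired subgroup, pro-definable by Lemma~\ref{L:lstdom pro-def}.

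To build the family I would coordinatize the cofinal part of $\mathcal{S}$ by $\Gamma$-valued functions, in the spirit of the proof of Proposition~\ref{P:sums}. Fix $n$ with $2n$ larger than the weight bound of $A$. Given a finite tuple $t$ from $\Gamma$, I attach a $Ct$-definable stably dominated type $p_t$ on $A$, uniformly in $t$ over a definable $P_0\subseteq\Gamma^m$ (the source of these types being metastability: for $a\in A$, $\tp(a/\Gamma_C(a))$ is stably dominated, and $\Gamma_C(a)$ is coded by a finite tuple from the $o$-minimal $\Gamma$). Since $p_t$ is symmetric of weight $<2n$, Lemma~\ref{L:abelian1} makes $p_t^{\pm 2n}$ the principal generic of the connected stably dominated group $\Stab(p_t^{\pm 2n})$, which Lemma~\ref{L:sums alg} enlarges to a connected stably dominated pro-$Ct$-definable $S_t$; let $P\subseteq P_0$ be the definable locus where this succeeds. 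For cofinality, take $W\in\mathcal{S}$ with principal generic $w$ and $a\models w$: as $w$ is stably dominated it is orthogonal to $\Gamma$ (Proposition~\ref{P:orth}), so $\Gamma_C(a)\subseteq\dcl(C)$ and $p_t$ for the corresponding standard parameter $t_0\in\Gamma(C)^m$ extends $w$; since $W$ is connected $w^{\pm 2n}=w$ and $\Stab(p_{t_0}^{\pm 2n})=\Stab(w)=W$, whence $W\le S_{t_0}$ with $t_0$ standard, as required.

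For directedness of $(P,\preceq)$: given $t_1,t_2\in P$, Proposition~\ref{P:sums} gives that $S_{t_1}+S_{t_2}$ is connected stably dominated, its generic is again of the form $p_{t_3}$ for a $t_3\in P$ definable from $t_1,t_2$ (its $\Gamma$-data being a finite tuple), and then $\Stab(p_{t_3}^{\pm 2n})=S_{t_1}+S_{t_2}\le S_{t_3}$, so $t_3$ is an upper bound; definability of this assignment makes $\preceq$ a definable directed order. The main obstacle is exactly this packaging: turning the pointwise recipe (metastability, Lemma~\ref{L:abelian1}, Lemma~\ref{L:sums alg}) into \emph{one} family that is uniformly pro-$C$-definable over a \emph{single} $\Gamma^m$ and whose inclusion order is definable and directed. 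Its two finiteness inputs are a uniform bound on the $\Gamma$-dimension of the relevant generics (so that one $m$ works, and a fixed finite set of $\Gamma$-valued functions suffices to coordinatize a cofinal part of $\mathcal{S}$) and closure of the family under sums. Once these are in place, the dichotomy mentioned before the statement---whether $\mathcal{S}$ has a maximal element---is simply whether the cofinal type $q$ is realized, and requires no separate treatment.
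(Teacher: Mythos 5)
Your proposal has a genuine gap, and it is the one you yourself flag as ``the main obstacle'': the existence of a \emph{single} pro-$C$-definable family $(S_t)_{t\in P}$ over a \emph{definable} $P\subseteq\Gamma^m$, with a definable directed order, cofinal among the connected stably dominated subgroups. This is never constructed, and the recipe ``attach to each $t$ a $Ct$-definable stably dominated type $p_t$, uniformly in $t$'' is not well defined: for a fixed $\Gamma$-tuple $t$ there are many stably dominated types (hence many groups $\Stab(p_t^{\pm 2n})$) with the same $\Gamma$-data, and nothing in metastability singles one out canonically. In the paper, such uniformly definable families are exactly the content of Proposition~\ref{P:cert} and Corollary~\ref{C:cert-2}, which require \Aom; under the hypotheses of Proposition~\ref{P:exists-lstdom} (metastability and bounded weight only) they are not available. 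Worse, your scaffolding imports hypotheses the proposition does not have: Lemma~\ref{L:cofinal-def} needs $\Gamma$ to be $o$-minimal, and coding $\Gamma_C(a)$ by a finite tuple needs \AFD (Lemma~\ref{L:afg}); neither is assumed here. So your reduction replaces the statement by a strictly stronger one that does not follow from the given assumptions.

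The underlying issue is a misreading of what Definition~\ref{D:lmstd} demands: the index $q$ may be a possibly \emph{infinitary complete type} over a small set, the order on realizations need not be definable, and no cofinal definable type is required --- clause (3) only asks that small sets of realizations have upper bounds. The paper's proof exploits exactly this flexibility. By Proposition~\ref{P:sums} (the step where bounded weight enters, as in your proof) there is one connected stably dominated pro-$C$-definable $S$ containing \emph{all} connected stably dominated pro-$C$-definable subgroups, where $C$ is a metastability basis containing an $\aleph_1$-saturated model $C_1$. By Lemma~\ref{L:sums alg} one may take $S=S_t$ pro-$Ct$-definable for a (small, possibly infinite) $\Gamma$-tuple $t$, and by Lemma~\ref{L:defbdd} both $t$ and a parameter set $C_0\subset C$ can be taken small. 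Setting $q=\tp(t/C_0)$, the limit family is simply the conjugates $(S_{t'})_{t'\models q}$. Clause (2) holds even for an arbitrary connected stably dominated $W$: by $\aleph_1$-saturation of $C_1$ there is an automorphism over $C_0$ moving $W$ to a group defined over $C$, which is then contained in $S_t$; pulling back, $W\leq S_{t'}$ for some $t'\models q$. Directedness then follows from Proposition~\ref{P:sums} together with clause (2). Your first paragraph's directedness observation for the poset of pro-$C$-definable groups is correct and does appear in this argument, but the machinery of definable index sets, definable orders and Lemma~\ref{L:cofinal-def} belongs to Theorem~\ref{T:lmstd} under \Aom, not to this proposition.
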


\begin{proof}
Let $C_1$ be a sufficiently saturated model containing \(C\) and let $C_2
\supseteq C_1$ be a metastability basis which is a model. Let $(A_i)_{i \in I}$
be the family of all connected stably dominated pro-\(C_1\)-definable subgroups
of $A$. Let \(S\) be as in Proposition\ \ref{P:sums}. By Lemma\ \ref{L:defbdd},
\(S = S_t\) is pro-\(Ct\)-definable for some small tuple \(t\). Let
$q=\tp(t/C)$.

If $W\leq A$ is a connected stably dominated group, we must show that $W \leq
S_{t'}$, for some $t' \models q$. For this purpose we can replace $W$ by a
conjugate, under the group of automorphisms of the universal domain over $C$.
Thus, we may assume $W$ is defined over $C_1$.  In this case, $W \leq S_t$. This
proves (2) of the definition of a limit stably dominated family. Directness of
the family \((S_t)_{t\models q}\) follows from Proposition\ \ref{P:sums},
together with (2).
\end{proof}

Note that if \(C = C_2\) can be assumed to be a sufficiently saturated
metastability basis, then we can choose \(q\) to be a type concentrating on some
power of \(\Gamma\).

\begin{lem}
\label{L:4def}
Let $C$ be a metastability basis which is a model. Let $A$ be a $C$-definable
Abelian group and let \(H\) be a connected stably dominated
\(\infty\)-$C$-definable subgroup of \(A\).
\begin{itemize}
\item[\AFD] \(H\) is contained in a $C\Gamma$-definable stably dominated
  subgroup.
\item[\Aom] \(H\) is contained in a $C\Gamma$-definable connected stably
  dominated subgroup.
\end{itemize}
\end{lem}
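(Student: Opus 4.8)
The plan is to use the value-group parameters $\Gamma$ to enlarge $H$ into a connected stably dominated subgroup of \emph{maximal} stable dimension $n := \dimst(A)$, and then to invoke Corollary\ \ref{C:maxrkdef} to definabilize it over $C\Gamma$. Observe first that both cases assume \AFD, since \Aom includes \AFD; in particular $A$, being definable, has bounded weight, so Proposition\ \ref{P:sums} will be available.

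First I would apply Lemma\ \ref{L:ab-inertial} to $A$ over the metastability basis $C$ to produce a connected stably dominated pro-$C\Gamma$-definable subgroup $S_0 \leq A$ with a stable homomorphic image of Morley rank $n = \dimst(A)$; write $S_0$ as pro-$Ct$-definable for a finite tuple $t$ from $\Gamma$. By Remark\ \ref{R:add Gamma}, $Ct$ is again a metastability basis. The point is that $S_0$ genuinely requires the $\Gamma$-parameters $t$, which is precisely why the conclusion cannot in general be achieved over $C$ alone.

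Next, both $H$ and $S_0$ are connected stably dominated pro-$Ct$-definable subgroups of $A$, so Proposition\ \ref{P:sums} (applied over the metastability basis $Ct$) yields a connected stably dominated pro-$\acl(Ct)$-definable subgroup $B \leq A$ containing both; Lemma\ \ref{L:sums alg} then descends $B$ to a connected stably dominated pro-$Ct$-definable subgroup $B' \supseteq B \supseteq H$. Since $S_0 \leq B'$ we have $n = \dimst(S_0) \leq \dimst(B') \leq \dimst(A) = n$, so $\dimst(B') = n$; by Proposition\ \ref{P:groupdom} together with the remark following it, $B'$ therefore has a stable homomorphic image of Morley rank $n$. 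Being a pro-definable subset of the definable group $A$, the group $B'$ is $\infty$-$Ct$-definable.

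Finally I would apply Corollary\ \ref{C:maxrkdef} with ambient group $A$ (so that $n = \dimst(A)$) to the connected $\infty$-$Ct$-definable stably dominated subgroup $B'$ of maximal stable rank $n$. Under \AFD this produces a $Ct$-definable stably dominated subgroup $S \leq A$ with $S^0 = B'$, whence $H \leq B' \leq S$ with $S$ being $C\Gamma$-definable and stably dominated. Under \Aom the same corollary shows that $B'$ itself is $Ct$-definable, hence a $C\Gamma$-definable connected stably dominated subgroup containing $H$. The principal obstacle is organizational rather than a single hard estimate: one must keep the enlargement of $H$ simultaneously pro-$C\Gamma$-definable and connected throughout (hence the combined use of Proposition\ \ref{P:sums} and Lemma\ \ref{L:sums alg} to descend from $\acl(Ct)$ back to $Ct$) while forcing its stable dimension up to the maximum $n$, so that the definabilization step of Corollary\ \ref{C:maxrkdef} outputs a group defined over $C\Gamma$ and not merely over $\acl(C\Gamma)$.
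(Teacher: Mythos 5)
Your plan breaks at the step where you claim that $\dimst(B') = n$ implies, ``by Proposition~\ref{P:groupdom} together with the remark following it,'' that $B'$ has a stable homomorphic image of Morley rank $n$. Proposition~\ref{P:groupdom} produces a stable quotient whose Morley rank equals the rank of the \emph{stable part of the generic} of $B'$ over the base, and this can be strictly smaller than $\dimst(B')$: Example~\ref{E:noninertial} is a connected stably dominated $\infty$-definable group with $\dimst = 2$ whose maximal stable homomorphic image has rank $1$ (indeed, the paper notes that this group cannot be the connected component of a definable group, which is exactly what Corollary~\ref{C:maxrkdef} would force if your inference were valid). Nor does the containment $S_0 \leq B'$ rescue the step: the rank of the stable homomorphic image is \emph{not} monotone under inclusion of connected stably dominated subgroups. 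For instance, working over a base whose value group contains an element $\delta$ above all $\val(t^n)$, the group $H$ of Example~\ref{E:noninertial} contains the definable subgroup $\{(x,u): \val(x)\geq \delta,\ \val(u-1)\geq\delta\}\simeq \Ga(\Oo)\times\Gm(\Oo)$, whose stable image has rank $2$, while $H$'s own stable image still has rank $1$. So knowing $S_0 \leq B'$ with $S_0$ of full stable image rank tells you nothing about the stable image rank of $B'$, and Corollary~\ref{C:maxrkdef} cannot be applied to $B'$. The deeper problem is that Proposition~\ref{P:sums} gives no control whatsoever on the stable part of the generic of the group it produces (its generic comes from an abstract realization of the type $q_0$), so your $B'$ simply has no certified stable homomorphic image rank.

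This is precisely the difficulty the paper's own proof is engineered to avoid: instead of enlarging $H$ inside $A$, it writes $H = \bigcap_i H_i$ with $H_i$ $C$-definable, sets $n := \dimst(A/H)$ (not $\dimst(A)$), and applies Lemma~\ref{L:ab-inertial} to the quotient $A/H$, where the subgroup $S$ produced has stable homomorphic image of full rank $n$ relative to its ambient group \emph{by construction} (this is tracked through Claim~\ref{C:rk n} inside Lemma~\ref{L:ab-inertial}, not deduced from a dimension count). Corollary~\ref{C:maxrkdef}, Proposition~\ref{P:a2def} and Lemma~\ref{L:a3def} are then applied in the quotients $A/H_i$, and the resulting groups $T_i$, $W_i$ are intersected and shown stably dominated via Lemma~\ref{L:stdomext}; containment of $H$ is automatic because the whole argument takes place above the $H_i$. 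If you want to salvage your approach, you would have to prove that the specific group coming out of Proposition~\ref{P:sums} and Lemma~\ref{L:sums alg} has a stable homomorphic image of rank $n$, which amounts to redoing the rank bookkeeping of the paper's proof. (A further, minor, slip: Lemma~\ref{L:sums alg} does not assert that $B'$ is connected; one must pass to $B'^{0}$, which is still pro-$Ct$-definable by invariance, before invoking Corollary~\ref{C:maxrkdef}.)
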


\begin{proof}
  Assume \AFD. Since \(H\) is the stabilizer of its principal generic,
  \(H = \bigcap_i H_i\) for some directed system of $C$-definable subgroups \(H_i\) of
  \(A\). Then $\dimst(A/H_i)$ is non-decreasing with $i$, and
  eventually stabilizes; we may assume it is constant, $\dimst(A/H_i)
  = n$. Clearly $\dimst(A/H) = n$. By Lemma\ \ref{L:ab-inertial},
  $A/H$ contains a stably dominated pro-$C\Gamma$-definable subgroup
  $S$ with stable homomorphic image of dimension $n$. Let \(S_i\) be
  the image of \(S\) in \(A/H_i\). Then \(S_i\) is stably dominated
  and \(S = \projlim_i S_i\). The stable homomorphic image of
  dimension $n$ of \(S\) factorizes through \(S_i\) for large enough
  \(i\), so we may assume that, for all \(i\), \(S_i\) has stable
  homomorphic image of dimension \(n = \dimst(A/H_i)\). By
  Corollary\ \ref{C:maxrkdef}, there exists a $C\Gamma$-definable stably dominated
  subgroup $R_i\leq A$ such that $S_i = (R_i/H_i)^0$. Note that for any
  \(j\geq i\), \(R_i/H_j\) has a stable homomorphic image of dimension
  $n = \dimst(A/H_j) = \dimst(R_i/H_j)$. So replacing $A$ by any of
  the $R_i$, we may assume that for all $i$, $A/H_i$ has a stable
  homomorphic image of dimension $n = \dimst(A/H_i)$.

  By Proposition\ \ref{P:a2def} there exists a unique
  $\infty$-definable group $T_i$ with $H_i \leq T_i \leq A$, $A/T_i$
  is almost $\Gamma$-internal and $T_i/H_i$ is connected stably
  dominated. Note that, since it is unique, $T_i$ is
  $\infty$-$C\Gamma$-definable. By Lemma\ \ref{L:a3def}, there exists
  $W_i\leq A$, $C\Gamma$-definable, such that $T_i/H_i = (W_i/H_i)^0$.

Let $i\leq j$. Since $T_j/H_j$ is connected stably dominated, so is is
homomorphic image \(T_j /(T_i\cap T_j)\) and, since $A/ T_i$ is almost
$\Gamma$-internal, so is its subgroup \(T_iT_j/T_i \simeq T_j /(T_i\cap
T_j)\). It is therefore trivial and $T_j \leq T_i$. Since $A/T_j$ is (almost)
$\Gamma$-internal, the argument of Lemma\ \ref{L:a3def} shows that for large
enough $i_0$, for all $i \geq i_0$, \([T_{i_0}:T_i]\) is bounded. Let \(T =
\bigcap_{i} T_i\). Then \([T_{i_0}:T]\) is bounded. Moreover, \(T/T\cap H_i\)
is (isomorphic to) a bounded index subgroup of \(T_i/H_i\), it is therefore
stably dominated. It follows that \(T/H = \projlim_i T/T\cap H_i\) is also
stably dominated. By Lemma\ \ref{L:stdomext}, so is \(T\) and hence so are the
\(T_i\). Since $[W_i:T_i]$ is bounded, any of the $W_i$ is $C\Gamma$-definable
and stably dominated.

If \Aom holds, since $H\leq T^0$, the second statement follows from
Corollary\ \ref{C:def G0}.
\end{proof}

\begin{defn}
A definable family $H_t$ of subgroups of some definable group $G$ is
said to be \emph{certifiably stably dominated} if for all \(t\), $H_t$
is stably dominated and there exists \(t'\) such that \(H_t\leq
H_{t'}\) and $H_{t'}$ is connected.
\end{defn}

\begin{prop}
\label{P:cert}
\Aom Let $A$ be a \(C\)-definable Abelian group. Then there exist
\(C\)-definable families $H^\nu$ of definable subgroups $H^\nu_t$ of $A$, such
that:
\begin{enumerate}
\item Any $H^\nu_t$ is stably dominated.
\item Any connected stably dominated $\infty$-definable subgroup of
  $A$ (over any set of parameters) is contained in some $H^\nu_t$.
\end{enumerate}
\end{prop}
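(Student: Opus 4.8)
The plan is to manufacture the families $H^\nu$ directly from the available defining formulas, cut each one down to the parameters that yield a connected stably dominated subgroup, and then invoke Lemma \ref{L:4def} to see that these families cover everything via containment, using \AFD to reduce to finitely many formulas. First I would fix $C$ to be a metastability basis which is a model (possible by \Aom) and large enough that $A$ is $C$-definable. For each $\cL(C)$-formula $\theta(x;s)$ in which $s$ ranges over a finite power of $\Gamma$, let $\Sigma_\theta$ be the set of $s$ for which $\theta(\Uu;s)$ is a connected stably dominated subgroup of $A$. The key definability input is that $\Sigma_\theta$ is $C$-definable: that $\theta(\Uu;s)$ is a subgroup is standard; connectedness (equivalently that the finite-index component, definable by Corollary \ref{C:def G0}, is everything) is handled uniformly; and stable domination is, by Proposition \ref{P:groupdom} together with the remark following it, equivalent to the existence of a stable homomorphic image of Morley rank $\dimst(\theta(\Uu;s))$, a condition definable in families by clause (2) of \AFD. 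Setting $H^\theta_s := \theta(\Uu;s)$ for $s\in\Sigma_\theta$ then produces $C$-definable families of connected stably dominated subgroups, which already secures property (1).

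For property (2) the engine is Lemma \ref{L:4def}: any connected stably dominated $\infty$-$C$-definable subgroup is contained in a $C\Gamma$-definable connected stably dominated subgroup, i.e. in some $H^\theta_s$ with $\theta$ over $C$ and $s\in\Gamma(C)$. To reach subgroups $W$ over an arbitrary small parameter set $B$, I would run Lemma \ref{L:4def} over a metastability basis $B'\supseteq B\cup C$ (a model, by \Aom) to obtain a connected stably dominated overgroup $W'\supseteq W$ that is $B'\Gamma$-definable. The crucial point, and what I expect to be the main obstacle, is to argue that $W'$ may be taken of the form $\theta(\Uu;\gamma)$ with $\theta$ an $\cL(C)$-formula and only the parameter $\gamma\in\Gamma(\Uu)$ depending on $B'$; in other words that the non-$\Gamma$ parameters used to build $W'$ are absorbed into the $\Gamma$-coordinate $\gamma$, exactly as the field parameter $c$ of a lattice $\{(x,y):\val x\ge\alpha,\ \val(y-cx)\ge\beta\}$ is absorbed into $\min(\beta,\alpha+\val c)$ when one passes to the larger group $\alpha\Oo\times\gamma\Oo$. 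This should follow by inspecting the construction of Lemma \ref{L:4def}: the group $A$ and every non-$\Gamma$ choice in the construction (generics of the quotients $A/H_i$, the stable homomorphic images, and the uniquely determined groups $T_i$ of Proposition \ref{P:a2def}) are controlled $C$-definably, so that the overgroup is cut out by a fixed $C$-formula evaluated at a $\Gamma$-tuple merely computed from $B'$.

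Granting this parameter-reduced form of Lemma \ref{L:4def}, every connected stably dominated $\infty$-definable subgroup of $A$, over any parameters, is contained in $H^\theta_\gamma$ for some $\cL(C)$-formula $\theta$ and some $\gamma\in\Gamma(\Uu)$. It then remains to replace the a priori $|\cL|$-many formulas $\theta$ by a finite list $\theta_1,\dots,\theta_r$. For this I would combine the bounds of \AFD, namely $\dimst\le\dimst(A)$ and the uniform finiteness of $\dimo$, with compactness: were no finite set of formulas to suffice, the conditions ``$W$ is a connected stably dominated subgroup'' and, for each $\theta$, ``$W\not\subseteq H^\theta_s$ for every $s\in\Sigma_\theta$'' would be finitely satisfiable, yielding a connected stably dominated subgroup covered by no $H^\theta_s$ at all and contradicting the previous paragraph. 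Taking $H^\nu:=H^{\theta_\nu}$ for $\nu=1,\dots,r$ then gives families satisfying (1) and (2).

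I anticipate two places needing genuine care. The first, as noted, is the elimination of the non-$\Gamma$ parameters from the overgroup of Lemma \ref{L:4def}, uniformly in $W$; this is precisely what makes a single fixed base $C$ (rather than a base growing with $W$) sufficient, and it is the heart of the argument. The second is the definability-in-families of ``connected stably dominated'', for which one must verify that the full-rank stable-homomorphic-image criterion of Proposition \ref{P:groupdom}, together with Corollary \ref{C:def G0}, can indeed be expressed uniformly in a $\Gamma$-parametrized family, using clauses (2) and (3) of \AFD.
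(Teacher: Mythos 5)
Your proof hinges on the claim that, for a $\Gamma$-parametrized family $\theta(x;s)$, the set $\Sigma_\theta$ of parameters $s$ for which $\theta(\Uu;s)$ is a connected stably dominated subgroup is $C$-definable, justified by the asserted equivalence ``stably dominated $\Leftrightarrow$ admits a stable homomorphic image of Morley rank $\dimst$''. This equivalence is false, and this is a genuine gap. By Proposition~\ref{P:a2def}, a group $G$ with a surjective homomorphism onto a stable group of rank $\dimst(G)$ need only contain a connected stably dominated subgroup $T$ with $G/T$ almost $\Gamma$-internal; $G$ itself need not be stably dominated. Concretely, in $\ACVF$ the connected group $\Gm(\Oo)\times(\Gamma,+)$ has $\dimst=1$ and surjects onto the stable group $k^\star$ of Morley rank $1$, yet it is not stably dominated, since its generic has a non-constant image in $\Gamma$, contradicting orthogonality to $\Gamma$ (Proposition~\ref{P:orth}). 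Moreover, even for the correct property, writing ``there exists a surjective definable homomorphism onto \emph{some} stable group of rank $n$'' as a first-order condition on $s$ requires a uniform bound on the family of target groups and homomorphisms; clause (2) of \AFD only gives definability of Morley rank \emph{within} a given definable family of stable sets, not such a bound. No definable-in-families criterion for stable domination of groups is available in the paper, and its proof of Proposition~\ref{P:cert} is engineered precisely to avoid needing one: it introduces the invariants $(n,k)$, quotients by a group $B_0$ minimizing $(n,k)(A/B_0)$, and then (Claim~\ref{C:maxnk stdom}) spreads \emph{specific} witnesses $(S,T)\in Z_2(A/B)$ into a definable family whose defining conditions are anchored to fixed stable sets and $\Gamma$-powers (surjectivity, rank $n$, $o$-minimal dimension $k$); stable domination of \emph{every} member of that family is then deduced from the extremality of $(n,k)$, never from a definable characterization.

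Two further steps would also fail as written. The parameter-absorption issue you flag as ``the main obstacle'' is not resolved by inspecting Lemma~\ref{L:4def}: run over a basis $B'\supseteq B\cup C$, that lemma produces a $B'\Gamma$-definable overgroup, and nothing forces the non-$\Gamma$ parameters into $C$. The paper instead first invokes the limit stably dominated family $(S_t)_{t\models q}$ of Proposition~\ref{P:exists-lstdom} (whose covering property holds for subgroups over arbitrary parameters, by a conjugation argument), so that only $\infty$-$C\Gamma$-definable subgroups need be covered, and then applies Lemma~\ref{L:4def} over the bases $Ct$. Finally, your closing compactness argument quantifies over ``all connected stably dominated $\infty$-definable subgroups $W$'', which is not a definable (or even ind-definable) collection, so finite satisfiability does not yield a limit object of the same kind; fortunately, the statement of Proposition~\ref{P:cert} does not require finitely many families $\nu$, so this last step is unnecessary rather than merely broken.
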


\begin{proof}
By Lemma\ \ref{L:4def}, to prove (2), it suffices to consider definable
connected stably dominated subgroups of \(A\).

For a definable Abelian group $B$, define invariants $n$ and $k$ as follows.
First, set $n = \dimst(B)$.  Let $Z(B)$ be the collection of definable subgroups
$S\leq B$ with stable homomorphic images of dimension $n$; by Lemma\
\ref{L:ab-inertial} and Corollary\ \ref{C:maxrkdef}, $Z(B) \neq \emptyset$.  Let
$Z_2(B) = \{(S,T): S \in Z(B), T \leq S\text{ $C\Gamma$-definable}, S/T\text{
$\Gamma$-internal}\}$. Let $k = \max\{\dimo(S/T): (S,T) \in Z_2(B)\}$; by \AFD,
such a maximum exists. The pairs $(n,k)$ are ordered lexicographically.

Pick any definable connected stably dominated definable $B\leq A$. If
$(S/B,T/B)$ attains the maximum for $A/B$, then the pullbacks to $A$ show that
$(n,k)(A) \geq (n,k)(A/B)$. Thus increasing $B$ has the effect of decreasing
$(n,k)(A/B)$. Let $B_0$ be such that $(n,k)(A/B_0)$ is minimal. For any stably
dominated $H\leq A$, $H+B_0$ is also stably dominated and if $S_t$ is a family
of stably dominated subsets of $A/B_0$, its lifting to $A$ is a family of stably
dominated subsets of $A$; so it suffices to find families $\{S^\nu_t\}$ for
$A/B_0$. Thus, we may assume $(n,k)(A/B) = (n,k)(A)$ for any connected stably
dominated $C\Gamma$-definable $B \leq A$. Let $(n,k) = (n,k)(A)$.

\begin{claim}
\label{C:maxnk stdom}
Let $S$ be a definable subgroup of $A$ admitting a definable surjective
homomorphism $\theta: S \to \fg$ to a definable stable group of Morley rank $n$
and a definable surjective homomorphism $\xi: S \to W$ to a definable
$\Gamma$-internal group of \(o\)-minimal dimension \(k\). Then there exists a
\(C\)-definable family $H_t$ of stably dominated subgroups of $A$ such that
$\ker(\xi) = H_t$ for some $t$.
\end{claim}

\begin{proof}
Clearly $S,\xi,\fg,\theta,W$ lie in a \(C\)-definable family
$(S_t,\xi_t,\fg_t,\theta_t,W_t)$ such that \(S_t\leq A\), $\fg_t$ is stable,
$\dimst(\fg_t) = n$, $W_t$ is a $\Gamma$-internal, \(\dimo(W_t) = k\),
$\theta_t: S_t \to \fg$ is a surjective homomorphism, $\xi_t: S_t \to W_t$ is a
surjective homomorphism. Let $H_t = \ker(\xi_t)$. Let $T_t\leq S_t$ be as in
Proposition\ \ref{P:a2def}. By Lemma\ \ref{L:a3def}, $T_t$ is $Ct$-definable. By
Lemma\ \ref{L:almost-int-2}, $S_t/T_t$ is $\Gamma$-internal. The group
$T_t/H_t\cap T_t$ is $\Gamma$-internal and stably dominated, it is therefore
finite and hence trivial, since $T_t$ is connected. So $T_t\leq H_t$. By
maximality of $k$, $H_t/T_t$ is $\Gamma$-internal of $o$-minimal dimension $0$,
so it is finite. It follows that $H_t$ is stably dominated.
\end{proof}

Let $B$ be any definable stably dominated connected subgroup of $A$. Let
$(S,T)\in Z_2(A/B)$ be such that $S$ has a stable homomorphic image of dimension
$n$ and $S/T$ is $\Gamma$-internal of $o$-minimal dimension $k$. Let $(S',T')$
be the pullbacks of $(S,T)$ to $A$. Then $B\leq T'$. Clearly $S'$ admits a
stable homomorphic image of dimension $n$ and $S'/T'$ is $\Gamma$-internal of
$o$-minimal dimension $k$. By the Claim\ \ref{C:maxnk stdom}, there exists a
$C$-definable family $H_t$ of stably dominated groups such that $T' = H_t$ for
some $t$.
\end{proof}

\begin{cor}
\label{C:cert-2}
\Aom Let $A$ be a \(C\)-definable Abelian group. There exists a definable
certifiably stably dominated family $H_t$ such that any connected stably
dominated $\infty$-definable subgroup of \(A\) is contained in some \(H_t\).
\end{cor}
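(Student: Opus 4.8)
The plan is to obtain this as a repackaging of Proposition~\ref{P:cert}, the only real work being to (i) make every member of the covering family connected and (ii) fold the finitely many families $H^\nu$ into a single $\Gamma$-parametrized one. First I would apply Proposition~\ref{P:cert} to get $C$-definable families $(H^\nu_t)_t$, $\nu=1,\dots,N$, of definable stably dominated subgroups of $A$ covering every connected stably dominated $\infty$-definable subgroup. The key observation is that the possibly disconnected groups $H^\nu_t$ are not needed as members: if $W\le A$ is connected and $W\le H^\nu_t$, then since $[H^\nu_t:(H^\nu_t)^0]$ is finite and $W$ has no proper finite-index subgroup we get $W\le(H^\nu_t)^0$. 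Thus it suffices to produce a $C$-definable family of \emph{connected} stably dominated subgroups covering all connected stably dominated $\infty$-definable subgroups.

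For this I would pass to the connected components $G^\nu_t:=(H^\nu_t)^0$. Each $H^\nu_t$ is stably dominated, hence by Proposition~\ref{P:groupdom} (using \AFD) stably dominated via a definable homomorphism onto a definable stable group $\fg_t$; Corollary~\ref{C:def G0} then makes $G^\nu_t$ definable of finite index in $H^\nu_t$. To see that $(G^\nu_t)_t$ is a \emph{definable family} one needs $[H^\nu_t:(H^\nu_t)^0]$ bounded uniformly in $t$: by Lemma~\ref{L:groupdomcor} a type is generic in $H^\nu_t$ iff its $g$-image is generic in $\fg_t$, and stable domination forces distinct generics of $H^\nu_t$ to have distinct images, so $[H^\nu_t:(H^\nu_t)^0]=[\fg_t:\fg_t^0]$; the latter is uniformly bounded, the $\fg_t$ forming a definable family of finite-Morley-rank stable groups (\AFD). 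Alternatively, and more cleanly, one may simply reread the proof of Proposition~\ref{P:cert}: the connected stably dominated groups $T_t$ constructed there (shown $C_t$-definable via Lemma~\ref{L:a3def}) are exactly these connected components and already form a $C$-definable family, so no index computation is required. Either way we obtain a $C$-definable family $(G^\nu_t)_t$ of connected stably dominated subgroups which, by the first paragraph, still covers every connected stably dominated $\infty$-definable subgroup of $A$.

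It remains to amalgamate the finitely many families over $\nu\le N$ into one indexed by a single $\Gamma$-tuple. Since $\Gamma$ is an $o$-minimal ordered group, I would reserve an extra block $c$ of $\Gamma$-coordinates and split a power of $\Gamma$ into $N$ disjoint nonempty $C$-definable regions using finitely many fixed cutpoints (sign patterns around $0$ suffice); setting $s=(c,t_1,\dots,t_N)$ and $H_s:=G^\nu_{t_\nu}$ when $c$ lies in the $\nu$-th region gives a single $C$-definable family, all of whose members are connected stably dominated. Because every $H_s$ is connected, the family is automatically certifiably stably dominated---each $H_s$ is stably dominated and we may take $t'=s$ itself as the connected witness---and the covering property is inherited from the previous step. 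The one genuine obstacle is the uniform definability of the connected components across the family; this is precisely where \Aom (through Corollary~\ref{C:def G0}) and the finite-rank hypotheses of \AFD enter, and the cleanest way around it is the observation above that the proof of Proposition~\ref{P:cert} already delivers these connected groups uniformly.
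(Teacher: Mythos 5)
There is a genuine gap, and it sits exactly where you sense it does: the uniform definability in $t$ of the connected components $(H^\nu_t)^0$. Corollary~\ref{C:def G0} makes each individual $(H^\nu_t)^0$ definable, over the parameters $Ct$, but gives no single formula working across the family, and neither of your two justifications repairs this. The index identity $[H^\nu_t:(H^\nu_t)^0]=[\fg_t:\fg_t^0]$ rests on the claim that stable domination forces distinct generics of $H^\nu_t$ to have distinct images in $\fg_t$; this is false. In $\ACVF$ take $H=\mu_2\times\Ga(\Oo)$ with the surjective homomorphism $g(\epsilon,x)=\res(x)$ onto the connected group $\Ga(k)$: both generics of $H$ are stably dominated via $g$ (the coset of $H^0$ is algebraic over the base, so domination via $g$ never needs to see it), and both push forward to the unique generic of $\Ga(k)$, so the left-hand index is $2$ while the right-hand one is $1$. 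Lemma~\ref{L:groupdomcor} is an equivalence of genericity, not an injectivity statement on generics. Moreover, even granting some index identity, $[\fg_t:\fg_t^0]$ need not be uniformly bounded in a definable family of stable groups of bounded Morley rank (think of families whose members include $\Zz/n\times\Ga(k)$). Your fallback---that the groups $T_t$ in the proof of Proposition~\ref{P:cert} ``already form a $C$-definable family''---is precisely the assertion at issue: that proof produces each $T_t$ as a canonical object, definable over $Ct$ by Lemma~\ref{L:a3def}, but it exhibits no uniformity in $t$; if such uniformity were automatic, the corollary would be trivial and the paper's definition of ``certifiably stably dominated'' (which deliberately tolerates disconnected members, asking only that each be contained in a connected one) would be pointless.

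The missing idea is the step of the paper's proof that you never invoke: Proposition~\ref{P:sums}. Using directedness of connected stably dominated subgroups (available since $A$ has bounded weight), the paper first proves a Claim that a \emph{single} $\nu$ and a \emph{single complete type} $p$ over $C$ suffice: every connected stably dominated $\infty$-definable subgroup of $A$ lies in some $H^\nu_t$ with $t\models p$ (otherwise, choose a counterexample group $B_{p,\nu}$ for each pair, bound them all by one stably dominated $B$ via Proposition~\ref{P:sums}, and locate $B$ inside some $H^\nu_t$ to get a contradiction). Completeness of $p$ is what rescues uniformity: once $(H^\nu_\gamma)^0$ is definable of finite index $l$ for one $\gamma\models p$ (Corollary~\ref{C:def G0}), canonicity of the connected component and conjugacy of realizations of $p$ over $C$ show the same formula works for every $t\models p$, and compactness extends this to a $C$-definable family $L_t$ with $[H^\nu_t:L_t]\leq l$ agreeing with the connected component on all $t\models p$. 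Members indexed off $p$ may well be disconnected---the paper makes no attempt to connect them---but the family is certifiably stably dominated and covers, which is all the corollary asks. So your outline can be salvaged only by inserting this reduction to a single type (or some substitute for it) before attempting any uniform passage to connected components; as written, the passage fails.
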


\begin{proof}
Let $H_t^{\nu}$ be as in Proposition\ \ref{P:cert} and let \(p'\) and \(S_{t'}\)
be as in Proposition~\ref{P:exists-lstdom}. Let \(a \models p'\), then, for some
\(\nu\) and \(t\), we have \(S_a \leq H_t^\nu\). Let \(p = \tp(t/C)\). Then, for
\(t'\models p'\), for some \(t\models p\), \(S_{t'} \leq H_{t}^\nu\). In
particular, any \(\infty\)-definable connected stably dominated subgroup of $A$
is contained in some \(H_t^{\nu}\), with $t\models p$.

For any $t\models p$, $H_{t}^{\nu,0}$ is definable and has finite index, say
$l$, in $H_{t}^{\nu}$. Let $L_t$ be a $C$-definable family of subgroups of
$H_t^\nu$ such that $[H_t^\nu:L_t]\leq l$ and whenever $t\models p$, $L_t =
H_t^{\nu,0}$. Then every $L_t$ is stably dominated and any \(\infty\)-definable
connected stably dominated subgroup of $A$ is contained in some $L_t$ with
$t\models p$.

Finally, let \(Q\) be the set of \(t\) such that for every \(s\), if
\(L_t/L_s\cap L_t\) is finite then it is trivial. This set is definable by
Lemma~\ref{L:nfcp}. If \(t\models p\), then \(L_t\) is connected and hence \(t
\in Q\). Moreover, for any \(s\in Q\), \(L_s^0 \leq L_t\) for some \(t\models
p\). But then \(L_s^0\leq L_s\cap L_t\), so \(L_s / L_s\cap L_t\) is finite and
hence trivial, by definition of \(Q\). It follows that \(L_s \leq L_t\),
concluding the proof.
\end{proof}

Before we can state the next theorem, we need to give an explicit
definition of $\Gamma$-internality for certain hyperimaginaries.

\begin{defn}
 Let $D$ be a pro-definable set and $E$ be a pro-definable equivalence
 relation on $D$. We say that $D/E$ is \emph{almost $\Gamma$-internal} if there
 exists a a pro-definable map $f : D \to X$ such that:
 \begin{enumerate}
 \item $X$ is a pro-definable set in \(\eq{\Gamma}\).
 \item Each fiber of $f$ intersects only boundedly many classes of $E$.
 \end{enumerate}
\end{defn}

\begin{rem}\label{R:int hyper}
Note that if $E$ is the intersection of relatively definable
equivalence relations on $D$, then $D/E$ is a pro-definable set and
the above definition is equivalent to the usual notion of almost
$\Gamma$-internality.

In particular, if $G$ is a pro-definable group and $H$ is a pro-definable
subgroup admitting a generic and \(G/H\) is almost \(\Gamma\)-internal, then
$G/H$ is a pro-limit of almost \(\Gamma\)-internal definable groups. If,
moreover, \(\Gamma\) is \(o\)-minimal, then, by Lemma\ \ref{L:almost-int-2},
each of these almost \(\Gamma\)-internal definable groups is in fact
\(\Gamma\)-internal and is therefore (isomorphic to) a group definable in
$\Gamma$. It follows that $G/H$ is (pro-definably isomorphic to) a group
pro-definable in $\Gamma$.
\end{rem}

\begin{thm}
\label{T:lmstd}
Let \(A\) be a pro-limit of interpretable Abelian groups. Assume \(A\)
has bounded weight. Then the limit stably dominated subgroup $H$
exists and $A/H$ is almost internal to $\Gamma$.

If \Aom holds and $A$ is interpretable, then \(A/H\) is \(\Gamma\)-internal and
\(H\) is definable, it admits a generic type \(p\) and it is connected.
\end{thm}

\begin{proof}
  The existence of $H$ is proved in
  Proposition\ \ref{P:exists-lstdom}. Let \(C\) be a metastability
  basis over which \(A\) and \(H\) are defined.  Let \(f\) be a
  pro-definable function enumerating all \(C\)-definable maps from
  \(A\) into \(\Gamma\).

  \begin{claim}
    For all \(c, d\in A\), if \(\tp(c/\acl(Cf(c))) =
    \tp(d/\acl(Cf(c)))\), then \(c\cdot d^{-1} \in H\).
  \end{claim}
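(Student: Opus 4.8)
The plan is to exploit the fact that $f(c)$ codes the $\Gamma$-part of $c$ over $C$, so that metastability turns $\tp(c/\acl(Cf(c)))$ into a stably dominated type; Lemma~\ref{L:abelian1} then produces a connected stably dominated subgroup on a coset of which this type concentrates, and the defining property of the limit stably dominated subgroup $H$ closes the argument.

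First I would set $\gamma := f(c)$ and observe that, since $f$ enumerates all $C$-definable maps into $\Gamma$ and $\Gamma$ eliminates imaginaries, $\gamma$ is interdefinable with $\Gamma_C(c)$ over $C$; in particular $\acl(Cf(c)) = \acl(\Gamma_C(c))$. As $C$ is a metastability basis, $\tp(c/\Gamma_C(c))$ is stably dominated, and hence so is $p_0 := \tp(c/\acl(Cf(c)))$ by Proposition~\ref{P:stdomdef}.(1). By Proposition~\ref{P:stdomdef}.(2), $p_0$ has a unique global $\acl(Cf(c))$-definable extension $p$, which is stably dominated over $\acl(Cf(c))$ and therefore symmetric.

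Next, using that $A$ has bounded weight, choose $n$ with $A$ of $p$-weight $<2n$, and apply Lemma~\ref{L:abelian1} to $A$ and the symmetric definable type $p$. This yields a connected subgroup $H_p := \Stab(p^{\pm 2n})$, pro-$\acl(Cf(c))$-definable, such that $p$ concentrates on the coset $X := \Stab(p^{\pm 2n}, p^{\pm 2n-1})$ of $H_p$; by the remark following Lemma~\ref{L:abelian1}, $H_p$ is stably dominated. Since $X$ is pro-$\acl(Cf(c))$-definable and $c \models p|\acl(Cf(c))$, we have $c \in X$; and as $\tp(d/\acl(Cf(c))) = \tp(c/\acl(Cf(c)))$, the $\acl(Cf(c))$-condition $x \in X$ holds of $d$ as well, so $d \in X$. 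Because $A$ is Abelian and $X$ is a coset of $H_p$, it follows that $c \cdot d^{-1} \in H_p$.

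Finally I would invoke the defining property of $H$: the group $H_p$ is a connected stably dominated subgroup of $A$, so by clause (2) of Definition~\ref{D:lmstd}, in the form established in the proof of Proposition~\ref{P:exists-lstdom} (where an arbitrary connected stably dominated subgroup is moved by a $C_0$-automorphism to one defined over $C$), we obtain $H_p \leq S_{t'} \leq H$ for some $t' \models q$. Hence $c \cdot d^{-1} \in H$, as required. The one step demanding care is this last passage: $H_p$ is defined over $\acl(Cf(c))$ rather than over $C$, so one must use the covering property for connected stably dominated subgroups over \emph{arbitrary} parameters, not merely the pro-$C$-definable ones literally stated in Definition~\ref{D:lmstd}.
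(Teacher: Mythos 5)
Your proof is correct and follows essentially the same route as the paper's: metastability over $\acl(Cf(c))$, the unique $\acl(Cf(c))$-definable extension $p$, Lemma~\ref{L:abelian1} producing a connected stably dominated subgroup on a coset of which $p$ concentrates, and then the covering property of the limit stably dominated subgroup $H$. The only difference is that you make explicit a point the paper's proof leaves implicit, namely that the covering property must be applied to a subgroup defined over $\acl(Cf(c))$ rather than over $C$, which is indeed justified by the argument in the proof of Proposition~\ref{P:exists-lstdom}.
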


  \begin{proof}
    By definition of a metastability basis, $\tp(c/Cf(c))$ is stably
    dominated. Let $p$ be the unique $\acl(Cf(c))$-definable extension
    of \(\tp(c/\acl(Cf(c)))\). By Lemma\ \ref{L:abelian1}, there
    exists a stably dominated pro-\(\acl(Cf(c))\)-definable connected
    stably dominated subgroup \(S\leq A\) such that \(p\) concentrates
    on a coset of \(S\). Note that, by definition of $H$, $S\leq H$. Since \(p\) is \(\acl(Cf(c))\)-definable, it
    follows that \(p|\acl(Cf(c))\) concentrates on a coset of \(S\);
    i.e. \(c\cdot d^{-1}\in S\leq H\).
  \end{proof}

  It immediately follows that each fiber of \(f\) intersects boundedly
  many cosets of \(H\) and, since the image of \(f\) is a
  pro-definable set in \(\Gamma\), \(A/H\) is almost
  \(\Gamma\)-internal.

  Let us now assume \Aom. By Corollary\ \ref{C:cert-2} we can find a certifiably
  stably dominated family \((H_{t})_{t\in Q}\) such that any
  \(\infty\)-definable connected stably dominated subgroup of \(A\) is contained
  in some connected \(H_{t'}\). Then \(H = \bigcup_{t\in Q} H_{t}\) and hence it
  is definable.

  Define a partial order on \(Q\) by \(t_1 \leq t_2\) if \(H_{t_1}\leq
  H_{t_2}\). Note that this order is directed. By \cref{cofinal-def}, \(Q\) has
  a cofinal definable type \(q\). For any \(t\models q\), let \(p_{t}\) be the
  principal generic of \(H_{t}^{0}\). Let \(r\) be the type such that, for all
  $D\supseteq C$ over which $q$ and \(H\) are defined, \(a\models r|D\) if and
  only if \(a\models p_{t}|Dt\) for some \(t\models q|D\). By Lemma\
  \ref{L:concat}, \(r\) is definable. Pick any \(g\in H\). Then there exists
  \(t\models \restr{q}{Dg}\) such that \(g\in H^0_t\). Let \(a\models p_{t}|D g
  t\), then \(g\cdot a \models p_{t}|D g t\) and hence \(g\cdot a\models r|Dg\).
  It follows that \(\transl{g}{p} = p\) and thus that \(H = \Stab(p)\) is
  connected.

  The fact that \(A/H\) is \(\Gamma\)-internal now follows from the
  fact that it is almost \(\Gamma\)-internal and
  Lemma\ \ref{L:almost-int-2} (cf. Remark\ \ref{R:int hyper}).
\end{proof}

\begin{rem}
The only use of \Aom in the proof above is to apply Corollary\ \ref{C:cert-2}.
It follows that, to get the second part of the statement in Theorem\
\ref{T:lmstd}, it suffices to assume that there exists a certifiably stably
dominated family \(H_t\) such that any \(\infty\)-definable connected stably
dominated subgroup is contained in some \(H_t\).
\end{rem}

\begin{cor}\label{C:max-G-quotient}  \Aom
Let $A$ be a definable Abelian group.  There exists a universal pair
$(f,B)$ with $B$ a $\Gamma$-internal definable group, and $f: A \to B$
a definable homomorphism. In other words for any $(f',B')$ of this
kind there exists a unique definable homomorphism $h: B \to B'$ with
$f' = h\circ g$.

Equivalently, there is a smallest
definable subgroup $H$ with $A/H$ $\Gamma$-internal.
\end{cor}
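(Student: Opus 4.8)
The plan is to show that the quotient of $A$ by its limit stably dominated subgroup is the universal object. Concretely, I would let $H$ be the limit stably dominated subgroup of $A$, which exists and is definable by Theorem\ \ref{T:lmstd} (using \Aom), and set $B := A/H$ with $f : A \to A/H$ the quotient homomorphism. Since $H$ is definable, $B$ is a definable group; and Theorem\ \ref{T:lmstd} also gives that $A/H$ is $\Gamma$-internal, so $(f,B)$ is indeed a pair of the required kind. It then remains only to verify universality.

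For universality, let $f' : A \to B'$ be any definable homomorphism with $B'$ a $\Gamma$-internal group. The key claim is that $H \leq \ker(f')$; granting this, $f'$ factors as $f' = h \circ f$ for a unique definable homomorphism $h : B \to B'$ (uniqueness because $f$ is surjective, definability because all the data are definable and $H = \ker(f)$), which is exactly the asserted universal property. To prove the claim, write $H = \bigcup_{t \models q} S_t$ for a limit stably dominated family $(S_t)_{t\models q}$, so that each $S_t$ is a connected stably dominated definable subgroup of $A$. For each $t$ the image $f'(S_t)$ is a definable subgroup of $B'$, hence itself $\Gamma$-internal; by Remark\ \ref{R:gmpres} its generic is the push-forward under $f'$ of the generic of $S_t$, so $f'(S_t)$ is stably dominated, and it is connected, being a homomorphic image of the connected group $S_t$.

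I would then invoke the principle, already used in the proof of Proposition\ \ref{P:a2def}, that a group which is simultaneously connected stably dominated and almost $\Gamma$-internal is trivial: its generic type is orthogonal to $\Gamma$ by Proposition\ \ref{P:orth}, so every definable map from it into $\Gamma$ is constant on the generic; composing an almost-internal witness $g : f'(S_t) \to Y \subseteq \dcl(\Gamma^n)$ with coordinate projections then forces the generic to be algebraic, whence $f'(S_t)$ is finite and, being connected, trivial. Thus $S_t \leq \ker(f')$ for every $t$, and therefore $H = \bigcup_t S_t \leq \ker(f')$, establishing the claim.

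The only real content is this last triviality step; once $H \leq \ker(f')$ is known, the factorization and its uniqueness are purely formal. I expect no serious obstacle beyond correctly transporting connectedness and stable domination across the homomorphism $f'$, for which Remark\ \ref{R:gmpres} and the orthogonality of stably dominated types to $\Gamma$ (Proposition\ \ref{P:orth}) suffice.
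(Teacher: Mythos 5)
Your proof is correct and takes essentially the same route as the paper: let $H$ be the limit stably dominated subgroup given by Theorem~\ref{T:lmstd} under \Aom, set $B = A/H$ with the quotient map, and observe that any definable homomorphism into a $\Gamma$-internal group must vanish on $H$ because the image of each connected stably dominated piece of $H$ is simultaneously connected, stably dominated and $\Gamma$-internal, hence trivial. The only difference is that you spell out this triviality step (via orthogonality to $\Gamma$, Proposition~\ref{P:orth}, and connectedness), which the paper asserts without proof.
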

  
\begin{proof}
Let $H$ be the limit stably dominated subgroup of $A$. Then $H$ is
definable.  For any pair $(f',B')$ as above, $f'$ vanishes on any
definable connected stably dominated group.  Since $H$ is a union of
such groups, $f'$ vanishes on $H$. But $A/H$ is $\Gamma$-internal, so
the canonical homomorphism $A \to A/H$ clearly solves the universal
problem.
\end{proof}

\begin{cor}\label{C:finite-cc}  \Aom
Let $A$ be a definable Abelian group. There exists a smallest
definable subgroup $A^0$ of $A$ of finite index.
\end{cor}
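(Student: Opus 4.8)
The plan is to combine the structure theorem for definable Abelian groups (Theorem~\ref{T:lmstd}) with the classical fact that a group definable in an \(o\)-minimal structure has a definable, finite-index, definably connected component. By Theorem~\ref{T:lmstd}, since \Aom holds and \(A\) is definable, the limit stably dominated subgroup \(H\) of \(A\) is definable and \(A/H\) is \(\Gamma\)-internal; write \(\pi : A \to A/H\) for the quotient map. Via the isomorphism furnished by Remark~\ref{R:int hyper} and Lemma~\ref{L:almost-int-2}, \(A/H\) is (pro-definably) isomorphic to a group definable in the \(o\)-minimal structure \(\Gamma\). The first step is then to record that such a group has a smallest definable subgroup of finite index, namely its definably connected component \((A/H)^0\): this is definable and of finite index, and any definable finite-index subgroup \(B_1 \leq A/H\) contains \((A/H)^0\), since \((A/H)^0\) is definably connected and hence \((A/H)^0 \meet B_1 = (A/H)^0\). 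I would then set \(A^0 := \pi^{-1}((A/H)^0)\), which is definable and satisfies \([A : A^0] = [A/H : (A/H)^0] < \infty\).

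The heart of the argument is to show that \(A^0\) is contained in every definable subgroup of finite index, and the decisive point is that \(H\) already is. Recall that \(H = \bigcup_{t \models q} S_t\) is a union of \emph{connected} stably dominated subgroups \(S_t\) (Definition~\ref{D:lmstd}). If \(A_1 \leq A\) is any definable subgroup of finite index, then for each \(t\) the intersection \(A_1 \meet S_t\) has index at most \([A : A_1]\) in \(S_t\); since \(S_t\) is connected it admits no proper subgroup of bounded index, so \(A_1 \meet S_t = S_t\), that is \(S_t \subseteq A_1\). Taking the union over \(t\) gives \(H \subseteq A_1\). Consequently \(A_1 = \pi^{-1}(\pi(A_1))\) with \(\pi(A_1)\) a definable finite-index subgroup of \(A/H\); by the first step \(\pi(A_1) \supseteq (A/H)^0\), whence \(A_1 \supseteq \pi^{-1}((A/H)^0) = A^0\). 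This exhibits \(A^0\) as the smallest definable subgroup of finite index, completing the proof.

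The main obstacle I anticipate is the input from \(o\)-minimal group theory—that a group definable in an \(o\)-minimal structure has a definable, finite-index, definably connected component that is the least definable finite-index subgroup—which I would cite as a standard fact, in the spirit of the \(o\)-minimal results already invoked in the paper (cf. \cite{PetPilSta,PetPil-Gen}). Everything else is a routine pullback argument, the only genuinely model-theoretic ingredient being the passage \(H \subseteq A_1\), which rests entirely on the connectedness of the \(S_t\) supplied by Theorem~\ref{T:lmstd} and Definition~\ref{D:lmstd}.
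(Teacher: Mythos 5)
Your proposal is correct and follows essentially the same route as the paper: every definable finite-index subgroup of $A$ must contain each connected stably dominated subgroup (hence $H$), because connected groups admit no proper (pro-)definable subgroup of bounded index, and the problem then reduces to the $o$-minimal group $A/H$, where the existence of a smallest definable finite-index subgroup (the definably connected component) is a known result — the paper cites \cite[Proposition~2.12]{Pil-OMinGp} for this. Your write-up merely makes explicit the pullback bookkeeping that the paper leaves implicit.
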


\begin{proof} Let $H$ be the limit stably dominated subgroup of $A$ and \(B\leq A\) be any subgroup of finite index. Since $H/H\cap B$ is finite and $H$ is connected, it follows that $H\leq B$. The question
  thus reduces to the $o$-minimal group $A/H$, where it is known
  (cf. \cite[Proposition\ 2.12]{Pil-OMinGp}).
\end{proof}

\begin{question}\label{Q:finite-cc}
In $\ACVF$, is $G/G^0$ always finite, where \(G\) is any definable group and \(G^0\) denotes the
intersection of all definable finite index subgroups of \(G\)?
\end{question}

The answer to this question is presumably yes (cf. Problem\ \ref{Pb:double} and the
proof of Corollary\ \ref{C:finite-cc}) and it is already known to be
true of stably dominated groups (cf. Corollary\ \ref{C:def G0}).

\begin{cor}\label{C:max-st-quotient} \Aom 
Let $A$ be a definable Abelian group. There exists a universal pair
$(f,B)$ with $B$ a stable definable group, and $f: A \to B$ a
definable homomorphism. Equivalently, there exists a smallest
definable subgroup $H$ with $A/H$ stable.
\end{cor}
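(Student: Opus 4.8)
The plan is to prove the second (equivalent) formulation: that the family
\[
\mathcal{H} = \{H \leq A : H \text{ definable (over any parameters) and } A/H \text{ stable}\}
\]
has a least element $H_{\min}$. Once $H_{\min}$ is found I set $B = A/H_{\min}$ and let $f$ be the quotient map; universality is then immediate, since for any definable $f': A \to B'$ with $B'$ stable we have $A/\ker(f') \cong \mathrm{im}(f')$, a definable subgroup of a stable group and hence stable, so $\ker(f') \in \mathcal{H}$, whence $\ker(f') \supseteq H_{\min}$ and $f'$ factors uniquely through $f$. Note that this also shows it does not matter that members of $\mathcal{H}$ may be defined over varying parameter sets: $H_{\min}$, being the least, is automatically contained in every such kernel.

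The first routine ingredient is that $\mathcal{H}$ is closed under finite intersection: the diagonal embeds $A/(H_1 \meet H_2)$ into $A/H_1 \times A/H_2$, a finite product of stable, stably embedded groups, hence stable, and a definable subgroup of a stable group is stable. The second ingredient is finiteness of rank. Since each $A/H$ with $H \in \mathcal{H}$ is itself stable, $\dimst(A/H) = \RM(A/H)$, and by the very definition of $\dimst$ in \AFD (maximum of $\RM(g(A))$ over definable $g$ with stable range) we get $\RM(A/H) \leq \dimst(A) = n < \infty$. So $m := \max\{\RM(A/H) : H \in \mathcal{H}\}$ exists; fix $H^\ast \in \mathcal{H}$ attaining it. For arbitrary $H \in \mathcal{H}$, the group $H \meet H^\ast$ again lies in $\mathcal{H}$, and $A/(H\meet H^\ast) \twoheadrightarrow A/H^\ast$; by additivity of Morley rank in stable groups together with maximality of $m$, the kernel $H^\ast/(H \meet H^\ast)$ has rank $0$, i.e.\ $H \meet H^\ast$ has finite index in $H^\ast$.

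The crux — and the step I expect to be the real obstacle — is to pass from ``each $H$ meets $H^\ast$ in finite index'' to an actual \emph{definable} minimum, ruling out an infinite descending (profinite) tower of commensurable finite-index subgroups whose intersection would only be $\infty$-definable. This is exactly what the earlier finite-index result supplies: applying Corollary\ \ref{C:finite-cc} to the definable Abelian group $H^\ast$ yields a smallest definable finite-index subgroup $(H^\ast)^0$, which is itself of \emph{finite} index. Since $H \meet H^\ast$ has finite index in $H^\ast$ for every $H \in \mathcal{H}$, each such $H \meet H^\ast$ contains $(H^\ast)^0$, and therefore $(H^\ast)^0 \subseteq H$ for every $H \in \mathcal{H}$. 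Because $H^\ast/(H^\ast)^0$ is finite, only finitely many distinct subgroups arise as $H \meet H^\ast$; hence
\[
H_{\min} = \bigmeet_{H \in \mathcal{H}} H = \bigmeet_{H \in \mathcal{H}} (H \meet H^\ast)
\]
is a \emph{finite} intersection of members of $\mathcal{H}$, so it is definable and, by closure under finite intersection, lies in $\mathcal{H}$. This is the desired least element, and it is the only point where \Aom is genuinely needed (through Corollary\ \ref{C:finite-cc}), the rest using only the finiteness of $\dimst$ from \AFD.
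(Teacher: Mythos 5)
Your proof is correct and takes essentially the same route as the paper's: closure of the family under finite intersections, the \AFD bound on $\dimst(A/H)$ forcing finite-index inclusions via rank additivity, and Corollary~\ref{C:finite-cc} to block infinite profinite descent. The only difference is organizational --- the paper phrases the argument as stabilization of descending chains, while you fix a maximal-rank member $H^\ast$ and exhibit the minimum directly as a finite intersection --- but the key ingredients are identical.
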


\begin{proof}
Clearly if $A/H$ and $A/H'$ are stable, so is $A/(H \meet H')$. It
suffices to show that there are no strictly descending chains $A
\supset H_1 \supset H_2 \ldots$ of such subgroups.  By \AFD,
$\dimst(A/H_n)$ is bounded; so we may assume it is constant,
$\dimst(A/H_n)=d$.  Then $H_n/H_{n+1}$ is finite.  By
Corollary\ \ref{C:finite-cc}, the chain stabilizes.
\end{proof}

As one can see, Corollary\ \ref{C:max-st-quotient} follows formally from the existence of a smallest definable subgroup of finite index and does not actually depend on the existence of the limit metastable subgroup. Using the fact that in a metastable theory, extensions of $\Gamma$-internal groups are $\Gamma$-internal, Corollary\ \ref{C:max-G-quotient} can similarly be deduced formally from the existence of a smallest definable subgroup of finite index. In particular, a
positive answer to Question\ \ref{Q:finite-cc} would imply that
Corollaries\ \ref{C:max-G-quotient} and \ref{C:max-st-quotient} hold of every definable group in $\ACVF$.

\begin{rem}\label{R:max-st-quotient} \Aom
  Since a pro-definable stable group always has a generic and hence, by Proposition\ \ref{P:prodefgp} is
  a prolimit of groups, any \(\infty\)-definable normal subgroup of a
  definable group $A$ such that $A/H$ is stable is the intersection of
  definable subgroups $H_i$ such that $A/H_i$ is stable. It follows
  that if $A$ is Abelian, the smallest \(\infty\)-definable subgroup
  $H$ with $A/H$ stable exists and is definable.

  It is then straightforward to prove that any pro-limit $A$ of
  Abelian groups---in particular, any pro-definable Abelian group with
  a generic---has a smallest pro-definable subgroup $H$ such that
  $A/H$ is stable.
\end{rem}

\begin{cor}\label{C:endos} \Aom
Let $A$ be a connected definable Abelian group.  Then for almost all
primes $p$, $p \cdot A=A$.
\end{cor}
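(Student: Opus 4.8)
The plan is to use the structure theorem for Abelian groups, Theorem~\ref{T:lmstd}, to split $A$ into its limit stably dominated subgroup $H$ and the quotient $A/H$, and to control multiplication by $p$ on each piece. Under \Aom the subgroup $H$ is definable and connected (it has a unique generic), $A/H$ is definable, and $A/H$ is $\Gamma$-internal. Since $A$ is connected so is $A/H$, and since $\Gamma$ is $o$-minimal, Lemma~\ref{L:almost-int-2} together with Remark~\ref{R:int hyper} identifies $A/H$ with a definably connected Abelian group definable in $\Gamma$; such a group is divisible, so $p(A/H)=A/H$ for every prime $p$. Given $a\in A$ one may therefore write $a\equiv pb\pmod H$ for some $b\in A$, and the corollary reduces to showing that $pH=H$ for all but finitely many $p$: if $a-pb=ph$ with $h\in H$, then $a=p(b+h)\in pA$, whence $pA=A$.

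Next I would write $H=\bigcup_{t\models q}S_t$ as the directed union of the connected stably dominated subgroups of a limit stably dominated family (Definition~\ref{D:lmstd}), and analyze each $S_t$ separately. Since \Aom implies \AFD, Proposition~\ref{P:groupdom} provides a definable surjective homomorphism $g_t\colon S_t\to\fg_t$ onto a definable connected stable Abelian group $\fg_t$ with $\dimst(\fg_t)\le\dimst(A)=:n$, via which $S_t$ is stably dominated. The key local observation is: if $p\fg_t=\fg_t$, then $g_t(pS_t)=p\fg_t=\fg_t$, so by Corollary~\ref{C:finindex} the relatively definable subgroup $pS_t$ has finite index in $S_t$, and connectedness of $S_t$ forces $pS_t=S_t$. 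Equivalently, $pS_t\neq S_t$ implies $p\fg_t\neq\fg_t$.

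It remains to bound the exceptional set. Put $P_0=\{p:pH\neq H\}$ and suppose it contained distinct primes $p_1,\dots,p_{n+1}$. For each $i$, choose $h_i\in H\setminus p_iH$; then $h_i\in S_{t_i}$ for some $t_i$, and for every $t'\ge t_i$ we have $h_i\in S_{t'}$ while $h_i\notin p_iS_{t'}$ (because $p_iS_{t'}\subseteq p_iH$), so $p_iS_{t'}\neq S_{t'}$ and hence $p_i\fg_{t'}\neq\fg_{t'}$. By directedness pick $t^*\ge t_1,\dots,t_{n+1}$; then $p_i\fg_{t^*}\neq\fg_{t^*}$ for all $i$. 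As $\fg_{t^*}$ is connected, each $p_i\fg_{t^*}$ is a proper, hence infinite-index, definable subgroup, so each $\fg_{t^*}/p_i\fg_{t^*}$ is infinite. Since the $p_i$ are distinct primes, the Chinese Remainder Theorem gives $\fg_{t^*}/(\prod_i p_i)\fg_{t^*}\cong\bigoplus_{i=1}^{n+1}\fg_{t^*}/p_i\fg_{t^*}$, a direct sum of $n+1$ infinite groups; by superadditivity of Morley rank on direct sums this quotient has rank $\ge n+1$, forcing $\dimst(\fg_{t^*})\ge n+1$ and contradicting $\dimst(\fg_{t^*})\le n$. Thus $|P_0|\le n=\dimst(A)$, and $pA=A$ for every $p\notin P_0$.

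The genuinely delicate point, and the step I expect to be the main obstacle, is precisely this finiteness of $P_0$. Individually each connected stably dominated $S_t$ is divisible away from finitely many primes, but a priori the union of these finite sets over the whole directed family $\{S_t\}$ could be infinite; one cannot simply quote divisibility of the $\fg_t$ uniformly. The device above sidesteps the uniformity issue by collapsing all candidate bad primes into a \emph{single} stable homomorphic image $\fg_{t^*}$ (obtained from directedness) and bounding their number by $\dimst(A)$ via the rank of $\fg_{t^*}/(\prod p_i)\fg_{t^*}$. In $\ACVF$ these exceptional primes are, as expected, confined to the residue characteristic.
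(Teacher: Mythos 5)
Your argument is correct, and its skeleton coincides with the paper's: both decompose $A$ via Theorem~\ref{T:lmstd} into the limit stably dominated subgroup $H=\bigcup_{t\models q}S_t$ and the $\Gamma$-internal quotient $A/H$, dispose of $A/H$ by divisibility of connected groups definable in an $o$-minimal structure, and prove $pS_t=S_t$ by pushing into a stable image and combining Corollary~\ref{C:finindex} with connectedness. Where you genuinely diverge is the finiteness of the exceptional set of primes. The paper fixes $t$, takes the maximal stable quotient $A_t$ of $S_t$ (Corollary~\ref{C:max-st-quotient}), and observes that $A_t$ has infinite $p$-torsion for at most $\dimst(A_t)$ primes, multiplication by any other prime having finite kernel and hence being surjective on the connected group $A_t$; the uniformity in $t$ that you explicitly worry about is absorbed, without comment, by the fact that $q$ is a \emph{complete} type over $C$ and the family $(S_t)_t$ is uniformly definable, so that for a fixed prime the statement $pS_t=S_t$ holds for all $t\models q$ or for none, and the exceptional set is literally the same for every $t$. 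You avoid invoking this: directedness collapses witnesses for $n+1$ putative bad primes into a single $S_{t^*}$, and the Chinese Remainder splitting of $\fg_{t^*}/(\prod_i p_i)\fg_{t^*}$ together with superadditivity of Morley rank bounds the number of bad primes by $n=\dimst(A)$. The two counting arguments are dual (the paper sums ranks of torsion subgroups, you sum ranks of quotients), but yours is self-contained on the uniformity point and yields an explicit global bound $|P_0|\leq\dimst(A)$, at the cost of an extra reduction. One citation to repair: Definition~\ref{D:lmstd} only makes the $S_t$ pro-definable, while your uses of Proposition~\ref{P:groupdom} (to get a definable $\fg_t$) and of Corollary~\ref{C:finindex} (applied to the relatively definable subgroup $pS_t$) need the $S_t$ definable; under \Aom this is exactly what Corollary~\ref{C:cert-2} provides, so the mathematics is unaffected.
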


\begin{proof}
  Let $H$ be the limit stably dominated subgroup of $A$.  We have $H =
  \union_{t \models q} S_t$ where $q$ is a complete type over some
  \(C\) and $S_t$ is a definable connected stably dominated group. Let $A_t$
  be the maximal stable quotient of $S_t$ and \(m = \dimst(A_t)\). Let $A_t(p) = \{x \in A_t: p\cdot x=0 \}$.  Then
  $A_t(p)$ can be infinite for at most $m$ values of $p$.  For any
  other $p$, $A_t(p)$ is finite and hence $p\cdot A_t = A_t$.  It
  follows that $p\cdot S_t$ has finite index in $S_t$
  (Corollary\ \ref{C:finindex}); so $p\cdot S_t = S_t$.  Thus $pH=H$.
  
  On the other hand, $A/H$ is an
  $o$-minimally definable group. So its $p$-torsion is finite (eg. \cite[Proposition\ 6.1]{Str}) and hence $\dimo((A/H)/p(A/H)) = 0$. Since $A/H$ is connected, $p(A/H) = A/H$. So $p\cdot A=A$.
\end{proof}

\subsection{Metastable fields}

By a rng we mean an Abelian group with a commutative, associative,
distributive multiplication (but possibly without a multiplicative
unit element.)  If a rng has no zero-divisors, the usual construction
of a field of fractions makes sense.

\begin{prop}\label{P:fields} \AFD
  Let $F$ be a definable field with $0 < \dimst(F) = n < \infty$. Then there
  exists an $\infty$-definable subrng $D$ of $F$, with $(D,+)$
  connected stably dominated. Moreover $F$ is the field of
  fractions of $D$.
\end{prop}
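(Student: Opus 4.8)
The plan is to realise $D$ as the ring of multipliers of a suitable connected stably dominated additive subgroup of $F$. Fix a metastability basis $C$, which we may enlarge by $\Gamma$ (legitimate by Remark~\ref{R:add Gamma}), over which $F$ is defined. Since $\dimst(F)=n$, Lemma~\ref{L:ab-inertial} applied to the Abelian group $(F,+)$ produces a stably dominated pro-$C$-definable subgroup with a stable homomorphic image of dimension $n$; passing to its connected component (the stabiliser of its principal generic) we obtain a connected stably dominated $S\leq(F,+)$ and a surjective homomorphism $g\colon S\to\fg$ with $\fg$ stable, $\RM(\fg)=n=\dimst(F)$, and $S$ stably dominated via $g$. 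As $S$ is connected, $\fg$ is connected. Set
\[
D:=\{a\in F : aS\subseteq S\}.
\]
Then $D$ is closed under addition and negation, contains $1$, and $abS\subseteq aS\subseteq S$ gives closure under multiplication, so $D$ is a $C$-definable subrng of $F$ (in fact a subring with unit). It remains to prove that $(D,+)$ is connected stably dominated and that $F$ is the fraction field of $D$.

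The heart of the argument is to identify $D$ with a multiplicative translate of $S$. For $c\in F^\times$, the additive group $cS$ is again connected stably dominated, being the image of $S$ under the definable additive automorphism $x\mapsto cx$ (Remark~\ref{R:gmpres}). Let $p$ be the principal generic of $S$, a stably dominated $C$-definable type. By the analysis of limit stably dominated groups (Proposition~\ref{P:cert} and Corollary~\ref{C:cert-2}), the connected stably dominated subgroups of $(F,+)$ occur in $\Gamma$-parametrised families, so the canonical code of $sS$ is, over $C$, (almost) $\Gamma$-internal; hence $s\mapsto\ulcorner sS\urcorner$ is a definable map from $p$ into $\eq{\Gamma}$. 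Since $p$ is orthogonal to $\Gamma$ (Proposition~\ref{P:orth}), this map has constant $p$-germ: there is a $C$-definable group $S'$ with $sS=S'$ for every $s\models p|C$. Now choose independent generics $s_0\models p|C$ and $s_1\models p|Cs_0$; then $s_0S=S'=s_1S$, so $s_0^{-1}s_1\in D$ and $s_1=s_0(s_0^{-1}s_1)\in Ds_0$. Applying $g$ and Lemma~\ref{L:groupdomcor}, $g(s_1)$ is a generic of $\fg$ over $Cg(s_0)$; letting $s_1$ range over such generics shows $g(Ds_0)$ contains every generic of $\fg$ over $Cg(s_0)$, whence $g(Ds_0)=\fg$ by connectedness of $\fg$. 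Since $Ds_0\leq S$ is relatively definable and $S$ is stably dominated via $g$, Corollary~\ref{C:finindex} gives $[S:Ds_0]<\infty$, so $Ds_0=S$ as $S$ is connected. Therefore $D=s_0^{-1}S$ is a multiplicative translate of $S$; in particular $(D,+)$ is connected stably dominated with $\dimst(D)=n$. I expect the verification that $s\mapsto sS$ has constant germ — the passage through the $\Gamma$-internal parametrisation of stably dominated subgroups together with orthogonality — to be the main obstacle, as it is precisely what breaks the apparent circularity in pinning down $\dimst(D)=n$.

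Finally, to see that $F$ is the fraction field of $D$ it suffices, since $1\in D$, to show that for every $c\in F^\times$ there is a nonzero $d\in D$ with $cd\in D$, i.e. that $D\cap c^{-1}D\neq 0$. Both $D$ and $c^{-1}D$ are connected stably dominated of stable dimension $n$, and $D\cap c^{-1}D$ is (isomorphic to) the kernel of the surjection $D\times c^{-1}D\to D+c^{-1}D$, $(x,y)\mapsto x+y$, whose fibres are cosets of $\{(x,-x):x\in D\cap c^{-1}D\}$. Were $D\cap c^{-1}D$ finite, this map would be finite-to-one, forcing $\dimst(D+c^{-1}D)=\dimst(D\times c^{-1}D)=2n$; but $D+c^{-1}D\subseteq F$ yields $\dimst(D+c^{-1}D)\leq\dimst(F)=n$, a contradiction. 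Hence $D\cap c^{-1}D$ is infinite, in particular nonzero, and choosing $0\neq d\in D\cap c^{-1}D$ gives $c=(cd)/d\in\mathrm{Frac}(D)$. Thus $F=\mathrm{Frac}(D)$, which completes the proof.
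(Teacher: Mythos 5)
Your strategy --- realizing $D$ as the multiplier ring $\{a\in F: aS\subseteq S\}$ of an additive connected stably dominated subgroup $S$ --- is genuinely different from the paper's, but it has a gap exactly at the step you yourself flag as ``the main obstacle'', and the justification you offer does not close it. You need the map sending $s$ to the code of $sS$ to be a definable map from $p$ into $\eq{\Gamma}$, so that orthogonality (Proposition\ \ref{P:orth}) forces $sS$ to be constant on realizations of $p|C$. But Proposition\ \ref{P:cert} and Corollary\ \ref{C:cert-2} do not say that the connected stably dominated subgroups of $(F,+)$ \emph{are} the members of a $\Gamma$-parametrized family; they only say that every such subgroup is \emph{contained in} some member $H_t$ of such a family. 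Containment attaches no canonical $\Gamma$-tuple to $sS$ (a given $sS$ lies in many $H_t$, and need not equal any of them), so nothing places the code of $sS$ in $\dcl(C\Gamma)$ or even $\acl(C\Gamma)$, and the chain $s_0S=s_1S \Rightarrow s_0^{-1}s_1\in D \Rightarrow g(Ds_0)=\fg \Rightarrow D=s_0^{-1}S$ is unsupported at its first link. There is also a hypothesis mismatch: Proposition\ \ref{P:cert} and Corollary\ \ref{C:cert-2} assume \Aom, as does the definability of $S$ (via Corollary\ \ref{C:maxrkdef}) which you use tacitly --- with $S$ only pro-definable, as Lemma\ \ref{L:ab-inertial} provides it, the set $D=\{a:aS\subseteq S\}$ is not obviously even $\infty$-definable, and Corollary\ \ref{C:finindex} requires $Ds_0$ to be relatively definable. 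Since Proposition\ \ref{P:fields} is stated without \Aom (the remark following it adds \Aom as an extra hypothesis), your argument would at best prove the statement under strictly stronger assumptions.

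For comparison, the paper avoids the multiplier-ring problem entirely by a change of group: Lemma\ \ref{L:ab-inertial} is applied to the \emph{multiplicative} group $F^\star$, producing a connected stably dominated $M\leq F^\star$ whose stably dominated generic $p$ has stable dimension $n$; then $p$, regarded as a type in $(F,+)$, has weight at most $n<2n$, so Lemma\ \ref{L:abelian1} yields a connected pro-definable additive subgroup $D$ with generic $p^{\pm 2n}$, additively generated by $p^{\pm 2}$. Multiplicative closedness of $D$ is then automatic: since $M$ is connected, multiplication by a realization of $p$ fixes $p$, hence permutes the additive generators $p^{\pm 2}$ of $D$ and so stabilizes $D$; as $D$ is additively generated by such elements, $D\cdot D\subseteq D$. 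Your concluding fraction-field argument is essentially the paper's dimension count (injectivity of $(x,y)\mapsto x+a\cdot y$ on $D^2$, in your finite-fibre phrasing) and is fine; it is only the construction of the stably dominated subrng that needs repair, and the paper's multiplicative trick is precisely what makes that repair unnecessary.
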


\begin{proof}
Using Lemma\ \ref{L:ab-inertial}, we find a connected $\infty$-definable
subgroup $M$ of $F^\star$ with a stably dominated generic type $p$ of stable
dimension $n$.
 
Recall that the \(p\)-weight of \(F\) is bounded by \(\dimst(F)\). We can
therefore apply Lemma\ \ref{L:abelian1} additively to find $D$ a connected
subgroup of $(F,+)$ with principal generic type $p^{\pm 2n}$ and such that $p$
concentrates on an additive coset of $D$. As in Lemma\ \ref{L:ab-inertial}, we
see that $\dimst(D) \geq n$. Let $C$ be such that $F$, $M$, $D$ and $p$ are
$C$-definable. If $(a,b_1,b_2,\ldots,b_{2n})\models p^{\tensor 2n+1}|C$, then
$a\cdot(\sum_{i} -b_{2i}+b_{2i+1}) = \sum_{i} -(a\cdot b_{2i}) + (a\cdot
b_{2i+1}) \models p^{\pm 2n}|Ca$, i.e. if $a\models p|C$ and $b\models p^{\pm
2n}|Ca$, $a\cdot b = p^{\pm 2n}|Ca$. Let now $a\models p|C$ and $d\in D$. Since
$p^{\pm 2n}$ is the principal generic of $D$, we find $b,c\models p^{\pm 2n}|Ca$
such that $d = b-c$. So $a\cdot d = a\cdot b - a\cdot c \in -p^{\pm 2n}|C +
p^{\pm 2n}|C \subseteq D$. Similarly, since $M$ is multiplicatively generated by
$p|C$, $M\cdot D\subseteq D$. Finally, since $D$ is generated by $M-M$, $D\cdot
D \subseteq D$; so $D$ is a subrng of $F$.

  Let $F'\subseteq F$ be the fraction field of $D$. If $a\in F\sminus
  F'$, then the map $(x,y) \mapsto x+a\cdot y$ takes $D^2 \to F$
  injectively---if $x+a\cdot y=x'+a\cdot y'$ and $(x,y) \neq (x',y')$ then $y \neq
  y'$ and $a = (x'-x)/(y-y')$. Thus $D + a\cdot D\subseteq F$ is definably isomorphic
  to $D^2$. But $\dimst(D^2) \geq 2n > n = \dimst(F)$, a
  contradiction. We have proved that $F=F'$ is the field of fractions
  of $D$.
\end{proof}

\begin{rem}
  Assuming \Aom, in the previous proposition, if \(D\) is a ring,
  i.e. \(1\in D\), then the generic type of \((D,+)\) is also generic
  in \((D^\star,\cdot)\).
\end{rem}  

\begin{proof}
  Note first that by construction, the generic type $q$ of \((D,+)\)
  has stable dimension \(n = \dimst(F)\), so, by
  Corollary\ \ref{C:maxrkdef}, \(D\) is definable. Let \(N\) be the
  smallest definable subgroup of \((D,+)\) with \(D/N\) stable
  (cf. Corollary\ \ref{C:max-st-quotient}). Clearly $N$ is
  multiplicatively invariant under the units of $D$.  Thus $N$ is an
  ideal of $D$ and $D/N$ is a definable stable ring. Then $D/N$ has
  only finitely many maximal ideals which are all definable
  (cf. \cite[Theorem\ 2.6]{CheRei}). It follows that the generic of
  $D/N$ avoids all these maximal ideals and therefore concentrates on
  $(D/N)^\star$.

  Let $a \models q$. The image in $D/N$ of the ideal $a\cdot D$
  contains the generic $a+N$ which is invertible. So that image equals
  $D/N$. By Corollary\ \ref{C:finindex}, $a\cdot D = D$---recall that
  $D$ is connected---and $a$ is invertible. So the additive generic of
  $D$ concentrates on $D^\star$ and since, for any $c\in D^\star$, the
  map $x\mapsto c\cdot x$ is a definable isomorphism, it must preserve
  $q$.
\end{proof}

\begin{rem}
  If $F$ is a definable field of finite weight, then the limit stably
  dominated subgroup $H$ of $(F,+)$ is an ideal so it is either
  trivial---in which case $F$ is \(\Gamma\)-internal---or coincides
  with $F$. If $(F,+)$ is stably dominated then its minimal definable
  subgroup $N$ with $F/N$ stable is a proper ideal and hence is
  trivial---in which case $F$ is stable.

  So, unless $F$ is stable or $\Gamma$-internal, $(F,+)$ is properly
  limit stably dominated.
\end{rem}

\begin{prop}\label{P:unstable fields}\Aom
  Let $F$ be an infinite definable field which is neither stable nor
  internal to $\Gamma$. Then $F^\star$ has a definable homomorphic $\Gamma$-internal non definably compact image.
\end{prop}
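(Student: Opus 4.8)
The plan is to exhibit, as the required quotient, the ``value group'' of a valuation ring canonically attached to $F$. First I would reduce to the case $\dimst(F)=n>0$: if $\dimst(F)=0$, then over any metastability basis $M$ every $a\in F$ has $\St_{\Gamma_M(a)}(a)\subseteq\acl(\Gamma_M(a))$, so stable domination forces $a\in\acl(\Gamma_M(a))$; hence $(F,+)$ is almost $\Gamma$-internal, and by Lemma\ \ref{L:almost-int-2} (as $\Gamma$ is $o$-minimal) $\Gamma$-internal, against the hypothesis. With $n>0$, Proposition\ \ref{P:fields} gives an $\infty$-definable subrng $D\le F$ with $(D,+)$ connected stably dominated and $F=\mathrm{Frac}(D)$; since the generic of $(D,+)$ has stable dimension $n=\dimst(F)$, Corollary\ \ref{C:maxrkdef} makes $D$ definable. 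I then set $\Oo_D=\{c\in F\mid cD\subseteq D\}$, a definable subrng with $D\subseteq\Oo_D$ and $\mathrm{Frac}(\Oo_D)=F$, and $\Oo_D^\star=\{c\in F^\star\mid cD=D\}$, a definable subgroup of $F^\star$. The candidate map is $v:F^\star\to\Gamma_D:=F^\star/\Oo_D^\star$.

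The first substantial task is to show $\Gamma_D$ is $\Gamma$-internal. Each coset $c\,\Oo_D^\star$ is faithfully recorded by the translate $cD$, and since multiplication by $c$ is a definable automorphism of $(F,+)$, every $cD$ is a connected stably dominated subgroup of \emph{maximal} stable dimension $n=\dimst(F,+)$. By Corollary\ \ref{C:cert-2} there is a definable, $\Gamma$-tuple indexed, certifiably stably dominated family $(H_t)_t$ containing every connected stably dominated subgroup of $(F,+)$; so $cD\le H_t$ for some $t$, forcing $\dimst(H_t)=n$. As $cD$ is connected of full stable dimension inside the stably dominated $H_t$, Corollary\ \ref{C:finindex} (applied to the stable homomorphic image of $H_t$) shows $cD$ has finite index in $H_t$, whence $cD=H_t^0$, which is definable by Corollary\ \ref{C:def G0}. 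The resulting definable finite-to-one assignment $cD\mapsto t$ into $\eq\Gamma$ exhibits $\Gamma_D$ as almost $\Gamma$-internal, hence $\Gamma$-internal by Lemma\ \ref{L:almost-int-2}.

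It then remains to see that $\Gamma_D$ is not definably compact. The set $D^\star=D\setminus\{0\}$ is a multiplicative subsemigroup, so $Y:=v(D^\star)$ is a definable subsemigroup of $\Gamma_D$, and since $F=\mathrm{Frac}(D)$ every element of $F^\star$ is a ratio of elements of $D^\star$, giving $Y-Y=\Gamma_D$. Suppose $\Gamma_D$ were definably compact. Being $\Gamma$-internal it is definable in the $o$-minimal $\Gamma$, so property (NG) holds and Lemma\ \ref{L:semigroup} yields $Y=\Gamma_D$, i.e.\ $F^\star=D^\star\,\Oo_D^\star$. But for $d\in D^\star$ and $u\in\Oo_D^\star$ one has $(du)D=d(uD)=dD\subseteq D$, so $du\in\Oo_D$; hence $F^\star\subseteq\Oo_D$ and $\Oo_D=F$. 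Then $D$ is a nonzero ideal of the field $F$, so $D=F$, making $(F,+)$ connected stably dominated and therefore $F$ stable (cf.\ the remark preceding the statement), a contradiction. Thus $\Gamma_D$ is $\Gamma$-internal and not definably compact, and $v:F^\star\to\Gamma_D$ is the desired homomorphism.

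I expect the main obstacle to be the $\Gamma$-internality of $\Gamma_D$ in the second paragraph: the rest is short manipulation, but this step rests on the rigidity principle that a connected stably dominated subgroup of $(F,+)$ of maximal stable dimension must be the connected component of a member of the $\Gamma$-indexed certifiable family, i.e.\ on combining Corollaries\ \ref{C:cert-2}, \ref{C:finindex} and \ref{C:def G0}. A secondary point requiring care is definability (rather than mere $\infty$-definability) of $D$, $\Oo_D$, $\Oo_D^\star$ and of the parametrization $cD\mapsto t$, which is exactly where the hypothesis \Aom is spent.
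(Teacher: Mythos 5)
The overall architecture of your proof (reduction to $\dimst(F)>0$, Proposition~\ref{P:fields}, a quotient of $F^\star$ attacked via Lemma~\ref{L:semigroup} and (NG)) is the right one, and your endgame is in fact a little cleaner than the paper's, since you obtain the properness of the semigroup $Y$ by contradiction rather than directly. But the step you yourself flag as the crux---the $\Gamma$-internality of $\Gamma_D=F^\star/\Oo_D^\star$---contains a genuine error. You claim: since $cD$ is connected stably dominated of full stable dimension $n=\dimst(H_t)$ inside the stably dominated $H_t$, Corollary~\ref{C:finindex} gives $[H_t:cD]<\infty$, hence $cD=H_t^0$. Corollary~\ref{C:finindex}, however, requires that the image of the subgroup under the \emph{specific} homomorphism $g:H_t\to\fg$ witnessing stable domination of $H_t$ be all of $\fg$. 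The hypothesis $\dimst(cD)=\dimst(H_t)$ only says that \emph{some} definable map on $cD$ has stable image of full Morley rank; it says nothing about $g(cD)$, which can even be trivial. Concretely, in $\ACVF$ take $H_t=(\Oo,+)$, dominated via $\res:\Oo\to k$, and $cD=\alpha\Oo$ with $\val(\alpha)>0$: both are connected stably dominated, and $\dimst(\alpha\Oo)=\dimst(\Oo)=1$ (scaling by $\alpha^{-1}$ is a definable bijection), yet $\res(\alpha\Oo)=\{0\}$ and $[\Oo:\alpha\Oo]$ is infinite. This is exactly your situation: the translates $cD$ are fractional-ideal-like subgroups of $(F,+)$, and Corollary~\ref{C:cert-2} only guarantees $cD\leq H_t$ for \emph{some} member of the family, which may be strictly larger; nothing forces $cD$ to be the connected component of a member. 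So the finite-to-one parametrization $cD\mapsto t$ is not established, and with it the $\Gamma$-internality of $\Gamma_D$ collapses. (Your other uses of the rank machinery are fine: $D$ itself does have stable homomorphic image of rank $n$ via its \emph{own} dominating homomorphism, so Corollary~\ref{C:maxrkdef} does make $D$ definable.)

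The paper avoids this problem by quotienting $F^\star$ not by $\Oo_D^\star$ but by its limit stably dominated subgroup $M$: the $\Gamma$-internality of $F^\star/M$ is then given outright by Theorem~\ref{T:lmstd}, with no parametrization of translates needed. The price is that properness of the corresponding subsemigroup $(R_t\sminus\{0\})/M$ is no longer automatic, and the paper proves it via the ideals $N_t$ (minimal subgroups with stable quotient) together with the observation that $a\in N_t\sminus\{0\}$ forces $a^{-1}\notin R_s$. If you wish to keep your quotient $F^\star/\Oo_D^\star$, you need a genuinely different argument for its $\Gamma$-internality---for instance by relating $\Oo_D^\star$ to the limit stably dominated subgroup of $F^\star$---since the rigidity principle you invoke (full stable dimension implies finite index) is false.
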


\begin{proof}
  Let $H$ be a stably dominated $\infty$-definable subgroup of
  $F^\star$ with generic $p$. As in the proof of
  Proposition\ \ref{P:fields}, we find a subrng $R$, additively
  generated by $p^{\pm 2}$, invariant under multiplication by $H$,
  whose additive group is connected stably dominated and such that $p$
  concentrates on an additive coset of $R$. Recall that, since we built $p^{\pm 2}$ by looking at the additive structure, realizations of $p^{\pm 2}$ are of the form $-a + b$ for some $(a,b)\models p^{\tensor 2}$.
  
  If $H_1 \leq H_2$ are two $\infty$-definable stably dominated subgroups of $F^\star$,
  let $p_i$ be their generics and $R_i$ the corresponding subrng. Assume that everything is defined over some $C$. If $a,b\in H_1$ and
  $e\models p_2|Cab$, then both $a\cdot e$ and $b\cdot e$ realize
  $p_2|Cab$. It follows that $a - b = e^{-1}(a\cdot e - b\cdot e)\in
  R_2$. In particular, the set of realizations of $p_1^{\pm 2}$ is contained in $R_2$. Since $R_1$ is additively generated by $p_1^{\pm 2}$, $R_1 \subseteq
  R_2$.
  
  Let $M = \bigcup_{t\models q} M_t$ be the limit stably dominated
  subgroup of \(F^\star\) and \(R_t\) the subrng corresponding to $M_t$. Note
  that, since $M_t$ can be---and is---chosen definable and that $M_t -
  M_t$ generates \(R_t\) in boundedly many steps, \(R_t\) is definable. Note that if
  \(\dimst(F) = 0\), working over a metastability basis $C$, for
  any $a\in F$, $\tp(a/\Gamma_C(a))$ has a unique realization and $F$ is internal
  to $\Gamma$. So \(\dimst(F) > 0\). Let \(R'\) (and $M'$) be as in
  Proposition\ \ref{P:fields}. In particular, $F$ is the fraction field of $R'$. By definition, there exists $t\models q$ such that $M'\leq M_t$. By the previous paragraph, \(R'\subseteq R_t\). It follows that for any
  \(t\models q\), $F$ is the fraction field of $R_t$. Let $N_t$ be the
  minimal quotient of $(R_t,+)$ such that $R_t/N_t$ is stable
  (cf. Corollary\ \ref{C:max-st-quotient}). Since $N_t$ is
  characteristic, it is invariant under multiplication by elements of
  $M_t$. It is therefore an ideal of $R_t$. By
  Proposition\ \ref{P:groupdom}, the principal generic of $R_t$ is
  dominated by a group morphism, so if \(N_t = R_t\), \(R_t =
  \{0\}\). On the other hand, if \(N_t = (0)\), $R_t$, and hence $F$
  is stable. It follows that \(N_t\) is a proper non-trivial ideal. If
  \(t\leq s\), \(R_t/N_s\cap R_t\) is stable as it embeds in $R_s/N_s$. So \(N_t\subseteq
  N_s\). If $a\in N_t$ is not \(0\), then $a^{-1}\nin R_s$ for any
  $s>t$ and it follows that $R := \bigcup_t R_t \neq F$.
  
  Let $V := F^\star/M$. It is $\Gamma$-internal. Let \(V^+ :=
  (R_t\sminus\{0\})/M\) for some choice of \(t\). It is a definable
  subsemigroup of \(V\). Since \(R\) is stabilized by $M$, \(M
  R_t\subseteq R \neq F\), so \(V^+\neq V\). Also, since \(F\) is the
  fraction field of \(R_t\), \(V = V^+ - V^+\). By
  Lemma\ \ref{L:semigroup}, $V$ does not have (NG). By Proposition\ \ref{P:defcomp NG}, it is not
  definably compact.
\end{proof}

\section{Valued fields: stably dominated groups and algebraic groups}

In this section, we work in $\ACVF$. Let $K$ be an algebraically
closed valued field; $\Oo$ denotes the valuation ring. Occasionally we
will assume $K$ to be sufficiently saturated.

Recall that a definable set is purely imaginary if there are no definable functions (with parameters)  from that set onto an infinite subset of \(K\), cf. Definition\ \ref{D:pure im}.

\begin{prop}\label{P:embed}
  Let \(C = \acl(C)\) and $H$ be a connected $\infty$-\(C\)-definable
  group with a (right) generic. Then there exists an algebraic group
  $G$ over \(C\cap K\) and a \(C\)-definable homomorphism $f: H \to
  G$, with purely imaginary kernel.
\end{prop}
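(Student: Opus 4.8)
The plan is to use the generic type of $H$ to produce a definable homomorphism into an algebraic group, following the group-chunk philosophy but now landing in the field sort rather than in the stable part. The main point is that $H$ has a generic type $p$, and we want to attach to it enough field-valued data to embed the group into an algebraic group, while controlling what gets collapsed in the kernel. First I would fix $C = \acl(C)$ over which $H$ and $p$ are defined. For each field-valued $C$-definable function $\phi$ on $H$ (more precisely, for each tuple of such functions), left translation acts on the $p$-germ of $\phi$, and since $p$ is generic, these germs are permuted among boundedly many. The idea is to collect all field-valued $p$-germs into a single object: let $V_\phi$ be the $C$-definable closure of the set of $p$-germs of the maps $x \mapsto \phi(g\cdot x)$ as $g$ ranges over $H$. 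Because genericity bounds the orbit, and because in $\ACVF$ one has elimination of imaginaries into the geometric sorts, one expects $V_\phi$ to live in a definable set built from $K$.

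The key construction is then to let $G$ be the image of the induced action of $H$ on these field-valued germs. Concretely, for $g \in H$, left translation $x \mapsto g\cdot x$ induces, on the $p$-germ level, a transformation $g_\star$ of the field-valued germs; the assignment $g \mapsto g_\star$ is a homomorphism by associativity of the group law (exactly as in Proposition \ref{P:gpchunk}), and its target is a group of transformations of a field-definable object. The second step is to realize this target as (the $C\cap K$-points of) an \emph{algebraic} group $G$: here I would invoke that a group of definable transformations of field-definable data, in $\ACVF$, is (after passing to the generic and using the strong germ machinery of Proposition \ref{P:stgerms} together with the maximum modulus principle, Corollary \ref{C:maxmod2}, to pin down the germs via valuations of regular functions) recognizable as an algebraic group over $K$, and descent of the field of definition gives $C\cap K$. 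Let $f: H \to G$ be the resulting homomorphism $g \mapsto g_\star$.

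The remaining step, and the one where the specific hypothesis ``purely imaginary kernel'' must be verified, is to analyze $N = \ker(f)$. An element $n \in N$ fixes every field-valued $p$-germ, i.e. for every $C$-definable field-valued $\phi$ one has $\defsc{p}{x}\phi(n\cdot x) = \phi(x)$. I would then argue that any field element in $\acl(Cn)$ is already in $\acl(C)$: given a field-valued $d \in \dcl(Cn)$, produce a $C$-definable field-valued function on $H$ whose $p$-germ detects $d$, and use the fixing condition to conclude that $d$ is fixed, hence in $\acl(C)$; the characterization of purely imaginary elements in Lemma \ref{L:se1}.(2) (namely $\dcl(Cn)\meet K \subseteq \acl(C)$) then applies. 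This is where I expect the \textbf{main obstacle} to lie: one must ensure that the family of field-valued functions $\phi$ used to build $G$ is \emph{rich enough} that fixing all their $p$-germs forces $n$ to be purely imaginary, yet still \emph{bounded/definable} enough that $G$ is a genuine algebraic group of finite dimension. Balancing these two requirements — separating field points while keeping the target algebraic — is the crux, and I would resolve it using that $\dimst(H)<\infty$ (so only finitely many independent field-valued germs arise, via the dimension bounds in \AFD) together with the strong germ property to guarantee the germs are coded by field elements rather than genuine imaginaries.
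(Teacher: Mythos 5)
Your plan stalls exactly where the real work of this proposition begins: producing an \emph{algebraic} group. You define $G$ as the group of transformations $g_\star$ of field-valued $p$-germs and then assert that such a group is ``recognizable as an algebraic group over $K$'' --- but that assertion is essentially the proposition itself. The group of invertible germs (the $\fG(p)$ of Definition~\ref{D:Gen Stab}) is merely a piecewise pro-definable group in $\ACVF$, and $\ACVF$-definable groups are typically \emph{not} algebraic: taking $\phi$ to be the identity function on $H = \Gm(\Oo)$ or $\SL_n(\Oo)$, your germ group simply reproduces $H$ itself. Nor does having field tuples as germ codes help by itself: one would still need the composition law on those codes to be \emph{generically rational}, whereas in $\ACVF$ it is a priori only definable. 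The mechanism the paper uses to cross from $\ACVF$-definability to algebraic geometry is the group configuration theorem for the stable theory $\ACF$: from independent generic realizations of $p$ one forms the six-point group configuration, replaces each point $c$ by a field tuple $\alpha(c)$ with $\acl(Cc) \cap K = \alg{F(\alpha(c))}$ (where $F = C \cap K$), checks that $\alpha$ preserves the algebraic-dependence matroid (using that no geometric sort other than $K$ admits a definable map with infinite range into $K$), and then applies the group configuration theorem in $\ACF$ over $F$ to obtain an algebraic group $G$ whose generics are inter-algebraic with the $\alpha(a_i)$; the homomorphism $f$ is then extracted as the graph of a stabilizer $S = \Stab(q_2) \leq H \times G$ of a definable type. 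Your proposal contains no such transfer-to-$\ACF$ step, and without one (this is the heart of the matter, not a routine verification) no algebraicity can be concluded. The germ idea does have pedigree --- it is how Weil's theorem is proved in \cite{Hru-UniDim} --- but that argument takes place \emph{inside} a stable theory, which is precisely what is unavailable here.

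There are two further problems. First, the tools you invoke to pin down the germs, Proposition~\ref{P:stgerms} (strong germs) and Corollary~\ref{C:maxmod2} (maximum modulus), apply to \emph{stably dominated} types, whereas Proposition~\ref{P:embed} assumes only that $H$ has a (right) generic; stable domination enters only in Corollary~\ref{C:embed2}, which is the refinement giving a boundedly imaginary kernel. Second, your claim that ``since $p$ is generic, these germs are permuted among boundedly many'' confuses two orbits: genericity bounds the set of translates $\transl{g}{p}$ of the \emph{type}, not the set of $p$-germs of the functions $x \mapsto \phi(g\cdot x)$; already for $\phi = \mathrm{id}$ on $\Gm(\Oo)$, distinct $g$ give distinct germs, so that family has the cardinality of $H$. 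Your kernel analysis is the soundest part of the proposal and is close in spirit to the paper's proof that $J$ is purely imaginary, but in the paper it rests on the inter-algebraicity $\acl(Ca_i)\cap K = \alg{F(b_i)}$ supplied by the group configuration, so it cannot be carried out until the algebraic group has been correctly constructed.
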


\begin{proof}
  Let $F = C \meet K$. Let $p$ be the principal generic type of $H$;
  let $(a_1,a_2) \models p^{\tensor 2} | C$ and $a_3=a_1\cdot
  a_2$. Let $\tau = \{a_1,a_2,a_3,a_{12}=a_1\cdot a_2, a_{23}=a_2\cdot
  a_3,a_{123} = a_1\cdot a_2\cdot a_3\}$.  We view this six-element
  set as a \emph{matroid}, given by specifying the collection of
  algebraically dependent subsets of $\tau$.  This data is called a
  group configuration.

  For $c\in\tau$, let $A(c) = \acl(Cc) \meet K$; this is the set of
  field elements in the algebraic closure of $c$ over $C$. Pick a
  tuple $\alpha(c) \in A(c)$ such that $A(c) = \alg{F(\alpha(c))}$. We
  view $c \mapsto \alpha(c)$ as a map on $\tau$ into the matroid of
  algebraic dependence in the algebraically closed field $K$ over $F$.
  Then $\alpha$ preserves both dependence and independence.  For
  independence this is clear. Preservation of dependence follows from:

  \begin{claim}
    Let $E_1$ and $E_2$ be two algebraically closed substructures of a
    model of $\ACVF$, all sorts allowed.  Let $L_i$ be the set of
    field elements of $E_i$.  If $c \in \acl(E_1 \union E_2)\cap K$,
    then $c \in \alg{(L_1L_2)}$.
  \end{claim}

  \begin{proof}
    This follows immediately from the fact that there are no definable
    maps with infinite range from a geometric sort other than $K$ to
    $K$ (cf. Remark\ \ref{R:Geom pure im}).
  \end{proof}

  According to the group configuration theorem for stable theories,
  applied to the theory $\ACF$ over the model $F$, there exists a
  group $G$, $\ACF$-definable over $F$, such that $a(\tau)$ is a group
  configuration for $G$; in particular there exist $b_1, b_2 \in G$,
  $b_{12}=b_1b_2$ such that $A(a_i) = \alg{F(b_i)}$.  Since
  $\ACF$-definable groups are (definably isomorphic to) algebraic
  groups, we can take $G$ to be an algebraic group over $F$
  (cf. \cite[Proposition\ 3.1]{HruPil-GpPFF}).

  We work in the group $H \times G$. Let $c_i = (a_i,b_i)$. Since
  $a_i\models p|C$, $p$ is $C$-definable and $C=\acl(C)$, $\tp(c_i/C)$
  has a $C$-definable extension $q_i$. Let $Z= \Stab(q_2,q_3)$. This
  is a coset of $S := \Stab(q_2)$ and $q := \transl{c_1^{-1}}{q_1}$ is
  a generic type of $S$. Let $J = \{h \in H: (h,1) \in S\}$.

\begin{claim}
  $J$ is purely imaginary.
\end{claim}

\begin{proof}
  Pick any $h\in J$, $D\supseteq C$ and $(a,b)\models q_2
  |\acl(D h)$. Then $(h\cdot a,b)\models q_2 | D h$. Since all geometric
  sorts but $K$ are purely imaginary and $a\cap K \subseteq \acl(b)$,
  we have $\acl(D a)\cap K \subseteq \acl(Db)$ and hence $\acl(D,h\cdot
  a)\cap K \subseteq \acl(Db)$. So $\acl(D h)\cap K\subseteq
  \acl(D,a,h\cdot a)\cap K \subseteq \acl(Db)$. This inclusion holds
  for any $(a,b)\models q_2 |\acl(D h)$. Since $q_2$ is $D$-definable, in
  particular $D$-invariant, it follows that $\acl(D h)\cap K\subseteq
  \acl(D)$ and $h$ is purely imaginary over any $D\supseteq C$. Therefore $J$ is purely imaginary.
\end{proof}

Note that \(S\) projects onto $H$; this is because the projection
contains a realization of $p_1 | C$ and $p_1$ generates
$H$. Similarly, \(S\) projects dominantly onto \(G\). Let $J' = \{ g
\in G: (1,g) \in S \}$.  By an argument similar to the proof of the
above claim, for all \(h\in J'\) and $(a,b) \models q_2 | Ch$, since
$b \in \acl(a)$, we have that \(h\in\acl(Ca)\). Since \(q_2\) is
\(C\)-invariant, it follows that \(h\in\acl(C)\) and hence \(J'\) is
finite. Note also that \(J'\) is normal in the projection of \(S\) to
\(G\) and hence is a normal subgroup of \(G\).

Now $S$ can be viewed as the graph of a homomorphism $f: H \to G/J'$
with kernel $J$. Since $G/J'$ is isomorphic to an algebraic group
defined over $F$, replacing $G$ by $G/J'$ we may assume that $f: H \to
G$. Since \(S\subseteq H\times G\) is the \(\infty\)-\(C\)-definable
graph of a function, it is relatively definable---we have $(a,b) \notin
S$ if and only if $(a,b') \in S$ for some $b' \neq b$---and $f$ is a
\(C\)-definable function.
\end{proof}

\begin{rem}\label{R:embed-univ}
  Assume that $H$ is a connected $\infty$-definable Abelian group with
  a (right) generic, then there is a universal map $f$ into an algebraic group;  i.e. for any algebraic group $G'$ and
  any definable homomorphism $f' : H \to G'$, there exists a unique
  definable homomorphism $g: f(H) \to G'$ with $f' = g \circ f$.
\end{rem}

Instead of $H$ being Abelian, it suffices to assume that
Question\ \ref{Q:finite-cc} has a positive answer for all definable
subgroups of $H$.

\begin{proof}
  Let $f$ be as in Proposition\ \ref{P:embed}, then, since $J :=
  \ker(f)$ is purely imaginary, $f'(J)$ is finite. Thus $\ker(f')\cap
  J$ is a finite index subgroup of $J$. If we choose $\ker(f)$ to be
  as small as possible, i.e. $\ker(f) / \ker(f)^0$ to be least
  possible, then $J\leq \ker(f')$.
\end{proof}

The following example shows that the assumption that the group has a
generic cannot be eliminated. Without it, it seems likely that one can
find an isogeny between a quotient of $H$ by a purely imaginary group
and a quotient of an ind-algebraic group.

\begin{example}[Versions of tori]\label{E:versions-of-tori}
The value group $\Gamma$ of an algebraically closed valued field is a
divisible ordered Abelian group. So every group definable in $\Gamma$
is locally definably isomorphic to the group $V :=
\Gamma^n$. Nevertheless, interesting variants of $V$ are known. If
$\Delta$ is a finitely generated subgroup of $V$, the convex hull
$C(\Delta)$ of $\Delta$ (with respect to the product partial ordering
of $V$) is an ind-definable subgroup (a direct limit of definable
sets) which is itself not definable. But the quotient
$C(\Delta)/\Delta$ is canonically isomorphic to a group definable in
$\Gamma$ (cf. \cite{PetSta-Tori}). One example of such a group is, given any $\tau \in\Gamma_{>0}$, the group $\Gamma(\tau) = [0,\tau)$ with the law $x \star y = x+y$ if it stays in $\Gamma(\tau)$ and $x\star y = x+y-\tau$ otherwise.

Note that a non realized definable type concentrating on \(\Gamma(\tau)\) corresponds to a definable cut and hence has a trivial stabilizer. It follows that \(\Gamma(\tau)\) does not have generics.

These examples lift to the tori $T = \Gm^n$ over algebraically closed
valued fields. Let $r: T \to V$ be the homomorphism induced by the
valuation map. Let $\Delta$ be a finitely generated subgroup of $T$.
Then $C(\Delta) := r^{-1}(C(r(\Delta)))$ is ind-definable, and
$C(\Delta)/\Delta$ is definable in $\ACVF$. For example, fix $t \in K$
with $\val(t)=\tau > 0$, and define a group structure on $A(t) :=
\{x\mid 0 \leq \val(x) < \tau \}$ by: $x \star y = x\cdot y$ if
$x\cdot y \in A(t)$, $x \star y = x\cdot y/t$ otherwise. This group
admits a homomorphism onto the definably compact $\Gamma$-internal
group $\Gamma(\tau)$, with stably dominated kernel. Since $\Gamma(\tau)$ does not have generics, neither does $A(t)$.

Note that $A(t)$ contains no infinite purely imaginary
subgroup. Moreover, if, for some finite group $J$, $A(t)/J$ is a
Zariski-dense subgroup of an algebraic group $G$, then the group
configuration of $G$ is inter-algebraic with that of $\Gm$. It follows
that $G$ is isogenous to $\Gm$. But $A(t)$ has too many torsion points
for this to be possible.
\end{example}

\begin{cor}\label{C:embed2}
  Let \(C = \acl(C)\) and $H$ be a connected $\infty$-\(C\)-definable
  stably dominated group. Then, there exists an algebraic group $G$
  over \(C\cap K\) and a \(C\)-definable homomorphism $f: H \to G$,
  with boundedly imaginary kernel.
\end{cor}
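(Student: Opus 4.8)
The plan is to run Proposition~\ref{P:embed} to produce the homomorphism, and then to improve its conclusion from \emph{purely imaginary} to \emph{boundedly imaginary} for the kernel, using the extra hypothesis that \(H\) is stably dominated. First I would apply Proposition~\ref{P:embed} to the connected \(\infty\)-\(C\)-definable group \(H\) (which, being stably dominated, carries a symmetric generic, so Lemma~\ref{L:gen sym} applies). This yields an algebraic group \(G\) over \(C\cap K\) and a \(C\)-definable homomorphism \(f\colon H\to G\) whose kernel \(J:=\ker f\) is purely imaginary. By the very definition of bounded imaginarity and Corollary~\ref{C:bdd-im}, it then suffices to show that \(J\) is \emph{boundedly imaginary}, i.e.\ that \(J\) carries no definable map onto an unbounded subset of \(\Gamma\).

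The second step is to show that \(J\) is itself stably dominated, by applying Lemma~\ref{L:stdomsubgp} to \(H\) and its normal subgroup \(J\), with \(\eta=f\colon H\to H/J\cong f(H)\subseteq G(K)\). The task is to produce a relatively definable \(Y\subseteq H\) on which \(f\) is surjective and bounded-to-one onto \(f(H)\); I would build \(Y\) exactly as in the proof of Corollary~\ref{C:stdomsubgpvf}. Since \(f(H)\) consists of field points, for each \(h\in H\) the field \(M=\alg{(C\cap K)(f(h))}\) is a model of \(\ACVF\); the coset \(hJ=f^{-1}(f(h))\) is a nonempty \(M\)-relatively-definable subset of \(H\), hence has a point in \(M\). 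By compactness this produces a relatively definable \(Z\) inside the graph of \(\restr{f}{H}\) with finite projection to \(f(H)\); its projection \(Y\) to \(H\) is the required section, and Lemma~\ref{L:stdomsubgp} then gives that \(J\) is stably dominated.

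Finally I would upgrade pure imaginarity to bounded imaginarity. Replacing \(J\) by \(J^{0}\) (a connected stably dominated subgroup of finite index; its finitely many cosets have representatives in \(\acl(C)=C\), which are boundedly imaginary, so this reduction is harmless and a finite union of boundedly imaginary sets is boundedly imaginary), let \(p\) be the unique generic of \(J\) (Remark~\ref{R:1genr}); being stably dominated it is orthogonal to \(\Gamma\) (Proposition~\ref{P:orth}). Hence for \(a\models p\,|\,C\) every element of \(\Gamma_{C}(a)\) lies in \(\Gamma(\acl(C))\), so \(a\) is purely imaginary \emph{with bounded \(\Gamma\)-content}. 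By Lemma~\ref{L:se1}.(3) there are fixed \(\beta_{0}\le 0\le\beta_{1}\in\Gamma(\acl(C))\) and a \(C\)-definable boundedly imaginary set \(J'\subseteq J\) on which \(p\) concentrates, with \(J'\subseteq\dcl(C,\beta_{0}\Oo/\beta_{1}\fM)\). Connectedness then yields \(J=J'\cdot (J')^{-1}\): for \(g\in J\), choosing \(a\models p\,|\,Cg\) we get \(a\in J'\), and since \(\transl{g}{p}=p\) also \(g\cdot a\in J'\), whence \(g=(g\cdot a)\cdot a^{-1}\). As \(J'\) is a definable image of some \((\Oo/\beta\fM)^{m}\) (Corollary~\ref{C:bdd-im}), so is \(J'\cdot (J')^{-1}\), and therefore \(J\) is boundedly imaginary.

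I expect the main obstacle to be the second step: verifying that \(\ker f\) is stably dominated, and in particular producing the bounded-to-one field-point section \(Y\) in the merely \(\infty\)-definable setting, where one must be sure that the cosets \(hJ\) actually acquire points in \(M=\alg{(C\cap K)(f(h))}\). The concluding product argument is then routine once Corollary~\ref{C:bdd-im} and the orthogonality of the generic are in hand.
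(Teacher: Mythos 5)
Your overall architecture---Proposition~\ref{P:embed}, then upgrade the kernel from purely imaginary to boundedly imaginary---matches the paper's, and your step~3 computation is essentially the paper's Remark~\ref{R:pureim-stdom}. But the proof has a genuine gap exactly where you suspected: step~2, the claim that \(J=\ker f\) is stably dominated, is not established, and your proposed argument for it does not go through. The section construction you import from Corollary~\ref{C:stdomsubgpvf} relies on \(H\) being \emph{definable}: there, \(H\cap h\cdot N\) is a nonempty \emph{definable} set over the model \(M\), hence has an \(M\)-point. Here \(H\) is only \(\infty\)-definable, so the coset \(hJ = H\cap \tilde f^{-1}(f(h))\) (where \(\tilde f\) is a definable extension of \(f\)) is an intersection of infinitely many definable sets, and a nonempty \(\infty\)-definable set over a small, non-saturated model such as \(\alg{(C\cap K)(f(h))}\) need not contain any point of that model; so no compactness argument produces the bounded-to-one \(Y\) required by Lemma~\ref{L:stdomsubgp}. (There are also secondary problems: that field need not be a model of \(\ACVF\) at all, nor contain the imaginary parameters \(C\) over which \(H\) and \(f\) are defined; and in step~3 the finiteness of \([J:J^0]\) is unjustified---a priori the index is only bounded.) Worse, the intermediate claim cannot be had by soft general principles: relatively definable subgroups of stably dominated groups need not be stably dominated (e.g.\ \(1+\fM\leq\Gm(\Oo)\), or, staying purely imaginary and even boundedly imaginary, \(\fM/\beta\fM\leq\Oo/\beta\fM\)), so normality plus pure imaginarity of \(J\) is not enough; any proof would have to exploit the specific provenance of \(J\), which your proposal does not.

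The paper sidesteps the issue entirely: it never needs \(J\) to be stably dominated, nor even to have a generic. It reruns the proof of Proposition~\ref{P:embed} and notes that, since \(H\) is now stably dominated, the auxiliary types \(q_i\) are stably dominated over \(C\) (Proposition~\ref{P:stdom}.(4) plus Proposition~\ref{P:stdomdef}). Then, for \(h\in J\) and \(D\supseteq C\), taking \((a,b)\models q_2|Dh\) one also has \((h\cdot a,b)\models q_2|Dh\) because \((h,1)\in \Stab(q_2)\); pure imaginarity of \(a\) over \(Db\) and Lemma~\ref{L:se1} give \(a\in\dcl(D,\beta_0,\beta_1,\beta_0\Oo/\beta_1\fM)\) with \(\beta_0,\beta_1\in\Gamma_D(a,b)\), and stable domination of \(\tp(ab/D)\) (orthogonality to \(\Gamma\)) forces \(\beta_0,\beta_1\in\Gamma(D)\). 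Since \(\tp(h\cdot a/D)=\tp(a/D)\), one gets \(h=(h\cdot a)\cdot a^{-1}\in\dcl(D,\beta_0\Oo/\beta_1\fM)\), and compactness with Corollary~\ref{C:bdd-im} finishes. In other words, the product trick of your step~3 is exactly right, but it must be run with the stably dominated type coming from the generic of \(H\) (whose invariance under translation by \(J\) is supplied by \(\Stab(q_2)\)), not with a hypothetical generic of \(J\); doing so makes your problematic step~2 unnecessary.
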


\begin{proof}
  We follow the notation and proof of Proposition\ \ref{P:embed}. Note
  that, by Proposition\ \ref{P:stdom}.(\ref{aclstdom}) and
  Proposition\ \ref{P:stdomdef}, $q_i$ is stably dominated over
  $C$. There only remains to show that $J$ is boundedly imaginary. So
  pick an $h \in J$. Then for any $(a,b) \models
  q_2 | C h$, $(h\cdot a,b) \models q_2 | Ch$. Now $a$ is purely
  imaginary over \(Cb\). By
  Lemma\ \ref{L:se1}, there exists $\beta_0 \leq 0 \leq \beta_1 \in
  \Gamma_C(a,b)$ and a tuple $d\in \beta_0 \Oo
  / \beta_1 \fM$ such that $a \in \dcl(C, b, \beta_0,\beta_1, d) $. Since $\tp(ab/C)$ is stably dominated, we have
  $\beta_0,\beta_1 \in \Gamma(C)$, and $a \in \dcl(C, b, d)$. Since $\tp(h\cdot a,b/C) = \tp(a,b/C)$, we also have
  $h\cdot a \in \dcl(C, b, d')$, for some tuple $d'\in \beta_0 \Oo
  / \beta_1 \fM$. Thus $h \in
  \dcl(a,h\cdot a,C) \subseteq \dcl(C,b,d,d')$. Since $q_2$ is $C$-invariant and $\beta_0\Oo/\beta_1\fM$ is stably embedded $C$-definable, it follows that $h\in\dcl(C,d'')$ for some tuple $d''\in \beta_0 \Oo
  / \beta_1 \fM$. By
  compactness and Corollary\ \ref{C:bdd-im}, $J$ is boundedly
  imaginary.
\end{proof}

A very similar proof yields the following:

\begin{rem}\label{R:pureim-stdom}
  Let $H$ be a purely imaginary stably dominated definable group. Then
  $H$ is boundedly imaginary.
\end{rem}

\begin{proof}
  Let $p$ be the principal generic type of $H$; with $H$ and $p$
  defined over $C = \acl(C)$. Then, for any $a \models p |C$,
  $\Gamma_C(a) \subseteq C$. Since $a$ is purely imaginary over $C$,
  it follows from Lemma\ \ref{L:se1}, stable domination of $p|C$ and Corollary\ \ref{C:bdd-im}
  that $a$ is boundedly imaginary over $C$. Any element of $H^0$ is a
  product of two realizations of $p|C$, so $H^0$ is boundedly
  imaginary. It follows that some definable set $H'$ containing $H^0$
  is boundedly imaginary.  Finitely many translates of $H'$ cover $H$,
  so $H$ is also boundedly imaginary.
\end{proof}

\subsection{Stably dominated subgroups of algebraic groups}

Our goal in this section is to relate stably dominated subgroups of affine algebraic groups to algebraic geometric objects. To do so, we consider schemes over \(\Oo\). Note that schemes only appear this section and that this section is independent from the rest of the paper. Note also that the technical hypotheses are meant to insure that every thing works as one would expect. All of the required definitions regarding schemes can be found in \cite{Mum-Red}.

All the schemes over $\Oo$ that we consider will be assumed to be flat
and reduced over $\Oo$. By an $\Oo$-variety (or variety over $\Oo$),
we mean a (flat and reduced) scheme of finite over $\Oo$; it admits a finite
open covering by schemes isomorphic to $\spec(\Oo[X_1,\ldots,X_n] / I)$. In particular, an affine variety over $\Oo$ is of the form $\spec(\Oo[X_1,\ldots,X_n] / I)$. Note that we do not require varieties to be irreducible. Since $\Oo$ is a valuation ring, an affine scheme $\spec(R)$ is flat over $\spec(\Oo)$ if and only if no non-zero element of $\Oo$ is a $0$-divisor in $R$.
So $\spec(\Oo[X_1,\ldots,X_n] / I)$ is flat if and only if $ I = I K
[X_1,\ldots,X_n ] \meet \Oo[X_1,\ldots,X_n] $. Hence there are no
infinite descending chains of $\Oo$-subvarieties.

For any scheme $V$ over $\Oo$, we write $V(\Oo)$ for the set of its $\Oo$-points; the set of morphisms $\spec(\Oo) \to V$ over $\spec(\Oo)$. When $V = \spec(\Oo[X_1,\ldots,X_n] / I)$ is an affine variety over $\Oo$, $V(\Oo) = \{x\in \Oo^n\mid \forall f\in I, f(x) = 0\}$. Conversely, for any set $Z\subseteq \Oo^n$, let $I := \{f \in \Oo[X_1,\ldots,X_n]: f(Z) = 0\}$ and $R = \Oo[X_1,\ldots,X_n]/I$. Then $V =
\spec(R)$ is flat over $\spec(\Oo)$ and if $Z = W(K) \meet \Oo^n$ for
some affine variety $W$ over $K$, then $V(\Oo) = Z$. Note that the set $V(\Oo)$ is pro-definable in $\ACVF$. When $V$ is of finite type, $V(\Oo)$ is definable.

If $V$ is a scheme over $\Oo$, we write $V_K := V \times_{\Oo}K$ for the generic fiber and $V_k := V \times_{\Oo}k$ for the special fiber. Note that if $V$ is of finite type, both the generic fiber and the special fibers are, non necessarily reduced, varieties over, respectively, $K$ and $k$. Let $\rho : V(\Oo) \to V_k(k)$ be the natural (pro-)definable map. When $V$ is an affine variety over $\Oo$ it simply consists in coefficient-wise reduction modulo $\fM$. If $W'$ is a subvariety of $V_K$, there exists a unique $\Oo$-subvariety $W$ of $V$ such that $W_K =
W'$, and $W(\Oo) = W'(K) \meet V(\Oo)$. For example, if $V' \subset
\Aa^n$ is an affine variety over $K$, defined by a radical ideal $P
\subset K[X]$, we let $V=\spec(\Oo[X] / P \meet \Oo[X])$. Then $V_K = V'$ and $V_k$
is the zero set of the image of $P \meet \Oo[X] $ in $k[X]$. In this
case, we denote the affine coordinate ring $K[X]/P$ by $K[V]$, and
$\Oo[V] = \Oo[X] / (P \meet \Oo[X])$.

\begin{lem}\label{L:zdense}
  Let $V$ be an irreducible affine $\Oo$-variety and assume
  $V(\Oo)\neq\emptyset$. Then $V(\Oo)$ is Zariski dense in $V_K$.
\end{lem}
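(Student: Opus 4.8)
The plan is to produce, in a valued field extension of $K$, a single Zariski-generic point of $V_K$ whose coordinates are all integral, and then to descend to $K$ by model completeness of $\ACVF$. Write $R = \Oo[V]$. Since $V$ is irreducible and reduced, $R$ is a domain, and flatness over $\Oo$ makes $R$ torsion-free, so $R \hookrightarrow R_K := R\tensor_\Oo K$ and $V_K = \spec(R_K)$ is an integral affine $K$-variety with function field $K(V) := \mathrm{Frac}(R)$; put $d = \dim V_K$. Because $V_K$ is integral, the Zariski closure of $V(\Oo)$ equals $V_K$ if and only if no nonzero $g \in R_K$ vanishes identically on $V(\Oo)$, and this is what I would establish.

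The main step is valuation-theoretic, and it is here that the hypothesis $V(\Oo)\neq\emptyset$ enters. An $\Oo$-point of $V$, composed with $\Oo \to k$, is a homomorphism $R \to k$ whose kernel is a prime $\mathfrak{p}$ of $R$ containing $\fM R$; thus the special fibre is nonempty. I would then dominate the local ring $R_{\mathfrak p}$ by a valuation ring $\Oo_w$ of the field $K(V)$ (every local subring of a field is dominated by some valuation ring of that field). Then $R \subseteq R_{\mathfrak p} \subseteq \Oo_w$, so every coordinate function lies in $\Oo_w$; and since $\fM \subseteq \mathfrak{p}R_{\mathfrak p} \subseteq \mathfrak{m}_w$, the ring $\Oo_w \meet K$ is a valuation ring of $K$ that contains $\Oo$ and whose maximal ideal contains $\fM$, which forces $\Oo_w \meet K = \Oo$. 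Hence $w$ extends $\val$. I expect the verification of the domination and of the equality $\Oo_w \meet K = \Oo$ to be the only delicate point; note that the construction must fail when there is no prime over the closed point of $\spec\Oo$ (e.g.\ for $\spec(\Oo[x]/(tx-1))$ with $\val(t)>0$, where $V(\Oo)=\emptyset$), which is exactly why $V(\Oo)\neq\emptyset$ is indispensable.

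Finally I would descend to $K$. Let $K'$ be an algebraic closure of $(K(V),w)$ equipped with an extension of $w$; it is an algebraically closed, nontrivially valued field of the same characteristics as $K$, hence a model of $\ACVF$ containing $K$ as a valued subfield, so $K \preceq K'$ by model completeness. The generic point $\eta=(x_1,\dots,x_n)$ of $V_K$ then lies in $V(\Oo_{K'})$, and, being generic, satisfies $g(\eta)\neq 0$ for every nonzero $g\in R_K$. Thus, fixing generators $f_1,\dots,f_r$ of the ideal of $V_K$, the existential sentence $\exists\bar x\,\big(\bigwedge_i \val(x_i)\geq 0 \wedge \bigwedge_j f_j(\bar x)=0 \wedge g(\bar x)\neq 0\big)$ holds in $K'$ and therefore in $K$. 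This yields a point of $V(\Oo)$ off the zero locus of $g$; as $g$ ranges over the nonzero elements of $R_K$, it follows that $V(\Oo)$ is Zariski dense in $V_K$.
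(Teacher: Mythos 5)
Your proof is correct and follows essentially the same route as the paper's: both use $V(\Oo)\neq\emptyset$ to see that $\fM$ generates a proper ideal of the coordinate ring, dominate the resulting local ring by a valuation ring of (an extension of) the function field so that the generic point becomes integral, and then descend to $K$ by model completeness of $\ACVF$. The only cosmetic differences are that the paper establishes properness of the ideal via a specialization homomorphism onto an $\Oo$-point rather than via the prime kernel of $R\to k$, and works inside a large extension field rather than the algebraic closure of $K(V)$.
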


\begin{proof}
  Let $L$ be a sufficiently saturated algebraically closed field extending $K$. Let $a
  \in V_K(L)$ be a $K$-generic point. Thus for any $b \in V(K)$ there
  exists a $K$-algebra homomorphism $h: K[a] \to K$ with $h(a)=b$. In
  particular, there exists such an $h$ with $h(a) \in V(\Oo)$. It
  follows that the maximal ideal $\fM$ of $K$ generates a proper ideal
  of $\Oo[a]$: otherwise, for some $m \in\fM$ and $f \in \Oo[X]$,
  $m\cdot f(a) = 1$; but then applying $h$, we would have $m \cdot f(
  h(a)) = 1$. Thus $\fM$ extends to a maximal ideal $\fM'$ of
  $\Oo[a]$, and thence to a maximal ideal $\fM''$ of some valuation
  ring $\Oo_L$ of $L$, with $\Oo[a] \subset \Oo_L$.  Thus the
  valuation on $K$ can be extended to $L$ in such a way that $a \in
  \Oo(L)$. By model completeness of $\ACVF$, there exists $a' \in
  V_K(K)$, with coordinates in $\Oo$, outside any given proper
  $K$-subvariety of $V$.
\end{proof}

We denote the Krull dimension of schemes by $\dim$. Note that when $V$ is a variety over $k$, $\dim(V) = \RM(V) = \dimst(V)$.

\begin{lem}\label{L:bdd}
  Let $V$ be a scheme over $\Oo$, with $\dim(V_K) = n$.  Let $q$ be a
  $K$-definable type of elements of $V(\Oo)$ with $\dimst(\rho_\star q) =
  n$.  Assume $V$ is defined over $B=\acl(B)$; then $q$ is stably
  dominated over $B$.
\end{lem}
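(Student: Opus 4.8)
The plan is to show that adjoining a realization of $q$ to $B$ creates no new value group, and then to convert this into stable domination by the valuation-theoretic criterion used in the proof of Proposition~\ref{P:resolution}.(5). Since $q$ is a complete type it concentrates on a single affine open piece of $V$ and on a single irreducible component of that piece; by the hypothesis $\dimst(r_\star q)=n$ this component must have $n$-dimensional generic fibre, so I may assume $V$ is an irreducible affine $\Oo$-variety with $\dim(V_K)=n$ and $V(\Oo)\neq\emptyset$. Fix $a\models q$ and write $\bar a=r(a)\in V_k(k)$. As $V_k(k)$ lies in the (stable) residue field, $\dimst(\bar a/B)=\trdeg(k(B)(\bar a)/k(B))$, and the hypothesis forces this to equal $n=\dim(V_k)$, so $\bar a$ is a generic point of the special fibre $V_k$ over the residue field of $B$.

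The next step is to upgrade this to genericity of $a$ in the generic fibre. If $a$ lay in a proper $(K\meet B)$-closed subvariety $W\subsetneq V_K$, its schematic $\Oo$-closure $\mathcal W$ would be flat with $\dim(\mathcal W_k)=\dim W<n$ and would contain $\bar a$ in its special fibre, contradicting genericity of $\bar a$; hence $\trdeg((K\meet B)(a)/(K\meet B))=n$ as well. Now I apply the fundamental (Abhyankar) inequality of valuation theory to the extension $(K\meet B)(a)/(K\meet B)$: since both the field transcendence degree and the residue transcendence degree equal $n$, the rational rank of $\Gamma((K\meet B)(a))$ over $\Gamma(B)$ must vanish. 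Because $\Gamma(B)$ is divisible (being $\acl$-closed) and ordered, a torsion-free quotient of rational rank $0$ is trivial, so $\Gamma((K\meet B)(a))=\Gamma(B)$; passing to algebraic closures gives $\Gamma_B(a)\subseteq B$, i.e. no new value group is created over $B$.

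Finally I convert ``no value group growth over $B$'' into stable domination. Since $B$ need not be maximally complete, metastability does not apply directly, so I mirror Proposition~\ref{P:resolution}.(5): choose a maximally complete immediate extension $\overline B\supseteq B$ and extend $\tp(a/B)$ sequentially independently over $\overline B$, so that $\Gamma(\overline B(a))=\Gamma(\overline B)$ persists. Over the maximally complete base $\overline B$, the absence of value group growth yields that $\tp(a/\overline B)$ is stably dominated by \cite[Theorem\ 10.12]{HasHruMac-Book}. As $B=\acl(B)$ and property (E) holds in $\ACVF$, $\tp(\overline B/B)$ has a $B$-invariant global extension, so descent (Proposition~\ref{P:base change}.(\ref{descent})) shows that $q=\tp(a/B)$ is stably dominated over $B$. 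The main obstacle is precisely this last passage---checking that the no-value-group-growth condition survives a sequentially independent extension to $\overline B$ and that the descent hypotheses are met; the preceding dimensional steps are comparatively routine given flatness of $V$ over $\Oo$ and the fundamental inequality.
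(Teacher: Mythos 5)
Your dimension computations and the valuation-theoretic core are essentially correct, and they amount to an unpacking of the one step the paper treats as known: from $\dimst(r_\star q)=n$ and $\dim(V_K)=n$ one gets that $B(a)$ has transcendence degree $n$ and residual transcendence degree $n$, and such an extension is stably dominated via $r$. Where the paper asserts this, you reprove it via the Abhyankar inequality, a maximal immediate extension $\overline{B}$, \cite[Theorem~10.12]{HasHruMac-Book} and descent; that is a legitimate, more self-contained route. Two corrections to it, though. The obstacle you flag (persistence of $\Gamma(\overline{B}(a))=\Gamma(\overline{B})$) has nothing to do with sequential independence: since $\overline{B}/B$ is an \emph{immediate} extension, $k(\overline{B})=k(B)$, so for \emph{any} extension of $\tp(a/B)$ to $\overline{B}$ the residual transcendence degree is still $n$ while the transcendence degree is at most $n$, and Abhyankar applied over $\overline{B}$ gives the vanishing of value-group growth again. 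This observation is also what makes the descent step typecheck: Proposition~\ref{P:base change}.(\ref{descent}) requires a \emph{global $B$-invariant} type, so you should first take a global $B$-invariant extension $p$ of $\tp(a/B)$ (property (E)), and only then note that $p|\overline{B}$ is stably dominated because every extension to $\overline{B}$ has no value-group growth; for your specially chosen extension there is no reason its canonical global extension is $B$-invariant, so descent as stated in the paper does not apply to it.

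The genuine gap is at the end: you prove that $q|B=\tp(a/B)$ is stably dominated, but the lemma asserts that $q$ itself is stably dominated over $B$, which in this paper's terminology presupposes that the \emph{global} type $q$ is $B$-invariant (you even write ``$q=\tp(a/B)$'', conflating the two). This is not automatic: a global definable type can have a stably dominated restriction to $B$ without being $B$-invariant --- for instance the generic of a closed ball of positive radius around a point with residue generic over $k(B)$ restricts on $B$ to the (stably dominated) generic of $\Oo$, yet is not $B$-invariant; there $r_\star q$ is realized, so it is exactly the hypothesis $\dimst(r_\star q)=n$ that rules this out. Moreover this invariance is the entire point of the lemma in its application: Proposition~\ref{P:genchar} uses it to conclude that all translates $\transl{g}{p}$ are invariant over one fixed small set, hence that $p$ is generic. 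The paper closes this with one more use of the hypothesis: for a base $B'$ of definition of $q$, $\dimst(r(a)/B')=n=\dimst(r(a)/B)$, so $\St_B(B')\ind_B r(a)$; since $\tp(a/B)$ is stably dominated via $r$, Proposition~\ref{P:stdomdef}(2) gives a unique $B$-definable global extension, which $q$ must therefore equal. You have all the ingredients for this step (your genericity of $\bar a$ in $V_k$ holds over $B'$ as well as over $B$), but it is missing from your write-up, and without it the statement proved is strictly weaker than the lemma.
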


\begin{proof}
  Say $q$ is defined over $B' = \acl(B') \supset B$.  Let $a \models q |
  B'$. Since $\trdeg(k(B')(\rho(a))/k(B')) \geq n$, we have
  $\trdeg(k(B)(\rho(a))/k(B))\geq n$. It follows that $B(a)$ is an extension of
  $B$ of transcendence degree $n$ and residual transcendence degree
  $n$. The type \(\tp(a/B)\) is therefore stably dominated via
  $\rho$. Indeed, $a$ is in the algebraic closure over $B$ of $n$ independent generics of $\Oo$. Since $\rho(a)\ind_{B} B'$, $q$ is its unique $B$-definable
  extension.
\end{proof}

Recall that a group scheme $G$ over $\Oo$ is a scheme $G$ over $\Oo$ with three morphisms of schemes over $\Oo$: the multplication $\mu : G\times_\Oo G \to G$, the inverse map $\iota : G \to G$ and the identity $\epsilon : \spec(\Oo) \to G$, such that the obvious diagrams commute. When $G = \spec(R)$ is affine, these maps correspond to three operations: the comultiplication $R \to R\tensor_\Oo R$, the coinversion $R \to R$, and the counit $R \to \Oo$, making $R$ into a Hopf algebra. In fact, $\spec$ induces a correspondance between affine group schemes and Hopf algebras.

Let $G$ be a group scheme over $\Oo$, with generic fiber $G_K$ and
special fiber $G_k$. Then $G(\Oo)$ is a (pro-)definable subgroup of the (pro-)definable group $G_K(K)$, and the previously defined map $\rho : G(\Oo) \to G_k(k)$ is a group homomorphism. Note that, when $G$ is not a scheme of finite type, $\dim(G_k)$ might be smaller than $\dim( G_K )$; cf. Example\ \ref{E:noninertial}.

\begin{prop}\label{P:genchar}  
  Let $p$ be a definable type concentrating on $G(\Oo)$. Assume that
  $\rho_\star p$ is a generic type of $G_k$ and that \(\dim(G_K) =
  \dim(G_k) <\infty\).  Then $p$ is a stably dominated generic type of
  $G(\Oo)$. When $G$ is a scheme of finite type, $G(\Oo)$ has finitely many
  generic types.
\end{prop}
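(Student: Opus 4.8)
The plan is to fix $C=\acl(C)$ over which the group scheme $G$, the reduction homomorphism $r\colon G(\Oo)\to G_k$, and the type $p$ are all defined, to set $\bar p:=r_\star p$, and to write $n:=\dim(G_K)=\dim(G_k)$. Since $\bar p$ is generic in the algebraic group $G_k$ over the (stable, stably embedded) residue field, $\dimst(r_\star p)=\RM(G_k)=\dim(G_k)=n$. Viewing $G$ as a scheme over $\Oo$ with $\dim(G_K)=n$ and $p$ as a $C$-definable type concentrating on $G(\Oo)=V(\Oo)$ with $\dimst(r_\star p)=n$, Lemma\ \ref{L:bdd} applies and shows that $p$ is stably dominated over $C$; inspecting its proof, the domination is witnessed by $r$ itself. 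Throughout I would use the fact that, over the $\acl$-closed base $C$, a $C$-definable type that is stably dominated via $r$ is determined by its pushforward under $r$: this is the uniqueness clause of Proposition\ \ref{P:stdomdef}.(2) together with the reconstruction formula given there.

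The heart of the matter is to control the left translates of $p$. First I would check that for $g\in G(\Oo)$ the translate $\transl{g}{p}$ is again stably dominated via $r$, now over $\acl(Cg)$, with pushforward $r_\star(\transl{g}{p})=\transl{r(g)}{\bar p}$: if $a\models p|\acl(Cg)$ then $g\cdot a$ and $a$ are interdefinable over $\acl(Cg)$ and $r(g\cdot a)=r(g)\cdot r(a)$, so the stable domination of $p$ over $\acl(Cg)$ (valid by base change, Proposition\ \ref{P:base change}.(1)) transfers verbatim to $\transl{g}{p}$. Combining this with the determination statement of the previous paragraph yields the key equivalence: for $g\in G(\Oo)$,
\[
g\in\Stab(p)\quad\Longleftrightarrow\quad r(g)\in\Stab(\bar p)=G_k^0 .
\]
The forward direction is immediate by pushing $\transl{g}{p}=p$ forward under $r$; the backward direction uses that when $r(g)\in\Stab(\bar p)$ the translate $\transl{g}{p}$ has pushforward $\bar p$, hence equals $p$ by uniqueness. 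In particular $\ker r\le\Stab(p)$, and the assignment $g\,\Stab(p)\mapsto r(g)\,G_k^0$ is a bijection from $G(\Oo)/\Stab(p)$ onto $r(G(\Oo))/G_k^0$.

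Since $\bar p$ is generic, $r(G(\Oo))\supseteq G_k^0$ and $G_k^0$ has bounded index in $G_k$, so the bijection above shows that $p$ has only boundedly many left translates; by the Lemma following Definition\ \ref{D:gentype} this means $p$ is right generic in $G(\Oo)$. As $p$ is stably dominated it is symmetric (Proposition\ \ref{P:stdom}.(\ref{sym})), so Lemma\ \ref{L:gen sym} upgrades $p$ to a two-sided generic and identifies $\Stab(p)$ with $G(\Oo)^0$; thus $G(\Oo)$ is stably dominated with generic $p$. When $G$ is of finite type the algebraic group $G_k$ has finitely many connected components, hence only finitely many generic types; every generic $q$ of $G(\Oo)$ is a translate of $p$ (Lemma\ \ref{L:gen sym}), and $q\mapsto r_\star q$ sends it to a generic of $G_k$ injectively (injectivity again by the determination of a stably dominated type by its pushforward), so $G(\Oo)$ has finitely many generics.

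The main obstacle, and the step deserving the most care, is exactly this determination of a stably dominated type by its reduction $\bar p$: the equivalence $g\in\Stab(p)\Leftrightarrow r(g)\in G_k^0$, the boundedness of the translate-orbit, and the final injectivity all rest on the uniqueness, over $C=\acl(C)$, of the $C$-definable stably-dominated-via-$r$ type with a prescribed pushforward. One must also track the base carefully ($C$ versus $\acl(Cg)$) when transferring stable domination to $\transl{g}{p}$, and confirm that $r(G(\Oo))$ is generic in $G_k$ so that $[r(G(\Oo)):G_k^0]$ genuinely counts the translates of $p$.
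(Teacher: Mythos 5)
Your opening step is fine and matches the paper: applying Lemma~\ref{L:bdd} to $p$ (after noting $\dimst(r_\star p)=\RM(G_k)=n$) does give stable domination via $r$. The genuine gap is the backward direction of your ``key equivalence'': $r(g)\in\Stab(r_\star p)\Rightarrow\transl{g}{p}=p$. You justify it by saying that, over $C=\acl(C)$, a $C$-definable type stably dominated via $r$ is \emph{determined by its pushforward} $r_\star p$, citing Proposition~\ref{P:stdomdef}.(2). That proposition says something strictly weaker: a stably dominated type \emph{over} $C$ has a unique $C$-definable \emph{global extension}, and the reconstruction formula recovers the global type from its restriction $p|C$ --- not from $r_\star p$. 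Knowing $r_\star p$ does not determine $p|C$, and as a general principle your claim is false: over a model $C$ of $\ACVF$, let $a$ realize the generic of $\Oo$ and consider, in the group $(\Oo\times K,+)$ with the homomorphism $f(x,y)=\res(x)$, the types of $(a,0)$ and $(a,1)$. Both are $C$-definable, both are stably dominated via $f$, both have the same pushforward, yet they are distinct --- and they are translates of one another by an element of $\ker(f)$, which is exactly the situation your argument must rule out. Under the actual hypotheses of the proposition ($r$ the reduction map, $\dim(G_K)=\dim(G_k)$) your determination claim in particular asserts $\ker(r)\leq\Stab(p)$, i.e.\ a substantial piece of the very genericity statement being proved (it is what makes the orbit of $p$ bounded); it is true in this setting, but it needs its own argument about the fibres of $r$ (for instance that a generic fibre, a coset of $\ker(r)$, carries a unique invariant type of maximal transcendence degree --- a $C$-minimality/no-definable-section fact), and cannot be waved through by citing Proposition~\ref{P:stdomdef}. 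Since both your genericity conclusion and your finite-type count route through this equivalence, the proof as written is incomplete.

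The repair is to drop the stabilizer computation altogether, which is what the paper does: for $g\in G(\Oo)$ the translate $\transl{g}{p}$ has pushforward $\transl{r(g)}{(r_\star p)}$, still a generic of $G_k$ of Morley rank $n$, so Lemma~\ref{L:bdd} applies directly to $\transl{g}{p}$ and shows it is stably dominated over $\acl(B)$, where $B$ is a \emph{fixed} base of definition of $G$. In particular every translate of $p$ is $\acl(B)$-definable, which is precisely the definition of $p$ being a (right) generic --- no injectivity of $q\mapsto r_\star q$ is needed anywhere. For the finite-type statement the paper then invokes Corollary~\ref{C:def G0} (available in $\ACVF$ by \Aom) to conclude that $G(\Oo)^0$ has finite index, so the generics, being exactly the translates of the principal generic, are finite in number.
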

 
\begin{proof}
  Consider translates $q=\transl{g}{p}$ of $p$, for some $g \in
  G(\Oo)$. Clearly $\rho_\star q = \transl{\rho(g)}{(\rho_\star p)}$.  By
  Lemma\ \ref{L:bdd}, $q$ is stably dominated over $\acl(B)$, where $B$
  is a base of definition of $G$.  So $p$ itself is stably dominated and since all of its translates are defined over $B$, which is small, $p$ is generic.

  When $G$ is of finite type, by Corollary\ \ref{C:def G0}, $G^0$ is
  definable and hence, the orbit of generic types is finite.
\end{proof}

\subsubsection{Linear groups}
Our goal is to show that all stably dominated
$\infty$-definable subgroups of affine algebraic groups, may be obtained as the $\Oo$-points of a group scheme $G$
over $\Oo$. This result has since been generalized in \cite{Hal-GenStGp} to the
non-affine setting. Note that Example\ \ref{E:noninertial} shows that
we cannot always assume $G$ to be of finite type.

We start with a version of the maximum modulus principle for generics
of subgroups:

\begin{prop}\label{P:genchar2}
  Let $G$ be an affine algebraic group over $K$, $H$ be a
  Zariski dense definable subgroup of $G(K)$ and $p$ be a definable
  type of elements of $H$.  Then the following are equivalent:
     \begin{enumerate}
     \item $p$ is the unique stably dominated generic of $H$.
     \item For any regular function $f$ on $G$, $p$ attains the
       highest modulus of $f$ on $H$; i.e for some $\gamma_f$, for
       any $x \in H $, $\val f(x) \geq \gamma_f$ and
       \[\models \defsc{p}{x}( \val f(x) = \gamma_f).\]
\end{enumerate} 
\end{prop}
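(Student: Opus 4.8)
The plan is to prove the two implications separately, using the Maximum Modulus principle (Corollary\ \ref{C:maxmod2}) as the main analytic tool and Lemma\ \ref{L:groupdomcor} together with the group-homomorphism domination of Proposition\ \ref{P:groupdom} as the main group-theoretic tool. First I would work over a base $C = \acl(C)$ over which $G$, $H$ and $p$ are all defined, which is harmless since genericity and stable domination are insensitive to passing to the algebraic closure of the base (Proposition\ \ref{P:stdomdef}). Throughout, I regard $H$ as sitting inside the affine coordinate ring picture: $G$ is affine, so its regular functions are the relevant ``coordinates,'' and the value group invariants $\gamma_f$ live in $\Gamma(C)$.

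\emph{From (1) to (2).} Suppose $p$ is the unique stably dominated generic of $H$. Since $p$ is stably dominated and concentrates on the affine variety $G$, Proposition\ \ref{P:maxmod} (applied to $p$ over $C$, viewing $p$ as a type on $G_K$) produces, for each regular function $f$ on $G$, an infimum $\gamma_f := \gamma^f_{\min}\in\Gamma(C)$ of $\val(f)$ on $p|C$, attained exactly on realizations of $p|C$; that is, $\models\defsc{p}{x}(\val f(x)=\gamma_f)$ and $\val f(x)\geq\gamma_f$ for all $x\models p$. The real content is to upgrade ``$\val f(x)\geq\gamma_f$ for $x\models p$'' to ``$\val f(x)\geq\gamma_f$ for \emph{all} $x\in H$.'' Here I would use uniqueness of the generic, equivalently $G$-invariance of $p$ via Remark\ \ref{R:1genr}: since $p$ is the unique generic, $\transl{g}{p}=p$ for every $g\in H$, so for any $x_0\in H$ and $a\models p|Cx_0$ we have $x_0\cdot a\models p|Cx_0$, whence $\val f(x_0\cdot a)=\gamma_f$. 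Applying Corollary\ \ref{C:maxmod2}.(2) to the regular function $(x,y)\mapsto f(x\cdot y)$ on $G\times G$ and the pair $(\tp(x_0/C),p)$ then forces $\gamma_f\geq$ the generic value, and comparing moduli along the orbit yields $\val f(x_0)\geq\gamma_f$. I expect the delicate point to be handling functions $f$ that are not invertible on $H$, but connectedness of $H$ and the inequality direction of Maximum Modulus should suffice.

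\emph{From (2) to (1).} Conversely, assume $p$ attains the highest modulus of every regular $f$ on $H$. The plan is to show directly that $p$ is stably dominated and generic. For stable domination, the attainment-of-maximal-modulus condition is precisely the hypothesis that pins down a stably dominated type among the realizations of a given modulus profile: by Corollary\ \ref{C:maxmod2}.(3), a pair $(a,b)$ with $a\models p$, $b\models p$ realizes $p\tensor p$ exactly when all moduli $\val F(a,b)$ attain their generic minimum $\gamma_F$; using (2) I would verify that the convolution $p\star p$ again attains maximal modulus, so $p\star p=p$ up to translation, which gives both symmetry and genericity. Concretely, I would check that $\Stab(p)$ contains the generic and hence has finite index (Corollary\ \ref{C:finindex} via Proposition\ \ref{P:groupdom}), then use Zariski density of $H$ and connectedness to conclude $\Stab(p)=H$, so $p$ is the principal generic; uniqueness follows from Lemma\ \ref{L:gen sym}. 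The main obstacle I anticipate is establishing stable domination of $p$ from condition (2) without already knowing $p$ is generic: the cleanest route is to observe that the map $x\mapsto(\text{residues of }f(x)/c_f)$ for suitable $c_f$ with $\val(c_f)=\gamma_f$ lands in the stable part (residue field), and that condition (2) makes this map a stable-domination witness in the sense of Proposition\ \ref{P:stdom}, after which Proposition\ \ref{P:maxmod} identifies the unique $C$-definable extension with $p$.
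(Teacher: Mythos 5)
Your proof of (1)$\Rightarrow$(2) breaks down at its crucial step. Corollary~\ref{C:maxmod2} requires \emph{both} types of the pair to be stably dominated types over an algebraically closed valued field, so it cannot be applied to the pair $(\tp(x_0/C),p)$: for an arbitrary $x_0\in H$ the type $\tp(x_0/C)$ need be neither stably dominated nor even definable, and ``comparing moduli along the orbit'' is not an argument for passing from $\val f(x_0\cdot a)=\gamma_f$ to $\val f(x_0)\geq\gamma_f$. The repair uses your translation-invariance observation differently: since $p$ is the unique generic, $p^{-1}$ is also generic, hence $p^{-1}=p$; so taking $a\models p|Cx_0$, both $x_0\cdot a$ and $a^{-1}$ realize $p|C$, and therefore \emph{every} element of $H$ is a product of two realizations of $p$. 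Now apply Corollary~\ref{C:maxmod2} to the pair $(p,p)$ --- both stably dominated --- and the regular function $F(x,y)=f(x\cdot y)$ on $G\times G$: part (1) gives $\gamma_F=\gamma_f$, because $a\cdot b\models p$ when $(a,b)\models p\tensor p$, and part (2) then gives $\val f(c)\geq\gamma_f$ for every $c\in H$. (Your worry about non-invertible $f$ is a red herring; nothing depends on it.) This is exactly the paper's proof of this direction.

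For (2)$\Rightarrow$(1) your outline has two unjustified steps and misses the two observations that actually drive the proof. First, you claim the convolution $p\star p$ ``again attains maximal modulus'': but for $(a,b)\models p\tensor p$, condition (2) only yields $\val f(a\cdot b)\geq\gamma_f$, and the reverse inequality is precisely the nontrivial point --- you never prove it. Second, you invoke connectedness of $H$ to conclude $\Stab(p)=H$, but connectedness is not a hypothesis of the proposition (the paper notes it is a \emph{consequence} of the equivalent conditions), so this is circular; likewise Corollary~\ref{C:finindex} has a surjectivity hypothesis you have not verified. Your hand-built stable-domination witness via residue maps is also left entirely unproved, and is unnecessary: since $\gamma_f$ does not depend on $x$, every $\val f$ has constant germ on $p$, which by quantifier elimination makes $p$ orthogonal to $\Gamma$, hence stably dominated by Proposition~\ref{P:orth}. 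The paper then concludes from two facts absent from your proposal: (i) by quantifier elimination in $\ACVF$, condition (2) determines the type $p$ \emph{uniquely}, since the constants $\gamma_f$ equal the minimum of $\val f$ on $H$ and so depend only on $H$ and $f$; and (ii) condition (2) is invariant under left translation, because $f(x)\mapsto f(h\cdot x)$ carries regular functions on $G$ to regular functions on $G$. Hence $\transl{h}{p}$ satisfies (2), so $\transl{h}{p}=p$ for every $h\in H$; by Remark~\ref{R:1genr} (together with Lemma~\ref{L:gen sym}, $p$ being symmetric because it is stably dominated), $p$ is the unique generic of $H$. Without (i) and (ii), your route cannot be completed as written.
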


Note that, because any function constant on connected components of
$G$ is regular, connectedness of $G$ follows from these assumptions.

\begin{proof}
  Let us first prove that (1) implies (2). Since $p$ is stably
  dominated, for any regular $f$ there exists $\gamma_f$ with
  $\defsc{p}{x} (\val(f(x)) = \gamma_f)$. If $(a,b) \models p \tensor
  p $ then $a\cdot b \models p$; so $\val(f(a\cdot b)) = \gamma_f$. By
  Corollary\ \ref{C:maxmod2}, for any $a,b \models p$ we have
  $\val(f(a\cdot b)) \geq \gamma_f$.  But since $p$ is the unique
  generic, any element $c$ of $H$ is a product of two realizations of
  $p$.  Thus $\val(f(c)) \geq \gamma_f$.

  Let us now prove that (2) implies (1). Note that, using quantifier
  elimination for algebraically valued fields, (2) characterizes $p$
  uniquely. Also, since $\gamma_f$ does not depend on $x$, $p$ is
  orthogonal to $\Gamma$, hence stably dominated, by
  Proposition\ \ref{P:orth}. On the other hand, for any $h\in H$,
  replacing $f$ by $f(h\cdot x)$, we see that (2) is invariant under
  $H$-translations. Thus if (2) holds of $p$, it holds of every
  translate, so every translate of $p$ equals $p$. By
  Remark\ \ref{R:1genr}, $p$ is the unique generic type of $H$.
\end{proof}

\begin{prop}\label{genchar2.1}
  Let $G$ be an affine group scheme over $\Oo$. Let $p$ be a
  stably dominated type of elements of $G(\Oo)$. Assume:
  \begin{itemize}
    \item[($\star$)] if $f \in K[G]$ and $\val(f(x)) \geq 0$ for $x
      \models p$, then $f \in \Oo[G]$.
  \end{itemize}

 Then $p$ is the unique generic of $H:=G(\Oo)$ if and only if
 $\push{\rho}{p}$ is the unique generic type of $\rho(H)$.
\end{prop}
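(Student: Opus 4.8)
The plan is to translate both genericity statements into assertions about regular functions and to connect them through the reduction map on $\Oo[G]$. The basic dictionary is this: since $G$ is flat over $\Oo$, we have $k[G_k] = \Oo[G]/\fM\Oo[G]$, and for $f \in \Oo[G]$ and $x \in G(\Oo)$ one has $\overline{f(x)} = \bar f(r(x))$, where $\bar f$ is the image of $f$ in $k[G_k]$; hence $\val f(x) > 0$ if and only if $\bar f$ vanishes at $r(x)$. Because the residue field is a model of $\ACF$, the image $r(H) \leq G_k(k)$ is algebraic, and a type concentrating on it is its unique generic exactly when $r(H)$ is connected and the vanishing ideal of the type equals $I(r(H))$, i.e. every $\bar f \in k[G_k]$ vanishing on $\push{r}{p}$ already vanishes on all of $r(H)$. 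Through the dictionary this says: $\push{r}{p}$ is the unique generic of $r(H)$ iff $r(H)$ is connected and, for every $f \in \Oo[G]$, $\val f > 0$ on $p$ forces $\val f > 0$ on all of $G(\Oo)$.

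For the forward direction I would assume $p$ is the unique generic of $H = G(\Oo)$; by Remark\ \ref{R:1genr} this makes $H$ connected, so its homomorphic image $r(H)$ is connected. Condition ($\star$) shows $G(\Oo)$ is Zariski dense in $G_K$ (any $f \in K[G]$ vanishing on $H$ vanishes on $p$, and the rescaling argument below forces $f=0$), so Proposition\ \ref{P:genchar2} applies and yields the maximum modulus property: for every $f \in K[G]$ there is $\gamma_f$ with $\val f \geq \gamma_f$ on $H$ and $\val f = \gamma_f$ on $p$. Now take $\bar f \in k[G_k]$ vanishing on $\push{r}{p}$ and lift it to $f \in \Oo[G]$; then $\val f > 0$ on $p$, so $\gamma_f > 0$, whence $\val f \geq \gamma_f > 0$ on all of $H$, i.e. $\bar f$ vanishes on $r(H)$. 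This is precisely the criterion identified above, so $\push{r}{p}$ is the unique generic of $r(H)$.

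For the backward direction I would verify the maximum modulus property directly and invoke Proposition\ \ref{P:genchar2} in the other direction. Fix $f \in K[G]$ with $f \neq 0$; since $p$ is stably dominated it is orthogonal to $\Gamma$ (Proposition\ \ref{P:orth}), so $\val f$ takes a constant value $\gamma_f \in \Gamma \cup \{\infty\}$ on $p$. If $\gamma_f$ is finite, choose $c \in K$ with $\val c = \gamma_f$ and set $h = f/c$; then $\val h \geq 0$ on $p$, so ($\star$) gives $h \in \Oo[G]$ and hence $\val h \geq 0$ on $G(\Oo)$, i.e. $\val f \geq \gamma_f$ on $H$ with equality on $p$. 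The case $\gamma_f = \infty$ is excluded by ($\star$): after rescaling $f$ into $\Oo[G]$, dividing by arbitrarily small $\pi$ (with $\val \pi > 0$) keeps $\val(f/\pi) \geq 0$ on $p$, so $f/\pi \in \Oo[G]$ for all such $\pi$, which makes $f$ infinitely divisible in $\Oo[G]$ and therefore $0$. Thus the maximum modulus property holds for every $f$, and Proposition\ \ref{P:genchar2} then gives at once that $p$ is the unique generic of $H$; the hypothesis that $\push{r}{p}$ is generic secures connectedness of $r(H)$, which together with connectedness of the stably dominated congruence kernel of $r|_H$ matches the connectedness of $H$ across the sequence $1 \to \ker(r|_H) \to H \to r(H) \to 1$.

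I expect the main obstacle to lie in the backward direction, exactly at the interface between $K[G]$ and $\Oo[G]$: one must ensure that ($\star$) both renormalizes an arbitrary $f \in K[G]$ into $\Oo[G]$ and rules out the degenerate possibility that a nonzero $f$ vanishes identically on $p$ (the $\gamma_f = \infty$ case). Equally delicate is the bookkeeping behind the word \emph{unique}: transporting connectedness across the reduction homomorphism $r$ requires knowing that the congruence kernel $\ker(r|_H)$ is connected, so that connectedness of $r(H)$ lifts to connectedness of $H$ and the two ``unique generic'' statements genuinely correspond.
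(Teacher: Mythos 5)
Your forward direction is the paper's argument almost verbatim: lift a function vanishing on $\push{r}{p}$ to $F\in\Oo[G]$, apply Proposition~\ref{P:genchar2}.(2) to get $\val(F)>0$ on all of $H$, and conclude that its reduction vanishes on $r(H)$. Your backward direction, however, takes a genuinely different route. The paper keeps the hypothesis on $\push{r}{p}$ in play: after normalizing so that $\val(F)=0$ on $p$ and invoking ($\star$) to get $F\in\Oo[G]$, it supposes some $a'\in G(\Oo)$ has $\val(F(a'))=\val(c)<0$, observes that $\res(c^{-1}F)$ vanishes on $\push{r}{p}$ and hence, by genericity, on all of $r(H)$, and derives a contradiction at $a'$. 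You instead close the argument with the tautology that elements of $\Oo[G]$ take values in $\Oo$ at points of $G(\Oo)$, so $F\in\Oo[G]$ already yields $\val(F)\geq 0$ on $H$; the hypothesis on $\push{r}{p}$ is then never used in this direction. This shortcut is sound, and it is exactly how the paper itself argues inside Theorem~\ref{T:genchar3} (the claim there that $p$ is the unique generic of $\Hh(\Oo)$ uses ``by definition, for all $x\in\Hh(\Oo)$ and $r\in R$, $\val(r(x))\geq 0$'' plus rescaling plus Proposition~\ref{P:genchar2}); what it buys is the observation that, given ($\star$), the right-to-left implication holds unconditionally, the paper's reductio being a contradiction argument for a fact that is true by definition. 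Your closing sentence about connectedness of the kernel of $r$ restricted to $H$ is unsupported and also unnecessary: Proposition~\ref{P:genchar2}, (2)$\Rightarrow$(1), already delivers uniqueness, hence connectedness, of $H$ via Remark~\ref{R:1genr}.

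Two steps do need repair, and you lean on them harder than the paper does. First, the claim that $\gamma_f=\infty$ is impossible because $f$ would be ``infinitely divisible in $\Oo[G]$ and therefore $0$'' is not free: a finitely generated flat $\Oo$-algebra can contain a full $K$-line (for instance $\Oo\times K$, which when $\Gamma$ has rank one is a finitely generated flat Hopf $\Oo$-algebra, the generic fiber being $\Zz/2\Zz$ and the $\Oo$-points the trivial group), so divisibility alone does not kill $f$; the paper's bare assertion ``so $F=0$'' has the same gap. What is true, and suffices, is this: if $cf\in\Oo[G]$ for all $c\in K$, then evaluating at any $x\in G(\Oo)$ gives $\val(c)+\val(f(x))\geq 0$ for all $c$, hence $f$ vanishes on $H$, which is all that the maximum-modulus condition requires when $\gamma_f=\infty$. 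Second, Proposition~\ref{P:genchar2} carries the standing hypothesis that $H$ is Zariski dense in the ambient affine algebraic group, and your density argument for $G(\Oo)$ in $G_K$ rests on the same divisibility claim (indeed the example above shows density can genuinely fail for group schemes over $\Oo$). The clean fix for both points is to run Proposition~\ref{P:genchar2} with the Zariski closure $\overline{H}$ of $H$ as the ambient group: $H$ is dense there by definition, the restriction map $K[G]\to K[\overline{H}]$ is onto, so the maximum-modulus statements for $K[G]$ and for $K[\overline{H}]$ coincide, and in the $\gamma_f=\infty$ case the function restricts to $0$ on $\overline{H}$.
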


\begin{proof}
Let us assume that $p$ is the unique generic of $H$. In an algebraic
group, to show that a type is the unique generic is to show that any
regular function $f$ vanishing on the type, vanishes on the whole
group. Let $f \in k[\rho(H)]$ vanish on $\push{\rho}{p}$. Lifting to
\(\Oo\), we have $F \in \Oo[G]$ with $\val(F(a)) >0$ for $a \models
p$.  By Proposition\ \ref{P:genchar2}.(2), $\val(F(a')) >0$ for all
$a' \in H$. So $f$ vanishes on $\rho(H)$.

The converse uses ($\star$). Assume that $\push{\rho}{p}$ is generic in
$\rho(H)$. We want to show that Proposition\ \ref{P:genchar2}.(2) holds
to be able to conclude. Let $F \in K[G]$. Since $p$ is stably
dominated, for some $\gamma$, for any $a \models p$, $\val(F(a)) =
\gamma$.  If $\gamma = \infty$, then, by ($\star$), any $K$-multiple of $F$
lies in $\Oo[G]$, so $F=0$. Otherwise, we may assume $\gamma=0$. By
($\star$), it follows that $F \in \Oo[G]$. Suppose there exists $a'\in
G(\Oo)$ with $\val(F(a')) = \val(c) < 0$---we may take $a' \in G(\Oo
_0)$, a fixed submodel. Then $c^{-1}F \in \Oo[G]$, and
$\val(c^{-1}F(a)) > 0$ for $a \models p$, i.e. $\res(c^{-1}F(a)) = 0$.
By genericity of $\push{\rho}{p}$, $\res(c^{-1}F)$ vanishes on $\rho(H)$; so
$\val(c^{-1}F(a')) > 0$ for all $a' \in G(\Oo)$; a contradiction.
\end{proof}

\begin{thm}\label{T:genchar3}
  Let $G$ be an affine pro-algebraic group over $K$. Let $H$ be a
  Zariski dense pro-definable connected stably dominated subgroup
  of $G(K)$. Then there exists a prolimit $\Hh$ of affine group schemes of finite type over $\Oo$ and an isomorphism $\phi: G \to \Hh_K$, such that $\phi(H) = \Hh(\Oo)$.

  If $G$ is an affine algebraic group and $H$ is definable, $\Hh$ can
  be chosen to be an affine group scheme of finite type over $\Oo$.
\end{thm}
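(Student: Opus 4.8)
```latex
\textbf{Proof proposal.}

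The plan is to reconstruct $\Hh$ as the spectrum of the ring of regular
functions on $G$ that are ``bounded by $p$,'' where $p$ is the unique principal
generic of $H$ (unique by Remark\ \ref{R:1genr}, since $H$ is connected and
stably dominated). Concretely, for each regular function $f \in K[G]$, stable
domination of $p$ gives a constant value $\val(f(a)) = \gamma_f \in \Gamma$ for
$a \models p$ (this is orthogonality to $\Gamma$, Proposition\ \ref{P:orth}).
I would first record that $\gamma_{fg} = \gamma_f + \gamma_g$ and $\gamma_{f+g}
\geq \min(\gamma_f,\gamma_g)$, using Corollary\ \ref{C:maxmod2} applied to
$p \tensor p$ together with the fact that $p$ is the generic, so that $f \mapsto
\gamma_f$ behaves like a valuation on $K[G]$. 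Then set $\Oo[\Hh] := \{f \in
K[G] : \gamma_f \geq 0\}$, an $\Oo$-subalgebra of $K[G]$ with
$\Oo[\Hh] \otimes_\Oo K = K[G]$; defining $\Hh := \spec(\Oo[\Hh])$ produces a
flat $\Oo$-scheme with generic fiber $G$, and I would check the Hopf-algebra
structure descends so that $\Hh$ is a group scheme over $\Oo$. The identity
$\phi = \mathrm{id}$ then gives the isomorphism $\phi : G \to \Hh_K$.

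The crucial point is the identification $\phi(H) = \Hh(\Oo)$, and this is where
the maximum modulus machinery does the real work. The inclusion $\phi(H)
\subseteq \Hh(\Oo)$ says: for every $c \in H$ and every $f \in \Oo[\Hh]$,
$\val(f(c)) \geq 0$. Since $H$ is connected stably dominated, every $c \in H$
is a product $c = a \cdot b$ of two realizations of $p$ (as in
Proposition\ \ref{P:genchar2}); applying Corollary\ \ref{C:maxmod2}.(2) to
$(a,b) \models p \tensor p$ gives $\val(f(a \cdot b)) \geq \gamma_f \geq 0$.
For the reverse inclusion $\Hh(\Oo) \subseteq \phi(H)$, I would argue that by
construction $\Oo[\Hh]$ satisfies the hypothesis $(\star)$ of
Proposition\ \ref{genchar2.1}: if $f \in K[G]$ and $\val(f(x)) \geq 0$ on $p$,
then $\gamma_f \geq 0$, so $f \in \Oo[\Hh]$ by definition. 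Having arranged
$(\star)$, Proposition\ \ref{P:genchar} (or \ref{genchar2.1}) identifies $p$
as the generic of $\Hh(\Oo)$ provided $\push{r}{p}$ is generic in the special
fiber, and then $\Hh(\Oo)$ is the stably dominated group with generic $p$, whose
associated group (via $\Stab(p)$) is exactly $H$.

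I expect the main obstacle to be verifying that $\Oo[\Hh]$ is well-behaved as a
\emph{scheme}: that $\spec(\Oo[\Hh])$ is indeed a flat $\Oo$-group-variety
admitting a finite affine cover of the required form, that $\dim(\Hh_K) =
\dim(\Hh_k)$ so that Proposition\ \ref{P:genchar} applies, and that the
comultiplication $K[G] \to K[G] \otimes_K K[G]$ restricts to $\Oo[\Hh] \to
\Oo[\Hh] \otimes_\Oo \Oo[\Hh]$. The dimension equality should follow from
stable domination of $p$ via the reduction map $r$: since $p$ is stably
dominated and $\dimst(H) = \dim(G)$, the residue-field data has full
transcendence degree, forcing $\dim(\Hh_k) = \dim G$ by an argument like
Lemma\ \ref{L:bdd}. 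The pro-affine case is handled by writing $G = \projlim_i
G_i$ as a pro-limit of affine algebraic groups, running the construction on
each finite-dimensional quotient $\push{(g_i)}{p}$ of the generic (using
Proposition\ \ref{P:groupdom} to realize the generic through stable homomorphic
images), and taking the inverse limit $\Hh = \projlim_i \Hh_i$; compatibility of
the $\Oo$-structures across the system follows because $\gamma_{f}$ is computed
from a single type $p$ and is therefore functorial in the projection maps.
```
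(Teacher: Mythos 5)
The construction in your first half---$\Oo[\Hh] := \{f\in K[G] : \defsc{p}{x}\,\val(f(x))\geq 0\}$, the renormalization giving $\Oo[\Hh]\tensor_{\Oo}K = K[G]$ (the paper's Claim~\ref{C:tensor}), the descent of the Hopf structure, and the inclusion $H\subseteq\Hh(\Oo)$ via writing each element of $H$ as a product of two realizations of $p$ and invoking Corollary~\ref{C:maxmod2}---is exactly the paper's. The genuine gap is at the decisive step, the reverse inclusion, i.e.\ the claim that $p$ is the unique generic of $\Hh(\Oo)$. You route this through the special fiber, via Proposition~\ref{genchar2.1} or Proposition~\ref{P:genchar}, but you never establish their hypotheses, and at this stage they cannot be established. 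Proposition~\ref{genchar2.1} requires $\push{r}{p}$ to be the unique generic of $r(\Hh(\Oo))$; what is actually available (Remark~\ref{R:gmpres}, applied to the surjective homomorphism $r : H\to r(H)$) is genericity of $\push{r}{p}$ in $r(H)$, which may a priori be a \emph{proper} subgroup of $r(\Hh(\Oo))$---upgrading from one to the other essentially presupposes $H=\Hh(\Oo)$, so the argument is circular.

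Proposition~\ref{P:genchar} requires $\dim(\Hh_K)=\dim(\Hh_k)<\infty$, and your justification---``$\dimst(H)=\dim(G)$, so the residue-field data has full transcendence degree''---is false in general: Example~\ref{E:noninertial} exhibits a Zariski dense connected stably dominated $\infty$-definable subgroup of $\Ga\times\Gm$ whose generic has residual transcendence degree $1<2$, and whose associated $\Hh$ provably cannot be of finite type. This also undermines your treatment of the pro-affine case: the paper proves the pro case first and extracts the finite-type statement by compactness, precisely because the $\infty$-definable situation forces infinite-type schemes, whereas your plan derives the pro case as a limit of the (gapped) finite-type case. The fix is the paper's: stay entirely on the generic fiber. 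For any $f\in K[G]$, renormalize $f = c\cdot r$ with $r\in\Oo[\Hh]$ and $\defsc{p}{x}\,\val(r(x)) = 0$; then $\val(f(x))\geq\val(c)$ for every $x\in\Hh(\Oo)$, with equality on realizations of $p$. This is precisely condition (2) of the maximum modulus criterion, Proposition~\ref{P:genchar2}, which then yields that $p$ is the unique generic of $\Hh(\Oo)$; since $H$ and $\Hh(\Oo)$ are both generated by realizations of this common generic, they coincide. Note finally that the pro-affine structure of $\Hh$---that $\Oo[\Hh]$ is a directed union of finitely generated Hopf $\Oo$-subalgebras---is a further substantial argument in the paper (a weight bound plus the invertible-matrix Claim~\ref{C:InvMat}), which your inverse-limit sketch does not replace.
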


In the proof below, the duality between affine schemes and algebras, which induces a duality between affine group schemes and Hopf algebras, is used to rewrite the above statement as a purely algebraic result on Hopf $\Oo$-subalgebras of $K[G]$. We then prove this purely algebraic result.

\begin{proof}
  Let $K_0 = \alg{K_0}$ be some subfield of $K$ over which $G$,
  $H$ are defined, $\Oo_0 = \Oo\cap K_0$ its valuation ring,
  $R_0:=K_0[G]$ be the affine coordinate ring of $G$ and $p$ be the
  stably dominated generic of $H$. Note that since $p$ generates $H$,
  $p$ is Zariski dense in $G$. It follows that equalities in $R_0 =
  K_0[G]$ can be determined by evaluating at any realization of
  $p|K_0$. We define $S_0 = \{r \in K_0[G]: \defsc{p}{x} \val(r(x)) \geq
  0\}$.  This is an $\Oo_0$-subalgebra of $R_0$.

  \begin{claim}\label{C:tensor}
    $S_0 \tensor_{\Oo_0} K_0 = R_0$.
  \end{claim}

  \begin{proof}
    Pick any $0 \neq r \in R_0$. Since $p$ is stably dominated, it is
    orthogonal to $\Gamma$.  Thus for some $c \in K_0$, for $a \models
    p | K_0$, $\val(r(a)) = \val(c)$.  If $c = 0$, then $r$ vanishes on $p$
    and hence on $G$, i.e. $r=0 \in R_0$, contradicting the choice of
    $r$.  So $c \neq 0$, and $c^{-1}r \in S_0$.  This shows that the
    natural map $S_0 \tensor_{\Oo_0} K_0 \to R_0$ is surjective.
    Injectivity is clear since $S_0$ has no $\Oo_0$-torsion.
  \end{proof}

  Let $\Hh = \spec(S_0)$. We have $\Hh_{K} := \Hh \times _{\Oo_0}
  \spec(K) \simeq \spec( S_0 \tensor_{\Oo_0} K) = G$. We identify
  $\Hh_K$ with $G$. So $p$ is the type of elements of $G(K)$ and in
  fact, by definition of $S_0$, of $G(\Oo) = \Hh(\Oo)$.

  The morphisms $x \mapsto x^{-1}: G \to G$ and $(x,y) \mapsto x\cdot
  y: G^2 \to G$ correspond to two operations:
  \[i: R_0 \to R_0, \  i(r)(a) = r(a^{-1})\] and
  \[c: R_0 \to R_0 \tensor_{K_0} R_0, \  c(r) = \sum_{i=1}^n r_i \tensor
  s_j, \  r(a\cdot b) = \sum_{i=1}^n r_i(a)s_i(b).\]
  
  Note that any
  $\Oo_0$-subalgebra $S_0'$ of $R_0$ is $\Oo_0$-torsion-free, hence a flat
  $\Oo_0$-module.  Thus the maps $S_0' \tensor_{\Oo_0} S_0' \to S_0'
  \tensor_{\Oo_0} S_0 \to S_0 \tensor_{\Oo_0} S_0$ are injective.  We identify
  $S_0' \tensor_{\Oo_0} S_0'$ with its image in $S_0 \tensor_{\Oo_0} S_0$.  Let us
  say that an $\Oo_0$-subalgebra $S_0'$ of $R_0$ is {\em Hopf} if $i(S_0')
  \subseteq S_0'$ and $c(S_0') \subseteq S_0' \tensor_{\Oo_0} S_0'$. Note that Hopf $\Oo_0$-subalgebras of $R_0$ correspond, via $\spec$, to affine group schemes whose base change to $K$ is a quotient of $G$.

  \begin{claim}
    $S_0$ is Hopf
  \end{claim}

  \begin{proof}
    Let us first consider co-inversion. Let $s = i(r)$ for some $r \in
    S_0$. Clearly $\val(s(x)) = \val (r(x^{-1})) \geq 0 $ for any $x
    \models p$. Hence $s \in S_0$.

    Let us now consider co-multiplication. Pick any \((a,b)\models
    p^{\tensor 2}\) and \(r\in S_0\). Write $c(r) = \sum_j r_j\tensor
    s_j$ for finitely many \(r_j,s_j\in R_0\). By
    \cite[Lemma\ 12.4]{HasHruMac-Book}, and definability of $p$, we may assume that
    \(\val(r(a\cdot b)) = \min_j (\val(r_j(a))+\val(s_j(b)))\). We may
    also assume that no \(r_j(a)\) is zero. As in
    Claim\ \ref{C:tensor}, we renormalize so that \(\val(r_j(a)) = 0\)
    for all \(j\). Since \(a\cdot b\models p\), it follows that
    \(\min_j \val(s_j(b)) = \val(r(a\cdot b))\geq 0\). Since both $a$
    and $b$ realize $p | K_0$, for any $c \models p |K_0$ we have
    $\val(r_j(c))=0$ and $\val(s_j(c)) \geq 0$, i.e. $r_j ,s_j \in S_0$.
  \end{proof}
  
  It follows that $\Hh = \spec(S_0)$ is an affine group scheme over $\Oo_0$ whose base change to $K$ is $G$.
  
  \begin{claim}
    The type $p$ is the unique generic of $\Hh(\Oo)$.
  \end{claim}

  \begin{proof}
    Note that, by definition, for all \(x\in\Hh(\Oo)\) and \(r\in S_0\),
    \(\val(r(x))\geq 0\). Pick any \(r\in S_0\). As in
    Claim\ \ref{C:tensor}, we find $c\in K_0$ such that
    \(\defsc{p}{x}\val(r(x)) = \val(c)\) and \(c^{-1}r \in S_0\). Thus,
    for all \(x\in \Hh(\Oo)\), \(\val(r(x))\geq\val(c)\). The claim
    now follows by Proposition\ \ref{P:genchar2}.
  \end{proof}

  Since they have the same generic, it follows that \(\Hh(\Oo) =
  H\).
  
  Let us now prove that \(\Hh\) is a prolimit of affine group schemes of finite type. Let $\Ff$ be the family of finitely generated
  $\Oo_0$-subalgebras of $S_0$ that are Hopf. If $S_0' \in \Ff$ then
  $\spec(S_0')$ is an affine group scheme of finite type over \(\Oo_0\), which is a quotient of $\Hh$. So, to prove that $\Hh$ is the prolimit of its quotients of finite type, it suffices, dually,
  to show that $\Ff$ is filtered and that $S_0$ is the direct limit of the $S_0'\Ff$. Note
  that if $S$ is generated by $S_0'$ and $S_0''$ as an $\Oo_0$-algebra,
  then $S$ is closed under $c$ if $S_0'$ and $S_0''$ are. Indeed $c: S_0
  \to S_0 \tensor_\Oo S_0$ is an $\Oo_0$-algebra homomorphism, so $\{r:
  c(r) \in S \tensor_{\Oo_0} S\}$ is an $\Oo_0$-subalgebra of $S$,
  hence equal to $S$ since it contains $S_0'$ and $S_0''$. Moreover, by
  the commutativity rules between \(c\) and \(i\), the $\Oo_0$-algebra
  $i(S_0')$ is also closed under \(c\) and hence, since the
  $\Oo_0$-algebra generated by $S_0' \union i(S_0')$ is closed under $i$,
  it is Hopf. It follows that it suffices, given \(r\in S_0\), to find a
  finitely generated $\Oo_0$-subalgebra $S_0'$ of $S_0$ closed under \(c\)
  with $r \in S_0'$.

  Fix $r\in S_0$ and write $c(r) = \sum_{i=1}^n r_i \tensor s_i$, with
  $n$ least possible, and $r_1,\ldots,r_n, s_1,\ldots,s_n \in S_0$.

  \begin{claim}\label{C:InvMat}
    Let $(a_1,\ldots,a_n) \models p^{\tensor n} | K_0$. The matrix
    $s=(s_i(a_j))_{1 \leq i,j \leq n} $ is invertible over $K$.
  \end{claim}

  \begin{proof}
    If not, there exists a non-zero vector $\alpha =
    (\alpha_1,\ldots,\alpha_n)$ in $K$ with $\alpha \cdot s = 0$.  We may
    assume $\alpha_n=1$. Since the $a_i$ are independent and $\alpha$ has weight
    at most $n-1$, some $a_j$ must be independent (in the underlying
    algebraically closed field) from $\alpha$. Therefore, for any $a
    \models p | K_0\alpha$, $\sum_i \alpha_i s_i(a) = 0$ 
    and hence
    $\sum_i \alpha_i s_i = 0$. Since $p$ is $K_0$-definable and the $s_i$ are in $R_0 = K_0[G]$, we may assume that $\alpha_i \in K_0$. Thus $s_1,\ldots,s_n$ are
    $K_0$-linearly dependent, contradicting the minimality of $n$.
  \end{proof}

  The expression $r(x\cdot y)=\sum_{i=1}^n r_i(x)\cdot s_i(y)$ shows
  that $\{r(g\cdot y): g \in G(K_0)\}$ spans a finite-dimensional
  $K_0$-subspace of $K[G]$.  Similarly $\{r(g\cdot y\cdot h): g,h \in
  G(K_0)\}$ spans a finite-dimensional $K_0$-subspace $V_0$ of
  $R_0$. Let $S_0'$ be the $\Oo_0$-algebra generated by $V_0\cap S_0$.

  \begin{claim}
    $c(V_0)\subseteq V_0\tensor_{\Oo_0} V_0$.
  \end{claim}

  \begin{proof}
    Let $(g_1,\ldots,g_n,x) \models p^{\tensor n+1} | K_0$. The
    $r(x\cdot g_j) = \sum_i a_i(x)b_i(g_j)$ are in $V_0$ by definition. By
    Claim\ \ref{C:InvMat}, $a_i\in V_0$. By the symmetric argument, we
    also have $b_i\in V_0$ and hence $r \in A_0 := \{r\in V_0\mid c(r)\in
    V_0\tensor_{\Oo_0} V_0\}$. Note that, by definition, $A_0$ is closed by
    left and right multiplication by $G(K_0)$, so $V_0\subseteq A_0$ and
    thus $c(V_0)\subseteq V_0\tensor_{\Oo_0} V_0$.
  \end{proof}

  By the renormalization argument of Claim\ \ref{C:tensor},
  $V_0\tensor_{\Oo_0} V_0 \subseteq V_0\tensor_{\Oo_0} S_0 = S_0\tensor_{\Oo_0}
  V_0$ and hence $c(V_0\cap S_0) \subseteq (V_0\tensor_{\Oo_0} V_0) \cap
  (S_0\tensor_{\Oo_0} V_0)\cap (S_0\tensor_{\Oo_0} V_0)\cap (S_0\tensor S_0) =
  (S_0\cap V_0)\tensor_{\Oo_0}(S_0\cap V_0)$. It immediately follows that
  $c(S_0')\subseteq S_0'\tensor S_0'$. Since $r\in S_0'$, we are done.

  The finite type statement follows by compactness.
\end{proof}

In Theorem\ \ref{T:genchar3}, we cannot always assume $G$ to be finite
type:

\begin{example}\label{E:noninertial}
  Let $K_0 = \alg{\Cc(t)}$, with $\val(t)>0$.  Let $H_n := \{(x,y) \in (\Ga
  \times \Gm)(\Oo): \val(y - \sum_{i=1}^n 1/i! (t\cdot x)^i) \geq \val(t^{n+1})
  \}$, a definable subgroup of $(\Ga \times \Gm) (\Oo)$, isomorphic to
  $(\Ga \times \Gm)(\Oo)$. The group $H := \meet H_n$ is
  connected stably dominated; but its generic is dominated via the map
  $(x,y) \mapsto \res(x)$. The Zariski closure of $H$ has dimension
  $2$, but the generic type of $H$ has a residual part of
  transcendence degree one.
  
  It follows that $H$ cannot be the $\Oo$ points of a group scheme of
  finite type, nor can it be the connected component of a definable
  group.
\end{example}

\begin{lem}\label{conn}
  Let $G$ be a group scheme over $\Oo$. For each $n$, let $\phi_n(g)
  := g^n$, and assume $\phi_n: G \to G$ is a finite morphism.  Then
  $G(\Oo)$ is connected.
\end{lem}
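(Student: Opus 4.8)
The plan is to prove that every power map is surjective on $\Oo$-points and then run a divisibility argument against finite quotients. Throughout I take the generic fibre $G_K$ to be connected, as it is in all the applications; some such hypothesis is genuinely needed, since for a disconnected fibre like $\Gm\times\Zz/2$ the group $G(\Oo)$ is already disconnected. Note also that $\phi_n$ is \emph{not} a homomorphism when $G$ is nonabelian, so the whole argument must treat it merely as a definable self-map.

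First I would show that for every $n$ the map $\phi_n\colon G(\Oo)\to G(\Oo)$ is surjective. Fix $h\in G(\Oo)$, viewed as a morphism $h\colon\spec\Oo\to G$, and form the fibre $Y:=\spec\Oo\times_{h,G,\phi_n}G$. Since $\phi_n$ is a finite morphism, its base change $Y\to\spec\Oo$ is finite, so $Y=\spec A$ with $A$ a finite, hence integral, $\Oo$-algebra. On the generic fibre, $\phi_{n,K}\colon G_K\to G_K$ is finite: being proper it has closed image, and being quasi-finite its image has full dimension; as the connected group $G_K$ is irreducible, a closed full-dimensional subset must be all of $G_K$, so $\phi_{n,K}$ is surjective on $K$-points. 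Thus $Y_K=\phi_{n,K}^{-1}(h)$ is a nonempty finite $K$-scheme, which has a $K$-point because $K$ is algebraically closed. Such a point is an $\Oo$-algebra homomorphism $A\to K$; as $A$ is integral over $\Oo$ its image is integral over $\Oo$, and since $\Oo$ is integrally closed in its fraction field $K$ (being a valuation ring) this image lies in $\Oo$. Hence the homomorphism factors through $\Oo$, yielding a point of $Y(\Oo)$, i.e.\ a $g\in G(\Oo)$ with $g^n=h$.

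With surjectivity in hand, I would show that $G(\Oo)$ is connected, i.e.\ has no proper definable subgroup of finite index. Suppose $S\leq G(\Oo)$ is definable of finite index. Replacing $S$ by its normal core $\bigcap_{g\in G(\Oo)}gSg^{-1}$ — a finite intersection, hence still definable, normal, and of finite index — we may assume $N:=S\trianglelefteq G(\Oo)$ with $F:=G(\Oo)/N$ finite. Let $e=\exp(F)$. For every $g\in G(\Oo)$ the image of $g^e=\phi_e(g)$ in $F$ is $(gN)^e=N$, so $\phi_e(G(\Oo))\subseteq N$. By surjectivity of $\phi_e$ this forces $N=G(\Oo)$, so $F$ is trivial. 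Hence no proper finite-index definable subgroup exists, and $G(\Oo)$ is connected.

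The main obstacle is the first step, and within it the passage from the generic fibre down to $\Oo$-points. Two ingredients make it work: that a \emph{finite} morphism of the irreducible variety $G_K$ is automatically surjective (closed image of full dimension in an irreducible space is everything, so one never needs $\phi_n$ to be a group homomorphism), and that $\Oo$, being a valuation ring, is integrally closed in $K$, which forces every $K$-point of the finite fibre scheme $Y$ to be an $\Oo$-point. It is precisely the finiteness of $\phi_n$ for \emph{every} $n$ — including those divisible by the residue characteristic — that guarantees $Y$ is finite over $\Oo$ and that the image over $K$ fills $G_K$; without it, as with $\Ga$ in positive characteristic, the power maps degenerate and the conclusion fails.
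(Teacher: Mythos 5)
Your proof is correct and follows essentially the same route as the paper's three-line argument: finiteness/properness of $\phi_n$ gives surjectivity of $\phi_n$ on $\Oo$-points, and then no finite quotient can survive; your fibre-product-plus-integral-closedness step is exactly the valuative criterion for the finite morphism $\phi_n$ spelled out in detail, and your exponent argument is a clean substitute for the paper's ``any finite quotient has order prime to $n$.'' Your explicit hypothesis that $G_K$ be connected is a justified sharpening rather than a deviation: the paper's statement omits it, but its opening claim ``by properness, $\phi_n$ is surjective'' genuinely needs irreducibility of the generic fibre, exactly as your $\Gm\times\Zz/2$ example shows.
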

 
\begin{proof}
  By properness, $\phi_n: G(\Oo) \to G(\Oo)$ is surjective. So any
  finite quotient has order prime to $n$.  This holds for all $n$, so
  $G(\Oo)$ has no finite quotients.
\end{proof}

\subsection{Abelian varieties}

\begin{defn}\label{D:bdd-serre}
  Let $V$ be an affine variety over $K$. A definable subset $W$ is
  \emph{bounded} if for any regular function $f$ on $V$,
  $\{\val(f(x))\mid x\in W\}$ has a lower bound. For a general
  $K$-variety $V$, a definable subset $W$ is \emph{bounded} if there
  exists an open affine covering $V = \bigcup_{i=1}^m U_i$, and a
  bounded $W_i \subseteq U_i$, with $W = \bigcup_i W_i$.
\end{defn}

This definition is due to \cite[\S 6.1]{Ser-MW}. The assumption there
that the valuation is discrete is inessential.

If $V'$ is a closed subvariety of $V$ and $W$ is bounded in $V$, then
clearly $W \cap V'$ is bounded in $V'$. In the affine case, if $V$ has
coordinate ring $K[f_1,\ldots,f_n]$, for $V$ to be bounded (as a subset of itself), it suffices that the $f_i$ have
bounded valuation. In the case of projective space $\Pp^n$, a standard
covering by bounded sets is given in projective coordinates by: $U_i =
\{(x_0: \cdots : x_n): x_i = 1\text{ and }(\forall j \neq i) \val(x_j)
\geq 0\}$. Complete varieties are bounded as subsets of themselves.

Let $C\models\ACVF$, so $C = \dcl(F)$ where $F = C \meet K$; $F$ is an
algebraically closed valued field. Let $\fF$ be the family of
all $C$-definable functions on $V$ into $\Gamma$. Recall (see
Proposition\ \ref{P:orth}) that a $C$-definable type $r$ concentrating on $V$ is stably
dominated if and only if for any $f \in \fF$ there exists $f(r) :=
\gamma \in \Gamma(M)$ such that if $c \models r |C$ then $f(c) =
\gamma$.

\begin{defn}
  Let $q$ be a $C$-definable type extending a type $q_0$ over $C$, let
  $(p_t: t \models q_0)$ be a family of stably dominated
  $Ct$-definable types concentrating on some variety $V$ over $F$ and
  $U$ be an open affine of $V$. We say that the family
  $(p_t)_{t\models q_0}$ is {\em uniformly bounded at $q$ (on $U$)} if
  for any regular function $f$ on $U$ defined over $D\supseteq C$,
  there exists $\alpha \in \Gamma$ such that if $t \models q| D\alpha$
  and $c \models p_t| D t \alpha$ then $c \in U$ and $\val(f(c)) \geq
  \alpha$.
\end{defn}

\begin{lem}\label{L:bdd-bdd}
  Let $p_t$ and $q$ be as above. Assume $p_t$ concentrates on a
  bounded $W \subseteq V$. Then $(p_t)_{t\models q_0}$ is uniformly
  bounded at $q$.
\end{lem}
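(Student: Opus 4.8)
The plan is to combine two ingredients. First, stable domination of each \(p_t\) forces \(\val(f)\) to be \emph{constant} along \(p_t\), by orthogonality to \(\Gamma\). Second, boundedness of \(W\) pins this constant from below by a single element of \(\Gamma\) that does not depend on \(t\). Concretely, fix a regular function \(f\) on \(U\) defined over some \(D \supseteq C\). Each \(p_t\) is stably dominated over \(Ct\), hence over \(Dt\) by base change (Proposition\ \ref{P:base change}); so, in the form of Proposition\ \ref{P:orth} recalled just before Definition\ \ref{D:bdd-serre} (applied with base \(Dt\) to the \(Dt\)-definable function \(\val\circ f\)), there is a value \(\gamma_t \in \Gamma\) with \(\val(f(c)) = \gamma_t\) for every \(c \models p_t | Dt\) — provided \(p_t\) concentrates on \(U\), so that \(f\) is defined at \(p_t\).

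Next I would locate the relevant affine. Writing \(W = \bigcup_{i=1}^m W_i\) with \(W_i \subseteq U_i\) bounded as in Definition\ \ref{D:bdd-serre}, a complete type \(p_t\) concentrating on the finite union \(W\) must concentrate on one of the pieces \(W_i\). Because the family \((p_t)\) is uniformly definable, the condition that \(p_t\) concentrates on \(W_i\) is expressed by a \(C\)-definable set of parameters \(t\); since \(q\) is a complete \(C\)-definable type extending \(q_0\) and every realization of \(q\) has \(p_t\) concentrating on \(W\), some fixed index \(i_0\) satisfies \(\{t : p_t \text{ concentrates on } W_{i_0}\} \in q\). We take \(U = U_{i_0}\) (the members of a covering witnessing boundedness are exactly the affines to which the definition applies). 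Then for every \(t \models q | C\) the type \(p_t\) concentrates on \(W_{i_0} \subseteq U\), so realizations of \(p_t\) lie in \(U\).

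It remains to produce the uniform bound. Since \(W_{i_0}\) is bounded on \(U\), the set \(S := \{\val(f(x)) : x \in W_{i_0}\}\) is \(D\)-definable and bounded below; by \(o\)-minimality of \(\Gamma\) its infimum \(\alpha := \inf S\) lies in \(\Gamma \cap \dcl(D)\). For \(t \models q | D\alpha\) we then have \(p_t\) concentrating on \(W_{i_0}\), and for \(c \models p_t | Dt\alpha\) we get \(c \in W_{i_0} \subseteq U\) together with \(\val(f(c)) = \gamma_t \in S\), whence \(\val(f(c)) = \gamma_t \geq \alpha\). This is precisely the uniform bound required by the definition of uniform boundedness at \(q\) on \(U\).

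The step I expect to be the main obstacle is the bookkeeping that reconciles the open affine \(U\) named in the definition with the piece of the bounded covering on which \(p_t\) genuinely concentrates: one must argue that, generically along \(q\), the types \(p_t\) all live in a single affine \(U_{i_0}\), and that on that affine the lower bound extracted from boundedness of \(W_{i_0}\) is an honest element of \(\Gamma\) definable over \(D\), rather than merely one in a saturated extension. Both points rest on \(o\)-minimality of \(\Gamma\) — the first through the completeness and uniform definability of the family \((p_t)\) together with the finiteness of the covering, the second through the existence of a definable infimum of a bounded-below definable subset of \(\Gamma\).
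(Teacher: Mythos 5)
Your proof is correct and is essentially the paper's own argument: decompose the bounded set $W$ into bounded pieces $W_i$ inside affines $U_i$, observe that along $q$ (by completeness of $q$ and definability of the family) the types $p_t$ concentrate on a single piece $W_{i_0}$, take $U = U_{i_0}$, and let boundedness of $W_{i_0}$ supply the lower bound. Your detours through orthogonality to $\Gamma$ (constancy of $\val(f)$ along $p_t$) and the $o$-minimal infimum are harmless but unnecessary, and the claim that $\alpha$ lies in $\dcl(D)$ is both unjustified (the covering $W_{i_0}\subseteq U_{i_0}$ need not be $D$-definable) and not required: the definition only asks for some $\alpha\in\Gamma$, and any lower bound for $\val(f)$ on $W_{i_0}$ works for every $c\in W_{i_0}$, generic or not.
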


\begin{proof}
  The types $p_t$ (when $t \models q_0$) concentrate on one of the
  bounded affine sets $W_i$ in Definition\ \ref{D:bdd-serre}. Let $U$ be
  the corresponding affine $U_i$. Any regular function on $U_i$ is
  bounded on all of $W_i$, hence in particular on realizations
  of the $p_t$.
\end{proof}

Recall Definition\ \ref{D:G-limit}.

\begin{lem}\label{L:sd-limit}
  Let $p_t$ and $q$ be as above. Assume $(p_t)_{t\models q_0}$ is
  uniformly bounded at $q$ on $U$. Then there exists a unique
  $C$-definable type $p_\infty := \lim_q p_t$ such that for any
  regular function $f$ on $U$, if $a \models p_\infty$ then
  $\val(f(a)) = \lim_q \val(f(p_t))$.

  Moreover, $p_\infty$ is stably dominated and if $h: V \to W$ is an
  isomorphism of varieties, then $\lim_q h_\star p_t = h_\star \lim_q p_t$.
\end{lem}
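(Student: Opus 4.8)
The plan is to describe $p_\infty$ through the valuations of regular functions and then to recover it by a maximum modulus argument. First I would fix, for each regular function $f$ on $U$ defined over some $D = \acl(D) \supseteq C$, the $D$-definable function $\gamma_f : t \mapsto \val(f(p_t))$. This is well defined because each $p_t$, being stably dominated, is orthogonal to $\Gamma$, so $\val(f(x))$ is constant on $p_t$ and this constant depends definably on $t$. By the remark following Definition\ \ref{D:G-limit} the limit $\gamma_f^\infty := \lim_q \gamma_f \in \Gamma \cup \{\pm\infty\}$ exists, and uniform boundedness at $q$ on $U$ forces $\gamma_f^\infty > -\infty$ for every such $f$; in particular the realizing point will remain inside the affine chart $U$ rather than escape to infinity.

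The key step is to check that $v_\infty : f \mapsto \gamma_f^\infty$ is a valuation on the coordinate ring $D[U]$ extending $\val$ on $D$; equivalently, that the pointwise limit along $q$ of the definable family of valuations attached to the $p_t$ is again a valuation. For constants this is clear. Multiplicativity and the ultrametric inequality follow from the identities $\gamma_{fg}(t) = \gamma_f(t) + \gamma_g(t)$ and $\gamma_{f+g}(t) \geq \min(\gamma_f(t),\gamma_g(t))$, which hold at each $t$, together with the fact that $\lim_q$ commutes with addition and with $\min$ and is monotone on $\Gamma$-valued definable functions. This interchange is the main obstacle, and I would isolate it as a short $o$-minimal lemma: since $q$ is a definable \emph{type}, the conjunction of two neighbourhood conditions each lying in $q$ again lies in $q$, which gives $\lim_q(\gamma_f+\gamma_g) = \gamma_f^\infty+\gamma_g^\infty$ when both limits are finite; and since, by uniform boundedness, no value $-\infty$ occurs, the indeterminate case $\infty-\infty$ never arises. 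It follows that $\mathfrak p := \{f : \gamma_f^\infty = \infty\}$ is a prime ideal and that $v_\infty$ induces a genuine valuation on the domain $D[U]/\mathfrak p$.

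Since $\ACVF$ is the model completion of the theory of valued fields, this valuation on $D[U]/\mathfrak p$ extending $(D,\val)$ is realized by a point $a \in U(\Uu)$, i.e. $\val(f(a)) = \gamma_f^\infty$ for all $f \in D[U]$. By quantifier elimination the quantifier-free, hence complete, type of the field tuple $a$ over $D = \acl(D)$ is determined by these valuations, and as $D$ grows the data cohere, yielding a single global type $p_\infty$ with the stated property and concentrating on $U$. Because every $\val(f(a))$ is constant on $p_\infty$ and, by quantifier elimination, any definable map from $p_\infty$ into $\eq{\Gamma}$ reduces to finitely many such valuations, the $p_\infty$-germ of any such map is constant; hence $p_\infty$ is orthogonal to $\Gamma$ and therefore stably dominated by Proposition\ \ref{P:orth}. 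Invariance of $q$ and of the family $(p_t)$ under $\mathrm{Aut}(\Uu/C)$ makes $p_\infty$ globally $C$-invariant, and the uniformity over $C$ of the construction of its definition scheme gives $C$-definability. Uniqueness is then immediate: the requirement that $\val(f(a)) = \gamma_f^\infty$ for all regular $f$ over all $L \supseteq C$ pins down the type by quantifier elimination, which is exactly the maximum modulus characterization of Proposition\ \ref{P:maxmod}.

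Finally, for an isomorphism of varieties $h : V \to W$ I would argue by naturality and uniqueness. The map $h$ induces $h^{*} : D[h(U)] \to D[U]$, $g \mapsto g\circ h$, so that $\val(g(h_\star p_t)) = \gamma_{h^{*}g}(t)$ and consequently $\val(g(a)) = \gamma_{h^{*}g}^\infty = \lim_q \val(g(h_\star p_t))$ for $a \models h_\star p_\infty$ and every regular $g$ on $h(U)$. Since $h$ transports regular functions and their lower bounds, the family $h_\star p_t$ is again a uniformly bounded family of stably dominated types on $h(U)$, so $\lim_q h_\star p_t$ exists; by the uniqueness just established it must coincide with $h_\star p_\infty = h_\star \lim_q p_t$.
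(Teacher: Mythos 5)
Your proof is correct and follows essentially the same route as the paper's: both construct the limit valuation $f \mapsto \lim_q \val(f(p_t))$, identify the prime ideal of functions whose limit is $+\infty$ (you work with $D[U]/\mathfrak{p}$, the paper with the function field of its zero locus), recover the type via quantifier elimination, deduce stable domination from orthogonality to $\Gamma$ via Proposition~\ref{P:orth}, and obtain functoriality by naturality and uniqueness. The only difference is one of emphasis: you spell out the interchange of $\lim_q$ with sums and minima and the definability of the scheme, which the paper leaves implicit.
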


\begin{proof}
  By assumption, we cannot have $\lim_q \val(f(p_t)) = -\infty$. The
  set \(\{f\mid \lim_q \val(f(p_t)) = +\infty\}\) is a prime ideal
  $I$; part of the condition on $p_\infty$ is that $(f=0) \in
  p_\infty$ if and only if $f \in I$. Let $V'\subseteq U$ be the zero
  set of $I$. The affine coordinate ring of $V'$ is $\Oo_V(U) / I$. An
  element of $F(V')$ can, therefore, be written $g/h$ with $g,h \in
  \Oo_V(U)$ and $h \notin I$; equivalently $\lim_q p_t(h) \neq \pm
  \infty$. Hence we can define a valuation $\overline{\val}$ on
  $F(V')$ by $\overline{\val}(g/h) = \lim_q g(p_t) - \lim_q
  h(p_t)$. This determines a valued field extension $F^+$ of $F$ and
  hence gives a complete type $p_\infty$ of elements of $V'$. It is
  clear that $p_\infty$ is definable; and that that $\Gamma(F^+) =
  \Gamma(F)$, so $p_\infty$ is orthogonal to $\Gamma$, and hence, by
  Proposition\ \ref{P:orth}, it is stably dominated. Note that
  $\val(f(p_\infty)) = \lim_q \val(p_t(f))$ for any $f \in F(V')$.  In
  particular the choice of $U$ is immaterial. The functoriality is
  evident.
\end{proof}

Recall Definition\ \ref{D:defcomp}.

\begin{lem}
\label{L:stdom-sup}
Let $G$ be a bounded $C$-definable subgroup of an
algebraic group $\tilde{G}$ over $F$. Let $(H_t)_t$ be a certifiably stably
dominated $C$-definable family of subgroups of $G$ forming a directed system
under inclusion and let $H := \bigcup_t H_t$. Assume $G/H$ is
$\Gamma$-internal. Then $H$ is stably dominated. Moreover $G/H$ is definably
compact.
\end{lem}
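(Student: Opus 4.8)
The plan is to produce the generic of $H$ as a limit of the generics of the $H_t$ along a cofinal definable type, and to read off both conclusions from orthogonality to $\Gamma$. First I would set up the limit: the relation $t_1 \le t_2 :\Leftrightarrow H_{t_1}\le H_{t_2}$ is a $C$-definable directed partial order on a subset of a power of $\Gamma$, so Lemma\ \ref{L:cofinal-def} furnishes a cofinal definable type $q$. Since the family is certifiably stably dominated, the indices $t$ with $H_t$ connected are cofinal; restricting $q$ to this cofinal definable set, I may assume every $H_t$ occurring is connected, with principal generic $p_t$ (stably dominated, $Ct$-definable). As $G$ is bounded and each $p_t$ concentrates on $G$, Lemma\ \ref{L:bdd-bdd} shows the family $(p_t)$ is uniformly bounded at $q$; hence by Lemma\ \ref{L:sd-limit} the limit $p_\infty := \lim_q p_t$ exists, is $C$-definable, and is stably dominated.

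Next I would show $p_\infty$ is the generic of $H$. For $g\in H$ we have $g\in H_{t_0}$ for some $t_0$, so $\transl{g}{p_t}=p_t$ for all $t\ge t_0$ (a connected $H_t$ has a unique generic, Remark\ \ref{R:1genr}); since left translation by $g$ is an isomorphism of the variety $\tilde G$, the functoriality in Lemma\ \ref{L:sd-limit} gives $\transl{g}{p_\infty}=\lim_q \transl{g}{p_t}=\lim_q p_t=p_\infty$. Thus $p_\infty$ is left-invariant under $H$. To see that $p_\infty$ concentrates on $H=\ker(\eta)$, where $\eta:G\to G/H$, note that $\eta_\star p_\infty$ is a stably dominated type on the $\Gamma$-internal group $G/H$, hence, being orthogonal to $\Gamma$ (Proposition\ \ref{P:orth}), concentrates on a single point. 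Each $p_t$ concentrates on $H_t\le H$, so $\eta_\star p_t$ is realized at the identity; computing the coordinates of $\eta_\star p_\infty$ through a finite-to-one map $\mu:G/H\to\Gamma^m$ and using Lemma\ \ref{L:sd-limit}, this point is the identity, i.e. $p_\infty$ concentrates on $H$. A left-invariant stably dominated type concentrating on the connected group $H$ is its unique generic, so $H$ is connected and stably dominated.

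For definable compactness of $G/H$ I would verify the limit criterion of Definition\ \ref{D:defcomp}. Given a definable type $r$ on $\Gamma$ and a definable $f:\Gamma\to G/H$, lift $f$ to $\tilde f:\Gamma\to G$ by definable choice over the base. The realized (hence stably dominated) types $\tp(\tilde f(\gamma))$ concentrate on the bounded set $G$, so again by Lemmas\ \ref{L:bdd-bdd} and\ \ref{L:sd-limit} the stably dominated limit $p^{\ast}:=\lim_r \tp(\tilde f(\cdot))$ exists. Its image $\eta_\star p^{\ast}$ is a stably dominated type on the $\Gamma$-internal group $G/H$, hence realized at some point $c^{\ast}$; evaluating separating $\Gamma$-valued coordinates as in the previous paragraph identifies $c^{\ast}$ with $\lim_r f$, which therefore exists. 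Hence $G/H$ is definably compact.

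The main obstacle is exactly the step where the limit $\lim_q$ (respectively $\lim_r$) is pushed through the quotient map $\eta$: the functoriality of Lemma\ \ref{L:sd-limit} is stated only for isomorphisms of varieties, whereas $\eta$ is a quotient by the pro-definable subgroup $H$ and $G/H$ is merely $\Gamma$-internal. I would handle this by composing a finite-to-one coordinate map $G/H\to\Gamma^m$ with $\eta$ to obtain a definable $\Gamma$-valued function on $G$, which in $\ACVF$ is given piecewise by integer combinations of valuations of regular functions on $\tilde G$; on a stably dominated type, Lemma\ \ref{L:sd-limit} then computes its value as the $\lim_q$ of its values along the family, reducing each claim to the trivial limit of the constant value at the identity. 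Verifying this valuation-expressibility, together with the auxiliary points that $H$ is pro-definable and normal so that $\eta$ and $G/H$ are legitimate objects, is where the genuine work lies.
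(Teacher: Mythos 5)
Your construction of $p_\infty$ and the invariance step follow the paper's proof closely: same cofinal type from Lemma\ \ref{L:cofinal-def}, same use of Lemmas\ \ref{L:bdd-bdd} and\ \ref{L:sd-limit}, same functoriality argument giving $H\subseteq\Stab(p_\infty)$. (One repair is needed even there: the set of $t$ with $H_t$ connected need not be definable, and a cofinal definable type need not concentrate on a prescribed cofinal subset, so you cannot simply ``restrict $q$'' to the connected indices; the paper instead takes $p_t$ to be the principal generic of $H_t$ and uses certifiability only to see $H=\bigcup_{t\models q|C}H_t^0$, proving invariance first for generic elements of $H_s^0$, $s\models q|C$, which generate.) The genuine gap is the step you yourself identify as the main obstacle, and your proposed repair of it is unsound. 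You need $\eta_\star p_\infty$ to be the identity coset, i.e.\ that $\lim_q$ commutes with $\eta_\star$, and you argue this by writing $\Gamma$-valued coordinates of $\eta$ piecewise as integer combinations of valuations of regular functions and invoking Lemma\ \ref{L:sd-limit}. But Lemma\ \ref{L:sd-limit} pins down $p_\infty$ only through valuations of regular functions, and piecewise-valuative functions do \emph{not} pass through the limit, because $p_\infty$ can land in a different piece than all the $p_t$. Concretely: let $p_t$ be the generic of the ball $t\Oo$ and $q$ the type $t\to 0^+$; computing $\val(a-c)$ for all constants $c$ shows $p_\infty=\lim_q p_t$ is the generic of $\Oo$. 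Each $p_t$ concentrates on $\fM$, yet $p_\infty$ does not; and for $g(x)=\gamma_0$ if $\val(x)>0$, $g(x)=0$ otherwise (each piece a valuation of a constant), one has $\lim_q g(p_t)=\gamma_0\neq 0=g(p_\infty)$. So ``concentration on the identity coset'' is exactly the kind of condition your method cannot carry through the limit.

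The paper's proof is designed to avoid ever identifying the coset. Since $p_\infty$ is stably dominated, it is orthogonal to $\Gamma$ (Proposition\ \ref{P:orth}); as $G/H$ is $\Gamma$-internal, the map $x\mapsto x\cdot H$ is therefore constant on $p_\infty$, so $p_\infty$ lies in \emph{some} coset $x_0\cdot H$ --- possibly not the identity one. Combined with $H\subseteq\Stab(p_\infty)$, the type $p_\infty\star p_\infty^{-1}$ concentrates on $H$, is $H$-invariant, definable and stably dominated, hence is the required generic; no commutation of $\lim_q$ with $\eta_\star$ is ever used. The same dodge handles definable compactness, where your lift of $h:\Gamma\to G/H$ ``by definable choice'' is a second gap ($\eta$ need not admit a definable section in $\ACVF$): the paper instead takes $p'_t$ to be the generic type of the coset $h(t)$ --- canonically defined, since it is the common translate of the generic of $H$ by any representative --- and the limit $\lim_r p'_t$, being stably dominated, concentrates on a single coset, which is then $\lim_r h$. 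You should replace your coset-identification and lifting steps by these two moves; as written, both of your conclusions rest on a commutation principle that is false.
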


\begin{proof}

  Let $q(t)$ be a $C$-definable type cofinal in the partial ordering
  $H_t \subseteq H_{t'}$, given by \cref{cofinal-def}. Let $p_t$ be the principal generic type of
  $H_t$. Note that $H = \bigcup_{t\models q| C} H_t^0$. By
  Lemma\ \ref{L:bdd-bdd}, using the boundedness of $G$, the family $(p_t)$ is uniformly bounded at
  $q$, so $p_\infty = \lim_q p_t$ exists.

  \begin{claim}
    Let $H$ and $H'$ be connected stably dominated definable
    subgroups of $G$.  Let $p$ and $p'$ be their generic types. Then $H
    \subseteq H'$ if and only if $p\star p' = p'$.
  \end{claim}

  \begin{proof}
    If $H \subseteq H'$, then $p\star p' = p'$ by definition of
    genericity for $p$.  Conversely if $p\star p' = p'$ then a generic
    of $H$ is a product of two realizations of $p'$; in particular it
    lies in $H'$. But any element of $H$ is a product of two generics,
    hence any element of $H$ lies in $H'$.
  \end{proof}

  By the functoriality of Lemma\ \ref{L:sd-limit}, we have $\lim_q
  \transl{a}{p_t} = \transl{a}{(\lim_q p_t)}$ for any $a \in G$. Let
  $s \models q|C$ and $t \models q|Cs$. If $a \models p_{s} |Cs$, then
  $\transl{a}{p_t} = p_t$, and hence $\transl{a}{p_\infty} =
  p_\infty$. Thus $H_s^0 \subseteq \Stab(p_\infty)$ and thus $H
  \subseteq \Stab(p_\infty)$. On the other hand, as $p_\infty$ is
  stably dominated and $G / H$ is $\Gamma$-internal, the function $x
  \mapsto x\cdot H$ is constant on $p_\infty$, so $p_\infty$ lies in a
  single coset $x\cdot H$. It follows that $p_\infty\star p_\infty^{-1}$ is
  generic in $H$, so $H$ is stably dominated.

  Now let $r$ be a definable type on $\Gamma$, and let $h: \Gamma \to
  G/H$ be a definable function. For any $t \in \Gamma$, let $p'_t $ be
  the generic type of $h(t)$, viewed as a coset of $H$; this is a
  translate of the generic type of $H$. Let $p'_\infty = \lim_r
  p'_t$. Then $p'_\infty$ is stably dominated, so it concentrates on
  a unique coset of $H$, corresponding to an element $e \in G/H$.
  Tracing through the definitions we see that $e = \lim_r h$. Since
  $r$ and $h$ are arbitrary, $G/H$ is definably compact.
\end{proof}

The following is an immediate consequence of Theorem\ \ref{T:lmstd}
and Lemma\ \ref{L:stdom-sup}. Recall that Abelian varieties are complete and hence bounded.

\begin{cor}
  Let $A$ be an Abelian variety over $K$. Then there is a definably
  compact group $C$ defined over $\Gamma$, and a definable
  homomorphism $\phi:A \to C$ with stably dominated kernel $H$. In
  particular $A$ has a unique maximal stably dominated connected
  $\infty$-definable subgroup---which is definable.
\end{cor}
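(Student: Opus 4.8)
The plan is to run Theorem~\ref{T:lmstd} to produce the limit stably dominated subgroup, then feed it into Lemma~\ref{L:stdom-sup} to upgrade it to a genuine stably dominated subgroup and extract definable compactness of the quotient. The only geometric input needed is that an Abelian variety is complete, hence bounded as a subset of itself.

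First I would note that $A = A(K)$ is a definable Abelian group in $\ACVF$, that $\ACVF$ satisfies \Aom by Proposition~\ref{P:ACVF hyp}, and that $A$ has bounded (indeed finite) weight, being a definable subset of a variety. Thus Theorem~\ref{T:lmstd} applies: the limit stably dominated subgroup $H \leq A$ exists, is definable, is connected (it carries a unique generic type, so connectedness follows from Remark~\ref{R:1genr}), and $A/H$ is $\Gamma$-internal. By Corollary~\ref{C:cert-2} I may moreover present $H = \bigcup_t H_t$ as the union of a $C$-definable certifiably stably dominated family $(H_t)$, with $t$ ranging over a $\Gamma$-tuple and the $H_t$ forming a directed system under inclusion; this is exactly the presentation already exploited inside the \Aom part of the proof of Theorem~\ref{T:lmstd}.

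Next I would verify the hypotheses of Lemma~\ref{L:stdom-sup} with $\tilde G = G = A$. Since $A$ is an Abelian variety it is complete, and complete varieties are bounded as subsets of themselves (cf. the discussion following Definition~\ref{D:bdd-serre}); so $A$ is a bounded definable subgroup of the algebraic group $A$. Combined with the $\Gamma$-internality of $A/H$ from the previous step, Lemma~\ref{L:stdom-sup} then gives that $H$ is stably dominated and that $A/H$ is definably compact. Because $\Gamma$ is $o$-minimal in $\ACVF$, the $\Gamma$-internal group $A/H$ is definably isomorphic to a group definable over $\Gamma$ by Lemma~\ref{L:almost-int-2} (cf. Remark~\ref{R:int hyper}); calling this group $C$ and letting $\phi\colon A \to C$ be the quotient map $A \to A/H$ composed with this isomorphism yields the desired definable homomorphism onto a definably compact group defined over $\Gamma$, with stably dominated kernel $H$.

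Finally, for the \emph{in particular} clause: by construction $H$ contains every connected stably dominated $\infty$-definable subgroup of $A$, this being property~(2) of a limit stably dominated family in Definition~\ref{D:lmstd}. Having just shown that $H$ is itself connected, stably dominated and definable, it is therefore the unique maximal such subgroup. I expect the only genuine obstacle to be conceptual rather than technical: recognizing that completeness of the Abelian variety is precisely the boundedness hypothesis of Lemma~\ref{L:stdom-sup}, which is what converts the \emph{a priori} merely limit stably dominated group $H$ into a stably dominated one and forces the quotient to be definably compact. Without this boundedness one would obtain only $\Gamma$-internality of $A/H$ from Theorem~\ref{T:lmstd}, not definable compactness.
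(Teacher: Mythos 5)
Your proposal is correct and takes essentially the same route as the paper, whose entire proof is the remark that the corollary is an immediate consequence of Theorem~\ref{T:lmstd} and Lemma~\ref{L:stdom-sup}; the details you supply (completeness of $A$ giving boundedness, Corollary~\ref{C:cert-2} providing the directed certifiably stably dominated family with union $H$, and maximality from the limit stably dominated property) are exactly what the paper leaves implicit. One micro-adjustment: Remark~\ref{R:1genr} presupposes a \emph{symmetric} generic, so connectedness of $H$ is best recorded only after Lemma~\ref{L:stdom-sup} shows $H$ is stably dominated (whence its unique generic is symmetric), or argued directly from $H$ being a directed union of connected subgroups.
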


If $A$ has good reduction, i.e $A = \mathcal{A}_K$ where $\mathcal{A}$ is some Abelian scheme over $\Oo$, then, since $A$ is the zero locus of homegeneous polynomials in some projective space, $A(K) = \mathcal{A}(\Oo)$. By Proposition\ \ref{P:genchar},
$A_K(K)$ is stably dominated. Since $A_K(K)$ is divisible, it
is connected. Thus, in this case, the definably compact quotient $C$ is
trivial.
 
In general, if $F$ is locally compact, then the set $C(F)$ of points
of $C$ lifting to $F$-points will be a {\em finite} subgroup of the
definably compact group $C$. On the other hand if $F= \Qq_p((t))$,
$C(F)$ can be a finite extension of $\Zz$.

The stably dominated group $H$ is dominated via a group homomorphism
$h: H\to \fh$ to a stable---i.e. a $k$-internal---group $\fh$. After
base change to a finite extension, $\fh$ becomes isomorphic to an
algebraic group over $k$. It would be interesting to compare this with
the classical theory of semi-stable reduction.

\subsection{Definable fields}

Before we prove the classification of fields definable in
algebraically closed valued fields, let us recall a
result of Zilber, cf. \cite{Zil-AlgGp}. Note that we are not only finding a $K'$-vector space structure on $A$ for some definable field $K'$, but we are also identifying $K'$ with the field \(K\) from which we started. As far as we know, the latter only appears in later work of Poizat, cf. \cite{Poi-GenNS}.

\begin{lem}\label{L:IrrAction}
  Let $(G,\cdot)$ and $(A,+)$ be infinite Abelian groups definable
  in an algebraically closed field $K$. Assume that $G$ acts definably
  on $A$ by group automorphisms and that the action is irreducible:
  i.e. there are no proper non-trivial $G$-invariant subgroups. Then
  $A$ has a definable $K$-vector space structure and $G$ acts linearly
  on $A$.
\end{lem}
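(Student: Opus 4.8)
The plan is to recover a field from the action in the style of Zilber's analysis of irreducible group actions, and then to identify that field with $K$. Since $A$ and $G$ are definable in the algebraically closed field $K$, everything in sight has finite Morley rank; write $\rho\colon G\to\End(A)$ for the action, so that each $\rho(g)$ is a definable automorphism of $(A,+)$ and, as $G$ is Abelian, the $\rho(g)$ pairwise commute. First I would form the subring $R\subseteq\End(A)$ generated by $\rho(G)$ (and the identity). By a standard result on groups of finite Morley rank --- Zilber's indecomposability theorem together with the chain conditions on definable subgroups available in finite Morley rank --- this ring is interpretable, and by elimination of imaginaries in $\ACF$ it is definable; it is commutative since its generators commute. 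The definable $R$-submodules of $A$ are exactly the definable $\rho(G)$-invariant, i.e.\ $G$-invariant, subgroups, so the irreducibility hypothesis says precisely that $A$ is a simple definable $R$-module.

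Next I would run a Schur-type argument inside $R$. Fix $r\in R$ with $r\neq0$. Because $R$ is commutative, $r$ commutes with every $\rho(g)$, so both $\ker r$ and $\mathrm{im}\,r$ are definable $G$-invariant subgroups of $A$. By simplicity each is $0$ or $A$; since $r\neq0$ forces $\mathrm{im}\,r\neq0$ and $\ker r\neq A$, we get $\mathrm{im}\,r=A$ and $\ker r=0$, i.e.\ $r$ is a definable automorphism of $A$. In particular $R$ has no zero divisors, and every nonzero element of $R$ is invertible in $\End(A)$, with inverse again a definable automorphism (uniformly in $r$, since the graph of $r^{-1}$ is the flip of the graph of $r$). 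I can therefore realise the fraction field $F:=\mathrm{Frac}(R)$ concretely inside $\End(A)$ as the definable family $\{\,r\circ s^{-1}:r,s\in R,\ s\neq0\,\}$ of pairwise commuting automorphisms; this is a definable field acting faithfully on $A$, and $\rho$ embeds $G$ into $F^{\times}$. Localising the simple $R$-module $A$ makes $A$ a definable vector space over $F$, on which $G$ acts through scalar multiplication, hence $F$-linearly. Note $F$ is infinite, since otherwise the simple module $A\cong F^{d}$ would be finite.

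Finally, $F$ is an infinite field definable in the algebraically closed field $K$, so by the standard classification of such fields it is definably isomorphic to $K$. Transporting the $F$-vector space structure along this isomorphism equips $A$ with a definable $K$-vector space structure, and since $G$ already acts $F$-linearly it acts $K$-linearly, as required. (The trivial-action case, in which $\rho(G)=\{1\}$ and irreducibility makes $A$ a simple Abelian group, reduces to $A\cong\Ga(K)$ and is immediate, so I may assume the action is non-trivial and hence that $R$ and $F$ are genuinely infinite.)

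The step I expect to be the real obstacle is the definability (interpretability, of finite Morley rank) of the generated ring $R$: one must know that the subring of $\End(A)$ generated by the definable family $\rho(G)$ is again definable, which is exactly where Zilber's indecomposability theorem and the finiteness of Morley rank are needed --- in particular to control the additive span, including the $\Zz$-multiples of the generators, without adjoining a non-definable copy of $\Zz$. Everything afterwards --- the Schur argument, the passage to $F$, and the identification $F\cong K$ --- is then routine given the results available.
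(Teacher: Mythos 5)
Your proof is correct and follows essentially the same route as the paper's: both recover a definable field from the ring generated by the image of $G$ in $\End(A)$, using a Schur-type application of irreducibility together with the Zilber/Marker-style definability argument for the generated ring (the paper cites Marker's Theorem 7.8.9, which rests on exactly the indecomposability and chain-condition tools you name), and both conclude by identifying this field with $K$ via Poizat's theorem so that $G$ acts by scalars. The only deviations are harmless: the paper first quotients by the kernel and applies Schur's lemma to $\End_G(A)$ where you argue directly on $R$; your fraction-field step is redundant, since a definable infinite integral domain in an algebraically closed field is already a field; and your parenthetical on the trivial-action case is misstated ($\Ga(K)$ is not simple as an abstract group --- that case is simply vacuous, as no infinite Abelian group is simple), though none of this affects the argument.
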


\begin{proof}
  Quotienting by the kernel of the action, we may assume that $G$ acts
  faithfully. By Schur's lemma $R := \End_G(A)$ is a division
  ring. Since $G$ is Abelian, we can identify $G$ with a
  multiplicative subgroup of $R^\star$. As in the proof of
  \cite[Theorem 7.8.9]{Mar-IntroMT}, we can show that the ring
  generated by $G$---which is a field---is definable and hence by
  \cite{Poi-GenNS} it is definably isomorphic to $K$. So we have
  obtained a definable action of $K^\star$ on $A$ by group
  automorphisms---a definable $K$-vector space structure--- and the
  action of $G$ factorizes through the action of $K^\star$.
\end{proof}

We will also need an notion of dimension than generalizes Krull dimension to all definable subsets of the field:

\begin{defn}
Let $D$ be a definable subset of $K^n$. We write $\dimK(D)$ for the Krull dimension of the Zariski closure of $D$ (inside $\Aa^n$).
\end{defn}

\begin{rem}
\begin{enumerate}
\item For any $K_0 = \alg{K_0} \subseteq K$, $\dimK(D) = \max\{\trdeg(a/K_0)\mid a\in D\}$.
\item $\dimK$ is preserved under definable bijection.
\item Let $f : D \to X$ be some definable map with $X$ purely imaginary. One can easily show from the above that $\dimK(D) = \max\{\dimK(f^{-1}(a)) \mid a\in X\}$.
\item Any definable set $D$ has non empty interior in its Zariski closure. It follows that if $D_1\subseteq D_2$ are definable and $\dimK(D_1) = \dimK(D_2)$, then $D_1$ has non empty interior in the Zariski closure of $D_2$.
\item In fact, $\dimK(D)$ is exactly the $C$-minimal dimension of $D$.
\end{enumerate}
\end{rem}

\begin{thm}
\label{T:acvf-fields}
Let $F$ be an infinite field interpretable in $\ACVF$. Then $F$ is
definably isomorphic to the residue field or the valued field.
\end{thm}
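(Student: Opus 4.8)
The plan is to split into cases according to the stability of $F$ and the value of $\dimst(F)$, using throughout the additive group $(F,+)$ together with the multiplicative action of $F^\star$. First I would record the finiteness of the stable dimension: by Proposition~\ref{P:ACVF hyp} the theory has \AFD, and for a definable subset of the field sort $\dimst$ is bounded by Zariski dimension, so $\dimst(F) = n < \infty$ and $F$ has bounded weight. If $\dimst(F)=0$, then, working over a metastability basis $C$, every $\tp(a/\Gamma_C(a))$ is realized (as in the proof of Proposition~\ref{P:unstable fields}), whence $F\subseteq\dcl(C\Gamma)$ is $\Gamma$-internal. But $\Gamma$ carries only the structure of a pure divisible ordered Abelian group, which is linear and $o$-minimal and therefore interprets no infinite field (by the $o$-minimal trichotomy). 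Thus no infinite $F$ can be $\Gamma$-internal, and we conclude $0<n<\infty$ and $F$ is not $\Gamma$-internal.

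If $F$ is stable, then as a stable, stably embedded infinite field in $\ACVF$ it is internal to the residue field $k$ (all stable stably embedded sets in $\ACVF$ are $k$-internal). Hence $F$ is an infinite field interpretable in the algebraically closed field $k$, and by Poizat's theorem \cite{Poi-GenNS} it is definably isomorphic to $k$. This settles the stable case.

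The main case is that $F$ is neither stable nor $\Gamma$-internal; by the remark following Proposition~\ref{P:fields}, $(F,+)$ is then properly limit stably dominated, and I claim $F\cong K$. By Proposition~\ref{P:fields} there is an $\infty$-definable subrng $D$ with $(D,+)$ connected stably dominated and $F=\operatorname{Frac}(D)$; since $\dimst(D)=n$ we may take $D$ definable (Corollary~\ref{C:maxrkdef}) and containing $1$. I would extract two pieces of data. For the \emph{residue}: let $N$ be the smallest subgroup of $(D,+)$ with $D/N$ stable (Corollary~\ref{C:max-st-quotient}); as in the remark after Proposition~\ref{P:fields}, $N$ is an ideal and $D/N$ is a stable, $k$-internal ring with finitely many, all definable, maximal ideals (\cite{CheRei}). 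Modding out by one such ideal yields a stable field $\cong k$ (Poizat), i.e.\ a definable residue map $\rho\colon D\to k$ onto the residue field $k$. For the \emph{value}: by Proposition~\ref{P:unstable fields}, $F^\star$ has a $\Gamma$-internal, not definably compact, definable homomorphic image $V=F^\star/M$ ($M$ the limit stably dominated subgroup), and the subsemigroup $V^+$ constructed there equips $F$ with a definable valuation $v\colon F^\star\to V$ whose valuation ring is commensurable with $D$ and whose value group $V$ is a divisible $\Gamma$-internal ordered Abelian group. Thus $(F,v)$ is a valued field with residue field $k$ and $\Gamma$-internal value group.

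Finally I would upgrade this valued-field data to an explicit definable isomorphism $F\cong K$. The route is to embed $F$ definably into the field sort: applying Corollary~\ref{C:embed2} to the connected stably dominated Abelian group $(D,+)$ gives a $C$-definable homomorphism into an algebraic group, which for an additive group may be taken into $\Ga^m(K)=K^m$ with boundedly imaginary kernel; transporting the $D$-multiplication through this map via the universal property (Remark~\ref{R:embed-univ}) and invoking Theorem~\ref{T:genchar3} to realize $D$ as $\Hh(\Oo)$ for a ring scheme over $\Oo$, one identifies $D$ with a stably dominated valuation ring inside $K$ and hence $F=\operatorname{Frac}(D)$ with a definable valued subfield of $K$ having residue field $k$ and cofinal divisible value group. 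It then remains to see this subfield is all of $K$, which I would derive from divisibility of the connected groups $(F,+)$ and $F^\star$ (Corollary~\ref{C:endos}, together with the absence of proper definable finite-index subgroups of the relevant connected groups) to force $F$ algebraically closed, so that by model-completeness of $\ACVF$ the algebraically closed valued subfield $F\le K$ with full residue field and cofinal divisible value group coincides with $K$. \emph{The main obstacle is exactly this last step}: converting the abstract valuation-theoretic picture into an explicitly \emph{definable} field isomorphism with $K$ — in particular, showing the additive embedding has trivial kernel once the ring structure is exploited, pinning down that $D$ is a genuine valuation ring with $\operatorname{Frac}(D)\cong K$ (equivalently that the stable-dimension bookkeeping collapses to the rank-one case), and verifying algebraic closedness so that the image fills out all of $K$ rather than a proper subfield.
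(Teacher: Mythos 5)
Your stable case and $\Gamma$-internal case are correct and coincide with the paper's treatment, and your setup of the unstable case (positive finite $\dimst$, the subrng $D$ with $F=\operatorname{Frac}(D)$, the failure of bounded imaginarity via Proposition~\ref{P:unstable fields}) is sound. But the core of the theorem --- producing a \emph{definable} isomorphism $F\cong K$ in the unstable case --- is exactly what your proposal does not prove, as you yourself concede in the final sentences. Concretely, the plan fails at several specific points: (i) nothing shows $D$ is a valuation ring of $F$, nor that the semigroup $V^+$ of Proposition~\ref{P:unstable fields} induces an ordering or valuation; in the paper $V^+$ is used only with Lemma~\ref{L:semigroup} to refute definable compactness, and there is no argument that the ``stable-dimension bookkeeping collapses to the rank-one case''; (ii) Corollary~\ref{C:embed2} gives a homomorphism into some algebraic group with \emph{boundedly imaginary} kernel, not a trivial kernel into $\Ga^m$; killing the kernel requires the ring-theoretic trick (the kernel of the universal homomorphism of Remark~\ref{R:embed-univ} is an ideal of $F$, hence zero unless $F$ is boundedly imaginary, which is excluded by Remark~\ref{R:pureim-stdom}), which you gesture at but never execute; (iii) Theorem~\ref{T:genchar3} yields a group scheme over $\Oo$, not a ring scheme, and no compatibility of the multiplication is established; (iv) divisibility of $(F,+)$ and $F^\star$ does not imply algebraic closedness, and even an abstract identification of $F$ as an algebraically closed valued field with residue field $k$ would not by itself produce a definable isomorphism with $K$.

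For contrast, the paper's proof constructs no valuation on $F$ at all. After the trivial-kernel step, it embeds $H=L\ltimes(F,+)$ (with $L$ the limit stably dominated subgroup of $F^\star$) into an algebraic group via Proposition~\ref{P:embed}, passes to Zariski closures $\overline{F}$ and $\overline{L}$, and applies the Zilber-type Lemma~\ref{L:IrrAction} to the irreducible action of $\overline{L}$ on $\overline{F}$: this equips $\overline{F}$ with a $K$-vector space structure on which $F$ acts $K$-linearly. A topological argument then closes the loop: $F$ contains a nonempty open subset of $\overline{F}$, so for $\alpha\in\Oo$ with $\val(\alpha-1)$ large, scalar multiplication by $\alpha$ agrees with multiplication by some element of $L$; the set $E$ of scalars realized by elements of $F$ is a subfield of $K$ containing a neighborhood of $1$, hence $E=K$, giving a definable ring embedding $K\to F$; since $F$ is a field and a finite-dimensional $K$-algebra and $K$ is algebraically closed, $F\cong K$. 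These are precisely the ingredients (linearization over $K$ via the group configuration and Zariski closure, then the scalar-identification argument) that your valuation-theoretic route lacks and would need to be replaced by.
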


\begin{proof}
  If $F$ is stable, then by \cite[Lemma\ 2.6.2]{HasHruMac-ACVF}, it is
  definable in the residue field (over some new parameters), which is
  a pure algebraically closed field. Hence, by \cite{Poi-GenNS},
  it is definably isomorphic to the residue field. There are no
  infinite fields definable over $\Gamma$, since $\Gamma$ is a pure
  ordered divisible Abelian groups.

  Let us assume that $F$ is unstable. By Proposition\ \ref{P:unstable
    fields}, there exists a non-definably compact \(\Gamma\)-internal
  definable homorphic image \(V\) of $F^\star$. By
  \cite[Theorem\ 1.2]{PetSte}, $V$ contains a definable
  one-dimensional torsion-free group. Since $\Gamma$ is a pure ordered
  divisible Abelian group, such $V$ contains a group isomorphic to
  $(\Gamma,+)$---see \cite[Theorem\ 1.5]{EdmEle-GpOMin} for a more
  general statement. Hence \(F\) is not boundedly imaginary.

  Let $D$ be a subrng of $F$, with $(D,+)$ connected stably dominated,
  and such that $F$ is the field of fractions of $D$
  (cf. Proposition\ \ref{P:fields}). There exists a surjective
  definable map $D \times D \to F$---namely $(x,y) \mapsto x/y$ for
  non-zero $y$, $(x,0) \mapsto 0$. Since \(F\) is not boundedly
  imaginary, neither is \(D\). Let $f $ be the universal homomorphism
  of Remark\ \ref{R:embed-univ} from $(F,+)$ into an algebraic
  group. Let $I$ be the purely imaginary kernel. For any \(c\in F\),
  $d \mapsto f(c\cdot d)$ is another homomorphism into an algebraic group,
  so it must factor through $f$; thus if $f(d)=0$ then $f(c\cdot d) = 0$,
  i.e. $I$ is an ideal of $F$. If \(I = F\), then \((D,+)\) is a
  purely imaginary stably dominated and by
  Remark\ \ref{R:pureim-stdom}, \(D\) is boundedly imaginary, a
  contradiction. It follows that $I \neq F$, so $I=(0)$. Hence $f$ is
  an isomorphism onto a subgroup of an algebraic group $G$.

  Let $L$ be the limit stably dominated subgroup of $(F^\star,\cdot)$ and
  let $H := L \ltimes (F,+)$. Let $h: H \to G$ be the homomorphism of
  Proposition\ \ref{P:embed}. The kernel is a purely imaginary subset
  of \(H\subseteq F^2\) so it is finite. But $H$ has no finite
  non-trivial normal subgroups, so $h$ is an embedding and we identify
  $H$ with its image in $G$.

  We now proceed as in \cite{Pil-FieldsQp}, with some local changes
  of reasoning. The Zariski closure $\overline{F}$ of $F$ is a (connected)
  commutative algebraic group and the Zariski closure $\overline{L}$
  acts on $F$ by conjugation. Let $B \leq \overline{F}$ be an
  $\overline{L}$-invariant subgroup of $\overline{F}$. If $a\in B\cap F$, let $Z$ be the $L$-orbit of $a$, which is in definable bijection with $L$. Since $F^\star/L$ is $\Gamma$-internal, we have $\dimK(Z) = \dimK(L) = \dimK(F^\star) = \dimK(F)$. So the Zariski closure of $K$, which is contained in $B$, is equal to $\overline{F}$; and $B = \overline{Z}$. If $B\cap F = \emptyset$ and $B$ is not trivial, quotienting by $B$, we would be
  able to embed $F$ in a variety of strictly lower dimension than
  $\overline{F}$, a contradiction, so $B$ is trivial.
  
  Applying Lemma\ \ref{L:IrrAction},
  we see that $\overline{F}$ is a vector space over the valued field
  $K$ and that the action of $\overline{L}$ is linear. We also denote
  this action by $\cdot$ since it extends the action of $L$ on $F$ by
  multiplication. If $r_i \in L$ and $\sum r_i = 0$, then $\sum r_i
  \cdot y= (\sum r_i)\cdot y = 0$ for any $ y \in F$, and by Zariski
  density of $F$, for $y \in \overline{F}$. Thus the action may be
  extended to an action of the ring $R$ generated by $L$ on $\overline{F}$, again
  extending the action by multiplication of $R$ on $F$. Note that $R$
  contains the subrng $D$ constructed in Proposition\ \ref{P:fields}
  and hence its fraction field is $F$. Finally, if $0 \neq r \in R$
  then $r$ acts on $F$ as an invertible linear transformation, since
  the image contains $r\cdot F = F$ and hence, by Zariski density,
  $\overline{F}$. Thus we can extend the action to a $K$-linear action
  of $F$ on $\overline{F}$ which extends the action by
  multiplication. We also denote that action by $\cdot$.

  Let $Z$ be some non-zero orbit of $L$ on $F$. Then $Z$ is definably
  isomorphic to $L$, and, since $F^\star /L$ is $\Gamma$-internal, it
  has the same $C$-minimal dimension as $F$. So $Z$ contains a
  non-empty open subset $U$ of $\overline{F}$.  Pick any $c \in U$,
  then for any $\alpha \in \Oo$ with $\val(\alpha-1)$ sufficiently
  large, $\alpha c \in U$ and thus there exists $h\in L$ such that
  $h\cdot c = \alpha c$.  Since $F$ is a field, $h$ is uniquely
  defined. For any $b \in L$, we have $\alpha (b \cdot c) = b \cdot
  (\alpha c) = b \cdot (h \cdot c) = h \cdot (b \cdot c)$ so the
  action by $h$ and scalar multiplication by $\alpha$ agree on
  $Z$. Since $Z$ has the same $C$-minimal dimension as $\overline{F}$,
  the linear span of $Z$ is $\overline{F}$ and those two linear
  functions actually agree on $\overline{F}$.

  Let $E = \{\alpha \in K \mid (\exists b \in F) (\forall x \in \overline{F})
  \alpha x = b \cdot x\}$. This is clearly a subfield of $K$, and it
  contains a neighborhood of $1$.  Hence it contains a neighborhood
  $N$ of $0$.  If $0 \neq x \in K$ then $x^{-1} N \cap N$ is open and
  contains some non-zero $u$, so $x\cdot u \in N$ and $x = (x\cdot
  u)/u \in E$. It follows that $E=K$. Moreover, the map sending any
  $\alpha\in K$ to $b\in F$, such that the action by $b$ coincides with
  scalar multiplication by $\alpha$, is an embedding of rings. Since
  $F$ has bounded dimension, for some $m$ no definable subset of $F$
  admits a definable map onto $K^m$; so $\dim_K F < m$.  Since $K$ is
  algebraically closed we have $K \simeq F$.
\end{proof}

We naturally expect any infinite non-Abelian definably simple group
definable in $\ACVF$ to be isomorphic to an algebraic group defined
over the residue field or the valued field. A proof along the above
lines may be possible assuming a positive solution to
Problem\ \ref{Pb:double} for $\ACVF$, along with an interpretation of an
ordered proper semi-group structure on $ H \backslash G / H$.

The existing metastable technology yields a proof of the Abelian case:

\begin{prop}
  Let $A$ be an non-trivial Abelian group definable in $\ACVF$. Then
  there exist definable subgroups $B < C \leq A$ with $C/B$ definably
  isomorphic (with parameters) to an algebraic group over the residue
  field or a definable group over the value group.
\end{prop}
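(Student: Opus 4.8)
The plan is to split on the value of $\dimst(A)$, using throughout that $\ACVF$ is metastable with \AFD and \Aom (Proposition\ \ref{P:ACVF hyp}), so that $A$ has bounded weight and finite stable dimension. Suppose first $\dimst(A) = 0$. Working over a metastability basis $C$ that is a model, for any $a \in A$ the type $\tp(a/\Gamma_C(a))$ is stably dominated of stable dimension $0$, hence realized; exactly as in the proof of Proposition\ \ref{P:unstable fields}, this forces $a \in \acl(\Gamma_C(a))$, so $A$ is almost $\Gamma$-internal. Since $\Gamma$ is $o$-minimal, Lemma\ \ref{L:almost-int-2} upgrades this to genuine $\Gamma$-internality; and as $\Gamma$ eliminates imaginaries, $A$ is definably isomorphic to a definable group over the value group. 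In this case I take $B = \{1\}$ and $C = A$.

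Now suppose $\dimst(A) = n > 0$. Lemma\ \ref{L:ab-inertial} provides a stably dominated subgroup of $A$ with a stable homomorphic image of Morley rank $n$; passing to its connected component (whose image under the same homomorphism still has rank $n$) and invoking Corollary\ \ref{C:maxrkdef} under \Aom, I obtain a connected definable stably dominated subgroup $S \leq A$ carrying a stable homomorphic image of Morley rank $n$. By Proposition\ \ref{P:groupdom} there is then a definable surjective homomorphism $g : S \to \fg$ onto a definable stable group $\fg$ with $\RM(\fg) = n > 0$, through which the generics of $S$ are dominated. I set $C := S$ and $B := \ker(g)$, both definable subgroups of $A$; since $\fg$ is infinite, $B < C$ strictly, and $C/B \cong \fg$.

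It remains to identify $\fg$ with an algebraic group over the residue field $k$. Being stable and definable in $\ACVF$, $\fg$ is definable in $k$ over additional parameters by \cite[Lemma\ 2.6.2]{HasHruMac-ACVF}; and since $k$ is a pure algebraically closed field, $\fg$ is definably isomorphic to an algebraic group over $k$ by the group-chunk theorem for groups definable in algebraically closed fields (cf. \cite[Proposition\ 3.1]{HruPil-GpPFF}). This exhibits $C/B$ as definably isomorphic, with parameters, to an algebraic group over the residue field, completing the proof. The main obstacle, and the only point that draws on imported machinery rather than on the structure theory developed above, is precisely this final identification: the reduction of a stable definable group in $\ACVF$ to an algebraic group over $k$. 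The finite-rank hypotheses are exactly what guarantee that the connected stably dominated subgroup $S$ and its stable quotient $\fg$ exist and are definable, so that both cases of the dichotomy produce the required definable quotient.
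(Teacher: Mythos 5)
Your proof is correct, but it takes a genuinely different route from the paper's. The paper deduces the proposition in a few lines from its main structure theorem for Abelian groups (Theorem~\ref{T:lmstd}): it takes the definable homomorphism $\lambda : A \to \Lambda \subseteq \Gamma^n$ whose kernel is limit stably dominated, and splits on whether $\lambda(A) \neq 0$ --- if so, $C = A$ and $B = \ker(\lambda)$ give a value-group quotient; if not, $A$ is limit stably dominated, hence contains a non-zero definable stably dominated subgroup $C$, whose non-trivial stable quotient (Proposition~\ref{P:groupdom}) is identified with an algebraic group over $k$. You instead split on $\dimst(A)$: when $\dimst(A)=0$, metastability over a basis forces every element of $A$ into $\acl(C\Gamma)$, and Lemma~\ref{L:almost-int-2} upgrades this to $\Gamma$-internality of $A$ itself; when $\dimst(A)=n>0$, Lemma~\ref{L:ab-inertial} and Corollary~\ref{C:maxrkdef} produce a connected \emph{definable} stably dominated $S \leq A$ with a definable stable quotient of Morley rank $n$, and you take $C=S$, $B=\ker(g)$. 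Your route bypasses Theorem~\ref{T:lmstd} and the limit-stably-dominated machinery behind it (Proposition~\ref{P:cert}, Corollary~\ref{C:cert-2}, cofinal definable types), at the cost of reassembling the needed subgroup by hand from lemmas that require only \AFD plus the definability statement of Corollary~\ref{C:maxrkdef}; the paper's proof is shorter, but only because the heavy lifting was already done elsewhere. Note also that the two dichotomies are genuinely different and can output different witnesses: for $A = \Gm(K)$, which has both $\dimst(A)=1$ and a non-trivial value-group quotient, the paper's argument yields $(B,C) = (\Gm(\Oo), \Gm(K))$ with quotient $\Gamma$, while yours yields $C = \Gm(\Oo)$ and $B$ the kernel of reduction, with quotient $\Gm(k)$ --- both acceptable since the conclusion is a disjunction. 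Your final step, identifying a stable definable group in $\ACVF$ with an algebraic group over $k$ via \cite[Lemma~2.6.2]{HasHruMac-ACVF} and purity of the residue field, is exactly the paper's own closing argument.
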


\begin{proof}
  Let $H$ be the limit stably dominated subgroup of $A$, cf. Theorem\ \ref{T:lmstd}. If
  $H < A \neq 0$ we can take $C=A$ and $B = H$.  If
  $A = H$, then $A$ is limit stably dominated. In particular,
  $A$ contains a non-zero stably dominated definable Abelian group $C$
  and, by Proposition\ \ref{P:groupdom}, $C$ has a non trivial stable
  quotient. Since all stable definable sets in $\ACVF$ are internal to
  $k$ and $k$ is a pure algebraically closed field, it follows that
  this quotient is (isomorphic to) an algebraic group over $k$.
\end{proof}

\subsection{Residually Abelian groups}

\begin{example}
In $\ACVF$, there exists connected stably dominated non-Abelian
groups, with Abelian stable part:
  \begin{enumerate}
  \item Let $A = \Ga^2$, and let $\beta: A^2 \to \Ga$ be a
    non-symmetric bilinear map defined over the prime field,
    e.g. $\beta((a_1,a_2),(b_1,b_2))=a_1b_2 - a_2 b_1$. Let $t$ be an
    element with $\val(t)>0$ and consider the following group law on
    $A^2$: \((a,b) \star (a',b') = (a+a', b+b' + t \beta(a,a'))\).
  \item If we work in $\Ga(\Oo/t^2\Oo)$ where $\val(t)>0$, we can also
    just take $a \star b = a+b+t\beta(a,b)$. This dimension $1$
    example is due to Simonetta \cite{Sim-CMin}. This group does not
    lift to an algebraic group over $\Oo$.
  \end{enumerate}
\end{example}

However, we may ask if any connected stably dominated group of weight
$1$ definable is nilpotent.

\sloppy
\bibliographystyle{alpha}
\bibliography{short,biblio}

\end{document}